\definecolor{mygray}{gray}{0.75}
\newtheorem{tvrz}{Proposition}[section]
\newtheorem{lemma}[tvrz]{Lemma}
\newtheorem{theorem}[tvrz]{Theorem}
\newtheorem{cor}[tvrz]{Corollary}
\theoremstyle{definition}
\newtheorem{definice}[tvrz]{Definition}
\theoremstyle{remark}
\newtheorem{rem}[tvrz]{Remark}
\theoremstyle{definition}
\newtheorem{mdexample}[tvrz]{Example}
\newenvironment{example}%
{\begin{mdframed}[topline=false, rightline=false, bottomline=false, linewidth=0.2em, linecolor=mygray, innerleftmargin=0.5em, innerrightmargin=0,leftmargin=-0.7em]\begin{mdexample}}%
{\end{mdexample}\end{mdframed}}
\def\^{\wedge}
\def\<{\langle}
\def\>{\rangle}
\def\M{\mathcal{M}}
\def\N{\mathbb{N}}
\def\cN{\mathcal{N}}
\def\cS{\mathcal{S}}
\def\X{\mathfrak{X}}
\def\g{\mathfrak{g}}
\def\sp{\mathfrak{sp}}
\def\h{\mathfrak{h}}
\def\frP{\mathfrak{P}}
\def\frQ{\mathfrak{Q}}
\def\frF{\mathfrak{F}}
\def\frI{\mathfrak{I}}
\def\R{\mathbb{R}}
\def\C{\mathcal{C}}
\def\E{\mathcal{E}}
\def\G{\mathcal{G}}
\def\H{\mathcal{H}}
\def\Z{\mathbb{Z}}
\def\fA{\mathbf{A}}
\def\ol{\overline}
\def\frJ{\mathfrak{J}}
\def\frP{\mathfrak{P}}
\def\fB{\mathbf{B}}
\def\fT{\mathbf{T}}
\def\fg{\mathbf{g}}
\def\frM{\mathfrak{M}}
\def\fM{\mathbf{M}}
\def\bbA{\mathbb{A}}
\def\bbz{\mathbbm{z}}
\def\bby{\mathbbm{y}}
\def\bbx{\mathbbm{x}}
\def\bbu{\mathbbm{u}}
\def\bbv{\mathbbm{v}}
\def\bDelta{\blacktriangle}
\def\dm{\mathrm{m}}
\def\cD{\nabla}
\def\1{\mathbbm{1}}
\def\fG{\mathbf{G}}
\def\fF{\mathbf{F}}
\def\ssm{\smallsetminus}
\def\f1{\mathbf{1}}
\def\~{\widetilde}
\def\gl{\mathfrak{gl}}
\def\ao{\mathfrak{o}}
\def\sfm{\mathsf{m}}
\def\sfe{\mathsf{e}}
\def\sfi{\mathsf{i}}
\def\fbeta{\bm{\beta}}
\newcommand\ul[1]{\underline{#1}}
\newcommand\dia[1]{{#1}_{\diamond}}
\def\sfj{\mathsf{j}}
\newcommand\mind[2]{\lfloor \substack{#1 \\ #2} \rfloor}
\DeclareMathOperator{\gdim}{gdim}
\newcommand{\modu}[1]{\text{ (mod }#1)}
\DeclareMathOperator{\Set}{\mathbf{Set}}
\DeclareMathOperator{\gVect}{\mathbf{gVec}}
\DeclareMathOperator{\gMan}{\mathbf{gMan}}
\DeclareMathOperator{\Op}{\mathbf{Op}}
\DeclareMathOperator{\im}{im}
\DeclareMathOperator{\Sym}{Sym}
\DeclareMathOperator{\GL}{GL}
\DeclareMathOperator{\gO}{O}
\DeclareMathOperator{\gSp}{Sp}
\DeclareMathOperator{\Ad}{Ad}
\DeclareMathOperator{\op}{op}
\DeclareMathOperator{\Aut}{Aut}
\DeclareMathOperator{\Lin}{Lin}
\newcommand{\cH}[1]{{\color{SaddleBrown}{#1}}}
\begin{document}
\begin{flushright}
\today
\end{flushright}
\vspace{0.7cm}
\begin{center}

\baselineskip=13pt {\Large \bf{Three Examples of Graded Lie Groups}\\}
 \vskip0.5cm
 {\large{Jan Vysoký$^{1}$}}\\
 \vskip0.6cm
$^{1}$\textit{Faculty of Nuclear Sciences and Physical Engineering, Czech Technical University in Prague\\ Břehová 7, 115 19 Prague 1, Czech Republic, jan.vysoky@fjfi.cvut.cz}\\
\vskip0.3cm
\end{center}

\begin{abstract}
Lie theory is, beyond any doubt, an absolutely essential part of differential geometry. It is therefore necessary to seek its generalization to 
$\Z$-graded geometry. In particular, it is vital to construct non-trivial and explicit examples of graded Lie groups and their corresponding graded Lie algebras.

Three fundamental families of graded Lie groups are developed in this paper: the general linear group associated with any graded vector space, the graded orthogonal group associated with a graded vector space equipped with a metric, and the graded symplectic group associated with a graded vector space equipped with a symplectic form. We provide both a direct geometric construction and a functor-of-points perspective. It is shown that their corresponding Lie algebras are isomorphic to the anticipated subalgebras of the graded Lie algebra of linear endomorphisms. Isomorphisms of graded Lie groups induced by linear isomorphisms, as well as possible applications, are also discussed.
\end{abstract}

{\textit{Keywords}: graded Lie groups, graded Lie algebras, graded manifolds, general linear group, graded orthogonal group, graded symplectic group, functor of points}.

\section*{Introduction}
\addcontentsline{toc}{section}{Introduction}
In supergeometry, Lie supergroups are of fundamental importance due to their utilization in supersymmetry. First mentioned already in \cite{BerzeinLeites1975}, there are basically three equivalent approaches to those. F. Berezin defines Lie supergroups in \cite{berezin1987superanalysis} as group objects in the category of supermanifolds. Equivalently, they can be viewed as super Hopf algebras on the Sweedler dual of the superalgebra of global functions. This viewpoint is due to B. Kostant in \cite{kostant1977graded}. Finally, they can be viewed as certain Lie group actions on Lie superalgebras, so called super Harish-Chandra pairs, see \cite{deligne1999notes} and \cite{koszul1982graded}. To understand the correspondence of the three viewpoints, see Chapter 7 of \cite{carmeli2011mathematical}. The most prominent example, the general linear supergroup $\GL(m|n)$, appears in all the above references. On the other hand, finite-dimensional simple Lie superalgebras were classified by V. Kac \cite{Kac1977, kac1977sketch}. Two classical series of this classification are formed by \textit{orthosymplectic Lie superalgebras} $\mathfrak{osp}(m|2n)$. The corresponding \textit{orthosymplectic Lie supergroups} $\text{OSp}(m|2n)$ were introduced in \cite{rittenberg2005guide}. Note that in the above literature, Lie supergroups are usually defined as groups of certain matrices valued in some auxiliary Grassmann algebra. In modern language, this means that they are defined by their functor of points evaluated on supermanifolds of the form $\R^{0|q}$. 

$\Z$-graded manifolds form a natural generalization of supergeometry. Recently $\Z$-graded manifolds with local coordinates of arbitrary degrees \cite{Vysoky:2022gm, kotov2024category, fairon2017introduction} were introduced, thus extending the theory of non-negatively graded (or just $\N$-graded) manifolds \cite{Kontsevich:1997vb, severa2001some, Voronov:2019mav} and \cite{mehta2006supergroupoids,2011RvMaP..23..669C}. In the following, \textit{graded} will always mean \textit{$\Z$-graded}. It is only natural to examine Lie theory in this category. For $\N$-graded manifolds, this was done in \cite{jubin2019differential} mostly using the language of Harish-Chandra pairs, and later extended to more general setting in \cite{kotov2023various}. In the master thesis of R. Šmolka \cite{Smolka2023}, graded Lie groups and elements of graded principal bundle theory were introduced. In particular, he constructed the pivotal example of the \textit{general linear group} associated with a graded vector space $\R^{(n_{j})}$. Note that this graded manifold \textit{always} requires one to use both positive and negative coordinate degrees, hence it never fits into the $\N$-graded setting. In the related category of $\Z_{2}^{n}$-graded manifolds, Lie groups were considered in \cite{bruce2020schwarz, bruce2021linear, bruce2025principal}. In particular, the appropriate general linear group was constructed using its functor of points. 

The realization that none of the above references contain a $\Z$-graded generalization of the orthosymplectic group $\text{OSp}(m|2n)$ was a main motivation for writing this paper. Our intention is to assign to any graded vector space $V$ equipped with a degree $\ell$ metric $g$ a certain graded Lie subgroup $\gO(V,g)$ of the \textit{general linear group} $\GL(V)$. For this, we need to find a more abstract and coordinate-independent construction of the general linear group. It turns out that this can be done in a rather straightforward way resembling the ordinary Lie theory. We only utilize the functor of points to avoid the explicit construction of the inverse. Since $\gO(V,g)$ is assigned in a canonical way to a metric $g$, we call it simply the \textit{graded orthogonal group}. It turned out that the construction can be kept very geometric without any digressions into abstract nonsense or graded Hopf algebras and Frechét topologies. In fact, it closely resembles the standard construction of $\gO(V,g)$ as a regular level set of the map $A \mapsto (g^{-1}A^{T}g)A$ corresponding to the value $\1_{V}$. The functor of points perspective is then offered as a bonus observation, though. We believe that the results of this paper can be utilized as valuable examples of not only graded Lie groups but of graded manifolds in general. It is also shown how the \textit{graded symplectic group} $\gSp(V,\omega)$ associated canonically with a symplectic from $\omega$ is constructed. We are aware of the fact that a graded version of the special linear group $\text{SL}(V)$ is still missing. This is because at the moment, we do not possess a suitable definition of a $\Z$-graded Berezinian. We plan to address this in the future. 

We make a conscious decision to \textit{not include} any section on the theory of $\Z$-graded manifolds. For basic notions, we refer the reader to \S 2 of our previous paper \cite{vsmolka2025threefold}. For a detailed introduction, see the review \cite{Vysoky:2022gm}. Non-elementary theory required in this paper includes mostly statements about submanifolds and their construction via pullbacks. Those can be found in \S 7 of the above reference. For introduction to graded Lie group theory, see the master thesis \cite{Smolka2023} of R. Šmolka. 

The paper is organized as follows. In Section \ref{sec_LA}, we introduce linear algebra of graded vector spaces equipped with a degree $\ell$ metric $g$. In particular, we show that a graded vector space $\gl(V)$ of all linear endomorphisms of $V$ decomposes as a direct sum of symmetric and skew-symmetric maps with respect to $g$, respectively. 

Section \ref{sec_GLG} begins by recalling the notion of graded Lie groups. We define a graded Lie subgroup and prove that it again forms a graded Lie group. It is then shown that its associated Lie algebra can be canonically identified with a subalgebra of the Lie algebra associated to the ``ambient'' graded Lie group. We finish this section by observing that group axioms can be checked on the set level by evaluating the functor of points. 

In Section \ref{sec_diamond}, we recall how every finite-dimensional graded real vector space $V$ can be viewed as a graded manifold which we denote as $\dia{V}$. For the purposes of this paper, it is convenient to view this construction as a functor $\diamond: \gVect \rightarrow \gMan^{\infty}$. We show how the tangent space to $\dia{V}$ at each point can be canonically identified with $V$ and utilize this to show that subspaces can be viewed as closed embedded submanifolds. We discuss how graded smooth maps into $\dia{V}$ are always uniquely determined by pullbacks of global coordinate functions. It is shown how bilinear maps can be naturally promoted to graded smooth maps of the corresponding graded smooth manifolds. We conclude this section by examining the functor of points associated with $\dia{V}$. 

Section \ref{sec_generallinear} is devoted to the construction of the most important example of a graded Lie group, namely the general linear group $\GL(V)$. It is a modified and perhaps more streamlined version of the original construction in \cite{Smolka2023}. The graded manifold $\GL(V)$ is defined as an open submanifold of $\dia{\gl(V)}$, where the underlying manifold $\gl(V)_{0}$ is restricted to the open subset $\GL(V_{\bullet})$ of invertible degree zero automorphisms of $V$. The group multiplication $\mu$ is obtained as a restriction of the graded smooth map assigned to the bilinear composition of graded linear maps, the group unit $e$ is defined to correspond to the point $\1_{V} \in \GL(V_{\bullet})$. In \cite{Smolka2023}, there is an explicit and rather complicated coordinate formula for the inverse mapping $\iota$. 

Admittedly, pure laziness led us to work around this issue by observing that the functor of points associated with $\GL(V)$ is naturally isomorphic to a functor assigning to each graded manifold $\cS$ a group of automorphisms $\Aut(\frM(\cS))$ of a certain graded module $\frM(\cS)$. This allows us to borrow the inverse from this group and use the Yoneda lemma to construct $\iota$. In fact, we can also immediately use this viewpoint to prove that $\GL(V)$ is a graded Lie group. Finally, we show that $\GL(V)$ deserves its name by finding its associated graded Lie algebra to be canonically isomorphic to $\gl(V)$ equipped with the graded commutator. 

The main novel subject of this paper, the \textit{graded orthogonal group} $\gO(V,g)$ associated with a graded vector space $V$ equipped with a degree $\ell$ metric $g$, is constructed in Section \ref{sec_GOVg}. We closely mimic the construction from ordinary differential geometry, albeit sometimes a bit in disguise. We divide the procedure into several steps:
\begin{enumerate}[(1)]
\item Construct an embedded submanifold $\Sym^{\times}(V,g)$ of both $\GL(V)$ and $\dia{\Sym(V,g)}$, a graded version of the set of invertible automorphisms of $V$ symmetric with respect to $g$. 
\item Find a graded smooth map $\tau^{\times}: \GL(V) \rightarrow \GL(V)$, a graded version of the map $A \mapsto g^{-1}A^{T}g$, and prove that it is an anti-homomorphism of graded Lie groups. 
\item Show that a graded smooth map $\varphi^{\times} = \mu \circ (\tau^{\times}, \1_{\GL(V)})$ lifts to a unique graded smooth map $\varphi: \GL(V) \rightarrow \Sym^{\times}(V,g)$. This is just a graded version of the map $A \mapsto (g^{-1}A^{T}g)A$. 
\item The unit $e: \{ \ast \} \rightarrow \GL(V)$ lifts to a map $e^{\times}: \{ \ast \} \rightarrow \Sym^{\times}(V,g)$. This just means that $\1_{V}$ is both invertible and symmetric with respect to $g$. We show that $\varphi$ is transversal to $e^{\times}$, or equivalently that $\1_{V} \in \Sym^{\times}_{0}(V,g)$ is a regular value of $\varphi$. 
\item Construct $\gO(V,g)$ as a regular level set submanifold of $\varphi$. This is a graded version of the orthogonality condition $(g^{-1} A^{T} g)A = \1_{V}$. Finally, prove that $\gO(V,g)$ is a graded Lie group and its associated Lie algebra can be identified with the subalgebra $\ao(V,g)$ of $\gl(V)$ consisting of endomorphisms of $V$ skew-symmetric with respect to $g$. 
\end{enumerate}
All these statements are proved in four parts of Section \ref{sec_technical}. This is the core mathematical part of the paper and can become a bit technical. To not obscure the overall picture, we have decided to move it into the separate section. 

In Section \ref{sec_OGFOP}, we provide an alternative perspective on the graded orthogonal group, namely the one of its functor of points. First, we show that the graded module $\frM(\cS)$ can be equipped with a $\C^{\infty}_{\cS}(S)$-bilinear form $\<\cdot,\cdot\>_{g}$ induced by the metric $g$ on $V$. This allows us to define a subgroup $\gO(\frM(\cS),g)$ of $\Aut(\frM(\cS))$ consisting of automorphisms preserving $\<\cdot,\cdot\>_{g}$. We then show that the functor of points associated with $\gO(V,g)$ is naturally isomorphic to the functor $\cS \mapsto \gO(\frM(\cS),g)$. 

Section \ref{sec_gradedsymplectic} is devoted to showing that a degree $\ell$ metric $g$ can be replaced by a degree $\ell$ \textit{symplectic form} $\omega$. The whole construction of Section \ref{sec_GOVg} can be repeated without any hiccups to obtain a \textit{graded symplectic group} $\gSp(V,\omega)$. Its Lie algebra is identified with the subalgebra $\sp(V,\omega)$ of $\gl(V)$ consisting of endomorphisms of $V$ skew-symmetric with respect to $\omega$. 

In Section \ref{sec_isomorphisms}, we investigate isomorphisms of graded vector spaces and graded smooth maps they induce between the respective graded Lie groups. One can consider isomorphisms of any degree. In the general linear case, every isomorphism induces a canonical isomorphism of general linear groups. This allows one to view the assignment $V \mapsto \GL(V)$ as a functor from a certain groupoid. As an interesting example, we show that the canonical degree shifting operator $\delta[m]: V[m] \rightarrow V$ induces an isomorphism of $\GL(V[m])$ and $\GL(V)$. It is only natural to assume that for a pair of graded vector spaces $(V,g)$ and $(W,g')$ equipped with metrics $g$ and $g'$, graded linear isometries induce isomorphisms of the respective graded orthogonal groups. However, this is only true for \textit{even degree} isomorphisms. For non-zero odd degrees, expected strange things happen. In particular, odd degree isomorphisms $M: V \rightarrow W$ can only relate metrics to symplectic forms and vice versa. Consequently, they induce graded Lie group isomorphisms e.g. of $\gO(V,g)$ and $\gSp(W,\omega')$. This is illustrated on three examples.

Finally, in Section \ref{sec_examples}, we discuss more explicit details and possible applications for graded Lie groups appearing in this paper. In Subsection \ref{subsec_standard} we show that to a graded vector space $V$ and a degree $\ell$ metric $g$, there is a convenient total basis for $V$, in which the metric $g$ takes a very simple ``standard form''. It turns out that that it is convenient to write $\ell = -2k + \epsilon$, where $k = - \lfloor \frac{\ell}{2} \rfloor$ and $\epsilon = \ell \modu{2}$. The standard form of $g$ is then discussed based on the values of $\epsilon \in \{0,1\}$ and $k \in \Z$. In Subsection \ref{subsec_OVG}, we show that for degree $\ell$ orthogonal graded vector bundles $(\E,\<\cdot,\cdot\>)$ over $\M$, defined in \cite{vysoky2022graded}, for any $m \in M$ there is always a convenient local frame for $\E$ over some $U \in \Op_{m}(M)$, in which $\<\cdot,\cdot\>$ takes a simple form. This generalizes the previous subsection. It then turns out that the group of automorphisms of $\E|_{U}$ preserving $\<\cdot,\cdot\>$ can then be identified with the group $\gMan^{\infty}(\M|_{U}, \GL(K))$, where $K \in \gVect$ is the typical fiber of $\E$. In Subsection \ref{subsec_ULG}, we use results of Subsection \ref{subsec_standard} to more carefully analyze underlying Lie groups of graded orthogonal groups. Finally, in Subsection \ref{subsec_repre}, we define left actions and representations of graded Lie groups and find the relation of these two notions. 

\section*{Acknowledgments}
I would like to thank Branislav Jurčo, Filip Moučka and Rudolf Šmolka for helpful discussions.

The research of was supported by grant GAČR 24-10031K. The author is also grateful for a financial support from MŠMT under grant no. RVO 14000. 

\section{Linear algebra} \label{sec_LA}
A graded vector space $V$ is a sequence $V = (V_{j})_{j \in \Z}$, where $V_{j}$ is an ordinary vector space for each $j \in \Z$. For the purposes of this paper, we will assume that all graded vector spaces are real and finite-dimensional, that is $\sum_{j \in \Z} \dim(V_{j}) < \infty$. We will denote the respective category as $\gVect$. By a graded dimension of $V \in \gVect$, we mean the sequence $\gdim(V) = ( \dim(V_{j}) )_{j \in \Z}$.

For any $V,W \in \gVect$, one writes $\Lin(V,W)$ for the set of degree zero graded linear maps and $\ul{\Lin}(V,W)$ for the graded vector space of graded linear maps of an arbitrary degree. $\gVect$ is monoidal with respect to the tensor product $\otimes_{\R}$ and $\ul{\Lin}$ is the corresponding internal hom-functor. 

The dual graded vector space is defined by $V^{\ast} := \ul{\Lin}(V,\R)$, where $\R$ is viewed as a trivially graded vector space. Note that $\gdim(V^{\ast}) = \neg \gdim(V)$, where we introduce the $\neg$ operator by
\begin{equation}
\neg (r_{j})_{j \in \Z} := (r_{-j})_{j \in \Z}
\end{equation}
on any sequence of objects $(r_{j})_{j \in \Z}$ labeled by integers. 

We write $\gl(V) := \ul{\Lin}(V,V)$. Let $[\cdot,\cdot]$ denote the graded commutator, that is 
\begin{equation} \label{eq_gcommutator}
[A,B] := A B - (-1)^{|A||B|} B A,
\end{equation} 
for all $A,B \in \gl(V)$. It is easy to see that $(\gl(V), [\cdot,\cdot])$ forms a graded Lie algebra of degree zero. Note that we omit any symbols for the composition of graded linear maps.

\begin{definice} \label{def_metric}
Let $V \in \gVect$. By a \textbf{symmetric bilinear form on $V$ of degree $\ell$}, we mean a graded bilinear map $g: V \times V \rightarrow \R$ satisfying 
\begin{enumerate}[(i)]
\item $|g(v,w)| = |v| + |w| + \ell$;
\item $g(v,w) = (-1)^{(|v| + \ell)(|w| + \ell)} g(w,v)$. 
\end{enumerate}
The grading on $\R$ is assumed to be trivial, so $g(v,w) = 0$ whenever $|v| + |w| + \ell \neq 0$. We say that a symmetric bilinear form $g$ of degree $\ell$ is a \textbf{degree $\ell$ metric on $V$}, if $g_{\flat} \in \ul{\Lin}(V,V^{\ast})$ defined for all $v,w \in V$ by the formula
\begin{equation} \label{eq_gflattog}
[g_{\flat}(v)](w) := (-1)^{(|v|+1)\ell} g(v,w) 
\end{equation}
is an isomorphism of graded vector spaces. Note that $|g_{\flat}| = \ell$. 
\end{definice}
\begin{rem} \label{rem_metric}
There are some remarks in order. 
\begin{enumerate}[(i)]
\item The graded symmetry of $g$ can be written in terms $g_{\flat}$ as 
\begin{equation} \label{eq_gradedsymmetrygflat}
[g_{\flat}(v)](w) = (-1)^{|v||w| + \ell} [g_{\flat}(w)](v). 
\end{equation}
\item We will henceforth drop the $\flat$ subscript and use both notions interchangeably. 
\item By saying that $g: V \rightarrow V^{\ast}$ is an isomorphism of graded vector spaces, we mean that each its component $g_{j}: V_{j} \rightarrow (V^{\ast})_{j + \ell} = (V_{-(j+\ell)})^{\ast}$ is an isomorphism. In particular, if $(r_{j})_{j \in \Z} = \gdim(V)$, a degree $\ell$ metric on $V$ exists only if $r_{j} = r_{-(j+\ell)}$ for every $j \in \Z$. 
\item We will write $g^{-1} \in \ul{\Lin}(V^{\ast},V)$ for the inverse. Note that $|g^{-1}| = -\ell$ and $(g^{-1})_{j} = (g_{j-\ell})^{-1}$ for each $j \in \Z$.  
\end{enumerate}
\end{rem}
Recall that to any $A \in \gl(V)$, there is an induced transpose map $A^{T} \in \gl(V^{\ast})$ defined for all $\xi \in V^{\ast}$ and $v \in V$ by the formula
\begin{equation}
[A^{T}(\xi)](v) := (-1)^{|A||\xi|} \xi( A(v)).
\end{equation}
Observe that $|A^{T}| = |A|$. Note that for all $A,B \in \gl(V)$, one has 
\begin{equation} \label{eq_comptranspose}
(A B)^{T} = (-1)^{|A||B|} B^{T} A^{T}. 
\end{equation}
\begin{definice} \label{def_symantisym}
Let $g$ be a degree $\ell$ metric on $V \in \gVect$. Se say that $A \in \gl(V)$ is \textbf{symmetric with respect to $g$}, if 
\begin{equation}
g A = (-1)^{\ell |A|} A^{T} g.
\end{equation}
Such maps form a subspace of $\gl(V)$ which we denote as $\Sym(V,g)$. We say that $A \in \gl(V)$ is \textbf{skew-symmetric with respect to $g$}, if 
\begin{equation}
g A = - (-1)^{\ell |A|} A^{T} g. 
\end{equation}
Such maps form a subspace of $\gl(V)$ which we denote as $\ao(V,g)$. 
\end{definice}
\begin{tvrz} \label{tvrz_Symortdecom} Let $g$ be a degree $\ell$ metric on $V \in \gVect$. 
\begin{enumerate}[(i)]
\item There is a direct sum decomposition $\gl(V) = \Sym(V,g) \oplus \ao(V,g)$;
\item $\ao(V,g)$ forms a graded Lie subalgebra of $(\gl(V),[\cdot,\cdot])$. 
\end{enumerate}
\end{tvrz}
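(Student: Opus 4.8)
The plan is to exhibit a single degree-preserving linear involution $\tau$ on $\gl(V)$ whose $\pm 1$-eigenspaces are precisely $\Sym(V,g)$ and $\ao(V,g)$; both claims then follow by routine eigenspace and multiplicativity arguments. Concretely, I would define $\tau: \gl(V)\to\gl(V)$ on homogeneous $A$ by $\tau(A) := (-1)^{\ell|A|}g^{-1}A^{T}g$ and extend linearly. Since $|\tau(A)| = |A|$, the map $\tau$ has degree zero. Applying $g^{-1}$ on the left in Definition \ref{def_symantisym} shows that $A$ is symmetric with respect to $g$ iff $\tau(A) = A$, and skew-symmetric iff $\tau(A) = -A$; thus $\Sym(V,g)$ and $\ao(V,g)$ are exactly the $+1$- and $-1$-eigenspaces of $\tau$.

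For part (i), the key step is to prove $\tau^{2} = \mathrm{id}$. Once this holds, working degree by degree and using that $\R$ has characteristic zero, every $A \in \gl(V)$ splits uniquely as $A = \tfrac12(A+\tau(A)) + \tfrac12(A-\tau(A))$, with the first summand fixed by $\tau$ (hence in $\Sym(V,g)$) and the second negated by $\tau$ (hence in $\ao(V,g)$); moreover $\Sym(V,g) \cap \ao(V,g) = 0$, since $\tau(A) = A = -\tau(A)$ forces $A = 0$. As $\tau$ preserves degree, both eigenspaces are graded subspaces, so this is a decomposition in $\gVect$.

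The main obstacle, and the only genuinely technical point, is the identity $\tau^{2} = \mathrm{id}$. I would reduce it to two transpose facts, established first under the canonical identification $V \cong V^{\ast\ast}$: reflexivity $(A^{T})^{T} = A$, and the reformulation of the symmetry \eqref{eq_gradedsymmetrygflat} of $g$ as $g^{T} = (-1)^{\ell}g$ (equivalently $(g^{-1})^{T} = g^{-1}$). Feeding these, together with the composition rule \eqref{eq_comptranspose}, into a careful sign computation yields $(g^{-1}A^{T}g)^{T} = gAg^{-1}$, whence $\tau^{2}(A) = (-1)^{2\ell|A|}g^{-1}(gAg^{-1})g = A$. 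I expect the sign bookkeeping around the double-dual identifications to be where all the care is required; everything else is formal.

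For part (ii), I would first record that $\tau$ is a graded anti-automorphism of the associative composition: a direct application of \eqref{eq_comptranspose} gives $\tau(AB) = (-1)^{|A||B|}\tau(B)\tau(A)$ for homogeneous $A,B$. Given $A,B \in \ao(V,g)$, so that $\tau(A) = -A$ and $\tau(B) = -B$, this produces $\tau(AB) = (-1)^{|A||B|}BA$ and $\tau(BA) = (-1)^{|A||B|}AB$; substituting into $[A,B] = AB - (-1)^{|A||B|}BA$ gives $\tau([A,B]) = -[A,B]$, so $[A,B] \in \ao(V,g)$. Since $\ao(V,g)$ is a graded subspace closed under the graded commutator, it is a graded Lie subalgebra, as claimed.
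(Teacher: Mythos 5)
Your proof is correct and is essentially the paper's own approach: the paper labels this proposition a ``straightforward verification'' and then immediately codifies exactly your argument in Proposition \ref{tvrz_taumap}, namely the degree-zero involution $\tau(A) = (-1)^{\ell|A|}g^{-1}A^{T}g$ with $\tau^{2} = \1_{\gl(V)}$ and with $\Sym(V,g)$, $\ao(V,g)$ as its $\pm 1$-eigenspaces, while your anti-automorphism identity $\tau(AB) = (-1)^{|A||B|}\tau(B)\tau(A)$ is precisely the paper's equation (\ref{eq_tauoriginalantihom}). Your sign bookkeeping --- $(A^{T})^{T} = A$ under $V \cong V^{\ast\ast}$, $g^{T} = (-1)^{\ell}g$ hence $(g^{-1})^{T} = g^{-1}$, and the splitting $A = \tfrac{1}{2}(A + \tau(A)) + \tfrac{1}{2}(A - \tau(A))$ --- all checks out.
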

\begin{proof}
Straightforward verification. 
\end{proof}
\begin{tvrz} \label{tvrz_taumap}
Let $g$ be a degree $\ell$ metric on $V \in \gVect$. Define $\tau: \gl(V) \rightarrow \gl(V)$ by 
\begin{equation} \label{eq_taumap}
\tau(A) := (-1)^{\ell|A|} g^{-1} A^{T} g.
\end{equation}
Then $\tau \in \gl( \gl(V))_{0}$, $\tau^{2} = \1_{\gl(V)}$, and $\Sym(V,g)$ and $\ao(V,g)$ are $\pm 1$ eigenspaces of $\tau$. 
\end{tvrz}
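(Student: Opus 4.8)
The plan is to dispatch the three claims in order, using the direct sum decomposition $\gl(V) = \Sym(V,g) \oplus \ao(V,g)$ from Proposition~\ref{tvrz_Symortdecom}(i) to avoid a head-on computation of $\tau^{2}$.

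First, for the degree claim, I would take a homogeneous $A \in \gl(V)$ and simply count degrees: since $|g^{-1}| = -\ell$, $|A^{T}| = |A|$, and $|g| = \ell$, the composite $g^{-1} A^{T} g$ has degree $-\ell + |A| + \ell = |A|$, and the scalar $(-1)^{\ell|A|}$ does not affect the degree. Hence $\tau$ maps $\gl(V)_{k}$ into $\gl(V)_{k}$; being $\R$-linear on each homogeneous component, it defines an element of $\gl(\gl(V))_{0}$.

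Next, rather than expand $\tau^{2}$ directly (which would force me to compute $(g^{-1} A^{T} g)^{T}$ and track how $g$ relates to $g^{T}$ through the graded symmetry \eqref{eq_gradedsymmetrygflat}), I would evaluate $\tau$ separately on the two summands. For $A \in \Sym(V,g)$ the defining relation $g A = (-1)^{\ell|A|} A^{T} g$ rearranges to $A^{T} g = (-1)^{\ell|A|} g A$; substituting into $\tau(A) = (-1)^{\ell|A|} g^{-1} A^{T} g$ and using $(-1)^{2\ell|A|} = 1$ together with $g^{-1} g = \1_{V}$ gives $\tau(A) = A$. For $A \in \ao(V,g)$ the analogous relation carries an extra minus sign and yields $\tau(A) = -A$. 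Thus $\tau$ acts as $+\1$ on $\Sym(V,g)$ and as $-\1$ on $\ao(V,g)$.

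Finally, because these two subspaces span $\gl(V)$, the computation above determines $\tau$ completely, and $\tau^{2} = \1_{\gl(V)}$ follows since $\tau^{2}$ acts as the identity on each summand. The same decomposition identifies the eigenspaces precisely: if $\tau(A) = A$ and $A = A_{s} + A_{a}$ is its splitting into symmetric and skew parts, then $A_{s} + A_{a} = A_{s} - A_{a}$ forces $A_{a} = 0$, so $A \in \Sym(V,g)$, and symmetrically for the $-1$ eigenspace. The only delicate point anywhere is the bookkeeping of the signs $(-1)^{\ell|A|}$ and the cancellation $(-1)^{2\ell|A|} = 1$; I do not expect a genuine obstacle, which is consistent with the decomposition carrying all the real content.
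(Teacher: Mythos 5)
Your proof is correct, and it takes a genuinely different route from the paper's. The paper proves $\tau^{2} = \1_{\gl(V)}$ head-on: the graded symmetry of $g$ gives $[\tau(A)]^{T} = (-1)^{\ell|A|} g A g^{-1}$, from which $\tau^{2}(A) = A$ follows in one line; the eigenspace claims are then immediate, because multiplying $gA = \pm (-1)^{\ell|A|} A^{T} g$ by $g^{-1}$ on the left shows that the defining relations of $\Sym(V,g)$ and $\ao(V,g)$ are \emph{literally} the equations $\tau(A) = A$ and $\tau(A) = -A$; in particular both inclusions of the eigenspace identification are tautological, so your splitting argument $A = A_{s} + A_{a}$, though correct, is more than is needed. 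You instead obtain $\tau^{2} = \1_{\gl(V)}$ from the decomposition of Proposition \ref{tvrz_Symortdecom}(i); since that proposition precedes this one in the paper, the appeal is legitimate and your deductions from it are sound. The trade-off is this: the paper's computation is self-contained, makes visible exactly where the symmetry of $g$ enters, and is what justifies the projectors $\tfrac{1}{2}(\1_{\gl(V)} \pm \tau)$ used later (e.g.\ in the proof of Proposition \ref{tvrz_varphi}); your route is shorter but, as you note yourself, shifts all the real content into the ``straightforward verification'' of Proposition \ref{tvrz_Symortdecom}(i). This is not circular --- that decomposition can be proved without mentioning $\tau$, by transporting the symmetric/skew splitting of graded bilinear forms through the bijection $A \mapsto gA$ --- but the two statements are essentially equivalent in strength: for an invertible $g$ that is \emph{not} symmetric one still has $\Sym(V,g) \cap \ao(V,g) = 0$, yet the sum fails to be all of $\gl(V)$, and correspondingly $\tau$ fails to be an involution. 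So whoever fills in that earlier verification ends up doing essentially the paper's transpose computation anyway.
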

\begin{proof}
Clearly $\tau$ is graded linear of degree zero. The symmetry of $g$ and definitions imply 
\begin{equation}
[\tau(A)]^{T} = (-1)^{\ell|A|} gA g^{-1} .
\end{equation}
Consequently, one has 
\begin{equation}
\tau^{2}(A) = (-1)^{ \ell |\tau(A)|} g^{-1} [\tau(A)]^{T} g = (-1)^{\ell|A|} g^{-1} ( (-1)^{\ell|A|} g A g^{-1}) g = A. 
\end{equation}
The rest of the claims is obvious. 
\end{proof}
\section{Graded Lie groups and subgroups} \label{sec_GLG}
A \textbf{graded Lie group} $(\G,\mu,\iota,e)$ is a group object in the category of graded manifolds $\gMan^{\infty}$. In other words, it is a graded manifold $\G$ together with the following graded smooth maps:
\begin{enumerate}[(i)]
\item a \textbf{multiplication} $\mu: \G \times \G \rightarrow \G$;
\item an \textbf{inverse}  $\iota: \G \rightarrow \G$; 
\item a \textbf{unit} $e: \{ \ast \} \rightarrow \G$, where $\{ \ast \} \in \gMan^{\infty}$ is a terminal object consisting of a one-point (graded) manifold. 
\end{enumerate}
They fit into the following commutative diagrams:
\begin{enumerate}[({g}1)]
\item The \textbf{associativity} diagram:
\begin{equation} \label{eq_associativity}
\begin{tikzcd}
\G \times (\G \times \G) \arrow{d} \arrow{r}{\1_{\G} \times \mu} & \G \times \G \arrow{dd}{\mu} \\
(\G \times \G) \arrow{d}{\mu \times \1_{\G}} \times \G &  \\
\G \times \G \arrow{r}{\mu} & \G
\end{tikzcd},
\end{equation}
where the unmarked arrow corresponds to the canonical graded diffeomorphism. 
\item The \textbf{unitarity} diagrams:
\begin{equation} \label{eq_unitarity}
\begin{tikzcd}
\G \arrow{rd}{\1_{\G}} \arrow{r}{(e_{\G},\1_{\G})} &[2em] \G \times \G \arrow{d}{\mu} \\
& \G 
\end{tikzcd}, \; \; 
\begin{tikzcd}
\G \arrow{rd}{\1_{\G}} \arrow{r}{(\1_{\G},e_{\G})} &[2em] \G \times \G \arrow{d}{\mu} \\
& \G 
\end{tikzcd},
\end{equation}
where for any $\M \in \gMan^{\infty}$, we write $e_{\M}$ for the composition $\begin{tikzcd}[column sep=small] \M \arrow{r} & \{ \ast \} \arrow{r}{e} &  \G \end{tikzcd}$. 
\item The \textbf{inversion} diagrams:
\begin{equation} \label{eq_inversion}
\begin{tikzcd}
\G \arrow{rd}{e_{\G}} \arrow{r}{(\1_{\G},\iota)} &[2em] \G \times \G \arrow{d}{\mu} \\
& \G 
\end{tikzcd}, \; \; 
\begin{tikzcd}
\G \arrow{rd}{e_{\G}} \arrow{r}{(\iota,\1_{\G})} &[2em] \G \times \G \arrow{d}{\mu} \\
& \G 
\end{tikzcd}.
\end{equation}
\end{enumerate}
It can be shown that $\iota$ and $e$, if they exist and fit into the respective diagrams, are unique. It is also clear that the underlying manifold $G$ together with the underlying smooth maps $\ul{\mu}: G \times G \rightarrow G$, $\ul{\iota}: G \rightarrow G$, and $\ul{e}: \{ \ast \} \rightarrow G$ form an ordinary Lie group. The choice of $e$ is equivalent to the choice of a single point $\ul{e}(\ast) \in G$ which we will often denote simply as $e$. 

Let us now define graded Lie subgroups. To keep things simple, we only consider those which form closed embedded submanifolds. This avoids some unnecessary technical difficulties. See \S 7 of \cite{Vysoky:2022gm} for details and further discussion. 
\begin{definice} 
Let $(\H,j)$ be a closed embedded submanifold of $\G$. We say that $(\H,j)$ is is a \textbf{graded Lie subgroup} of $\G$, if there exist graded smooth maps $\mu': \H \times \H \rightarrow \H$ and $\iota': \H \rightarrow \H$ fitting into the diagrams
\begin{equation} \label{eq_subgroup}
\begin{tikzcd}
\H \times \H \arrow[dashed]{r}{\mu'} \arrow{d}{j \times j}& \H \arrow{d}{j}\\
\G \times \G \arrow{r}{\mu} & \G 
\end{tikzcd}, \; \; 
\begin{tikzcd}
\H \arrow{r}{\iota'} \arrow{d}{j} & \H \arrow{d}{j} \\
\G \arrow{r}{\iota} & \G 
\end{tikzcd}.
\end{equation}
\end{definice} 
Note that if such $\mu'$ and $\iota'$ exist, they are unique. The basic observation is the following one:
\begin{tvrz} \label{tvrz_subgroupisgroup}
Let $(\H,j)$ be a graded Lie subgroup of $\G$. Then there exists $e': \{ \ast \} \rightarrow \H$ fitting into the diagram
\begin{equation} \label{eq_subgroupunit}
\begin{tikzcd}
\{ \ast \} \arrow[dashed]{r}{e'} \arrow{rd}{e} & \H \arrow{d}{j} \\
& \G
\end{tikzcd}
\end{equation}
Moreover, $(\H, \mu', \iota', e')$ forms a graded Lie group and $j: \H \rightarrow \G$ becomes a graded Lie group morphism. 
\end{tvrz}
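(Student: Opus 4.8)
The plan is to lean on the fact that a closed embedded submanifold $j:\H\to\G$ is a monomorphism in $\gMan^{\infty}$: its pullback on structure sheaves is an epimorphism and $\ul{j}$ is injective, so $j\circ f = j\circ g$ forces $f=g$ for any pair of graded smooth maps with common source (this is part of the submanifold theory in \S 7 of \cite{Vysoky:2022gm}). Granting this cancellation property, each identity to be proved for $(\H,\mu',\iota',e')$ will be obtained by post-composing the relevant $\H$-diagram with $j$, rewriting it with the defining relations $j\circ\mu' = \mu\circ(j\times j)$ and $j\circ\iota'=\iota\circ j$ from \eqref{eq_subgroup} until it reduces to the corresponding group axiom for $\G$, and then cancelling $j$.

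First I would construct $e'$, which is not purely formal since only $\mu'$ and $\iota'$ are given. Passing to underlying manifolds, the squares in \eqref{eq_subgroup} say that $\ul{\mu'}$ and $\ul{\iota'}$ are the restrictions to $H$ of the multiplication and inversion of the ordinary Lie group $G$; in particular $H\subseteq G$ is a non-empty subset closed under products and inverses. Picking any $h\in H$, the element $h\cdot h^{-1}=e$ then lies in $H$, so the ambient unit $\ul{e}(\ast)$ belongs to $\ul{j}(H)$. Since maps out of the terminal object $\{\ast\}$ are exactly points of the underlying manifold and $j$ is a closed embedding, the point map $e:\{\ast\}\to\G$ lifts uniquely through $j$ to a graded smooth map $e':\{\ast\}\to\H$ with $j\circ e'=e$, which is precisely diagram \eqref{eq_subgroupunit}; uniqueness is immediate from the monomorphism property.

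It then remains to verify the group axioms for $(\H,\mu',\iota',e')$. For associativity, post-composing $\mu'\circ(\mu'\times\1_{\H})$ and $\mu'\circ(\1_{\H}\times\mu')$ with $j$ and repeatedly substituting $j\circ\mu'=\mu\circ(j\times j)$ turns them into $\mu\circ(\mu\times\1_{\G})\circ(j\times j\times j)$ and $\mu\circ(\1_{\G}\times\mu)\circ(j\times j\times j)$ (up to the canonical associator), which agree by associativity of $\mu$; cancelling $j$ gives the associativity diagram for $\H$. For unitarity, writing $e'_{\H}:\H\to\H$ for the composite $\H\to\{\ast\}\xrightarrow{e'}\H$, one has $j\circ e'_{\H}=e_{\H}=e_{\G}\circ j$, so $j\circ\mu'\circ(e'_{\H},\1_{\H})=\mu\circ(e_{\G},\1_{\G})\circ j=\1_{\G}\circ j=j\circ\1_{\H}$ by \eqref{eq_unitarity}, and cancelling $j$ yields $\mu'\circ(e'_{\H},\1_{\H})=\1_{\H}$; the second unitarity diagram is handled identically. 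For inversion, $j\circ\mu'\circ(\1_{\H},\iota')=\mu\circ(\1_{\G},\iota)\circ j=e_{\G}\circ j=e_{\H}=j\circ e'_{\H}$ by \eqref{eq_inversion}, so cancelling $j$ gives $\mu'\circ(\1_{\H},\iota')=e'_{\H}$, and the mirror diagram is the same. Finally $j$ is a morphism of graded Lie groups: compatibility with multiplication is the left square of \eqref{eq_subgroup} by hypothesis, compatibility with the unit is $j\circ e'=e$ just established, and compatibility with the inverse is the right square of \eqref{eq_subgroup}.

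The only genuinely non-formal step is the construction of $e'$, namely showing that the ambient unit lies in $\H$ via the elementary group-theoretic argument on underlying sets; I expect this, together with making the invocations of the submanifold lifting and monomorphism properties precise, to be the main obstacle, as everything else is a mechanical cancellation once the monomorphism property of closed embeddings is in hand.
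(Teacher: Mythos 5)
Your proof is correct, and your verification of the group axioms --- post-composing with $j$, rewriting via \eqref{eq_subgroup}, reducing to the corresponding axiom of $\G$, and cancelling the monomorphism $j$ --- is exactly the argument the paper gives. Where you genuinely diverge is the construction of $e'$. The paper stays inside the sheaf-theoretic formalism: it invokes the factorization criterion for closed embedded submanifolds (a map $\phi$ lifts through $(\H,j)$ iff $\phi^{\ast}(\ker(j^{\ast}))=0$, Proposition 7.32 of \cite{Vysoky:2022gm}) and verifies $e^{\ast}(\ker(j^{\ast}))=0$ via the categorical identity $e_{\G}\circ j=\mu\circ(\1_{\G},\iota)\circ j=j\circ\mu'\circ(\1_{\H},\iota')$, which is just the abstract form of ``$hh^{-1}=e$''. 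You instead argue on underlying manifolds: $\ul{j}(H)$ is a non-empty subset of the ordinary Lie group $G$ closed under products and inverses, hence contains the unit, and you then promote that point to a morphism $e':\{\ast\}\rightarrow\H$ with $j\circ e'=e$ using the bijection between points of a graded manifold and morphisms out of the terminal object. This is legitimate --- the paper itself asserts that the choice of $e$ is equivalent to the choice of the single point $\ul{e}(\ast)\in G$ --- and it is more elementary: for maps whose source is $\{\ast\}$ you never need the $\ker(j^{\ast})$ criterion, only injectivity of $\ul{j}$ (for uniqueness, via the mono property) and the points-versus-morphisms correspondence. What the paper's route buys is independence from that correspondence and, more importantly, a template that still works when the source of the map to be lifted is not a point; the same $\ker(j^{\ast})$-style and pullback-diagram arguments are reused later for the lifts into $\Sym^{\times}(V,g)$ and in the construction of $\gO(V,g)$, where a set-level argument would be insufficient. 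Note also that both arguments quietly use $H\neq\emptyset$: the paper through the injectivity of $t^{\ast}$, you through picking some $h\in H$.
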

\begin{proof}
In order to find $e'$, since $(\H,j)$ is a closed embedded submanifold, it suffices to argue that $e^{\ast}( \ker(j^{\ast})) = 0$, where $e^{\ast}$ and $j^{\ast}$ are pullbacks of global functions.  See Proposition 7.32 in \cite{Vysoky:2022gm}. Let $t: \H \rightarrow \{ \ast \}$ denote the unique arrow into the terminal object. Note that $t^{\ast}$ is always injective, so it suffices to prove that $0 = t^{\ast}( e^{\ast}(\ker(j^{\ast})) = (e \circ t)^{\ast}(\ker(j^{\ast})) = (e_{\G} \circ j)^{\ast}( \ker(j^{\ast}))$. Observe that $e \circ t = e_{\G} \circ j$, since both arrows factor through the terminal object and $e$. By utilizing (\ref{eq_inversion}) and (\ref{eq_subgroup}), we get
\begin{equation}
e_{\G} \circ j = \mu \circ (\1_{\g},\iota) \circ j = \mu \circ (j \times j) \circ (\1_{\H},\iota') = j \circ \mu' \circ (\1_{\H},\iota'). 
\end{equation}
Consequently, one has $(e_{\G} \circ j)^{\ast} = (\mu' \circ (\1_{\H},\iota'))^{\ast} \circ j^{\ast}$, so obviously $(e_{\G} \circ j)^{\ast}(\ker(j^{\ast})) = 0$. There thus exists a unique $e': \{ \ast \} \rightarrow \H$ fitting into (\ref{eq_subgroupunit}). 

Next, use (\ref{eq_associativity}) and definitions to form a diagram
\begin{equation}
\begin{tikzcd}
\H \times (\H \times \H) \arrow{dd} \arrow{rd}{j \times (j \times j)} \arrow{rrr}{\1_{\H} \times \mu'} &[2em] &[1.5em] & \H \times \H \arrow{ld}{j \times j} \arrow{dddd}{\mu'} \\
& \G \times (\G \times \G) \arrow{d} \arrow{r}{\1_{\G} \times \mu} & \G \times \G \arrow{dd}{\mu} & \\
(\H \times \H) \times \H \arrow{dd}{\mu' \times \1_{\H}} \arrow{r}{(j \times j) \times j} & (\G \times \G) \times \G \arrow{d}{\mu \times \1_{\G}} & & \\
& \G \times \G \arrow{r}{\mu} & \G & \\
\H \times \H \arrow{ur}{j \times j} \arrow{rrr}{\mu'} & & & \H \arrow{ul}{j}
\end{tikzcd},
\end{equation}
where the ``side squares'' and the middle diagram commute. This shows that the associativity diagram for $\H$ and $\mu'$ composed with $j: \H \rightarrow \G$ commutes. But $j$ is a monomorphism in $\gMan^{\infty}$, see Proposition 7.28 in \cite{Vysoky:2022gm}, so the associativity diagram itself commutes. Other axioms are verified using the same arguments. $j$ being a graded Lie group morphism is the first diagram in (\ref{eq_subgroup}). 
\end{proof}

Next, let us discuss associated graded Lie algebras. Let $\G$ be a graded Lie group. Recall that a vector field $X$ on $\G$ is called \textbf{left-invariant}, if the induced vector field $1 \otimes X$ on $\G \times \G$ is $\mu$-related to $X$. More explicitly, one has 
\begin{equation}
(1 \otimes X)( \mu^{\ast}(f)) = \mu^{\ast}(X(f)) 
\end{equation} 
for any $f \in \C^{\infty}_{\G}(G)$. It is a straightforward check that left-invariant vector fields form a graded Lie subalgebra $\X^{L}_{\G}(G)$ of the graded Lie algebra $\X_{\G}(G)$ of global vector fields on $\G$. There is a canonical graded vector space isomorphism $\X^{L}_{\G}(G) \cong T_{e}\G$ given by $X \mapsto X|_{e}$. One writes $\g := T_{e}\G$ and $x^{L} \in \X^{L}_{\G}(G)$ for the left-invariant vector field corresponding to $x \in \g$. A graded Lie algebra bracket on $\g$ is thus defined by the formula 
\begin{equation} \label{eq_Liebracket}
[x,y]_{\g}^{L} = [x^{L},y^{L}]|_{e},
\end{equation}
for all $x,y \in \g$. $(\g,[\cdot,\cdot]_{\g})$ is called the \textbf{graded Lie algebra associated with $\G$}. We refer the interested reader to Chapter 3 in \cite{Smolka2023} for details.

Now, suppose that $(\H,j)$ is a graded Lie subgroup of $\G$. Let $\h$ be the graded Lie algebra associated with $\H$. What is its relation to $\g$? 

\begin{lemma} \label{lem_smfdVFextension}
Let $(\H,j)$ be a closed embedded submanifold of a graded manifold $\G$. Then to any $X \in \X_{\H}(H)$, there exists $Y \in \X_{\G}(G)$, such that $X \sim_{j} Y$. 
\end{lemma}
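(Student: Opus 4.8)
The plan is to build $Y$ by the standard local-extension-and-patch argument, adapted to the graded setting. Since the construction is local-to-global, the first step is to reduce to a neighborhood of a point of $H$ using the local normal form for closed embedded submanifolds recalled in \S 7 of \cite{Vysoky:2022gm}: around each $m \in H$ there is an open $U \in \Op_{m}(G)$ carrying graded coordinates on $\G|_{U}$ that split into a family $(z^{\alpha})$ whose pullbacks $\bar z^{\alpha} := j^{\ast}(z^{\alpha})$ form coordinates on $\H|_{U \cap H}$, together with a family $(z^{\mu})$ generating the vanishing ideal, so that $j^{\ast}(z^{\mu}) = 0$. It is harmless to assume $X$ homogeneous and argue degreewise.

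First I would record the elementary relation that each ambient coordinate field $\partial_{z^{\alpha}}$ is $j$-related to the corresponding coordinate field $\partial_{\bar z^{\alpha}}$ on $\H$: reading off the explicit form of $j^{\ast}$ in the adapted coordinates (a function $f$ pulls back by restricting $z^{\mu}$ to $0$), the graded chain rule gives $j^{\ast}(\partial_{z^{\alpha}} f) = \partial_{\bar z^{\alpha}}(j^{\ast} f)$ for every $f \in \C^{\infty}_{\G}(U)$. Writing the restricted field as $X|_{U \cap H} = \sum_{\alpha} X^{\alpha}\, \partial_{\bar z^{\alpha}}$ with $X^{\alpha} \in \C^{\infty}_{\H}(U \cap H)$, and using surjectivity of $j^{\ast}$ to choose lifts $\tilde X^{\alpha} \in \C^{\infty}_{\G}(U)$ with $j^{\ast}(\tilde X^{\alpha}) = X^{\alpha}$, I set $Y_{U} := \sum_{\alpha} \tilde X^{\alpha}\, \partial_{z^{\alpha}}$. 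A direct computation $j^{\ast}(Y_{U} f) = \sum_{\alpha} X^{\alpha}\, j^{\ast}(\partial_{z^{\alpha}} f) = \sum_{\alpha} X^{\alpha}\, \partial_{\bar z^{\alpha}}(j^{\ast} f) = X(j^{\ast} f)$ then shows $X \sim_{j} Y_{U}$ on $U$. On any open set disjoint from $H$ the zero field is trivially $j$-related to $X$, since there is nothing to check.

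The remaining and genuinely delicate step is globalization, since the local fields $Y_{U}$ need not agree on overlaps. Here I would take a locally finite cover of $G$ by such charts together with $G \ssm H$, and a subordinate partition of unity $(\rho_{a})$ by degree-zero functions, which exists because $\G$ admits the usual bump functions pulled back from its underlying manifold $G$. Setting $Y := \sum_{a} \rho_{a}\, Y_{a}$, with each summand extended by zero outside its chart, produces a global $Y \in \X_{\G}(G)$. The key point is that $j$-relatedness survives the patching: since $j^{\ast}$ is an algebra morphism and $\sum_{a} j^{\ast}(\rho_{a}) = j^{\ast}(1) = 1$, one computes $j^{\ast}(Y f) = \sum_{a} j^{\ast}(\rho_{a})\, j^{\ast}(Y_{a} f) = \sum_{a} j^{\ast}(\rho_{a})\, X(j^{\ast} f) = X(j^{\ast} f)$, whence $X \sim_{j} Y$ as required.

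I expect the main obstacle to be bookkeeping in this last step rather than anything conceptual: one must verify that the extension-by-zero of each $\rho_{a} Y_{a}$ is genuinely a well-defined global graded derivation, and that the overlap discrepancies cancel exactly because the $j^{\ast}(\rho_{a})$ sum to $1$ along $H$. The graded signs carried by the homogeneous derivation $X$ do not interfere, as they are transported uniformly through the single relation $X \sim_{j} Y$, so no sign subtleties arise beyond those already present in the local chain-rule identity $\partial_{z^{\alpha}} \sim_{j} \partial_{\bar z^{\alpha}}$.
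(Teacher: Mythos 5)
Your proof is correct and follows exactly the route the paper takes: construct $Y$ locally in slice charts (where the coordinate fields $\partial_{z^{\alpha}}$ are $j$-related to $\partial_{\bar z^{\alpha}}$ and coefficients lift by surjectivity of $j^{\ast}$), use the closedness of $\ul{j}(H)$ to add $G \ssm \ul{j}(H)$ with the zero field to the cover, and glue with a degree-zero partition of unity, the relation surviving because $\sum_{a} j^{\ast}(\rho_{a}) = 1$. The paper only sketches these steps; your write-up supplies the details consistently with its intended argument.
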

\begin{proof}
Use slice local charts for $(\H,j)$ to construct $Y$ locally, then use partitions of unity to glue the local pieces. The fact that $\ul{j}(H) \subseteq G$ is closed allows one to include the open set $G \ssm \ul{j}(H)$ and $Y$ can be constructed on the whole $G$. 
\end{proof}

\begin{tvrz} \label{tvrz_subalgebra}
Let $(\H,j)$ be a graded Lie subgroup of $\G$. Let $e'$ be the unit of $\H$. 

Then $\sfj := T_{e'}j: \h \rightarrow \g$ is a monomorphism of the associated graded Lie algebras. In other words, $\h$ can be identified with a graded Lie subalgebra of $\g$. 
\end{tvrz}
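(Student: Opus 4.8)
The plan is to show that $\sfj = T_{e'}j$ is injective and that it intertwines the two Lie brackets, so that it is a monomorphism of graded Lie algebras. Injectivity is immediate: since $(\H,j)$ is a closed embedded submanifold, $j$ is an immersion, so its tangent map $T_{e'}j$ is injective at every point, in particular at $e'$. The substance of the claim is therefore the compatibility with brackets, i.e. that $\sfj([x,y]_{\h}^{L}) = [\sfj(x),\sfj(y)]_{\g}^{L}$ for all $x,y \in \h$.

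The key tool will be Lemma \ref{lem_smfdVFextension}, which lets me extend left-invariant vector fields on $\H$ to $j$-related vector fields on $\G$. Given $x \in \h$, let $x^{L} \in \X^{L}_{\H}(H)$ be the associated left-invariant vector field. First I would argue that the left-invariant vector field $\sfj(x)^{L} \in \X^{L}_{\G}(G)$ on $\G$ is itself $j$-related to $x^{L}$, that is $x^{L} \sim_{j} \sfj(x)^{L}$. This is the heart of the matter: one evaluates both fields at points of $\ul{j}(H)$, writing a point of $H$ as a left-translate of $e'$, and uses that $j$ is a group morphism (so $j$ carries left translations on $\H$ to left translations on $\G$) together with the fact that $\sfj(x)^{L}|_{e} = \sfj(x) = (T_{e'}j)(x) = (T_{e'}j)(x^{L}|_{e'})$. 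Left-invariance then propagates this equality at the unit to every point, which is exactly the relation $x^{L} \sim_{j} \sfj(x)^{L}$. One must phrase this carefully in the graded setting using pullbacks of functions rather than pointwise arguments, but the logic mirrors the classical proof for Lie subgroups.

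Once I have $x^{L} \sim_{j} \sfj(x)^{L}$ and $y^{L} \sim_{j} \sfj(y)^{L}$, I invoke the standard fact that the bracket of pairs of $j$-related vector fields is again $j$-related, i.e. $[x^{L},y^{L}] \sim_{j} [\sfj(x)^{L},\sfj(y)^{L}]$; in the graded category this is a routine consequence of the definition of $j$-relatedness via pullbacks and the derivation property. Evaluating this relation at $e'$ and using the definition (\ref{eq_Liebracket}) of the bracket on each associated Lie algebra yields
\begin{equation}
(T_{e'}j)([x^{L},y^{L}]|_{e'}) = [\sfj(x)^{L},\sfj(y)^{L}]|_{e},
\end{equation}
which is precisely $\sfj([x,y]_{\h}^{L}) = [\sfj(x),\sfj(y)]_{\g}^{L}$. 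Combined with injectivity, this shows $\sfj$ is a monomorphism of graded Lie algebras.

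The main obstacle I expect is establishing $x^{L} \sim_{j} \sfj(x)^{L}$ rigorously: Lemma \ref{lem_smfdVFextension} only guarantees \emph{some} extension $Y$ of $x^{L}$, so I must separately verify that the canonical left-invariant extension $\sfj(x)^{L}$ is in fact $j$-related to $x^{L}$, rather than merely asserting relatedness of an abstract extension. The clean way to do this is to exploit that $j$ is a graded Lie group morphism (Proposition \ref{tvrz_subgroupisgroup}), so the left-translation maps on $\H$ and $\G$ are compatible through $j$, and left-invariant fields are determined by their value at the unit together with these translations; the relation at $e'$ then forces relatedness everywhere. Care is needed because in graded geometry vector fields are not determined by pointwise values, so the argument should be carried out at the level of the $\mu$-relatedness identities defining left-invariance and the pullback characterization of $j$-relatedness.
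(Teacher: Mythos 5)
Your overall skeleton is identical to the paper's: reduce everything to the claim $x^{L} \sim_{j} \sfj(x)^{L}$, use that graded commutators of pairs of $j$-related vector fields are again $j$-related, evaluate at $e'$ via (\ref{eq_Liebracket}), and get injectivity of $\sfj$ from $j$ being an immersion. Where the proposal falls short is precisely at the step you yourself flag as the main obstacle: you never actually prove $x^{L} \sim_{j} \sfj(x)^{L}$. Your proposed mechanism --- ``left-invariant fields are determined by their value at the unit together with left translations, so the relation at $e'$ forces relatedness everywhere'' --- is, as you note, a pointwise argument with no direct meaning in the graded category; making the phrase ``determined by its value at the unit'' precise is exactly the missing content, not something that can be invoked.

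The ingredient that closes this gap in the paper is the explicit formula (Theorem 3.16 in \cite{Smolka2023}, equation (\ref{eq_sfjxLjinak})): for \emph{any} vector field $Y \in \X_{\G}(G)$ with $Y|_{e} = \sfj(x)$, one has $\sfj(x)^{L} = (\1_{\G},e_{\G})^{\ast} \circ (1 \otimes Y) \circ \mu^{\ast}$. This is the rigorous graded substitute for ``the left-invariant extension is obtained by translating the value at the unit''. With it, your worry that Lemma \ref{lem_smfdVFextension} only produces an ``abstract extension'' unrelated to $\sfj(x)^{L}$ dissolves: take $Y$ to be the extension of $x^{L}$ supplied by the lemma, which automatically satisfies $Y|_{e} = (T_{e'}j)(x^{L}|_{e'}) = \sfj(x)$, insert it into the formula, and the desired identity $j^{\ast} \circ \sfj(x)^{L} = x^{L} \circ j^{\ast}$ follows from a short diagram chase using $(\1_{\G},e_{\G}) \circ j = (j \times j) \circ (\1_{\H},e'_{\H})$ coming from (\ref{eq_subgroupunit}), the relation $1 \otimes x^{L} \sim_{j \times j} 1 \otimes Y$, the subgroup diagram $\mu \circ (j \times j) = j \circ \mu'$ from (\ref{eq_subgroup}), and the analogous formula for $x^{L}$ on $\H$. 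In other words, the ``abstract extension'' is not a detour to be avoided in favour of a translation argument; it is the input to the canonical formula for $\sfj(x)^{L}$, and it is this combination of Lemma \ref{lem_smfdVFextension} with (\ref{eq_sfjxLjinak}) that your plan is missing.
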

\begin{proof}
It suffices to argue that for any $x \in \h$, the left-invariant vector field $x^{L} \in \X_{\H}(H)$ is $j$-related to $\sfj(x)^{L} \in \X_{\G}(G)$. Indeed, the usual property of graded commutators then implies 
\begin{equation}
[x^{L},y^{L}] \sim_{j} [\sfj(x)^{L},\sfj(y)^{L}],
\end{equation}
for all $x,y \in \h$. Evaluating this at $e' \in H$ gives the required property
\begin{equation}
\sfj([x,y]_{\h}) = [\sfj(x),\sfj(y)]_{\g}.
\end{equation}
Note that $\sfj$ is always injective as $j$ is an embedding. Recall that the action of $\sfj(x)^{L} \in \X_{\G}(G)$ can be also written as a composition
\begin{equation} \label{eq_sfjxLjinak}
\sfj(x)^{L} = (\1_{\G},e_{\G})^{\ast} \circ (1 \otimes Y) \circ \mu^{\ast},
\end{equation}
where $Y \in \X_{\G}(G)$ is an \textit{arbitrary} vector field on $\G$ satisfying $Y|_{e} = \sfj(x)$. See Theorem 3.16 in \cite{Smolka2023}. By Lemma \ref{lem_smfdVFextension}, we can choose it to satisfy $x^{L} \sim_{j} Y$. Note that then indeed
\begin{equation}
Y|_{e} = (T_{e'}j)(x^{L}|_{e'}) = \sfj(x). 
\end{equation}
To prove that $x^{L} \sim_{j} \sfj(x)^{L}$, we must verify that $j^{\ast} \circ \sfj(x)^{L} = x^{L} \circ j^{\ast}$. Using (\ref{eq_sfjxLjinak}), we get
\begin{equation}
j^{\ast} \circ \sfj(x)^{L} = j^{\ast} \circ (\1_{\G},e_{\G})^{\ast} \circ (1 \otimes Y) \circ \mu^{\ast} = (\1_{\H},e'_{\H}) \circ (j \times j)^{\ast} \circ (1 \otimes Y) \circ \mu^{\ast},
\end{equation}
where we have used $(\1_{\G},e_{\G}) \circ j = (j \times j) \circ (\1_{\H},e'_{\H})$ following from (\ref{eq_subgroupunit}). Next, the fact that $x^{L} \sim_{j} Y$ implies that $1 \otimes x^{L} \sim_{j \times j} 1 \otimes Y$, so we can write
\begin{equation}
\begin{split}
(\1_{\H},e'_{\H}) \circ (j \times j)^{\ast} \circ (1 \otimes Y) \circ \mu^{\ast} = & \ (\1_{\H},e'_{\H}) \circ (1 \otimes x^{L}) \circ (j \times j)^{\ast} \circ \mu^{\ast} \\
= & \ \big( (\1_{\H},e'_{\H}) \circ (1 \otimes x^{L}) \circ \mu'^{\ast} \big) \circ j^{\ast} \\
= & \ x^{L} \circ j^{\ast}.
\end{split}
\end{equation}
We have utilized the first diagram in (\ref{eq_subgroup}) and the analogue of (\ref{eq_sfjxLjinak}) for $x^{L}$. 
\end{proof}
Instead of verifying the axioms (g1) - (g3) directly, it is often useful to use the following:
\begin{tvrz} \label{tvrz_gradedLGintermsofsets}
Let $\G$ be a graded manifold together with graded smooth maps  $\mu: \G \times \G \rightarrow \G$, $\iota: \G \rightarrow \G$ and $e: \{ \ast \} \rightarrow \G$. For every graded manifold $\cS$, form a set $\frP(\cS) := \gMan^{\infty}(\cS,\G)$ and the following maps: 
\begin{enumerate}[(i)]
\item A map $\sfm_{\cS}: \frP(\cS) \times \frP(\cS) \rightarrow \frP(\cS)$ given by the formula 
\begin{equation}
\dm_{\cS}(\phi,\phi') := \mu \circ (\phi,\phi'),
\end{equation}
for all $\phi,\phi' \in \frP(\cS)$. 
\item A map $\sfi_{\cS}: \frP(\cS) \rightarrow \frP(\cS)$ given by the formula 
\begin{equation}
\sfi_{\cS}(\phi) := \iota \circ \phi,
\end{equation}
for every $\phi \in \frP(\cS)$. 
\item A mapping $\sfe_{\cS}: \{ \ast \} \rightarrow \frP(\cS)$ from the one-point-set, defined by $\sfe_{\cS}(\ast) = e_{\cS}$, see under (\ref{eq_unitarity}). 
\end{enumerate}
Then $(\G,\mu,\iota,e)$ is a graded Lie group, iff $(\frP(\cS), \sfm_{\cS}, \sfi_{\cS}, \sfe_{\cS})$ is a group for all $\cS \in \gMan^{\infty}$. 
\end{tvrz}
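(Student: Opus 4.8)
The plan is to exploit the fact that the covariant hom-functor $\gMan^{\infty}(\cS,-)$ preserves finite products, so that it automatically sends group objects to ordinary groups, and to run the Yoneda philosophy in reverse for the converse. The key bookkeeping device is the natural bijection coming from the universal property of the product,
\[
\gMan^{\infty}(\cS, \G \times \G) \;\cong\; \frP(\cS) \times \frP(\cS), \qquad \psi \mapsto (\pr_{1} \circ \psi,\, \pr_{2}\circ\psi),
\]
whose inverse sends $(\phi,\phi')$ to the pairing $(\phi,\phi')\colon \cS \to \G\times\G$. Under this identification, post-composition with $\mu$ is exactly $\sfm_{\cS}$, post-composition with $\iota$ is $\sfi_{\cS}$, and the value of $\sfe_{\cS}$ is $e_{\cS} = e \circ t_{\cS}$, where $t_{\cS}\colon \cS \to \{\ast\}$ is the unique arrow into the terminal object. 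The same identification applied to $\G\times(\G\times\G)$ handles the threefold products appearing in associativity.

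For the forward implication I assume $(\G,\mu,\iota,e)$ is a graded Lie group and fix $\cS$. Applying $\gMan^{\infty}(\cS,-)$ to the associativity diagram (\ref{eq_associativity}) and rewriting the products via the bijection above, I obtain that both legs of (\ref{eq_associativity}), precomposed with an arbitrary tuple $(\phi_1,\phi_2,\phi_3)\colon \cS \to \G\times(\G\times\G)$, agree; this says precisely that $\sfm_{\cS}$ is associative. In the same way the unitarity diagrams (\ref{eq_unitarity}) yield $\sfm_{\cS}(\sfe_{\cS}(\ast),\phi)=\phi=\sfm_{\cS}(\phi,\sfe_{\cS}(\ast))$, and the inversion diagrams (\ref{eq_inversion}) yield $\sfm_{\cS}(\sfi_{\cS}(\phi),\phi)=\sfe_{\cS}(\ast)=\sfm_{\cS}(\phi,\sfi_{\cS}(\phi))$, for all $\phi,\phi'\in\frP(\cS)$. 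Hence $\frP(\cS)$ is a group.

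For the converse I assume every $\frP(\cS)$ is a group and recover the diagrams by a Yoneda-type specialization: two parallel graded smooth maps $f,g$ out of an object $A$ are equal once $f\circ h = g\circ h$ for the single test map $h=\1_A$. To obtain associativity I take $\cS = \G\times(\G\times\G)$ with $\phi_1 = \pr_1$ and $(\phi_2,\phi_3)=\pr_{23}$, so that $(\phi_1,(\phi_2,\phi_3)) = \1_{\cS}$; associativity of $\sfm_{\cS}$ on this triple is then exactly the commutativity of (\ref{eq_associativity}), the unmarked canonical diffeomorphism being absorbed into the product identification. Taking instead $\cS = \G$ and $\phi = \1_{\G}$, the unit law in $\frP(\G)$ reads $\mu\circ(e_{\G},\1_{\G}) = \1_{\G} = \mu\circ(\1_{\G},e_{\G})$, which is (\ref{eq_unitarity}), and the inverse law reads $\mu\circ(\iota,\1_{\G}) = e_{\G} = \mu\circ(\1_{\G},\iota)$, which is (\ref{eq_inversion}). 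Thus all group-object diagrams commute and $(\G,\mu,\iota,e)$ is a graded Lie group.

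The argument is essentially formal, so I do not expect a deep obstacle; the delicate points are purely bookkeeping. First, one must verify that the product bijection intertwines post-composition with $\mu$ and the operation $\sfm_{\cS}$, which rests on the universal property of $\G\times\G$ together with the naturality of the pairing. Second, in the converse the terminal object must be tracked carefully: the identity $\sfe_{\cS}(\ast)=e_{\cS}$ has to be matched with $e_{\G}=e\circ t_{\G}$ when specializing to $\cS=\G$ with test map $\1_{\G}$. The fiddliest step is correctly carrying the coherence (associator) isomorphism of (\ref{eq_associativity}) through the threefold product identification, but this is a routine coherence check rather than a genuine difficulty.
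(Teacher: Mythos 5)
Your proposal is correct and takes essentially the same approach as the paper: the paper's entire proof is a citation of the standard group-object property (Chapter III \S 6 of \cite{mac1998categories}), and your argument—hom-functors preserving finite products for the forward direction, then evaluating the group axioms at the generic elements $\1_{\G}$ and the projections of $\G \times (\G \times \G)$ for the converse—is precisely the standard argument behind that citation, written out in full.
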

\begin{proof}
This is the standard property of group objects, see e.g. Chapter III \S 6 of \cite{mac1998categories}. 
\end{proof}
\section{Diamond functor, bilinear maps} \label{sec_diamond}
Recall that to any finite-dimensional graded vector space, there is an associated graded manifold. In fact, this assignment is very well-behaved.
\begin{tvrz} \label{tvrz_diamant}
To any $V \in \gVect$, there is an associated graded manifold $\dia{V} \in \gMan^{\infty}$. Its underlying manifold is the zero component $V_{0}$ with the standard smooth structure, and 
\begin{equation}
\gdim(\dia{V}) = \neg \gdim(V). 
\end{equation}
Let $V,W \in \gVect$. To any $A \in \Lin(V,W)$, there is an associated graded smooth map $\dia{A}: \dia{V} \rightarrow \dia{W}$, such that $\ul{\dia{A}} = A_{0}$. This assignment defines a functor $\diamond: \gVect \rightarrow \gMan^{\infty}$. 
\end{tvrz}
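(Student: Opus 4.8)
The plan is to build $\dia{V}$ directly as a graded domain and then use the universal property of maps into a graded domain to handle morphisms and functoriality, so that almost everything reduces to a uniqueness argument.

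First I would construct the object. The underlying manifold is $V_{0}$ with its standard smooth structure as a finite-dimensional real vector space. For the structure sheaf I would fix a homogeneous basis of $V$ and take the dual homogeneous basis of $V^{\ast} = \ul{\Lin}(V,\R)$ as global coordinate functions; a coordinate dual to a basis vector of $V_{j}$ sits in degree $-j$, which immediately yields $\gdim(\dia{V}) = \neg \gdim(V)$ since $\dim(V^{\ast})_{j} = \dim(V_{-j})$. Concretely, over an open $U \subseteq V_{0}$ the algebra of sections is the suitably completed tensor product $\C^{\infty}(U) \otimes \Sym(\ol{V}^{\ast})$, where $\ol{V} = \bigoplus_{j \neq 0} V_{j}$ and $\Sym$ is the graded-symmetric algebra; the completion is forced because pairing a positive-degree generator with a negative-degree one can produce infinitely many monomials of a fixed total degree, and I would refer to \S 2 of \cite{vsmolka2025threefold} and the review \cite{Vysoky:2022gm} for the precise topology. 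I would then note that a change of homogeneous basis induces a canonical isomorphism of the resulting graded domains, so $\dia{V}$ is well defined up to canonical isomorphism; the coordinate-free way to see this is that the degree-zero coordinates are honest linear coordinates on $V_{0}$, while the remaining generators are exactly $\ol{V}^{\ast}$.

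Next I would define $\diamond$ on morphisms. Given $A \in \Lin(V,W)$, the underlying smooth map is the linear, hence smooth, map $A_{0} : V_{0} \to W_{0}$, so $\ul{\dia{A}} = A_{0}$ holds by construction. To specify the sheaf morphism it suffices to prescribe the pullbacks of the global coordinate functions: for $\eta \in W^{\ast}$ I set $(\dia{A})^{\ast}(\eta) := \eta \circ A = A^{T}(\eta) \in V^{\ast}$. Because $A$ has degree zero, its transpose $A^{T}$ preserves degree, so each coordinate of $\dia{W}$ is sent to a function on $\dia{V}$ of the same degree; moreover, on the degree-zero coordinates $A^{T}$ restricts to the linear dual of $A_{0}$, so the prescribed pullbacks are compatible with the underlying map $A_{0}$. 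The existence and uniqueness of a graded smooth map $\dia{A}$ with this underlying map and these prescribed pullbacks is precisely the universal property of a graded domain: maps into $\dia{W}$ are uniquely determined by the pullbacks of global coordinates, and any degree-compatible assignment compatible with a given underlying map extends to a unique such map. This is the main technical input, and I would cite the corresponding chart/morphism theorem from \cite{Vysoky:2022gm}.

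Functoriality is then formal. The identity $\1_{V}$ pulls each coordinate back to itself and has underlying map $\1_{V_{0}}$, so $\dia{\1_{V}} = \1_{\dia{V}}$. For composable degree-zero maps $A: V \to W$ and $B: W \to U$, one has $(BA)_{0} = B_{0} A_{0}$ on underlying manifolds and, on coordinates, $(BA)^{T} = A^{T} B^{T}$ — this is the $|A| = |B| = 0$ case of (\ref{eq_comptranspose}), where the sign $(-1)^{|A||B|}$ equals $1$. Hence $\dia{B} \circ \dia{A}$ and $\dia{BA}$ share the same underlying map and the same pullbacks of global coordinates, so by the uniqueness half of the universal property they coincide. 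The only genuine obstacle is therefore the cited graded-domain morphism theorem together with the attendant completion of the structure sheaf; once these are in place, both the construction of $\dia{V}$ and the verification of functoriality are routine.
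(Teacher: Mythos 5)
Your proposal is correct and follows essentially the same route as the paper: construct $\dia{V}$ as a graded domain over $V_{0}$ with global coordinates given by a (dual) homogeneous basis (deferring the sheaf construction to \cite{Vysoky:2022gm, vsmolka2025threefold}), define $\dia{A}$ by prescribing the pullbacks of coordinates to be $A^{T}$ applied to the corresponding dual vectors, and invoke the fact that maps into $\dia{W}$ are uniquely determined by these pullbacks. The only difference is cosmetic: you phrase the pullback coordinate-freely as $\eta \mapsto A^{T}(\eta)$, which makes basis-independence automatic, and you spell out the identity and composition checks (via $(BA)^{T} = A^{T}B^{T}$ in degree zero) that the paper leaves implicit.
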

\begin{proof}
For a precise definition of the sheaf $\C^{\infty}_{\dia{V}}$, we refer the reader to Example 3.23 in \cite{Vysoky:2022gm}. Let us only recall the bare minimum required in this paper. To any total basis $( t_{\lambda})_{\lambda=1}^{n}$ for $V$, there are associated global coordinates $( \bbz^{\lambda} )_{\lambda=1}^{n}$ on $\dia{V}$ which transform as vectors of the respective dual basis. More precisely, let $(t'_{\lambda})_{\lambda = 1}^{n}$ be another total basis. We \textit{always} assume it ordered so that $|t'_{\lambda}| = |t_{\lambda}|$. Let us write
\begin{equation} \label{eq_twototalbasis}
t'_{\lambda} = \fB_{\lambda}{}^{\kappa} t_{\kappa},
\end{equation}
where $\fB_{\lambda}{}^{\kappa} \in \R$ satisfy $|\fB_{\lambda}{}^{\kappa}| = |t_{\lambda}| - |t_{\kappa}|$. For this to make sense, we view $\R$ as a (trivially) graded algebra. In particular, this makes sure that $\fB_{\lambda}{}^{\kappa} = 0$ whenever $|t_{\lambda}| \neq |t_{\kappa}|$. The induced coordinates $( \bbz^{\lambda})_{\lambda=1}^{n}$ and $(\bbz'^{\lambda})_{\lambda=1}^{n}$ then transform as 
\begin{equation} \label{eq_diaVectcoordtansform}
\bbz^{\lambda} = (-1)^{|t_{\lambda}|(|t_{\kappa}| - |t_{\lambda}|)} \fB_{\kappa}{}^{\lambda} \bbz'^{\kappa}.
\end{equation}
Strictly speaking, the sign in this formula is unnecessary. It is however useful to keep it there as a guiding principle - as if $\fB_{\kappa}{}^{\lambda}$ could have a non-zero degree. Finally, let $A \in \Lin(V,W)$. Let $(t_{\lambda})_{\lambda=1}^{n}$ and $(s_{\sigma})_{\sigma=1}^{m}$ be a total basis for $V$ and $W$, respectively. Then one can write
\begin{equation} \label{eq_fAmatrix}
A(t_{\lambda}) = \fA_{\lambda}{}^{\sigma} s_{\sigma},
\end{equation}
where $\fA_{\lambda}{}^{\sigma} \in \R$ satisfy $|\fA_{\lambda}{}^{\sigma}| = |t_{\lambda}| - |s_{\sigma}|$. The transpose $A^{T}: W^{\ast} \rightarrow V^{\ast}$ reads
\begin{equation}
A^{T}(s^{\sigma}) = (-1)^{|s_{\sigma}|( |t_{\lambda}| - |s_{\sigma}|)} \fA_{\lambda}{}^{\sigma} t^{\lambda}. 
\end{equation}
The induced graded smooth map $\dia{A}: \dia{V} \rightarrow \dia{W}$ is then uniquely determined by pullbacks of corresponding coordinate functions:
\begin{equation} \label{eq_diaApullback}
\dia{A}^{\ast}( \bbu^{\sigma}) = (-1)^{|s_{\sigma}|( |t_{\lambda}| - |s_{\sigma}|)} \fA_{\lambda}{}^{\sigma} \bbz^{\lambda}
\end{equation}
Since those transform as vectors of dual bases, this is a priori independent of any choices. 
\end{proof}

We will also need a way how to handle tangent spaces and induced tangent maps. 

\begin{tvrz} \label{tvrz_tangentVSidentification}
Let $V \in \gVect$. For any $v \in V_{0}$, there is a canonical isomorphism
\begin{equation} \label{eq_tangentVSidentification}
V \cong T_{v}\dia{V}. 
\end{equation}
Moreover, let $A \in \Lin(V,W)$. Then the induced graded linear map $T_{v}\dia{A}: T_{v}\dia{V} \rightarrow T_{A_{0}(v)} \dia{W}$ fits into the commutative diagram
\begin{equation}
\begin{tikzcd}
T_{v}\dia{V} \arrow{r}{T_{v}\dia{A}} &[2em] T_{A_{0}(v)} \dia{W} \\
V \arrow{u} \arrow{r}{A} & W \arrow{u}
\end{tikzcd},
\end{equation}
where the vertical arrows are the isomorphisms (\ref{eq_tangentVSidentification}). In other words, this statement allows us to canonically identify $T_{v}\dia{A}$ with $A$. 
\end{tvrz}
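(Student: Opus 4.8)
The plan is to realize both vertical isomorphisms in (\ref{eq_tangentVSidentification}) through the action of point derivations on \emph{linear} coordinate functions, which makes the whole construction manifestly basis-independent. Recall from the construction of $\dia{V}$ (Example 3.23 in \cite{Vysoky:2022gm}) that there is a canonical degree-preserving linear map $c_{V}: V^{\ast} \rightarrow \C^{\infty}_{\dia{V}}(V_{0})$ sending each $\xi \in V^{\ast}$ to the associated linear coordinate function; in a total basis $(t_{\lambda})_{\lambda=1}^{n}$ with dual $(t^{\lambda})_{\lambda=1}^{n}$ it satisfies $c_{V}(t^{\lambda}) = \bbz^{\lambda}$, and it is degree-preserving because $\gdim(V^{\ast}) = \neg \gdim(V) = \gdim(\dia{V})$. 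I would then define $\Phi_{v}: V \rightarrow T_{v}\dia{V}$ by declaring, for $v' \in V$, that the point derivation $\Phi_{v}(v')$ acts on generators by
\[
\Phi_{v}(v')(c_{V}(\xi)) := \xi(v'), \qquad \xi \in V^{\ast}.
\]
Since the $\bbz^{\lambda} = c_{V}(t^{\lambda})$ are free generators of the structure sheaf near $v$, prescribing values on them extends uniquely to a point derivation, so $\Phi_{v}(v')$ is well defined; the assignment is linear, degree-preserving, and independent of any basis because $c_{V}$ and the evaluation pairing are.

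To see that $\Phi_{v}$ is an isomorphism I would simply evaluate in a total basis: from $\Phi_{v}(t_{\mu})(\bbz^{\lambda}) = t^{\lambda}(t_{\mu}) = \delta^{\lambda}_{\mu}$ one reads $\Phi_{v}(t_{\mu}) = \partial/\partial \bbz^{\mu}|_{v}$, and these span $T_{v}\dia{V}$; hence $\Phi_{v}$ carries a basis of $V$ to a basis of $T_{v}\dia{V}$ and is bijective, establishing (\ref{eq_tangentVSidentification}). For the naturality diagram I would first record how $\dia{A}$ interacts with linear coordinate functions. Comparing the pullback formula (\ref{eq_diaApullback}) with the coordinate expression of $A^{T}$ displayed just above it yields the clean intertwining
\[
\dia{A}^{\ast} \circ c_{W} = c_{V} \circ A^{T}.
\]
Using that $T_{v}\dia{A}$ is the pushforward of derivations, $\big(T_{v}\dia{A}(\xi)\big)(f) = \xi(\dia{A}^{\ast}(f))$, I then compute for $v' \in V$ and $\eta \in W^{\ast}$:
\[
\big(T_{v}\dia{A}(\Phi_{v}(v'))\big)(c_{W}(\eta)) = \Phi_{v}(v')\big(c_{V}(A^{T}\eta)\big) = (A^{T}\eta)(v') = (-1)^{|A||\eta|}\, \eta(A(v')) = \eta(A(v')),
\]
where the final equality uses $|A| = 0$, as $A \in \Lin(V,W)$ is a degree-zero map. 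Since the right-hand side equals $\Phi_{A_{0}(v)}(A(v'))(c_{W}(\eta))$ and the two derivations agree on all generators $c_{W}(\eta)$, they coincide, so $T_{v}\dia{A} \circ \Phi_{v} = \Phi_{A_{0}(v)} \circ A$, which is precisely the asserted commutativity.

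The only genuinely delicate point is the first paragraph: checking that the prescription on linear coordinates extends to a bona fide point derivation and that $\Phi_{v}$ is basis-independent. Both reduce to the free-generation of $\C^{\infty}_{\dia{V}}(V_{0})$ by the $\bbz^{\lambda}$ recorded in \cite{Vysoky:2022gm}, together with the intrinsic character of $c_{V}$. Once that is in place, the naturality is a one-line computation whose only subtlety — the transpose sign $(-1)^{|A||\eta|}$ — disappears exactly because the diamond functor is defined on degree-zero morphisms.
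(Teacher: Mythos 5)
Your proof is correct and establishes the proposition. The underlying mechanism is the same as the paper's — identifying basis vectors of $V$ with coordinate derivations at $v$ and checking naturality against the pullback formula (\ref{eq_diaApullback}) — but you package it intrinsically via the map $c_{V}: V^{\ast} \rightarrow \C^{\infty}_{\dia{V}}(V_{0})$ and the intertwining relation $\dia{A}^{\ast} \circ c_{W} = c_{V} \circ A^{T}$, where the paper simply writes the basis formula and calls the verification straightforward; your route makes the basis-independence and the one-line naturality computation explicit, which is a genuine improvement in transparency. One discrepancy deserves attention: your isomorphism sends $t_{\lambda} \mapsto \partial/\partial \bbz^{\lambda}|_{v}$, whereas the paper's sends $t_{\lambda} \mapsto (-1)^{|t_{\lambda}|}\, \partial/\partial \bbz^{\lambda}|_{v}$. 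Both maps are canonical and both make the square commute — the sign is invisible in this proposition because a degree-zero map (or a degree-preserving change of basis) has non-zero matrix entries only between vectors of equal degree, so $(-1)^{|t_{\mu}|}\fA_{\mu}{}^{\sigma} = (-1)^{|s_{\sigma}|}\fA_{\mu}{}^{\sigma}$ on all non-vanishing terms. Your identification therefore differs from the paper's by the parity involution $v \mapsto (-1)^{|v|} v$ of $V$. This is harmless here, but it is not the identification the rest of the paper uses: the sign in the paper's convention is chosen to match the frame $\Phi_{\lambda} = (-1)^{|t_{\lambda}|}\, 1 \otimes t_{\lambda}$ in Proposition \ref{tvrz_GLVFOP}, the basis (\ref{eq_bdeltabasis}), and the coefficient conventions entering the left-invariant vector field computation around (\ref{eq_barxA}); with your sign-free convention those later formulas would all need compensating signs. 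So if your proof were spliced in as the official construction, the downstream sign bookkeeping would have to be redone consistently.
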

\begin{proof}
Let $(t_{\lambda})_{\lambda = 1}^{n}$ be a total basis for $V$. Let $(\bbz^{\lambda})_{\lambda=1}^{n}$ be the induced global coordinates on $\dia{V}$. Then the isomorphism (\ref{eq_tangentVSidentification}) is determined by 
\begin{equation}
t_{\lambda} \mapsto (-1)^{|t_{\lambda}|} \frac{\partial}{\partial \bbz^{\lambda}}|_{v},
\end{equation}
for each $\lambda \in \{1,\dots,n\}$. The extra sign may seem a bit odd, yet it makes all definitions compatible with coordinate transformations discussed in the proof of Proposition \ref{tvrz_diamant}. The rest is just a straightforward verification. 
\end{proof}

\begin{tvrz} \label{tvrz_subspacesubmanifold}
Let $V \in \gVect$ and let $P \subseteq V$ be its vector subspace. Let $i: P \rightarrow V$ be the respective inclusion. Then $(\dia{P},\dia{i})$ is a closed embedded submanifold of $\dia{V}$. 
\end{tvrz}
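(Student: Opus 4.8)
The plan is to exhibit $\dia{i}$ as a \emph{global} slice inclusion by computing it in coordinates induced from a basis of $V$ adapted to $P$. Since $P \subseteq V$ is a subobject in $\gVect$, it is graded, $P = (P_{j})_{j \in \Z}$ with $P_{j} \subseteq V_{j}$, and a graded complement exists: choosing a complement degreewise, $V_{j} = P_{j} \oplus Q_{j}$, assembles into a graded subspace $Q$ with $V = P \oplus Q$. First I would pick a total basis $(t_{\lambda})_{\lambda=1}^{n}$ for $V$ adapted to this splitting, with $(t_{\lambda})_{\lambda=1}^{k}$ a total basis for $P$ and $(t_{\lambda})_{\lambda=k+1}^{n}$ a total basis for $Q$. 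Let $(\bbz^{\lambda})_{\lambda=1}^{n}$ be the induced global coordinates on $\dia{V}$ from Proposition \ref{tvrz_diamant}, and let $(\bbw^{\lambda})_{\lambda=1}^{k}$ be the induced global coordinates on $\dia{P}$ attached to $(t_{\lambda})_{\lambda=1}^{k}$.

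Next I would read off the pullback $\dia{i}^{\ast}$ from formula (\ref{eq_diaApullback}). Because $i$ is the inclusion, $i(t_{\lambda}) = t_{\lambda}$ for $\lambda \le k$, so in the notation of Proposition \ref{tvrz_diamant} the matrix of $i$ is $\fA_{\lambda}{}^{\sigma} = \delta_{\lambda}^{\sigma}$, nonzero only for $\sigma = \lambda \le k$; moreover $|t_{\lambda}| = |t_{\sigma}|$ when $\sigma = \lambda$, so every sign in (\ref{eq_diaApullback}) is trivial. Hence
\[
\dia{i}^{\ast}(\bbz^{\sigma}) = \begin{cases} \bbw^{\sigma} & 1 \le \sigma \le k, \\ 0 & k < \sigma \le n. \end{cases}
\]
This is exactly a slice chart: in the single global chart $(\bbz^{\lambda})_{\lambda=1}^{n}$ the submanifold $\dia{P}$ is cut out by $\bbz^{k+1} = \dots = \bbz^{n} = 0$, while the remaining coordinates restrict to the global coordinates $\bbw^{\lambda}$ on $\dia{P}$.

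Finally I would invoke the characterization of closed embedded submanifolds by adapted (slice) charts from \S 7 of \cite{Vysoky:2022gm}. The underlying smooth map is $\ul{\dia{i}} = i_{0} \colon P_{0} \hookrightarrow V_{0}$, the inclusion of a linear subspace, which is a closed topological embedding; combined with the global slice chart above this shows that $(\dia{P}, \dia{i})$ is a closed embedded submanifold of $\dia{V}$, with $\gdim(\dia{P}) = \neg \gdim(P)$ as in Proposition \ref{tvrz_diamant}. As an independent consistency check, Proposition \ref{tvrz_tangentVSidentification} identifies $T_{v}\dia{i}$ with $i \colon P \to V$, which is injective, so $\dia{i}$ is a graded immersion. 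I do not expect a genuine obstacle: the construction collapses to one explicit coordinate computation. The only points needing care are the existence of a graded complement (immediate, degreewise) and matching the paper's precise definition of closed embedded submanifold — in particular confirming that a single global slice chart suffices and that closedness of $P_{0}$ in $V_{0}$ is what supplies the \emph{closed} part of the statement.
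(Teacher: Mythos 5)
Your proof is correct, but it takes a different route from the paper's. The paper does not construct coordinates at all: it observes that by Proposition \ref{tvrz_tangentVSidentification} each tangent map $T_{p}\dia{i}$ is canonically identified with the injective map $i$, so $\dia{i}$ is an immersion, and then combines this with the fact that the underlying map $i_{0}\colon P_{0} \rightarrow V_{0}$ is a closed topological embedding, invoking the criterion from \S 7 of \cite{Vysoky:2022gm} that an immersion with closed topologically-embedded underlying map is a closed embedded submanifold. You instead choose a graded complement $Q$ (degreewise, which is fine), take an adapted total basis, and compute $\dia{i}^{\ast}$ explicitly via (\ref{eq_diaApullback}) to exhibit a single global slice chart in which $\dia{P}$ is cut out by $\bbz^{k+1} = \dots = \bbz^{n} = 0$; your sign bookkeeping is right, since the only nonzero matrix entries $\fA_{\lambda}{}^{\sigma} = \delta_{\lambda}^{\sigma}$ occur when $|t_{\lambda}| = |t_{\sigma}|$. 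What each approach buys: yours is more self-contained and explicit, essentially re-proving in this special case the general criterion the paper cites, and the adapted coordinates you produce are the kind of data the paper itself later needs (compare the proof of Lemma \ref{lem_pullbackPsubspace}, which performs exactly this computation for $P = \ker(A)$); the paper's argument is shorter, basis-free, and reuses Proposition \ref{tvrz_tangentVSidentification}, at the cost of leaning on a black-box theorem from the reference. The two points you flag as needing care (that a global slice chart suffices, and that closedness of $P_{0}$ supplies the closedness of the submanifold) are indeed the only interface issues with the definitions in \cite{Vysoky:2022gm}, and both resolve as you expect.
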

\begin{proof}
For each $p \in P_{0}$, the tangent map $T_{p}\dia{i}$ can be identified with $i$ in the sense of Proposition \ref{tvrz_tangentVSidentification}, so it is injective. Hence $\dia{i}$ is an immersion. The underlying map $\ul{\dia{i}} \equiv i_{0}: P_{0} \rightarrow V_{0}$ is obviously a closed embedding, hence $(\dia{P},\dia{i})$ is a closed embedded submanifold of $\dia{V}$. 
\end{proof}

One often utilizes the following statement.
\begin{tvrz} \label{tvrz_onpullback}
Let $V \in \gVect$. Let $\cS \in \gMan^{\infty}$ be an arbitrary graded manifold. Let $(t_{\lambda})_{\lambda=1}^{n}$ be a total basis for $V$ inducing the coordinates $(\bbz^{\lambda})_{\lambda=1}^{n}$ on $\dia{V}$. 

Then every graded smooth map $\varphi: \cS \rightarrow \dia{V}$ determines and is uniquely determined by a collection of functions $\{ f^{\lambda} \}_{\lambda=1}^{n} \subseteq \C^{\infty}_{\cS}(S)$, such that $|f^{\lambda}| = |t^{\lambda}|$, and the relation to $\varphi$ is 
\begin{equation}
f^{\lambda} = \varphi^{\ast}(\bbz^{\lambda}). 
\end{equation}
\end{tvrz}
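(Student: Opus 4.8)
The plan is to establish a bijection between graded smooth maps $\varphi \colon \cS \to \dia{V}$ and $n$-tuples of functions of the prescribed degrees, and this is essentially the universal property of $\dia{V}$ as the free graded manifold on the graded vector space $V^{*}$ of coordinate functions. First I would fix the total basis $(t_{\lambda})_{\lambda=1}^{n}$ and recall that, by the construction referenced in Proposition \ref{tvrz_diamant} (Example 3.23 of \cite{Vysoky:2022gm}), the global coordinates $(\bbz^{\lambda})_{\lambda=1}^{n}$ generate $\C^{\infty}_{\dia{V}}(V_{0})$ as a graded manifold: that is, $\dia{V}$ is a globally split graded manifold whose structure sheaf is built freely from the nonzero-degree generators over the smooth functions in the degree-zero coordinates. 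The key structural fact I will invoke is that a graded smooth map out of \emph{any} source into such a coordinate-presented graded manifold is determined by, and can be freely prescribed by, the pullbacks of the generating coordinates, subject only to the degree constraint $|\varphi^{\ast}(\bbz^{\lambda})| = |\bbz^{\lambda}| = |t^{\lambda}|$.

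Concretely I would split the argument into the two directions of the claimed bijection. For the forward direction (``determines''), given $\varphi$ I simply set $f^{\lambda} := \varphi^{\ast}(\bbz^{\lambda})$; since $\varphi^{\ast}$ is a morphism of graded sheaves of algebras it preserves degree, so $|f^{\lambda}| = |t^{\lambda}|$, and the $f^{\lambda}$ lie in $\C^{\infty}_{\cS}(S)$ as required. For the reverse direction (``uniquely determined by''), given any collection $\{f^{\lambda}\}$ with $|f^{\lambda}| = |t^{\lambda}|$, I would appeal to the chart-wise universal property of graded manifolds: a graded smooth map into a graded domain (here $\dia{V}$ in its single global chart $V_{0}$ with coordinates $\bbz^{\lambda}$) is uniquely specified by assigning to each coordinate a global function on the source of the matching degree, provided the body map $\ul{\varphi} \colon S \to V_{0}$ assembled from the degree-zero data is genuinely smooth. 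This is the fundamental representability/chart theorem for graded manifolds, and it is precisely the statement that $\gMan^{\infty}(\cS, \dia{V})$ is in natural bijection with tuples of functions of the correct degrees.

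The main obstacle, and the only point requiring genuine care, is justifying that an arbitrary admissible tuple $\{f^{\lambda}\}$ actually \emph{assembles} into a well-defined graded smooth map, rather than merely checking that a given map produces such a tuple. The subtlety is that $\dia{V}$ carries nontrivial transition data between total bases (formulas \eqref{eq_twototalbasis} and \eqref{eq_diaVectcoordtansform}), so one must confirm that the prescription is independent of the chosen basis and that the resulting sheaf morphism respects the algebra structure and the smooth-function substitution (``Hadamard''-type) axioms on overlaps. I would handle this by invoking the general representability result for graded manifolds with a single global chart — the analogue of the classical fact for supermanifolds that $\mathrm{Mor}(\cS, \R^{p|q})$ is the set of tuples of functions of the appropriate parity — which is exactly the content of the cited chart theory in \S 7 and the examples of \cite{Vysoky:2022gm}. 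Given that machinery, the degree-matching condition $|f^{\lambda}| = |t^{\lambda}|$ is both necessary and sufficient, uniqueness follows because the $\bbz^{\lambda}$ generate the structure sheaf so two maps agreeing on all of them agree everywhere, and the naturality with respect to change of total basis is guaranteed by the remark at the end of Proposition \ref{tvrz_diamant} that the $\bbz^{\lambda}$ transform as vectors of dual bases.
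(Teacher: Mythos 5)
Your proposal is correct and follows essentially the same route as the paper: the paper's proof consists of a single citation to the chart/representability theorem (Theorem 3.29 in \cite{Vysoky:2022gm}), which is exactly the machinery you invoke for the reverse direction, the forward direction being immediate since $\varphi^{\ast}$ preserves degree. Your additional discussion of basis-independence is harmless but not needed, since the statement fixes a single total basis from the outset.
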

\begin{proof}
This is a variant of Theorem 3.29 in \cite{Vysoky:2022gm}.
\end{proof}

Now, let $V,W \in \gVect$. Recall that there is a canonical graded bilinear map $\alpha: V \times W \rightarrow V \otimes_{\R} W$, given by $\alpha(v,w) = v \otimes w$. We will now argue that it can be promoted to a certain graded smooth map. This will help us to ``geometrize'' general bilinear maps in what follows. 

\begin{tvrz}  \label{tvrz_diaalpha}
Let $V,W \in \gVect$, and $\alpha: V \times W \rightarrow V \otimes_{\R} W$ as above.

Then there is a canonical graded smooth map $\dia{\alpha}: \dia{V} \times \dia{W} \rightarrow \dia{(V \otimes_{\R} W)}$. Its underlying map $\ul{\dia{\alpha}}: V_{0} \times W_{0} \rightarrow (V \otimes_{\R} W)_{0}$ is given by $\ul{\dia{\alpha}}(v,w) = v \otimes w$ for all $v,w \in V_{0}$. 
\end{tvrz}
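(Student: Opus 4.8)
The plan is to construct $\dia{\alpha}$ directly by prescribing the pullbacks of global coordinate functions and invoking Proposition \ref{tvrz_onpullback}. First I would fix total bases $(t_{\lambda})_{\lambda=1}^{n}$ for $V$ and $(s_{\sigma})_{\sigma=1}^{m}$ for $W$, so that $(t_{\lambda} \otimes s_{\sigma})_{\lambda,\sigma}$ is a total basis for $V \otimes_{\R} W$ with $|t_{\lambda} \otimes s_{\sigma}| = |t_{\lambda}| + |s_{\sigma}|$. These induce global coordinates $(\bbz^{\lambda})$ on $\dia{V}$, $(\bbu^{\sigma})$ on $\dia{W}$, and $(\bbw^{\lambda\sigma})$ on $\dia{(V \otimes_{\R} W)}$, with $|\bbz^{\lambda}| = -|t_{\lambda}|$, $|\bbu^{\sigma}| = -|s_{\sigma}|$ and $|\bbw^{\lambda\sigma}| = -(|t_{\lambda}| + |s_{\sigma}|)$. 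Writing $\pr_{V}, \pr_{W}$ for the two projections out of $\dia{V} \times \dia{W}$ and abbreviating $\pr_{V}^{\ast}(\bbz^{\lambda}), \pr_{W}^{\ast}(\bbu^{\sigma})$ again by $\bbz^{\lambda}, \bbu^{\sigma}$, I would then define $\dia{\alpha}$ by the pullback rule
\begin{equation}
\dia{\alpha}^{\ast}(\bbw^{\lambda\sigma}) := \bbz^{\lambda} \bbu^{\sigma}.
\end{equation}
Since $|\bbz^{\lambda} \bbu^{\sigma}| = -|t_{\lambda}| - |s_{\sigma}| = |\bbw^{\lambda\sigma}|$, the degrees match, so Proposition \ref{tvrz_onpullback} yields a unique graded smooth map $\dia{\alpha}: \dia{V} \times \dia{W} \rightarrow \dia{(V \otimes_{\R} W)}$ realizing these pullbacks.

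The step I expect to be the main obstacle is showing that this $\dia{\alpha}$ is \emph{canonical}, i.e. independent of the chosen bases. For a second pair of total bases $t'_{\lambda} = \fB_{\lambda}{}^{\kappa} t_{\kappa}$ and $s'_{\sigma} = \fG_{\sigma}{}^{\rho} s_{\rho}$, the induced basis of $V \otimes_{\R} W$ transforms by the product coefficient $\fB_{\lambda}{}^{\kappa} \fG_{\sigma}{}^{\rho}$, and I would compare how $\bbw^{\lambda\sigma}$ and the product $\bbz^{\lambda} \bbu^{\sigma}$ transform under (\ref{eq_diaVectcoordtansform}). The key point making this manageable is that $\fB_{\kappa}{}^{\lambda}$ vanishes unless $|t_{\kappa}| = |t_{\lambda}|$ and $\fG_{\rho}{}^{\sigma}$ vanishes unless $|s_{\rho}| = |s_{\sigma}|$; on this ``degree-diagonal'' support all the signs appearing in (\ref{eq_diaVectcoordtansform}) are trivial, and reordering $\bbz'^{\kappa}$ past the (morally degree-zero) coefficient $\fG_{\rho}{}^{\sigma}$ produces no Koszul sign. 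Hence both $\bbw^{\lambda\sigma}$ and $\bbz^{\lambda} \bbu^{\sigma}$ obey the same rule $(\bullet) = \fB_{\kappa}{}^{\lambda} \fG_{\rho}{}^{\sigma} (\bullet')$, so the prescription is consistent across bases and no corrective sign is needed.

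Finally I would identify the underlying map $\ul{\dia{\alpha}}: V_{0} \times W_{0} \rightarrow (V \otimes_{\R} W)_{0}$ by reducing the pullbacks modulo the ideal generated by the non-body coordinates. A degree-zero coordinate $\bbw^{\lambda\sigma}$ with $|t_{\lambda}| = |s_{\sigma}| = 0$ pulls back to the product $\bbz^{\lambda} \bbu^{\sigma}$ of two genuine coordinate functions on $V_{0} \times W_{0}$, whereas if $|t_{\lambda}| = -|s_{\sigma}| \neq 0$ the factors $\bbz^{\lambda}, \bbu^{\sigma}$ are non-body and the product restricts to $0$ on the reduced manifold. Thus for $v = \sum_{|t_{\lambda}|=0} v^{\lambda} t_{\lambda} \in V_{0}$ and $w = \sum_{|s_{\sigma}|=0} w^{\sigma} s_{\sigma} \in W_{0}$ one reads off $\bbw^{\lambda\sigma}(\ul{\dia{\alpha}}(v,w)) = v^{\lambda} w^{\sigma} = \bbw^{\lambda\sigma}(v \otimes w)$ on the body coordinates, and $0$ on the remaining degree-zero coordinates, which are precisely those with $t_{\lambda} \otimes s_{\sigma} \notin V_{0} \otimes W_{0}$. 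Since $v \otimes w \in V_{0} \otimes W_{0} \subseteq (V \otimes_{\R} W)_{0}$, this shows $\ul{\dia{\alpha}}(v,w) = v \otimes w$, completing the argument.
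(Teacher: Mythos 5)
Your proof is correct and follows essentially the same route as the paper: define $\dia{\alpha}$ by prescribing pullbacks of the coordinates induced by total bases, invoke Proposition \ref{tvrz_onpullback}, check basis-independence using the fact that the transition coefficients $\fB_{\kappa}{}^{\lambda}$, $\fG_{\rho}{}^{\sigma}$ are supported on the degree diagonal (so all signs in (\ref{eq_diaVectcoordtansform}) are trivial), and read off the underlying map by passing to bodies. The one place you diverge is the sign convention: you declare $\dia{\alpha}^{\ast}(\bbw^{\lambda\sigma}) = \bbz^{\lambda}\bbu^{\sigma}$, whereas the paper's definition (\ref{eq_alphapullback}) is $(-1)^{|t_{\lambda}||s_{\sigma}|}\bbz^{\lambda}\bbu^{\sigma} = \bbu^{\sigma}\bbz^{\lambda}$. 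Both prescriptions are basis-independent (for exactly the reason you give) and both have the body $\ul{\dia{\alpha}}(v,w) = v \otimes w$, since the extra sign is $+1$ on the body coordinates, so your map does verify the statement as written; but it is a genuinely different graded smooth map from the paper's whenever $|t_{\lambda}|$ and $|s_{\sigma}|$ are both odd. The paper's Koszul-ordered choice is not cosmetic for the rest of the development: it is what makes the multiplication formula (\ref{eq_muastexplicit}), $\mu^{\ast}(\bby^{\lambda}{}_{\kappa}) = \bbu^{\nu}{}_{\kappa}\bbz^{\lambda}{}_{\nu}$, come out so that $\Psi_{\cS}$ in Proposition \ref{tvrz_GLVFOP} intertwines $\mu$ with honest composition in $\Aut(\frM(\cS))$ (note that the matrix of $FG$ is $\fG^{\nu}{}_{\lambda}\fF^{\kappa}{}_{\nu}$, in that order). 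With your convention one instead obtains $\fF^{\kappa}{}_{\nu}\fG^{\nu}{}_{\lambda}$, which differs from the matrix of $FG$ by $\nu$-dependent Koszul signs, so the later statements would only hold after systematically reinserting such signs. If you keep your convention you must therefore propagate it consistently; adopting the paper's sign at this stage is the cheaper option.
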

\begin{proof}
Let $(t_{\lambda})_{\lambda=1}^{n}$ and $(s_{\sigma})_{\sigma=1}^{m}$ be a total basis for $V$ and $W$, respectively. Let $(\bbz^{\lambda})_{\lambda=1}^{n}$ and $(\bbu^{\sigma})_{\sigma=1}^{m}$ be the induced coordinates on $\dia{V}$ and $\dia{W}$, respectively. The product graded manifold $\dia{V} \times \dia{W}$ is then equipped with coordinates $(\bbz^{1}, \dots, \bbz^{n} , \bbu^{1}, \dots, \bbu^{m})$. 

On the other hand, the two bases above can be used to provide a total basis for $V \otimes_{\R} W$ consisting of vectors $r_{\lambda,\sigma} := t_{\lambda} \otimes s_{\sigma}$, $\lambda \in \{1,\dots,n\}$ and $\sigma \in \{1, \dots, m \}$. Let $( \bbv^{\lambda,\sigma} )$ be the corresponding coordinates on $\dia{(V \otimes_{\R} W)}$. One declares
\begin{equation} \label{eq_alphapullback}
\dia{\alpha}^{\ast}( \bbv^{\lambda,\sigma}) := (-1)^{|t_{\lambda}||s_{\sigma}|} \bbz^{\lambda} \bbu^{\sigma} = \bbu^{\sigma} \bbz^{\lambda}. 
\end{equation}
It is now straightforward check that this is independent of the chosen bases and that the underlying map has the required form. 
\end{proof}
\begin{rem}
Note that the underlying manifold of $\dia{(V \otimes_{\R} W)}$ is not $V_{0} \otimes_{\R} W_{0}$, but a direct sum
\begin{equation} \label{eq_VotimesWunderling}
(V \otimes_{\R} W)_{0} = \bigoplus_{k + \ell = 0} V_{k} \otimes_{\R} V_{\ell}. 
\end{equation} 
The underlying smooth map of $\dia{\alpha}$ is thus a canonical bilinear map $V_{0} \times W_{0} \rightarrow V_{0} \otimes_{\R} W_{0}$ followed by the inclusion into (\ref{eq_VotimesWunderling}). 
\end{rem}

\begin{cor} \label{cor_bilineargrad}
Let $V,W,X \in \gVect$. Let $\beta: V \times W \rightarrow X$ be a degree zero bilinear map. 

Then there is a canonical graded smooth map $\dia{\beta}: \dia{V} \times \dia{W} \rightarrow \dia{X}$. Its underlying map is given by $\ul{\dia{\beta}}(v,w) = \beta(v,w)$ for all $v,w \in V_{0}$. 
\end{cor}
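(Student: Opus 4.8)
The plan is to reduce the bilinear map $\beta$ to a linear map via the universal property of the tensor product, and then invoke the two constructions already at our disposal. Since $\gVect$ is monoidal with respect to $\otimes_{\R}$ with internal hom $\ul{\Lin}$, every degree zero bilinear map $\beta: V \times W \rightarrow X$ factors uniquely as $\beta = \~\beta \circ \alpha$, where $\alpha: V \times W \rightarrow V \otimes_{\R} W$ is the canonical bilinear map of Proposition \ref{tvrz_diaalpha} and $\~\beta \in \Lin(V \otimes_{\R} W, X)$ is a degree zero linear map. This is precisely the universal property of the tensor product in the graded category, the degree zero linearity of $\~\beta$ being forced by the fact that both $\beta$ and $\alpha$ are of degree zero.

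With $\~\beta$ in hand, I would apply the functor $\diamond$ of Proposition \ref{tvrz_diamant} to obtain a graded smooth map $\dia{\~\beta}: \dia{(V \otimes_{\R} W)} \rightarrow \dia{X}$ whose underlying map is $\ul{\dia{\~\beta}} = \~\beta_{0}$. Composing this with the graded smooth map $\dia{\alpha}: \dia{V} \times \dia{W} \rightarrow \dia{(V \otimes_{\R} W)}$ furnished by Proposition \ref{tvrz_diaalpha}, I define
\begin{equation}
\dia{\beta} := \dia{\~\beta} \circ \dia{\alpha}.
\end{equation}
As a composition of graded smooth maps, $\dia{\beta}$ is itself graded smooth. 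It is canonical because both $\dia{\alpha}$ and $\dia{\~\beta}$ are canonical and the factorization $\~\beta$ is unique.

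It then remains to identify the underlying map. Underlying maps compose contravariantly to pullbacks, hence covariantly as set maps, so for all $v \in V_{0}$ and $w \in W_{0}$ I compute $\ul{\dia{\beta}}(v,w) = \ul{\dia{\~\beta}}\big( \ul{\dia{\alpha}}(v,w) \big) = \~\beta_{0}(v \otimes w)$. By Proposition \ref{tvrz_diaalpha} the inner map sends $(v,w)$ to $v \otimes w$, which lies in the degree zero component since $|v| = |w| = 0$, and the factorization gives $\~\beta(v \otimes w) = \beta(v,w)$. Therefore $\ul{\dia{\beta}}(v,w) = \beta(v,w)$, as claimed.

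There is no genuine obstacle here; the statement is a formal consequence of Propositions \ref{tvrz_diamant} and \ref{tvrz_diaalpha} together with the universal property of the tensor product. The only points demanding a little care are the verification that the graded universal property indeed produces a \emph{degree zero} linear factorization $\~\beta$, so that the functor $\diamond$ applies to it, and that the underlying map computation respects the grading, i.e. that $v \otimes w$ remains in the degree zero component $(V \otimes_{\R} W)_{0}$ for degree zero arguments. Both are immediate.
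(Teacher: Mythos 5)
Your proof is correct and follows exactly the paper's own route: factor $\beta = \beta' \circ \alpha$ through the tensor product via the universal property, apply the $\diamond$ functor to the linear factor, and define $\dia{\beta} := \dia{\beta}' \circ \dia{\alpha}$. Your explicit verification of the underlying map is a detail the paper leaves implicit, but it is the obvious computation and changes nothing in substance.
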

\begin{proof}
By the universal property of the tensor product, there is a unique graded linear map $\beta': V \otimes_{\R} W \rightarrow X$ of degree zero, such that $\beta = \beta' \circ \alpha$. Let us define $\dia{\beta}$ as a composition
\begin{equation} \label{eq_diabeta}
\dia{\beta} := \dia{\beta}' \circ \dia{\alpha},
\end{equation}
where $\dia{\beta}': \dia{(V \otimes_{\R} W)} \rightarrow \dia{X}$ is just the arrow map of $\diamond$ applied on $\beta'$.
\end{proof}

\begin{example}
Let us examine how the map $\dia{\beta}$ looks explicitly. Fix a total basis $(t_{\lambda})_{\lambda=1}^{n}$ for $V$, a total basis $(s_{\sigma})_{\sigma=1}^{m}$ for $W$, and a total basis $(x_{\rho})_{\rho=1}^{q}$ for $X$. We thus have the corresponding coordinates $(\bbz^{\lambda})_{\lambda=1}^{n}$ for $\dia{V}$, $(\bbu^{\sigma})_{\sigma=1}^{m}$ for $\dia{W}$, and $(\bbx^{\rho})_{\rho=1}^{q}$ for $\dia{X}$. We will use the same symbols to denote the induced coordinates on $\dia{V} \times \dia{W}$. Write
\begin{equation}
\beta(t_{\lambda},s_{\sigma}) = \fbeta_{\lambda \sigma}{}^{\rho} x_{\rho},
\end{equation}
where $\fbeta_{\lambda \sigma}{}^{\rho} \in \R$ satisfy $|\fbeta_{\lambda \sigma}{}^{\rho}| = |t_{\lambda}| + |s_{\sigma}| - |x_{\rho}|$. The corresponding graded linear map $\beta'$ is thus 
\begin{equation}
\beta'(r_{\lambda,\sigma}) = \fbeta_{\lambda \sigma}{}^{\rho} x_{\rho},
\end{equation}
where $r_{\lambda,\sigma} := t_{\lambda} \otimes s_{\sigma}$ is the induced total basis of $V \otimes_{\R} W$. Let $(\bbv^{\lambda,\sigma})$ be the induced coordinates on $\dia{(V \otimes_{\R} W)}$. It follows from (\ref{eq_diaApullback}) that 
\begin{equation}
\dia{\beta}'^{\ast}(\bbx^{\rho}) = (-1)^{|x_{\rho}|(|t_{\lambda}| + |s_{\sigma}| - |x_{\rho}|)} \fbeta_{\lambda \sigma}{}^{\rho} \bbv^{\lambda,\sigma}. 
\end{equation}
Finally, using the definition (\ref{eq_diabeta}) and (\ref{eq_alphapullback}), we find that $\dia{\beta}^{\ast}$ has the explicit form
\begin{equation} \label{eq_diabetaastexplicit}
\dia{\beta}^{\ast}(\bbx^{\rho}) = (-1)^{|x_{\rho}|(|t_{\lambda}| + |s_{\sigma}| - |x_{\rho}|)} \fbeta_{\lambda \sigma}{}^{\rho} \bbu^{\sigma} \bbz^{\lambda}. 
\end{equation}
\end{example}

Now, let $\cS$ be a general graded manifold. We can consider a free $\C^{\infty}_{\cS}(S)$-module
\begin{equation} \label{eq_frMcS}
\frM(\cS) := \C^{\infty}_{\cS}(S) \otimes_{\R} V. 
\end{equation}
See \S 4.1 of \cite{vsmolka2025threefold} for details. It turns out that the degree zero component of this module has an important relation to $\dia{V}$.
\begin{tvrz}
The assignment $\cS \mapsto \frM(\cS)_{0}$, where $\frM(\cS)$ is defined by (\ref{eq_frMcS}), defines a functor $\frM_{0}: (\gMan^{\infty})^{\op} \rightarrow \Set$. 
This functor is naturally isomorphic to the functor of points $\frQ$ associated with the graded manifold $\dia{V}$. 
\end{tvrz}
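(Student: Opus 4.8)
The plan is to exhibit an explicit natural isomorphism $\Phi: \frQ \Rightarrow \frM_0$ and to read off bijectivity and naturality from the coordinate description in Proposition \ref{tvrz_onpullback}. First I would pin down the functor structure of $\frM_0$ on morphisms. Given $\psi: \cS \to \cS'$ in $\gMan^{\infty}$, the pullback $\psi^{\ast}: \C^{\infty}_{\cS'}(S') \to \C^{\infty}_{\cS}(S)$ is a degree-preserving algebra morphism, so $\psi^{\ast} \otimes \1_{V}$ carries $\frM(\cS')$ to $\frM(\cS)$ and restricts on the degree-zero parts to a map $\frM_0(\psi): \frM(\cS')_0 \to \frM(\cS)_0$. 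Functoriality is immediate from $(\psi \circ \chi)^{\ast} = \chi^{\ast} \circ \psi^{\ast}$, which confirms $\frM_0: (\gMan^{\infty})^{\op} \to \Set$.

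Next, fix a total basis $(t_{\lambda})_{\lambda=1}^{n}$ for $V$ with induced coordinates $(\bbz^{\lambda})_{\lambda=1}^{n}$ on $\dia{V}$, and recall that $|\bbz^{\lambda}| = |t^{\lambda}| = -|t_{\lambda}|$. A general element of $\frM(\cS)_0$ is $\sum_{\lambda} f^{\lambda} \otimes t_{\lambda}$ with $|f^{\lambda}| = -|t_{\lambda}|$, which is precisely the degree datum of Proposition \ref{tvrz_onpullback}. I would therefore define
\[
\Phi_{\cS}: \frQ(\cS) \to \frM(\cS)_0, \qquad \Phi_{\cS}(\varphi) := \sum_{\lambda} \varphi^{\ast}(\bbz^{\lambda}) \otimes t_{\lambda},
\]
which lands in degree zero since $|\varphi^{\ast}(\bbz^{\lambda})| = |\bbz^{\lambda}| = -|t_{\lambda}|$. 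Bijectivity of $\Phi_{\cS}$ is then exactly Proposition \ref{tvrz_onpullback}: the assignment $\varphi \mapsto \{\varphi^{\ast}(\bbz^{\lambda})\}$ is a bijection onto tuples of the correct degrees, and such tuples correspond bijectively to elements of $\frM(\cS)_0$ because $(t_{\lambda})$ is a basis of $V$.

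For naturality, given $\psi: \cS \to \cS'$ I would check that both composites in the square relating $\Phi_{\cS'}$, $\Phi_{\cS}$, $\frQ(\psi) = (-)\circ\psi$ and $\frM_0(\psi) = \psi^{\ast} \otimes \1_{V}$ send $\varphi \in \frQ(\cS')$ to $\sum_{\lambda} \psi^{\ast}(\varphi^{\ast}(\bbz^{\lambda})) \otimes t_{\lambda}$, using $(\varphi \circ \psi)^{\ast} = \psi^{\ast} \circ \varphi^{\ast}$. Equivalently, writing $u := \sum_{\lambda} \bbz^{\lambda} \otimes t_{\lambda} \in \frM(\dia{V})_0$, one has $\Phi_{\cS}(\varphi) = \frM_0(\varphi)(u)$, so $\Phi$ is the transformation classified by $u$ under the Yoneda correspondence, from which naturality is automatic; here $u = \Phi_{\dia{V}}(\1_{\dia{V}})$ is the image of the identity.

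The one genuinely non-formal point, and the step I expect to require care, is that $\Phi_{\cS}$ must be \emph{canonical}, i.e. independent of the chosen total basis; this reduces to showing that the universal element $u = \sum_{\lambda} \bbz^{\lambda} \otimes t_{\lambda}$ does not depend on $(t_{\lambda})$. I would verify this directly from the transformation laws (\ref{eq_twototalbasis}) and (\ref{eq_diaVectcoordtansform}): for a second basis with $t'_{\kappa} = \fB_{\kappa}{}^{\lambda} t_{\lambda}$ one computes
\[
\sum_{\kappa} \bbz'^{\kappa} \otimes t'_{\kappa} = \sum_{\lambda} \Big( \sum_{\kappa} \fB_{\kappa}{}^{\lambda} \bbz'^{\kappa} \Big) \otimes t_{\lambda} = \sum_{\lambda} \bbz^{\lambda} \otimes t_{\lambda},
\]
where the sign in (\ref{eq_diaVectcoordtansform}) drops out because $\fB_{\kappa}{}^{\lambda} = 0$ unless $|t_{\kappa}| = |t_{\lambda}|$. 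This is exactly the reason the coordinates were designed to transform contravariantly to the basis, so the careful bookkeeping of signs and degrees here is the crux of the argument; everything else then follows formally.
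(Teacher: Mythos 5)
Your proposal is correct and follows essentially the same route as the paper: pair the pullbacks $\varphi^{\ast}(\bbz^{\lambda})$ against the basis $(t_{\lambda})$, invoke Proposition \ref{tvrz_onpullback} for bijectivity, and check naturality and basis-independence (the paper leaves the latter as an exercise, which you carry out; your Yoneda universal-element framing of naturality is a clean equivalent of the paper's direct check). The only difference is that the paper uses the frame $\Phi_{\lambda} = (-1)^{|t_{\lambda}|}\, 1 \otimes t_{\lambda}$ rather than $1 \otimes t_{\lambda}$; for this proposition the sign is immaterial (both choices give basis-independent natural bijections), though the paper's convention is the one that stays consistent with the tangent-space identification of Proposition \ref{tvrz_tangentVSidentification} and with the matrix conventions in Proposition \ref{tvrz_GLVFOP}.
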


\begin{proof}
If $\varphi: \cN \rightarrow \cS$ is a graded smooth map, the associated arrow map is 
\begin{equation}
\frM_{0}(\varphi) := \varphi^{\ast} \otimes \1_{V}: \frM(\cS)_{0} \rightarrow \frM(\cN)_{0}.
\end{equation}
It is an easy exercise to check that this makes $\frM_{0}$ into a functor. Next, recall that the functor of points $\frQ$ associated with $\dia{V}$ is just the hom-functor $\frQ := \gMan^{\infty}(-,\dia{V})$. We thus have to construct a bijection 
\begin{equation}
\Xi_{\cS}: \frQ(\cS) \rightarrow \frM_{0}(\cS)
\end{equation}
natural in $\cS$. Let $(t_{\lambda})_{\lambda = 1}^{n}$ be a fixed total basis for $V$ inducing the coordinates $(\bbz^{\lambda})_{\lambda=1}^{n}$ for $\dia{V}$. Let $\Phi_{\lambda} := (-1)^{|t_{\lambda}|} 1 \otimes t_{\lambda} \in \frM(\cS)$. It follows that $( \Phi_{\lambda})_{\lambda=1}^{n}$ is a frame for the $\C^{\infty}_{\cS}(S)$-module $\frM(\cS)$. For any $\phi \in \frQ(\cS)$, define 
\begin{equation} \label{eq_XiS}
\Xi_{\cS}(\phi) := \phi^{\ast}(\bbz^{\lambda}) \cdot \Phi_{\lambda}. 
\end{equation}
Note that the right-hand side is of degree zero, hence an element of $\frM(\cS)_{0}$. It follows from Proposition \ref{tvrz_onpullback} that this is a bijection. The naturality in $\cS$ is easy to check. Note that (\ref{eq_XiS}) is independent of the used basis $(t_{\lambda})_{\lambda=1}^{n}$. This follows from (\ref{eq_diaVectcoordtansform}) and we leave it as an exercise. 
\end{proof}

\section{General linear group} \label{sec_generallinear}
In this section, let us revisit the basic example of a graded Lie group. It is based on \S 2.4.1 of \cite{Smolka2023}, offering perhaps some new perspective and justifying certain choices made there. Let us start by constructing the graded manifold $\GL(V)$. 

\begin{tvrz}
Let $V \in \gVect$. Then the set 
\begin{equation}
\GL(V_{\bullet}) = \{ A \in \gl(V)_{0} \mid A \text{ is invertible} \}
\end{equation}is open in $\gl(V)_{0}$. We can thus consider a graded manifold
\begin{equation}
\GL(V) := \dia{\gl(V)}|_{\GL(V_{\bullet})}. 
\end{equation}
Note that $\gdim(\GL(V)) = \neg \gdim( \gl(V))$. 
\end{tvrz}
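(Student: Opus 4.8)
The plan is to reduce the whole statement to an elementary fact about ordinary matrices via the block decomposition of degree zero endomorphisms. First I would observe that a degree zero graded linear map $A \in \gl(V)_{0} = \Lin(V,V)$ preserves the grading, so it restricts to an ordinary linear map $A_{j} \colon V_{j} \to V_{j}$ for each $j \in \Z$, and since $V$ is finite-dimensional only finitely many $V_{j}$ are nonzero. This yields a canonical identification of real vector spaces
\begin{equation}
\gl(V)_{0} \cong \bigoplus_{j \in \Z} \End(V_{j}),
\end{equation}
under which $A$ corresponds to the tuple $(A_{j})_{j \in \Z}$. The key reduction is then that $A$ is invertible as a degree zero graded linear map (i.e.\ admits a degree zero two-sided inverse) if and only if each block $A_{j}$ is invertible: an inverse of $A$ is again degree zero and hence block-diagonal, so it must restrict to an inverse of each $A_{j}$, and conversely the block-diagonal assembly of the $A_{j}^{-1}$ furnishes the inverse of $A$.

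Granting this, openness is immediate. For each $j$ the set $\GL(V_{j})$ of invertible endomorphisms is open in $\End(V_{j})$, being the preimage of $\R \ssm \{0\}$ under the continuous determinant map. Under the identification above one has $\GL(V_{\bullet}) \cong \prod_{j} \GL(V_{j})$, a finite product of open sets, and a finite product of open sets is open in the product topology on $\bigoplus_{j} \End(V_{j}) = \gl(V)_{0}$. This establishes the first assertion.

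Next I would invoke the construction of an open graded submanifold. By Proposition \ref{tvrz_diamant}, $\dia{\gl(V)}$ is a graded manifold whose underlying manifold is $\gl(V)_{0}$. Since $\GL(V_{\bullet})$ is an open subset of this underlying manifold, restricting the structure sheaf $\C^{\infty}_{\dia{\gl(V)}}$ to $\GL(V_{\bullet})$ produces a well-defined graded manifold $\dia{\gl(V)}|_{\GL(V_{\bullet})}$; this is the standard open submanifold construction in $\gMan^{\infty}$ and requires nothing beyond the openness just proved.

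Finally, for the graded dimension I would use that the graded dimension is a local (pointwise) invariant, so it is unchanged under passage to an open submanifold. Combining this with the formula $\gdim(\dia{\gl(V)}) = \neg \gdim(\gl(V))$ from Proposition \ref{tvrz_diamant} gives
\begin{equation}
\gdim(\GL(V)) = \gdim(\dia{\gl(V)}) = \neg \gdim(\gl(V)),
\end{equation}
as claimed. I do not anticipate any genuine obstacle here; the only point demanding care is the equivalence between invertibility of the degree zero map $A$ and invertibility of all of its graded blocks $A_{j}$, which is exactly what licenses both the reduction to the classical openness of $\GL(n,\R)$ and the product description of $\GL(V_{\bullet})$.
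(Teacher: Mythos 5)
Your proposal is correct and follows essentially the same route as the paper: the identification $\gl(V)_{0} \cong \bigoplus_{j} \End(V_{j})$ (over the finitely many $j$ with $V_{j} \neq 0$), the blockwise characterization of invertibility so that $\GL(V_{\bullet})$ corresponds to the open product $\prod_{j} \GL(V_{j})$, and then the standard open submanifold restriction. The paper's proof is merely terser, leaving the blockwise invertibility equivalence, the determinant argument, and the invariance of $\gdim$ under open restriction implicit, all of which you justify correctly.
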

\begin{proof}
Since $V$ is finite-dimensional, one has 
\begin{equation}
\gl(V)_{0} \cong \bigoplus_{k \in I} \gl(V_{k}),
\end{equation}
where $ I = \{ k \in \Z \mid V_{k} \neq \{0\} \}$ is finite. The topology on $\gl(V)_{0}$ then corresponds to the product topology on $\prod_{k \in I} \gl(V_{k})$ and $\GL(V_{\bullet})$ corresponds to the open subset $\prod_{k \in I} \GL(V_{k})$. 
\end{proof}
We are now ready to produce the multiplication on $\GL(V)$. Let $\beta: \gl(V) \times \gl(V) \rightarrow \gl(V)$ be the degree zero bilinear map defined as 
\begin{equation} \label{eq_betaglVmulti}
\beta(A,B) := AB,
\end{equation}
for all $A,B \in \gl(V)$. By Corollary \ref{cor_bilineargrad}, there is an induced graded smooth map 
\begin{equation}
\dia{\beta}: \dia{\gl(V)} \times \dia{\gl(V)} \rightarrow \dia{\gl(V)}.
\end{equation}
The underlying smooth map has the property $\ul{\dia{\beta}}( \GL(V_{\bullet}) \times \GL(V_{\bullet})) \subseteq \GL(V_{\bullet})$. This implies that $\dia{\beta}$ restricts to a graded smooth map 
\begin{equation}
\mu: \GL(V) \times \GL(V) \rightarrow \GL(V),
\end{equation}
such that its underlying smooth map $\ul{\mu}: \GL(V_{\bullet}) \times \GL(V_{\bullet}) \rightarrow \GL(V_{\bullet})$ is given by $\ul{\mu}(A,B) = AB$. This is the multiplication on $\GL(V)$. 

Before proceeding further, let us introduce coordinates on $\dia{\gl(V)}$, and thus also on $\GL(V)$. Let $(t_{\lambda})_{\lambda=1}^{n}$ be a fixed total basis for $V$. For each $\lambda,\kappa \in \{1,\dots,n\}$, define $\Delta_{\lambda}{}^{\kappa} \in \gl(V)$ by 
\begin{equation} 
\Delta_{\lambda}{}^{\kappa}( t_{\nu}) := \delta^{\kappa}_{\nu} \delta^{\mu}_{\lambda} t_{\mu}. 
\end{equation}
It maps $t_{\kappa}$ to $t_{\lambda}$ and all remaining basis vectors to zero, hence $|\Delta_{\lambda}{}^{\kappa}| = |t_{\lambda}| - |t_{\kappa}|$. We call this the \textbf{standard basis} for $\gl(V)$ corresponding to $(t_{\lambda})_{\lambda=1}^{n}$. It is defined so that if $A \in \gl(V)$ is given by $A(t_{\lambda}) = \fA_{\lambda}{}^{\kappa} t_{\kappa}$, then $A = \fA_{\lambda}{}^{\kappa} \Delta_{\kappa}{}^{\lambda}$.  Note that one assumes that $|\fA_{\lambda}{}^{\kappa}| = |t_{\lambda}| - |t_{\kappa}| + |A|$. Let $\cD^{\lambda}{}_{\kappa} \in \gl(V)^{\ast}$ denote the corresponding dual basis, that is we define
\begin{equation} \label{eq_dualstandard}
\cD^{\lambda}{}_{\kappa}( \Delta_{\nu}{}^{\mu}) := \delta^{\lambda}_{\nu} \delta^{\mu}_{\kappa}. 
\end{equation}
Finally, let us write $(\bby^{\lambda}{}_{\kappa})$ for the induced coordinates on $\dia{\gl(V)}$. Observe that one has 
\begin{equation}
|\bby^{\lambda}{}_{\kappa}| = |\cD^{\lambda}{}_{\kappa}| = - |\Delta_{\lambda}{}^{\kappa}| = |t_{\kappa}| - |t_{\lambda}|,
\end{equation}
for all $\lambda,\kappa \in \{1,\dots,n\}$. Since $\GL(V)$ is just a restriction of $\dia{\gl(V)}$ to the open subset $\GL(V_{\bullet})$, we will use the same coordinates there. 
\begin{rem}
Suppose that $(t'_{\lambda})_{\lambda=1}^{n}$ is another total basis of $V$, related to the original one as in (\ref{eq_twototalbasis}). The associated standard basis $\Delta'{}_{\lambda}{}^{\kappa}$ is related to the original one as 
\begin{equation}
\Delta'{}_{\lambda}{}^{\kappa} = (-1)^{(|t_{\lambda}| - |t_{\kappa}|)(|t_{\mu}| - |t_{\kappa}|)} (\fB^{-1})_{\mu}{}^{\kappa} \fB_{\lambda}{}^{\nu} \Delta_{\nu}{}^{\mu},
\end{equation}
where $t_{\mu} = (\fB^{-1})_{\mu}{}^{\kappa} t'_{\kappa}$. The induced coordinates on $\dia{\gl(V)}$ are then related as
\begin{equation} \label{eq_coordinatesonGLVrelation}
(-1)^{|t_{\mu}| - |t_{\nu}|} \bby^{\nu}{}_{\mu} = (-1)^{|t_{\kappa}| - |t_{\lambda}|} (\fB^{-1})_{\mu}{}^{\kappa} \bby'^{\lambda}{}_{\kappa} \fB_{\lambda}{}^{\nu}. 
\end{equation}
This is a straightforward exercise. 
\end{rem}
\begin{tvrz}
Let $(\bby^{\lambda}{}_{\kappa})$ be the coordinates on $\GL(V)$ introduced above. Let $(\bbz^{\lambda}{}_{\kappa}, \bbu^{\lambda}{}_{\kappa})$ denote the induced coordinates on the product $\GL(V) \times \GL(V)$. Then the pullback $\mu^{\ast}$ has the explicit form
\begin{equation} \label{eq_muastexplicit}
\mu^{\ast}( \bby^{\lambda}{}_{\kappa}) = \bbu^{\nu}{}_{\kappa} \bbz^{\lambda}{}_{\nu}. 
\end{equation}
\end{tvrz}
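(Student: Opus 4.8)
The plan is to compute $\mu^\ast(\bby^\lambda{}_\kappa)$ by tracing through how $\mu$ was constructed: it is the restriction of $\dia\beta$ where $\beta(A,B)=AB$ is matrix multiplication on $\gl(V)$. Since the coordinates on $\GL(V)$ are inherited from $\dia{\gl(V)}$ via the open restriction, the pullback formula is identical to the one for $\dia\beta$, so it suffices to apply the explicit formula (\ref{eq_diabetaastexplicit}) to the particular bilinear map $\beta$. The only genuine content is bookkeeping: identifying the structure constants $\fbeta$ of matrix multiplication in the standard basis $(\Delta_\lambda{}^\kappa)$, and carefully checking that all the signs in (\ref{eq_diabetaastexplicit}) collapse to give the clean sign-free answer (\ref{eq_muastexplicit}).

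First I would record the product of two standard basis elements. From the definition $\Delta_\lambda{}^\kappa(t_\nu)=\delta^\kappa_\nu\,t_\lambda$ one gets the composition rule $\Delta_\lambda{}^\kappa\,\Delta_\mu{}^\nu=\delta^\kappa_\mu\,\Delta_\lambda{}^\nu$, so the structure constants of $\beta$ in the basis $(\Delta_\lambda{}^\kappa)$ (now playing the role of $t_\bullet,s_\bullet,x_\bullet$ simultaneously) are a product of Kronecker deltas. I would then substitute these into (\ref{eq_diabetaastexplicit}), being careful that the multi-indices there are now double-indices $\lambda{}^\kappa$ and that the degree of each basis element is $|\Delta_\lambda{}^\kappa|=|t_\lambda|-|t_\kappa|$. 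The Kronecker deltas force the external index degree to equal the sum of the two internal index degrees, which makes the prefactor exponent $|x_\rho|(|t_\lambda|+|s_\sigma|-|x_\rho|)$ vanish; this is precisely why no sign survives in (\ref{eq_muastexplicit}).

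The main obstacle, such as it is, will be matching the index conventions: the formula (\ref{eq_diabetaastexplicit}) is written for single-index bases $(t_\lambda),(s_\sigma),(x_\rho)$ on three possibly distinct spaces, whereas here all three spaces are $\gl(V)$ and every index is really a pair. One must also reconcile the two ways $A\in\gl(V)$ is expanded — as $A(t_\lambda)=\fA_\lambda{}^\kappa t_\kappa$ versus $A=\fA_\lambda{}^\kappa\Delta_\kappa{}^\lambda$ — and track which of $(\bbz^\lambda{}_\kappa,\bbu^\lambda{}_\kappa)$ corresponds to the first versus second factor of the product manifold. Keeping these straight is where an error would most likely creep in; once the dictionary is fixed, the contraction of deltas immediately produces the single summation $\bbu^\nu{}_\kappa\,\bbz^\lambda{}_\nu$.

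Alternatively, and perhaps more transparently, I would bypass (\ref{eq_diabetaastexplicit}) and argue directly from the characterization of maps into $\dia{\gl(V)}$ by their coordinate pullbacks (Proposition \ref{tvrz_onpullback}), together with the functor-of-points identification $\frM_0\cong\frQ$. Under that identification a point of $\GL(V)$ over $\cS$ is an endomorphism of $\frM(\cS)$ with matrix $\bby^\lambda{}_\kappa$, and $\mu$ is literally composition of endomorphisms; the formula $\mu^\ast(\bby^\lambda{}_\kappa)=\bbu^\nu{}_\kappa\,\bbz^\lambda{}_\nu$ is then the statement that the matrix of a composite is the (suitably ordered) product of matrices, which can be verified on the universal element. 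I expect the first, computational route to be the one the authors intend, with this second route available as a sanity check.
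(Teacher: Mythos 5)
Your main route is exactly the paper's proof: record the composition rule $\Delta_{\rho}{}^{\sigma}\,\Delta_{\nu}{}^{\mu}=\delta^{\sigma}_{\nu}\,\Delta_{\rho}{}^{\mu}$ for the standard basis, read off the structure constants of $\beta$, and plug them into (\ref{eq_diabetaastexplicit}), where the sign prefactor is trivial on every nonzero term because a nonzero real structure constant forces its assigned degree $|t_{\lambda}|+|s_{\sigma}|-|x_{\rho}|$ to vanish. Your explicit justification of the sign collapse and the functor-of-points sanity check are correct additions, but the argument coincides with the paper's.
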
 
\begin{proof}
It suffices to calculate the expression for $\dia{\beta}^{\ast}$, where $\beta$ is defined by (\ref{eq_betaglVmulti}). One has 
\begin{equation}
\beta( \Delta_{\rho}{}^{\sigma}, \Delta_{\nu}{}^{\mu}) = \Delta_{\rho}{}^{\sigma} \Delta_{\nu}{}^{\mu} = \delta^{\sigma}_{\nu} \Delta_{\rho}{}^{\mu} = \delta^{\lambda}_{\rho} \delta^{\sigma}_{\nu}  \delta^{\mu}_{\kappa} \Delta_{\lambda}{}^{\kappa}.
\end{equation}
The rest is just plugging into the formula (\ref{eq_diabetaastexplicit}). 
\end{proof}

It is trivial to guess the unit $e: \{ \ast \} \rightarrow \GL(V)$. Indeed, it corresponds to a single point in $\GL(V_{\bullet})$ which is to serve as a unit for the ordinary Lie group $\GL(V_{\bullet})$ with a multiplication $\ul{\mu}(A,B) = AB$. The natural choice is therefore $e := \1_{V} \in \GL(V_{\bullet})$. 

To define the inverse map $\iota: \GL(V) \rightarrow \GL(V)$ without too much work, we will now make a little detour, useful on its own. 
Let $\Aut(\frM(\cS))$ be the set of \textit{degree zero} $\C^{\infty}_{\cS}(S)$-module automorphisms of $\frM(\cS)$ defined by (\ref{eq_frMcS}). It turns out that it has a crucial relation to $\GL(V)$. 
\begin{tvrz} \label{tvrz_GLVFOP}
The assignment $\cS \mapsto \Aut(\frM(\cS))$ defines a functor
\begin{equation}
\frF: (\gMan^{\infty})^{\op} \rightarrow \Set
\end{equation}
 naturally isomorphic to the functor of points $\frP$ associated with $\GL(V)$. 
\end{tvrz}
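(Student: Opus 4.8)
The plan is to assemble the isomorphism $\frP \cong \frF$ from the bijection $\frM_0 \cong \frQ$ already obtained for the ambient diamond manifold, after first pinning down the algebraic identification that turns $\Aut(\frM(\cS))$ into a matrix group. Since $\frM(\cS) = \C^{\infty}_{\cS}(S) \otimes_{\R} V$ is free over $\C^{\infty}_{\cS}(S)$ on the frame $(\Phi_{\lambda})$ induced by a total basis $(t_{\lambda})$ of $V$, a graded module endomorphism is nothing but a matrix over $\C^{\infty}_{\cS}(S)$, giving a degree-preserving identification $\ul{\End}_{\C^{\infty}_{\cS}(S)}(\frM(\cS)) \cong \C^{\infty}_{\cS}(S) \otimes_{\R} \gl(V)$, and hence $\End(\frM(\cS))_{0} \cong (\C^{\infty}_{\cS}(S) \otimes_{\R} \gl(V))_{0}$. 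Applying the preceding proposition to the graded vector space $\gl(V)$ (in place of $V$) then supplies a natural bijection $\gMan^{\infty}(\cS,\dia{\gl(V)}) \cong (\C^{\infty}_{\cS}(S)\otimes_{\R}\gl(V))_{0} \cong \End(\frM(\cS))_{0}$, which by (\ref{eq_XiS}) sends $\phi$ to the endomorphism $\Psi_{\phi}$ whose matrix entries are the pullbacks $\phi^{\ast}(\bby^{\lambda}{}_{\kappa})$, up to the bookkeeping signs of the standard-basis conventions.

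It then remains to cut this bijection down to the invertible locus on both sides. On the geometric side, $\GL(V)$ is by construction the open submanifold of $\dia{\gl(V)}$ over $\GL(V_{\bullet}) \subseteq \gl(V)_{0}$, so a graded smooth map $\phi: \cS \to \dia{\gl(V)}$ factors uniquely through $\GL(V)$ if and only if its underlying map satisfies $\ul{\phi}(S) \subseteq \GL(V_{\bullet})$; thus $\frP(\cS)$ is precisely the subset of $\gMan^{\infty}(\cS,\dia{\gl(V)})$ singled out by this condition. I would then show that under $\phi \mapsto \Psi_{\phi}$ this subset matches $\Aut(\frM(\cS)) \subseteq \End(\frM(\cS))_{0}$. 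The essential observation is that the body map $b: \C^{\infty}_{\cS}(S) \to \C^{\infty}(S)$ (reduction modulo the ideal $\mathcal{J}$ generated by the nonzero-degree functions) carries $\Psi_{\phi}$ to the $\C^{\infty}(S)$-linear endomorphism of $\C^{\infty}(S) \otimes_{\R} V$ given pointwise by $\ul{\phi}(s) \in \gl(V)_{0}$. Consequently, if $\Psi_{\phi}$ is an automorphism then so is its body, forcing $\ul{\phi}(s) \in \GL(V_{\bullet})$ for all $s$; conversely, if $\ul{\phi}$ lands in $\GL(V_{\bullet})$ then $\Psi_{\phi}$ is invertible modulo $\mathcal{J}$, and this must be lifted to genuine invertibility.

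I expect the lifting step to be the only real obstacle, and the reason the functor-of-points detour pays off. In the $\Z$-graded setting the even nonzero-degree coordinates are not nilpotent, so a matrix invertible modulo $\mathcal{J}$ does not become invertible by a terminating expansion; instead one must invoke completeness of the structure sheaf with respect to the filtration by order in the nonzero-degree coordinates, whose degree-zero associated graded is $\C^{\infty}(S)$. Working in a local chart of $\cS$ one writes $\Psi_{\phi} = D(\1 + N)$ with $D$ the block-diagonal part (invertible because its body is pointwise invertible) and $N$ having all entries in $\mathcal{J}$, so that $\sum_{k \geq 0} (-N)^{k}$ converges in the completed sheaf and inverts $\1 + N$; the local inverses then patch, by uniqueness of module inverses, to a global two-sided inverse, so $\Psi_{\phi} \in \Aut(\frM(\cS))$. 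Finally, functoriality of $\frF$ is immediate: a morphism $\varphi: \cN \to \cS$ induces the unital graded algebra map $\varphi^{\ast}$, and applying $\varphi^{\ast}$ entrywise to matrices respects multiplication and units, hence carries $\Aut(\frM(\cS))$ into $\Aut(\frM(\cN))$ and composition to composition. Naturality of the resulting bijection $\Xi_{\cS}: \frP(\cS) \to \frF(\cS)$ reduces to naturality of the ambient isomorphism $\frM_{0} \cong \frQ$ together with compatibility of entrywise pullback with passage to the open locus, a routine diagram chase; its independence of the chosen total basis follows from (\ref{eq_coordinatesonGLVrelation}) exactly as before.
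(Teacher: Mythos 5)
You arrive at exactly the bijection the paper constructs --- $\phi$ corresponds to the automorphism whose matrix in the frame $(\Phi_{\lambda})$ is $\phi^{\ast}(\bby^{\kappa}{}_{\lambda})$ --- but you get there differently in two respects, both worth recording. First, you factor the construction through the preceding proposition: identifying the graded $\C^{\infty}_{\cS}(S)$-module endomorphisms of $\frM(\cS)$ with $\C^{\infty}_{\cS}(S) \otimes_{\R} \gl(V)$ and then applying the natural bijection $\frM_{0} \cong \frQ$ to $\gl(V)$ in place of $V$ gives you the ambient correspondence $\gMan^{\infty}(\cS,\dia{\gl(V)}) \cong \End(\frM(\cS))_{0}$ for free, after which the whole proposition reduces to matching the two invertible loci; the paper instead writes $\Psi_{\cS}$ down from scratch and re-verifies basis independence by hand, never invoking that proposition. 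Second, and more substantively, the crux --- that $F \in \End(\frM(\cS))_{0}$ whose body matrix is pointwise invertible is genuinely invertible --- is \emph{cited} by the paper (Appendix A of \cite{vsmolka2025threefold}, phrased as $\det\ul{\fF}$ being everywhere non-zero), whereas you prove it, correctly identifying both the obstacle (the even nonzero-degree coordinates are not nilpotent, so no terminating Neumann expansion exists) and its resolution ($\mathcal{J}$-adic completeness of the structure sheaf, geometric series in a chart, patching of local inverses by uniqueness). One repair is needed in that argument: your factorization $\Psi_{\phi} = D(\1 + N)$ is circular as stated, because the invertibility of the block-diagonal part $D$ ``since its body is pointwise invertible'' is an instance of the very lemma you are proving. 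The standard fix: in a chart where the body splits off, write $D = \ul{D}\,\big(\1 + \ul{D}^{-1}(D - \ul{D})\big)$, where $\ul{D}$ is invertible as a matrix over $C^{\infty}$ and the second factor is again of the form $\1 + N'$ with the entries of $N'$ in $\mathcal{J}$, hence invertible by the same geometric series --- or dispense with the two-step factorization and apply this once to the whole matrix. With that patched, your argument is complete, and is in fact more self-contained than the paper's, since it proves the one statement the paper outsources.
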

\begin{proof}
Let $\varphi: \cN \rightarrow \cS$ be a graded smooth map. We must construct a set map 
\begin{equation}
\frF(\varphi): \frF(\cS) \rightarrow \frF(\cN). 
\end{equation}
Choose a total basis $(t_{\lambda})_{\lambda=1}^{n}$ for $V$. Let us define $\Phi_{\lambda} := (-1)^{|t_{\lambda}|} 1 \otimes t_{\lambda}$. It follows that $(\Phi_{\lambda})_{\lambda=1}^{n}$ forms a frame for $\frM(\cS)$. We will use the same symbol regardless of a particular $\cS$. For any $F \in \frF(\cS)$ we can use the frame to write
\begin{equation} \label{eq_Fframedecomposition}
F(\Phi_{\lambda}) = \fF^{\kappa}{}_{\lambda} \cdot \Phi_{\kappa},
\end{equation}
for unique $\fF^{\kappa}{}_{\lambda} \in \C^{\infty}_{\cS}(S)$ of degree $|\fF^{\kappa}{}_{\lambda}| = |t_{\lambda}| - |t_{\kappa}|$. Let $\ul{\fF}$ be the $n \times n$ matrix defined as 
\begin{equation}
\ul{\fF}^{\kappa}{}_{\lambda} = \ul{\fF^{\kappa}{}_{\lambda}}.
\end{equation}
$\ul{f} \in C^{\infty}_{S}(S)$ denotes the body of the function $f \in \C^{\infty}_{\cS}(S)$. It follows that $F$ is invertible, iff $\det{\ul{\fF}}$ is everywhere non-zero. See Appendix A of \cite{vsmolka2025threefold} for the proof of this statement. We now propose $G := [\frF(\varphi)](F)$ to be given by the formula
\begin{equation} \label{eq_rfFfunctormap}
G(\Phi_{\lambda}) := \fG^{\kappa}{}_{\lambda} \cdot \Phi_{\kappa}, \text{ where } \fG^{\kappa}{}_{\lambda} := \varphi^{\ast}( \fF^{\kappa}{}_{\lambda}). 
\end{equation}
It is easy to see that $\det{\ul{\fG}} = \det{\ul{\fF}} \circ \ul{\phi}$ is everywhere non-zero, proving that $G \in \frF(\cN)$. It is straightforward to verify that $\frF$ defines a functor. Recall that the functor of points $\frP$ associated with $\GL(V)$ is just the contravariant hom-functor corresponding to $\GL(V)$, that is 
\begin{equation}
\frP := \gMan^{\infty}(-,\GL(V)),
\end{equation}
To show that $\frF \cong \frP$, we thus need to construct a bijection 
\begin{equation}
\Psi_{\cS}: \gMan^{\infty}(\cS,\GL(V)) \rightarrow \Aut(\frM(\cS)),
\end{equation}
and verify that it is natural in $\cS$. Let $\phi: \cS \rightarrow \GL(V)$ be a graded smooth map. Fix a total basis $(t_{\lambda})_{\lambda=1}^{n}$ for $V$. We thus have the induced coordinates $(\bby^{\kappa}{}_{\lambda})$ for $\GL(V)$ and the induced frame $(\Phi_{\lambda})_{\lambda=1}^{n}$ for $\frM(\cS)$. We declare
\begin{equation} \label{eq_PsicSmap}
[\Psi_{\cS}(\phi)](\Phi_{\lambda}) := \phi^{\ast}( \bby^{\kappa}{}_{\lambda}) \cdot \Phi_{\kappa}. 
\end{equation}
Write $\fF^{\kappa}{}_{\lambda} := \phi^{\ast}( \bby^{\kappa}{}_{\lambda})$. For each $s \in S$, the matrix $\ul{\fF}^{\kappa}{}_{\lambda}(s)$ is the transpose of the matrix of $\ul{\phi}(s) \in \GL(V_{\bullet})$. Consequently, $\det(\ul{\fF})$ is everywhere non-zero and thus $\Psi_{\cS}(\phi) \in \Aut(\frM(\cS))$. We invite the reader to verify that the definition does not depend on the choice of the total basis $(t_{\lambda})_{\lambda=1}^{n}$. One has to use (\ref{eq_coordinatesonGLVrelation}). Note the importance of the sign in the definition of $\Phi_{\lambda}$. The naturality in $\cS$ is easy to check. 

Conversely, if $F \in \Aut(\frM(\cS))$, one writes it as $F(\Phi_{\lambda}) = \fF^{\kappa}{}_{\lambda} \cdot \Phi_{\kappa}$, where $\fF^{\kappa}{}_{\lambda} \in \C^{\infty}_{\cS}(S)$ satisfy $|\fF^{\kappa}{}_{\lambda}| = |t_{\lambda}| - |t_{\kappa}|$. Let us first define a graded smooth map $\phi: \cS \rightarrow \dia{\gl(V)}$. Set
\begin{equation}
\phi^{\ast}(\bby^{\kappa}{}_{\lambda}) := \fF^{\kappa}{}_{\lambda},
\end{equation}
for each $\lambda,\kappa \in \{1, \dots, n\}$. Thanks to Proposition \ref{tvrz_onpullback}, this is enough to define $\phi$. It is not difficult to see that for each $s \in S$, the degree zero linear map $\ul{\phi}(s) \in \gl(V)_{0}$ is given by 
\begin{equation}
[\ul{\phi}(s)](t_{\lambda}) = \ul{\fF}^{\kappa}{}_{\lambda} t_{\kappa}. 
\end{equation}
This shows that $\ul{\phi}(s) \in \GL(V_{\bullet})$. One can thus view $\phi$ as a graded smooth map $\phi: \cS \rightarrow \GL(V)$ and set $\Psi_{\cS}^{-1}(F) := \phi$. It follows from the construction that this defines the inverse to $\Psi_{\cS}$. 
\end{proof}

To construct the inverse map $\iota: \GL(V) \rightarrow \GL(V)$, one can now utilize this new viewpoint. For each $\cS \in \gMan^{\infty}$, the set $\frF(\cS) = \Aut(\frM(\cS))$ has an obvious group structure. Let $\mathsf{m}'_{\cS}: \frF(\cS) \times \frF(\cS) \rightarrow \frF(\cS)$ denote the respective multiplication map. On the other hand, the graded smooth map $\mu: \GL(V) \times \GL(V) \rightarrow \GL(V)$ induces a set map $\sfm_{\cS}: \frP(\cS) \times \frP(\cS) \rightarrow \frP(\cS)$, where $\frP$ is the functor of points associated with $\GL(V)$. It turns out that those two maps are related by the above natural isomorphism. 

\begin{tvrz}
Using the notation of Proposition \ref{tvrz_GLVFOP} the preceding paragraph, the map $\Psi_{\cS}: \frP(\cS) \rightarrow \frF(\cS)$ is equivariant with respect to $\sfm_{\cS}$ and $\sfm'_{\cS}$, that is it fits into diagram 
\begin{equation} \label{eq_mSm'S}
\begin{tikzcd}
\frP(\cS) \times \frP(\cS) \arrow{d}{\sfm_{\cS}} \arrow{r}{\Psi_{\cS} \times \Psi_{\cS}} &[2em] \frF(\cS) \times \frF(\cS) \arrow{d}{\sfm'_{\cS}} \\
\frP(\cS) \arrow{r}{\Psi_{\cS}} & \frF(\cS)
\end{tikzcd}
\end{equation}
\end{tvrz}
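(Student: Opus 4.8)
The plan is to prove the commutativity of diagram (\ref{eq_mSm'S}) by an explicit frame computation, tracking the two ways around the square applied to an arbitrary pair of graded smooth maps $(\phi,\phi') \in \frP(\cS) \times \frP(\cS)$. I will fix a total basis $(t_{\lambda})_{\lambda=1}^{n}$ for $V$, with induced coordinates $(\bby^{\kappa}{}_{\lambda})$ on $\GL(V)$ and induced frame $(\Phi_{\lambda})_{\lambda=1}^{n}$ for $\frM(\cS)$, so that both $\Psi_{\cS}$ and $\sfm'_{\cS}$ are available in their explicit forms (\ref{eq_PsicSmap}) and matrix composition, respectively. Since an element of $\Aut(\frM(\cS))$ is uniquely determined by its action on the frame, it suffices to check that the two composite maps agree on each $\Phi_{\lambda}$.

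First I would compute the ``down-then-right'' route. Starting from $(\phi,\phi')$, the map $\sfm_{\cS}$ produces $\mu \circ (\phi,\phi')$, and then $\Psi_{\cS}$ sends this to the automorphism whose frame coefficients are $(\mu \circ (\phi,\phi'))^{\ast}(\bby^{\kappa}{}_{\lambda})$. Using the explicit form (\ref{eq_muastexplicit}) of $\mu^{\ast}$, namely $\mu^{\ast}(\bby^{\lambda}{}_{\kappa}) = \bbu^{\nu}{}_{\kappa} \bbz^{\lambda}{}_{\nu}$, together with the fact that $(\phi,\phi')^{\ast}$ acts on the product coordinates by $(\phi,\phi')^{\ast}(\bbz^{\lambda}{}_{\kappa}) = \phi^{\ast}(\bby^{\lambda}{}_{\kappa})$ and $(\phi,\phi')^{\ast}(\bbu^{\lambda}{}_{\kappa}) = \phi'^{\ast}(\bby^{\lambda}{}_{\kappa})$, I can express the resulting frame coefficients as a product $\phi'^{\ast}(\bby^{\nu}{}_{\lambda}) \cdot \phi^{\ast}(\bby^{\kappa}{}_{\nu})$. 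For the ``right-then-down'' route, I apply $\Psi_{\cS}$ to $\phi$ and $\phi'$ separately to get two automorphisms with frame matrices $\phi^{\ast}(\bby^{\kappa}{}_{\lambda})$ and $\phi'^{\ast}(\bby^{\kappa}{}_{\lambda})$, then compose them under $\sfm'_{\cS}$, which is ordinary (graded) module-automorphism composition. Evaluating the composite on $\Phi_{\lambda}$ and reading off coefficients via (\ref{eq_Fframedecomposition}) yields the matrix product of the two coefficient matrices.

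The crux of the proof is matching these two expressions, and the only genuine subtlety is bookkeeping of the order of composition and of Koszul signs. The formula (\ref{eq_muastexplicit}) places the factors in the order $\bbu \bbz$ (second argument first), which must be reconciled with the order in which $\sfm'_{\cS}$ composes automorphisms; this is exactly why the sign in the definition of $\Phi_{\lambda} := (-1)^{|t_{\lambda}|}1 \otimes t_{\lambda}$ was chosen, and it is what makes $\Psi_{\cS}$ transpose-like rather than a plain matrix assignment (see the remark in the proof of Proposition \ref{tvrz_GLVFOP} that $\ul{\fF}$ is the transpose of the matrix of $\ul{\phi}$). I expect this sign-and-order reconciliation to be the main obstacle: one must verify that the degrees $|\fF^{\kappa}{}_{\lambda}| = |t_{\lambda}|-|t_{\kappa}|$ combine correctly under the graded Leibniz/commutation rules so that no residual sign survives. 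Since both routes reduce to the same product of pullbacks of coordinate functions, the diagram commutes.

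Once the frame-level identity is established, I would note that it holds independently of the chosen total basis, as guaranteed by the basis-independence of $\Psi_{\cS}$ already proved in Proposition \ref{tvrz_GLVFOP} (via (\ref{eq_coordinatesonGLVrelation})), so the argument is complete. Alternatively, and more conceptually, I could observe that both $\sfm_{\cS}$ and $\sfm'_{\cS}$ encode the composition of endomorphisms of $V$ at the level of $\cS$-points, so the equivariance is essentially the statement that the bijection $\Psi_{\cS}$ identifies the multiplication coming from $\mu$ with honest composition in $\Aut(\frM(\cS))$; the explicit computation above is simply the verification of this identification in coordinates.
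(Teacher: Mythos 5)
Your proposal is correct and follows essentially the same route as the paper: fix a total basis, use the explicit forms of $\Psi_{\cS}$ (via (\ref{eq_PsicSmap})) and of $\mu^{\ast}$ (via (\ref{eq_muastexplicit})), and compare the frame coefficients produced by the two ways around the square. One clarification on the point you flag as the main obstacle: the sign-and-order reconciliation never actually materializes, because both automorphisms have degree zero, so $(FG)(\Phi_{\lambda}) = \fG^{\nu}{}_{\lambda}\,\fF^{\kappa}{}_{\nu}\,\Phi_{\kappa}$ with the scalars pulled out sign-free, and this is verbatim the ordered product $\phi'^{\ast}(\bby^{\nu}{}_{\lambda})\,\phi^{\ast}(\bby^{\kappa}{}_{\nu})$ arising from the $\bbu\bbz$ ordering in (\ref{eq_muastexplicit}) — no graded commutation (hence no residual Koszul sign) is ever needed, and the sign in $\Phi_{\lambda} = (-1)^{|t_{\lambda}|}\,1 \otimes t_{\lambda}$ is there to make $\Psi_{\cS}$ basis-independent, not to fix the composition order.
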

\begin{proof}
For any $F,G \in \frF(\cS)$, we have $\sfm_{\cS'}(F,G) = F G$. For $\phi,\phi' \in \frP(\cS)$, one defines $\sfm_{\cS}(\phi,\phi') := \mu \circ (\phi,\phi')$. It thus suffices to argue that for any $F,G \in \frF(\cS)$, one has 
\begin{equation}
\Psi_{\cS}^{-1}(F G) = \mu \circ (\Psi_{\cS}^{-1}(F), \Psi_{\cS}^{-1}(G)). 
\end{equation}
Both sides are graded smooth maps from $\cS$ to $\GL(V)$. It thus suffices to compare the respective pullbacks of coordinate functions. Let $(t_{\lambda})_{\lambda=1}^{n}$ be a fixed total basis for $V$. Let $(\Phi_{\lambda})_{\lambda=1}^{n}$ be the corresponding frame for $\frM(\cS)$. One has 
\begin{equation}
(F G)(\Phi_{\lambda}) = F( \fG^{\nu}{}_{\lambda} \Phi_{\nu}) = \fG^{\nu}{}_{\lambda} \fF^{\kappa}{}_{\nu} \Phi_{\kappa}, 
\end{equation}
where $\fF$ and $\fG$ denote the matrices of $F$ and $G$, respectively. 
Consequently, one has 
\begin{equation}
[ \Psi_{\cS}^{-1}(FG)]^{\ast}( \bby^{\kappa}{}_{\lambda}) = \fG^{\nu}{}_{\lambda} \fF^{\kappa}{}_{\nu}. 
\end{equation}
On the other hand, one utilizes (\ref{eq_muastexplicit}) to write 
\begin{equation}
[ \mu \circ (\Psi_{\cS}^{-1}(F), \Psi_{\cS}^{-1}(G))]^{\ast}( \bby^{\kappa}{}_{\lambda}) =  (\Psi_{\cS}^{-1}(F), \Psi^{-1}_{\cS}(G))^{\ast}( \bbu^{\nu}{}_{\lambda} \bbz^{\kappa}{}_{\nu}) = \fG^{\nu}{}_{\lambda} \fF^{\kappa}{}_{\nu}. 
\end{equation}
We see that the both expressions are equal for all $\lambda,\kappa \in \{1,\dots,n\}$ and the proof is finished. 
\end{proof}
We can now proceed to the definition of the inverse. 

\begin{tvrz}
For each $\cS \in \gMan^{\infty}$, let $\sfi'_{\cS}: \frF(\cS) \rightarrow \frF(\cS)$ be the inverse with respect to $\sfm'_{\cS}$. Let us define $\sfi_{\cS}: \frP(\cS) \rightarrow \frP(\cS)$ to fit into the diagram
\begin{equation} \label{eq_iSi'S}
\begin{tikzcd}
\frP(\cS) \arrow[dashed]{d}{\sfi_{\cS}} \arrow{r}{\Psi_{\cS}} & \frF(\cS) \arrow{d}{\sfi'_{\cS}} \\
\frP(\cS) \arrow{r}{\Psi_{\cS}} & \frF(\cS)
\end{tikzcd}
\end{equation}
Then $\sfi := \{ \sfi_{\cS} \}_{\cS}$ forms a natural transformation $\sfi: \frP \rightarrow \frP$. Consequently, $\sfi$ is induced by a unique graded smooth map $\iota: \GL(V) \rightarrow \GL(V)$. 
\end{tvrz}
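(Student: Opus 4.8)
The plan is to avoid constructing $\iota$ by hand and instead transport the genuine, set-theoretic inversion available on the automorphism groups $\frF(\cS)=\Aut(\frM(\cS))$ across the natural isomorphism $\Psi$, and then to invoke the Yoneda lemma. The defining diagram (\ref{eq_iSi'S}) simply reads $\sfi_{\cS}=\Psi_{\cS}^{-1}\circ\sfi'_{\cS}\circ\Psi_{\cS}$, and this is well-posed because $\Psi_{\cS}$ is a bijection while $\sfi'_{\cS}$ is the honest inversion in the group $(\frF(\cS),\sfm'_{\cS})$; thus no existence problem arises on the right-hand side and $\sfi_{\cS}$ automatically lands in $\frP(\cS)$. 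Consequently the entire content of the statement reduces to the \emph{naturality} of the family $\sfi=\{\sfi_{\cS}\}$, after which representability of $\frP$ finishes the job.

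First I would upgrade $\frF$ to a functor valued in $\Grp$. For a graded smooth map $\varphi\colon\cN\to\cS$, the arrow map $\frF(\varphi)$ sends an automorphism with matrix $\fF^{\kappa}{}_{\lambda}$ to the one with matrix $\varphi^{\ast}(\fF^{\kappa}{}_{\lambda})$, see (\ref{eq_rfFfunctormap}). Since composition of degree-zero module automorphisms corresponds to matrix multiplication and $\varphi^{\ast}$ is a unital homomorphism of graded algebras, one has $\varphi^{\ast}(\fF^{\kappa}{}_{\nu}\fG^{\nu}{}_{\lambda})=\varphi^{\ast}(\fF^{\kappa}{}_{\nu})\,\varphi^{\ast}(\fG^{\nu}{}_{\lambda})$ and $\varphi^{\ast}$ fixes the identity matrix; hence $\frF(\varphi)$ is a group homomorphism. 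A group homomorphism commutes with inversion, so
\begin{equation*}
\frF(\varphi)\circ\sfi'_{\cS}=\sfi'_{\cN}\circ\frF(\varphi),
\end{equation*}
which is exactly the naturality of $\sfi'\colon\frF\to\frF$.

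It then remains to conjugate this naturality back through $\Psi$. Using $\sfi_{\cS}=\Psi_{\cS}^{-1}\circ\sfi'_{\cS}\circ\Psi_{\cS}$ together with the naturality squares of the isomorphism $\Psi$ (and hence of $\Psi^{-1}$), a short diagram chase gives, for every $\varphi\colon\cN\to\cS$,
\begin{align*}
\frP(\varphi)\circ\sfi_{\cS}
&=\Psi_{\cN}^{-1}\circ\frF(\varphi)\circ\sfi'_{\cS}\circ\Psi_{\cS}\\
&=\Psi_{\cN}^{-1}\circ\sfi'_{\cN}\circ\frF(\varphi)\circ\Psi_{\cS}
=\sfi_{\cN}\circ\frP(\varphi),
\end{align*}
so $\sfi$ is a natural transformation $\frP\to\frP$. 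Finally, since $\frP=\gMan^{\infty}(-,\GL(V))$ is representable, the Yoneda lemma identifies $\Nat(\frP,\frP)$ with $\gMan^{\infty}(\GL(V),\GL(V))$; the morphism corresponding to $\sfi$ is the required unique $\iota$, explicitly $\iota:=\sfi_{\GL(V)}(\1_{\GL(V)})$.

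I expect no serious obstacle: the argument is transport of structure followed by Yoneda. The only point demanding care is the first step, namely checking that $\frF$ really factors through $\Grp$, i.e. that $\frF(\varphi)$ respects products and the unit. This is where the graded signs built into the frame $\Phi_{\lambda}$ and into matrix multiplication must be tracked, since the naturality of $\sfi'$ — and therefore the whole proof — rests on $\frF(\varphi)$ being a homomorphism rather than merely a map of sets.
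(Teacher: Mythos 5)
Your proposal is correct and follows essentially the same route as the paper: reduce naturality of $\sfi$ to naturality of $\sfi'$ by conjugating through the natural isomorphism $\Psi$, obtain naturality of $\sfi'$ from the fact that each $\frF(\varphi)$ respects the group structure (multiplicativity plus uniqueness of inverses), and then invoke Yoneda with the explicit formula $\iota=\sfi_{\GL(V)}(\1_{\GL(V)})$. The only cosmetic difference is that you verify the homomorphism property of $\frF(\varphi)$ directly at the level of matrices, whereas the paper cites the definition (\ref{eq_rfFfunctormap}) and the naturality of the multiplication $\sfm'_{\cS}$; these amount to the same computation.
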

\begin{proof}
Since $\Psi_{\cS}$ are components of the natural isomorphism $\Psi: \frP \rightarrow \frF$, it suffices to verify that $\sfi' := \{ \sfi'_{\cS} \}_{\cS}$ defines a natural transformation $\sfi': \frF \rightarrow \frF$. But this follows from the uniqueness of inverses together with the fact that the multiplication $\sfm'_{\cS}: \frF(\cS) \times \frF(\cS) \rightarrow \frF(\cS)$ is natural in $\cS$ in the following sense: For any graded smooth map $\varphi: \cN \rightarrow \cS$, one has 
\begin{equation}
\sfm'_{\cN} \circ (\frF(\varphi) \times \frF(\varphi)) = \frF(\varphi) \circ \sfm'_{\cS}. 
\end{equation}
This follows immediately from the fact how $\frF(\varphi)$ was defined, see (\ref{eq_rfFfunctormap}). Equivalently, we can use (\ref{eq_mSm'S}) together with the similarly formulated naturality of $\sfm_{\cS}$. 

This proves that $\sfi: \frP \rightarrow \frP$ is a natural transformation. The fact that it is induced by a unique graded smooth map $\iota: \GL(V) \rightarrow \GL(V)$ corresponds to the fact that the Yoneda embedding is a full and faithful functor. Explicitly, it is obtained as 
\begin{equation} \label{eq_iota}
\iota = \sfi_{\GL(V)}( \1_{\GL(V)}). 
\end{equation}
It follows that for any $\cS \in \gMan^{\infty}$ and $\phi \in \frP(\cS)$, one has $\sfi_{\cS}(\phi) = \iota \circ \phi$. 
\end{proof}

We can now prove the main claim of this section.

\begin{theorem}
$(\GL(V), \mu, \iota, e)$ forms a graded Lie group called the \textbf{general linear group} of $V$. 
\end{theorem}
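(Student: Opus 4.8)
The plan is to verify the three group-object axioms (g1)–(g3) by invoking Proposition \ref{tvrz_gradedLGintermsofsets}, thereby reducing the question to an elementary check at the level of sets. Concretely, for each graded manifold $\cS$ I would consider the set $\frP(\cS) = \gMan^{\infty}(\cS, \GL(V))$ equipped with the three maps $\sfm_{\cS}$, $\sfi_{\cS}$, $\sfe_{\cS}$ induced by $\mu$, $\iota$, $e$; by that proposition, $(\GL(V),\mu,\iota,e)$ is a graded Lie group if and only if $(\frP(\cS), \sfm_{\cS}, \sfi_{\cS}, \sfe_{\cS})$ is an ordinary group for every $\cS$. The whole burden is then transported, via the natural isomorphism $\Psi_{\cS}: \frP(\cS) \to \frF(\cS) = \Aut(\frM(\cS))$ constructed in Proposition \ref{tvrz_GLVFOP}, to the already-manifestly-a-group object $\Aut(\frM(\cS))$.

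First I would recall what has already been established: the diagram (\ref{eq_mSm'S}) shows that $\Psi_{\cS}$ intertwines $\sfm_{\cS}$ with the genuine composition $\sfm'_{\cS}$ on $\Aut(\frM(\cS))$, and the inverse $\sfi_{\cS}$ was \emph{defined} through (\ref{eq_iSi'S}) precisely so that $\Psi_{\cS}$ intertwines it with the genuine inversion $\sfi'_{\cS}$. The remaining piece is to check that $\Psi_{\cS}$ carries the prospective unit $\sfe_{\cS}(\ast) = e_{\cS}$ to the identity endomorphism $\1_{\frM(\cS)} \in \Aut(\frM(\cS))$. Since $e$ corresponds to $\1_{V} \in \GL(V_{\bullet})$, the map $e_{\cS}$ has $e_{\cS}^{\ast}(\bby^{\kappa}{}_{\lambda}) = \delta^{\kappa}_{\lambda}$, so by the defining formula (\ref{eq_PsicSmap}) one gets $[\Psi_{\cS}(e_{\cS})](\Phi_{\lambda}) = \delta^{\kappa}_{\lambda}\Phi_{\kappa} = \Phi_{\lambda}$, i.e. $\Psi_{\cS}(e_{\cS}) = \1_{\frM(\cS)}$. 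With these three compatibilities in hand, $\Psi_{\cS}$ becomes a bijection transporting the full group structure of $\Aut(\frM(\cS))$ onto $\frP(\cS)$, so $\frP(\cS)$ is a group for every $\cS$.

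Having shown $(\frP(\cS),\sfm_{\cS},\sfi_{\cS},\sfe_{\cS})$ is a group for all $\cS$, Proposition \ref{tvrz_gradedLGintermsofsets} immediately yields that $(\GL(V),\mu,\iota,e)$ is a graded Lie group. I expect no serious obstacle here, since the functor-of-points identification with $\Aut(\frM(\cS))$ was set up in the preceding propositions exactly to make this final step automatic; the only genuinely new verifications are the unit compatibility $\Psi_{\cS}(e_{\cS}) = \1_{\frM(\cS)}$ above and a routine confirmation that $\sfe_{\cS}$, $\sfm_{\cS}$, $\sfi_{\cS}$ are the set-level operations named in Proposition \ref{tvrz_gradedLGintermsofsets}. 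The mildly delicate point worth stating explicitly is that all three bijections $\Psi_{\cS}$ are the components of a single \emph{natural} isomorphism $\Psi: \frP \to \frF$, so the transported operations are automatically compatible across all $\cS$; this naturality is what guarantees that the induced $\iota$ exists as a bona fide graded smooth map via Yoneda, as was already extracted in (\ref{eq_iota}).
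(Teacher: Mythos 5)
Your proposal is correct and follows essentially the same route as the paper: reduce via Proposition \ref{tvrz_gradedLGintermsofsets} to checking that each $\frP(\cS)$ is a group, then transport the obvious group structure of $\Aut(\frM(\cS))$ through the natural isomorphism $\Psi_{\cS}$, using the already-established compatibilities (\ref{eq_mSm'S}) and (\ref{eq_iSi'S}). Your explicit verification that $\Psi_{\cS}(e_{\cS}) = \1_{\frM(\cS)}$ via $e_{\cS}^{\ast}(\bby^{\kappa}{}_{\lambda}) = \delta^{\kappa}_{\lambda}$ is exactly the step the paper dismisses as ``easy to see,'' so you have merely filled in that routine detail.
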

\begin{proof}
We shall utilize Proposition \ref{tvrz_gradedLGintermsofsets}. One thus has to argue that $(\frP(\cS),\sfm_{\cS},\sfi_{\cS},\sfe_{\cS})$ is a group for every $\cS \in \gMan^{\infty}$. Let $\sfe'_{\cS} := \Psi_{\cS} \circ \sfe_{\cS}: \{ \ast \} \rightarrow \frF(\cS)$. It is easy to see that $\sfe'_{\cS}(\ast) = \1_{\frM(\cS)}$. Thanks to the commutativity of (\ref{eq_mSm'S}) and (\ref{eq_iSi'S}), the claim is equivalent to showing that $(\frF(\cS), \sfm'_{\cS}, \sfi'_{\cS}, \sfe'_{\cS})$ forms a group. But that is obvious. This finishes the proof. 
\end{proof}

\begin{tvrz}
The graded Lie algebra associated with $\GL(V)$ can be canonically identified with $(\gl(V),[\cdot,\cdot])$, where $[\cdot,\cdot]$ is the graded commutator (\ref{eq_gcommutator}). 
\end{tvrz}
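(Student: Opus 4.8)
The plan is to render the vector-space identification explicit and then to check that the left-invariant bracket \eqref{eq_Liebracket} reproduces the graded commutator \eqref{eq_gcommutator}. Since $\GL(V)$ is an open submanifold of $\dia{\gl(V)}$ and the unit is $e = \1_{V} \in \GL(V_{\bullet})$, its tangent space at $e$ coincides with $T_{\1_{V}}\dia{\gl(V)}$. Applying Proposition \ref{tvrz_tangentVSidentification} to $W = \gl(V)$ at the point $\1_{V} \in \gl(V)_{0}$ then furnishes a canonical isomorphism of graded vector spaces $\g := T_{e}\GL(V) \cong \gl(V)$. Fixing a total basis $(t_{\lambda})_{\lambda=1}^{n}$ of $V$, the associated standard basis $\Delta_{\lambda}{}^{\kappa}$ of $\gl(V)$ and the coordinates $(\bby^{\lambda}{}_{\kappa})$ on $\GL(V)$, this isomorphism reads $\Delta_{\lambda}{}^{\kappa} \mapsto (-1)^{|\Delta_{\lambda}{}^{\kappa}|}\, \partial_{\bby^{\lambda}{}_{\kappa}}|_{e}$. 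All that remains is to transport the bracket.

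Next I would compute the left-invariant vector field $x^{L}$ attached to $x \in \gl(V)$ in these coordinates. Writing $x^{L} = \fc^{\lambda}{}_{\kappa}\, \partial_{\bby^{\lambda}{}_{\kappa}}$ for some functions $\fc^{\lambda}{}_{\kappa}$ and imposing the defining relation $(1 \otimes x^{L})(\mu^{\ast} f) = \mu^{\ast}(x^{L} f)$ on $f = \bby^{\lambda}{}_{\kappa}$, one feeds in the explicit multiplication \eqref{eq_muastexplicit}, $\mu^{\ast}(\bby^{\lambda}{}_{\kappa}) = \bbu^{\nu}{}_{\kappa}\, \bbz^{\lambda}{}_{\nu}$, and matches coefficients. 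The outcome is that $\fc^{\lambda}{}_{\kappa}$ is linear in the coordinates, giving the graded analogue of the classical field $g \mapsto g x$:
\[
x^{L} = \pm\, \bby^{\lambda}{}_{\nu}\, x^{\nu}{}_{\kappa}\, \partial_{\bby^{\lambda}{}_{\kappa}},
\]
where $x^{\nu}{}_{\kappa}$ are the entries of $x$ in the standard basis and the sign is the one dictated by Proposition \ref{tvrz_tangentVSidentification}. Since $\bby^{\lambda}{}_{\kappa}(e) = \delta^{\lambda}_{\kappa}$, one checks $x^{L}|_{e} = x$, consistent with the identification above.

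Finally I would bracket two such fields. Because the coefficient of each $x^{L}$ is linear in $\bby$ with constant ``slope'' $x^{\nu}{}_{\kappa}$, the graded commutator $[x^{L}, y^{L}]$ is again of the same linear type, and a short computation (in which $\partial_{\bby}$ is commuted past the linear coefficients, producing the Koszul signs) shows that its slope is exactly $xy - (-1)^{|x||y|} yx$; that is, $[x^{L}, y^{L}] = [x,y]^{L}$ with $[x,y]$ the graded commutator \eqref{eq_gcommutator}. Evaluating at $e$ through \eqref{eq_Liebracket} and using $\bby^{\lambda}{}_{\kappa}(e) = \delta^{\lambda}_{\kappa}$ then yields $[x,y]_{\g} = [x^{L}, y^{L}]|_{e} = xy - (-1)^{|x||y|} yx$, which is the claimed identification.

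The step I expect to be the main obstacle is none of the conceptual ones but rather the sign bookkeeping: the factor $(-1)^{|\Delta_{\lambda}{}^{\kappa}|}$ in the tangent identification, the Koszul signs arising when $1 \otimes x^{L}$ obeys the graded Leibniz rule on the product $\bbu^{\nu}{}_{\kappa}\, \bbz^{\lambda}{}_{\nu}$, and the sign convention in the graded commutator of vector fields must all conspire to deliver \eqref{eq_gcommutator} rather than its negative. As an independent sanity check that also pins down the overall sign, one may argue through the functor of points: under the natural isomorphism $\frP \cong \frF$ of Proposition \ref{tvrz_GLVFOP}, the group law on $\frF(\cS) = \Aut(\frM(\cS))$ is honest composition of module endomorphisms, whose infinitesimal commutator is precisely the associative graded commutator on $\gl(V)$, making the final answer \eqref{eq_gcommutator} transparent.
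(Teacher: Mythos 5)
Your proposal follows essentially the same route as the paper's proof: identify $T_{e}\GL(V) \cong \gl(V)$ via Proposition \ref{tvrz_tangentVSidentification}, compute the left-invariant field of $A \in \gl(V)$ in the coordinates $(\bby^{\lambda}{}_{\kappa})$ as the linear field $x_{A}^{L} = (-1)^{|t_{\nu}|-|t_{\kappa}|}\,\fA_{\kappa}{}^{\nu}\, \bby^{\lambda}{}_{\nu}\, \frac{\partial}{\partial \bby^{\lambda}{}_{\kappa}}$ (the paper gets this from the closed formula $x_{A}^{L} = (\1_{\GL(V)},e_{\GL(V)})^{\ast} \circ (1 \otimes \ol{x}_{A}) \circ \mu^{\ast}$ of Theorem 3.16 in \cite{Smolka2023} applied to the constant extension $\ol{x}_{A}$, rather than by solving the invariance equation with an ansatz, but the computation is the same), and then verify $[x_{A}^{L},x_{B}^{L}] = x_{[A,B]}^{L}$ and evaluate at $e$. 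The only part you defer — the sign bookkeeping — is exactly what the paper carries out explicitly, confirming that the bracket is \eqref{eq_gcommutator} and not its negative.
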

\begin{proof}
We know that there is a canonical isomorphism $\gl(V) \cong T_{e} (\dia{\gl(V)})$, see (\ref{eq_tangentVSidentification}). Since $\GL(V)$ is just a restriction of $\dia{\gl(V)}$ to the open subset $\GL(V_{\bullet})$, there is a canonical isomorphism
\begin{equation} \label{eq_glVTeGLViso}
\gl(V) \cong T_{e} \GL(V). 
\end{equation}
Let $A \in \gl(V)$. We can write $A = \fA_{\mu}{}^{\nu} \Delta_{\nu}{}^{\mu}$, where $\fA_{\mu}{}^{\nu} \in \R$ satisfy $|\fA_{\mu}{}^{\nu}| = |A| + |t_{\mu}| - |t_{\nu}|$. The corresponding tangent vector $x_{A} \in T_{e} \GL(V)$ under the isomorphism (\ref{eq_glVTeGLViso}) is given by
\begin{equation}
x_{A} = (-1)^{|t_{\nu}| - |t_{\mu}|} \fA_{\mu}{}^{\nu} \frac{\partial}{\partial \bby^{\nu}{}_{\mu}}|_{e}. 
\end{equation}
We want to calculate the corresponding left-invariant vector field $x_{A}^{L}$ on $\GL(V)$. Let $\ol{x}_{A}$ be the vector field extending $x_{A}$ in the simplest possible way, namely
\begin{equation} \label{eq_barxA}
\ol{x}_{A} = (-1)^{|t_{\nu}| - |t_{\mu}|} \fA_{\mu}{}^{\nu} \frac{\partial}{\partial \bby^{\nu}{}_{\mu}}.
\end{equation}
Clearly $\ol{x}_{A}|_{e} = x_{A}$. It follows from Theorem 3.16 in \cite{Smolka2023} that $x_{A}^{L}$ is can be written as a composition
\begin{equation}
x_{A}^{L} = (\1_{\GL(V)}, e_{\GL(V)})^{\ast} \circ (1 \otimes \ol{x}_{A}) \circ \mu^{\ast}. 
\end{equation}
This can be evaluated easily using (\ref{eq_muastexplicit}) and (\ref{eq_barxA}) to find
\begin{equation} \label{eq_xAL}
x_{A}^{L} = (-1)^{|t_{\nu}| - |t_{\kappa}|} \fA_{\kappa}{}^{\nu} \bby^{\lambda}{}_{\nu} \frac{\partial}{\partial \bby^{\lambda}{}_{\kappa}}.
\end{equation}
Plugging this into the graded commutator, we obtain 
\begin{equation}
[x_{A}^{L},x_{B}^{L}] = (-1)^{|t_{\nu}| - |t_{\kappa}|} \big( (-1)^{|A|(|B| + |t_{\kappa}| - |t_{\mu}|)} \fB_{\kappa}{}^{\mu} \fA_{\mu}{}^{\nu} - (-1)^{|B|(|t_{\kappa}| - |t_{\mu}|)} \fA_{\kappa}{}^{\mu} \fB_{\mu}{}^{\nu} \big) \bby^{\lambda}{}_{\nu} \frac{\partial}{\partial \bby^{\lambda}{}_{\kappa}}.
\end{equation}
But the combination in the large parentheses is in fact precisely the matrix of the graded commutator $[A,B]$ with respect to the total basis $(t_{\lambda})_{\lambda=1}^{n}$, and it thus follows from (\ref{eq_xAL}) that 
\begin{equation}
[x^{L}_{A},x^{L}_{B}] = x_{[A,B]}^{L}. 
\end{equation}
This shows that the image of $[A,B]$ under the isomorphism (\ref{eq_glVTeGLViso}) is precisely $[x_{A},x_{B}]_{\g}$ defined by (\ref{eq_Liebracket}) for $\g := T_{e}\GL(V)$. This finishes the proof. 
\end{proof}
\section{Graded orthogonal group} \label{sec_GOVg}
Let $V \in \gVect$ and fix a degree $\ell$ metric $g$ on $V$. We intend to construct a graded Lie subgroup $(\gO(V,g),j)$ of $\GL(V)$. We expect its Lie algebra to be identified with the subspace $\ao(V,g) \subseteq \gl(V)$ consisting of graded linear maps skew-symmetric with respect to $g$. If $V$ is trivially graded, that is an ordinary finite-dimensional real vector space, and $\ell = 0$, we expect to obtain the ordinary orthogonal Lie group. The whole procedure will secretly mimic the standard construction and we invite the reader to find the analogies. 

\begin{enumerate}[(1)] 
\item First, recall that we have a subspace $\Sym(V,g) \subseteq \gl(V)$. Let $i: \Sym(V,g) \rightarrow \gl(V)$ denote the inclusion. It follows from Proposition \ref{tvrz_subspacesubmanifold} that $( \dia{\Sym(V,g)}, \dia{i})$ forms a closed embedded submanifold of $\dia{\gl(V)}$. 

Let $k: \GL(V) \rightarrow \dia{\gl(V)}$ denote the embedding of the open submanifold $\GL(V)$. It is obviously transversal to the map $\dia{i}$ and we can form the pullback 
\begin{equation} \label{eq_Symcross}
\begin{tikzcd}
\Sym^{\times}(V,g) \arrow[dashed]{r}{k'} \arrow[dashed]{d}{\dia{i}'}  & \dia{\Sym(V,g)} \arrow{d}{\dia{i}} \\
\GL(V) \arrow{r}{k} & \dia{\gl(V)} 
\end{tikzcd}
\end{equation}
such that $(\Sym^{\times}(V,g), \dia{i}')$ forms a closed embedded submanifold of $\GL(V)$. See Theorem 7.43 in \cite{Vysoky:2022gm} for details.  Note that its underlying subset is 
\begin{equation}
\Sym^{\times}_{0}(V,g) := \Sym(V,g)_{0} \cap \GL(V_{\bullet}),
\end{equation}
that is a set of degree zero invertible endomorphisms of $V$ symmetric with respect to $g$. 

\item Recall that we have a degree zero graded linear map $\tau: \gl(V) \rightarrow \gl(V)$ defined by (\ref{eq_taumap}). We can thus promote it to a graded smooth map $\dia{\tau}: \dia{\gl(V)} \rightarrow \dia{\gl(V)}$. Let us argue that it defines a graded smooth map $\tau^{\times}: \GL(V) \rightarrow \GL(V)$ fitting into the commutative diagram
\begin{equation}
\begin{tikzcd}
\GL(V) \arrow[dashed]{r}{\tau^{\times}} \arrow{d}{k} & \GL(V) \arrow{d}{k} \\
\dia{\gl(V)} \arrow{r}{\dia{\tau}} & \dia{\gl(V)} 
\end{tikzcd}
\end{equation}
Since $(\GL(V),k)$ is just an open submanifold of $\dia{\gl(V)}$, it suffices to check that $\ul{\dia{\tau}}(\GL(V_{\bullet})) \subseteq \GL(V_{\bullet})$. But this follows from the fact that for any $A \in \GL(V_{\bullet})$, one has $\ul{\dia{\tau}}(A) = \tau(A)$ and $\tau(A)^{-1} = \tau(A^{-1})$. The crucial property of $\tau^{\times}$ is the following one:
\begin{tvrz} \label{tvrz_tauantihom}
$\tau^{\times}: \GL(V) \rightarrow \GL(V)$ is an anti-homomorphism, that is it fits into the commutative diagram 
\begin{equation} \label{eq_tauantihom}
\begin{tikzcd}
\GL(V) \times \GL(V) \arrow{d}{\sigma} \arrow{r}{\mu} & \GL(V) \arrow{dd}{\tau^{\times}}\\
\GL(V) \times \GL(V) \arrow{d}{\tau^{\times} \times \tau^{\times}} & \\
\GL(V) \times \GL(V) \arrow{r}{\mu} & \GL(V)
\end{tikzcd},
\end{equation}
where $\sigma$ is the obvious ``flip map''. 
\end{tvrz}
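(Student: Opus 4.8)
The plan is to isolate the purely algebraic identity underlying the proposition and then promote it to the geometric diagram (\ref{eq_tauantihom}) in a way that cleanly handles the graded flip. The algebraic heart is the claim that, for homogeneous $A,B \in \gl(V)$,
\begin{equation}
\tau(AB) = (-1)^{|A||B|} \tau(B)\tau(A).
\end{equation}
This is immediate from the definition (\ref{eq_taumap}), the reversal rule (\ref{eq_comptranspose}), and the cancellation $g g^{-1} = \1_{V}$: expanding $\tau(AB) = (-1)^{\ell(|A|+|B|)} g^{-1}(AB)^{T} g$ with $(AB)^{T} = (-1)^{|A||B|} B^{T} A^{T}$ yields $(-1)^{\ell(|A|+|B|)+|A||B|} g^{-1} B^{T} A^{T} g$, whereas $\tau(B)\tau(A) = (-1)^{\ell(|A|+|B|)} g^{-1} B^{T} A^{T} g$ after inserting $g g^{-1}$ in the middle. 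The Koszul factor $(-1)^{|A||B|}$ here is exactly what the graded flip $\sigma$ is built to absorb in (\ref{eq_tauantihom}).

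To upgrade this to the commuting diagram I would pass to the functor of points rather than grind through coordinate pullbacks. Since $\GL(V) \times \GL(V)$ is a categorical product in $\gMan^{\infty}$ and $\sigma$ is its symmetry isomorphism (characterised by $\pr_{1} \circ \sigma = \pr_{2}$ and $\pr_{2} \circ \sigma = \pr_{1}$), the map induced by $\sigma$ on the $\Set$-valued points $\frP(\cS) \times \frP(\cS)$ is the bare swap $(\phi,\phi') \mapsto (\phi',\phi)$, with all Koszul signs invisible at this level. Evaluating both legs of (\ref{eq_tauantihom}) on an arbitrary pair $(\phi,\phi')$ and using that $\mu$ induces the multiplication $\sfm_{\cS}$, the diagram becomes equivalent to the set-level identity
\begin{equation}
T_{\cS}(\sfm_{\cS}(\phi,\phi')) = \sfm_{\cS}(T_{\cS}(\phi'), T_{\cS}(\phi)), \qquad T_{\cS}(\phi) := \tau^{\times} \circ \phi,
\end{equation}
required for all $\cS \in \gMan^{\infty}$; by the Yoneda lemma this is enough.

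I would then transport this through the natural isomorphism $\Psi \colon \frP \to \frF$ of Proposition \ref{tvrz_GLVFOP}, under which $\sfm_{\cS}$ becomes composition in $\Aut(\frM(\cS))$ by the equivariance diagram (\ref{eq_mSm'S}). Setting $T'_{\cS} := \Psi_{\cS} \circ T_{\cS} \circ \Psi_{\cS}^{-1}$, the statement reduces to showing that $T'_{\cS}$ is a plain anti-automorphism of the group $\Aut(\frM(\cS))$, that is $T'_{\cS}(FG) = T'_{\cS}(G)\,T'_{\cS}(F)$ with no residual sign. Here $T'_{\cS}$ is the self-map induced by $\dia{\tau}$: if $F$ has matrix $\fF$ in the frame $(\Phi_{\lambda})$, then the matrix of $T'_{\cS}(F)$ is obtained from $\fF$ by the $\C^{\infty}_{\cS}(S)$-linear operation read off from $\dia{\tau}^{\ast}$ via (\ref{eq_diaApullback}), which is exactly the $g$-conjugated graded transpose $\fF \mapsto (\pm)\, g^{-1} \fF^{T} g$ over the ring $\C^{\infty}_{\cS}(S)$. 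The anti-homomorphism property is then the ring-level incarnation of (\ref{eq_comptranspose}): the transpose reverses matrix products and the $g$-conjugation cancels in the middle, precisely mirroring the algebraic identity above.

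The main obstacle is the sign bookkeeping concentrated in the identification of $T'_{\cS}$, i.e. computing the matrix of $\tau$ in the standard basis and verifying that the graded transpose over $\C^{\infty}_{\cS}(S)$ obeys the reversal rule without spawning an extra Koszul factor. The payoff of the functor-of-points route is that it quarantines the one genuinely delicate geometric point, the interaction of the graded flip $\sigma$ with the sign in the algebraic identity, into the elementary fact that $\sigma$ acts as a bare swap on $\Set$-valued points. A fully explicit alternative, bypassing $\Psi$, is to compare the two pullbacks of $\bby^{\lambda}{}_{\kappa}$ directly using (\ref{eq_muastexplicit}), the formula for $\dia{\tau}^{\ast}$, and $\sigma^{\ast}(\bbz^{\lambda}{}_{\kappa}) = \bbu^{\lambda}{}_{\kappa}$, $\sigma^{\ast}(\bbu^{\lambda}{}_{\kappa}) = \bbz^{\lambda}{}_{\kappa}$; there the same reversal identity resurfaces after reordering graded-commuting coordinates, at the cost of a considerably heavier sign computation.
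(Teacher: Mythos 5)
Your proposal is correct, and it reaches the statement by a genuinely different route than the paper. Both arguments share the same algebraic core, the identity $\tau(AB) = (-1)^{|A||B|}\tau(B)\tau(A)$, which is exactly the paper's (\ref{eq_tauoriginalantihom}) and which you verify the same way. The difference is in how this identity gets promoted to the diagram (\ref{eq_tauantihom}). The paper stays inside $\gVect$: it encodes the identity as a commutative square involving $\beta'$ and the tensor flip $\sigma'$, applies the functor $\diamond$, checks that the auxiliary squares and triangles relating $\dia{\alpha}$, $\dia{\beta}$, $\dia{\beta}'$, $\sigma_{0}$ and $\dia{\sigma}'$ commute, and then restricts along the open embedding $k$, using that $k$ is a monomorphism; no functor-of-points machinery appears, and all sign content is quarantined in the one-line algebraic identity. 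You instead test the diagram on $\cS$-points and invoke Yoneda, transporting through the isomorphism $\Psi$ of Proposition \ref{tvrz_GLVFOP} and the equivariance (\ref{eq_mSm'S}) -- legitimately, since both are established before Section \ref{sec_GOVg}, so there is no circularity -- which reduces the claim to the signless anti-automorphism property $T'_{\cS}(FG) = T'_{\cS}(G)\,T'_{\cS}(F)$ in the ordinary group $\Aut(\frM(\cS))$. Your observation that the flip acts as a bare swap on points is the genuine conceptual gain of this route. Its cost is that the signs do not vanish but migrate into the identification of $T'_{\cS}$ with a $g$-conjugated transpose over $\C^{\infty}_{\cS}(S)$, which you correctly flag as the main obstacle but leave unexecuted; this is precisely the kind of computation (the matrix of $\dia{\tau}^{\ast}$ as in (\ref{eq_taucoordexpression}), the signs built into $\Phi_{\lambda}$ and $\Psi_{\cS}$, the signed inverse relation (\ref{eq_ginversegmatrix})) that fills the paper's Part II, so as written your plan defers most of the actual work. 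It can, however, be closed without any matrices: under the identification of $\cS \mapsto (\C^{\infty}_{\cS}(S) \otimes_{\R} \gl(V))_{0}$ with the functor of points of $\dia{\gl(V)}$, the map induced by $\dia{\tau}$ is $(\1 \otimes \tau)$ restricted to degree zero, and composition in $\Aut(\frM(\cS))$ corresponds to the product $(f \otimes A)(h \otimes B) = (-1)^{|A||h|} fh \otimes AB$; writing $F = \sum_i f_i \otimes A_i$ and $G = \sum_j h_j \otimes B_j$ with $|f_i| = -|A_i|$ and $|h_j| = -|B_j|$, every Koszul factor cancels against the $(-1)^{|A_i||B_j|}$ supplied by (\ref{eq_tauoriginalantihom}), and the signless reversal drops out in two lines. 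With that step made explicit, your argument is a complete and valid alternative to the paper's proof.
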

We postpone the proof of this statement to Subsection \ref{subsec_technical1}. 

\item Let us consider a graded smooth map $\varphi^{\times} := \mu \circ (\tau^{\times}, \1_{\GL(V)}): \GL(V) \rightarrow \GL(V)$. 

\begin{tvrz} \label{tvrz_varphi}
There is a unique graded smooth map $\varphi: \GL(V) \rightarrow \Sym^{\times}(V,g)$ fitting into the commutative diagram
\begin{equation} \label{eq_varphidiagram}
\begin{tikzcd}
& \Sym^{\times}(V,g) \arrow{d}{\dia{i}'} \\
\GL(V) \arrow[dashed]{ur}{\varphi} \arrow{r}{\varphi^{\times}} & \GL(V) 
\end{tikzcd}
\end{equation} 
\end{tvrz}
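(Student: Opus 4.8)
The plan is to exploit that $\Sym^{\times}(V,g)$ was built as a pullback in (\ref{eq_Symcross}). By the universal property of that pullback, producing a $\varphi$ with $\dia{i}' \circ \varphi = \varphi^{\times}$ is the same as producing a graded smooth map $h: \GL(V) \rightarrow \dia{\Sym(V,g)}$ making the outer square commute, i.e. one satisfying $\dia{i} \circ h = k \circ \varphi^{\times}$. Everything therefore reduces to a single claim: the composite $k \circ \varphi^{\times}: \GL(V) \rightarrow \dia{\gl(V)}$ factors through the closed embedded submanifold $(\dia{\Sym(V,g)}, \dia{i})$. Uniqueness of $\varphi$ is then automatic, since $\dia{i}'$ is an embedding, hence a monomorphism in $\gMan^{\infty}$.

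The heart of the matter is that $\varphi^{\times}$ is fixed by $\tau^{\times}$. First I would record that $(\tau^{\times})^{2} = \1_{\GL(V)}$: indeed $\tau^{\times}$ is the restriction of $\dia{\tau}$, and functoriality of $\diamond$ together with $\tau^{2} = \1_{\gl(V)}$ (Proposition \ref{tvrz_taumap}) gives $\dia{\tau} \circ \dia{\tau} = \dia{(\tau^{2})} = \1$. Using the anti-homomorphism property of Proposition \ref{tvrz_tauantihom} in the form $\tau^{\times} \circ \mu = \mu \circ (\tau^{\times} \times \tau^{\times}) \circ \sigma$, I then compute
\begin{equation}
\tau^{\times} \circ \varphi^{\times} = \tau^{\times} \circ \mu \circ (\tau^{\times}, \1_{\GL(V)}) = \mu \circ (\tau^{\times} \times \tau^{\times}) \circ \sigma \circ (\tau^{\times}, \1_{\GL(V)}).
\end{equation}
Since $\sigma \circ (\tau^{\times}, \1_{\GL(V)}) = (\1_{\GL(V)}, \tau^{\times})$ and then $(\tau^{\times} \times \tau^{\times}) \circ (\1_{\GL(V)}, \tau^{\times}) = (\tau^{\times}, \1_{\GL(V)})$ by the involution property, this collapses to $\mu \circ (\tau^{\times}, \1_{\GL(V)}) = \varphi^{\times}$. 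Hence $\tau^{\times} \circ \varphi^{\times} = \varphi^{\times}$, which is precisely the graded-manifold incarnation of the classical identity $\tau(\tau(A)A) = \tau(A)A$, i.e. that $(g^{-1}A^{T}g)A$ is $g$-symmetric.

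It remains to convert this invariance into the desired factorization, and this is the step I expect to require the most care. Composing with $k$, the identity reads $\dia{\tau} \circ (k \circ \varphi^{\times}) = k \circ \varphi^{\times}$. I would then use the decomposition $\gl(V) = \Sym(V,g) \oplus \ao(V,g)$ of Proposition \ref{tvrz_Symortdecom}, under which $\diamond$ yields $\dia{\gl(V)} \cong \dia{\Sym(V,g)} \times \dia{\ao(V,g)}$, with $\dia{i}$ the inclusion of the first factor and, by Proposition \ref{tvrz_taumap}, with $\tau$ acting as $+\1$ on $\Sym(V,g)$ and $-\1$ on $\ao(V,g)$. Writing $k \circ \varphi^{\times}$ as a pair $(\psi_{+}, \psi_{-})$, the invariance forces $\psi_{-} = -\psi_{-}$; choosing coordinates on $\dia{\ao(V,g)}$ adapted to this factor and invoking Proposition \ref{tvrz_onpullback}, the map $-\1$ pulls each such coordinate back to its negative, so $\psi_{-}^{\ast}$ annihilates every coordinate and $\psi_{-}$ is the constant zero map. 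Thus $k \circ \varphi^{\times}$ factors through $\dia{\Sym(V,g)}$, supplying the required $h$ and, via the pullback, the unique lift $\varphi$. The only genuine subtlety is the bookkeeping identifying $\dia{\Sym(V,g)} \hookrightarrow \dia{\gl(V)}$ with the $+1$-eigenspace of $\dia{\tau}$; an alternative that sidesteps the explicit splitting is to verify directly that $(\varphi^{\times})^{\ast}(k^{\ast}(\ker(\dia{i}^{\ast}))) = 0$ and apply the factorization criterion for closed embedded submanifolds.
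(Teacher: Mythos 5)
Your proof is correct, but it follows a genuinely different route from the paper's. Both arguments begin identically: by the universal property of the pullback (\ref{eq_Symcross}), everything reduces to factoring $k \circ \varphi^{\times}$ through the closed embedded submanifold $(\dia{\Sym(V,g)}, \dia{i})$, with uniqueness for free because embeddings are monomorphisms. From there, the paper presents $\dia{\Sym(V,g)}$ as the zero level set of the projector $p = \tfrac{1}{2}(\1_{\gl(V)} - \tau)$ (Lemma \ref{lem_pullbackPsubspace}) and then verifies $(\chi_{0}^{\ast} \circ \dia{p}^{\ast})(\bby^{\kappa}{}_{\lambda}) = 0$ by a lengthy coordinate computation with the matrices $\fg_{\lambda \kappa}$, $\fg^{\sigma \kappa}$ and their Koszul signs. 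You avoid that computation entirely: you reuse Proposition \ref{tvrz_tauantihom} — legitimate, since it is proved before and independently of the present statement — together with the involutivity $(\tau^{\times})^{2} = \1_{\GL(V)}$ to obtain the fixed-point identity $\tau^{\times} \circ \varphi^{\times} = \varphi^{\times}$ by pure diagram chasing, and then convert $\dia{\tau}$-invariance into factorization through the $+1$-eigenspace. Your route is shorter, essentially sign-free, and transfers verbatim to the symplectic case of Section \ref{sec_gradedsymplectic}, since both ingredients (anti-homomorphism and involutivity of $\tau^{\times}$) hold there as well; in the paper this transfer instead rests on the bookkeeping remark that the symmetry of $g$ enters the computation ``exactly twice''. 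What the paper's computation buys in exchange is the explicit formulas (\ref{eq_taucoordexpression}) and (\ref{eq_theonewewanttobeat}), which are recycled in the proofs of Proposition \ref{tvrz_regularvalue} and Proposition \ref{tvrz_FOPOG}; if your argument were adopted, those expressions would still need to be derived there. To make your proof fully rigorous you should record two routine facts the paper never states explicitly: that $\diamond$ preserves finite products, so that $\dia{\gl(V)} \cong \dia{\Sym(V,g)} \times \dia{\ao(V,g)}$ with $\dia{\tau}$ corresponding to $\1 \times \dia{(-\1)}$ and $\dia{i}$ to the inclusion of the first factor (this follows, e.g., from Proposition \ref{tvrz_onpullback}, or from the functor of points and Yoneda), and that the final coordinate step uses that $f = -f$ forces $f = 0$, which holds since we work over $\R$.
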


The proof is statement is postponed to Subsection \ref{subset_technical2}. However, note that the corresponding underlying map $\ul{\varphi}: \GL(V_{\bullet}) \rightarrow \Sym^{\times}_{0}(V,g)$ has the form 
\begin{equation} \label{eq_varphiunderlying}
\ul{\varphi}(A) = (g^{-1} A^{T} g) A. 
\end{equation}

\item Let $e: \{ \ast \} \rightarrow \GL(V)$ be the unit. It corresponds to the choice of the point $e = \1_{V} \in \GL(V_{\bullet})$. Since $\1_{V} \in \Sym_{0}^{\times}(V,g)$, there exists a unique graded smooth map $e^{\times}: \{ \ast \} \rightarrow \Sym^{\times}(V,g)$ fitting into the commutative diagram
\begin{equation}
\begin{tikzcd}
& \Sym^{\times}(V,g) \arrow{d}{\dia{i}'} \\
\{ \ast \} \arrow{r}{e} \arrow[dashed]{ur}{e^{\times}} & \GL(V)
\end{tikzcd}.
\end{equation}
It corresponds to a choice of the point $e^{\times} = \1_{V} \in \Sym^{\times}_{0}(V,g)$.  This point is particularly important for the graded smooth map $\varphi$.

\begin{tvrz} \label{tvrz_regularvalue}
The map $\varphi: \GL(V) \rightarrow \Sym^{\times}(V,g)$ is transversal to $e^{\times}$. In other words, the corresponding point $\1_{V} \in \Sym^{\times}_{0}(V,g)$ is a regular value of $\varphi$. 
\end{tvrz}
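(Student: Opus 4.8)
The plan is to verify transversality of $\varphi$ to $e^{\times}$ by checking it at the level of tangent spaces at the relevant points, exploiting the fact that $\Sym^{\times}(V,g)$ is an open submanifold of $\dia{\Sym(V,g)}$ (being a pullback of the open embedding $k$ along $\dia{i}$). Concretely, regularity of the value $\1_{V} \in \Sym^{\times}_{0}(V,g)$ amounts to showing that for every point $A \in \ul{\varphi}^{-1}(\1_{V}) = \gO(V,g)_{0} \subseteq \GL(V_{\bullet})$, the tangent map
\begin{equation}
T_{A}\varphi : T_{A}\GL(V) \longrightarrow T_{\1_{V}}\Sym^{\times}(V,g)
\end{equation}
is surjective. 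Using the canonical identification $T_{A}\GL(V) \cong \gl(V)$ from Proposition \ref{tvrz_tangentVSidentification} (valid since $\GL(V)$ is open in $\dia{\gl(V)}$) and the analogous identification $T_{\1_{V}}\Sym^{\times}(V,g) \cong \Sym(V,g)$, the claim becomes a purely linear-algebraic surjectivity statement about a map $\gl(V) \to \Sym(V,g)$.

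First I would compute the tangent map $T_{A}\varphi$ explicitly. Since $\dia{i}' \circ \varphi = \varphi^{\times} = \mu \circ (\tau^{\times},\1_{\GL(V)})$ and $\dia{i}'$ is a closed embedding whose tangent map is the inclusion $\Sym(V,g) \hookrightarrow \gl(V)$, it suffices to differentiate $\varphi^{\times}$, whose underlying map is $A \mapsto (g^{-1}A^{T}g)A = \tau(A)A$ by (\ref{eq_varphiunderlying}). Differentiating along a tangent direction $X \in \gl(V)$ at a point $A \in \gO(V,g)_{0}$ (where $\tau(A) = A^{-1}$, i.e. $\tau(A)A = \1_{V}$), the Leibniz rule gives, up to the signs dictated by the grading conventions,
\begin{equation}
(T_{A}\varphi)(X) = \tau(X)\, A + \tau(A)\, X = \tau(X)A + A^{-1}X.
\end{equation}
I would then verify directly that the right-hand side indeed lands in $\Sym(V,g)$ for all $X$ (it must, by construction, but it is a useful consistency check against the sign conventions of Definition \ref{def_symantisym}).

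To establish surjectivity, the key observation is that $\tau^{2} = \1_{\gl(V)}$ from Proposition \ref{tvrz_taumap} allows one to exhibit an explicit preimage. Given an arbitrary target $S \in \Sym(V,g)$, I would propose $X := \tfrac{1}{2} A S$ (adjusted by the appropriate sign) as a candidate and check, using $\tau(A) = A^{-1}$, $A \in \Sym^{\times}_{0}(V,g)$, and the eigenspace characterization of $\Sym(V,g)$ and $\ao(V,g)$ as the $\pm 1$ eigenspaces of $\tau$, that $(T_{A}\varphi)(X) = S$. The underlying idea mirrors the classical computation: since $\tau$ is an involution whose $+1$-eigenspace is exactly $\Sym(V,g)$, the symmetrization operator $X \mapsto \tfrac{1}{2}(X + \tau(X))$ surjects onto $\Sym(V,g)$, and conjugating by the symmetric invertible element $A$ transports this onto the derivative of $\varphi$. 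I expect the main obstacle to be bookkeeping the Koszul signs correctly through the product rule and through $\tau$, so that the candidate preimage really works and the target lands in $\Sym(V,g)$ rather than some sign-twisted subspace; the conceptual content, namely that an involution with the right eigenspaces forces the differential to be a split surjection, is straightforward. Finally, transversality to the point map $e^{\times}$ is equivalent to $\1_{V}$ being a regular value, so surjectivity of $T_{A}\varphi$ at every $A$ in the level set completes the argument, invoking Theorem 7.43 of \cite{Vysoky:2022gm} in the next step to form the regular level set submanifold.
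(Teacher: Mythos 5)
Your proposal follows essentially the same route as the paper's proof: reduce transversality to surjectivity of the tangent map at each point $A$ of the level set, identify that map (via Proposition \ref{tvrz_tangentVSidentification}) with $L_{A}(X) = \tau(A)X + \tau(X)A$, and exhibit the explicit preimage $\tfrac{1}{2}AS$ of a given $S \in \Sym(V,g)$ using $\tau(A)A = \1_{V}$, the anti-homomorphism property of $\tau$, and $\tau(S)=S$. The one step you defer as "sign bookkeeping" --- verifying that the graded tangent map of $\chi_{0}$ really is $L_{A}$ with all Koszul signs intact --- is exactly where the paper spends the bulk of its effort (a coordinate computation comparing the matrix of $L_{A}$ in the standard basis against $T_{A}\chi_{0}$ in the induced tangent bases), so your plan is sound but that deferred verification is the actual core of the written proof, not a routine afterthought.
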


We will prove this statement in Subsection \ref{subsec_technical3}. 

\item Since $\1_{V} \in \Sym^{\times}_{0}(V,g)$ is a regular value of $\varphi$, we can construct the corresponding regular level set submanifold. This will be our \textbf{graded orthogonal group} $\gO(V,g)$. In other words, it is the graded manifold defined by the pullback diagram
\begin{equation} \label{eq_O}
\begin{tikzcd}
\gO(V,g) \arrow[dashed]{d}{j} \arrow[dashed]{r} & \{ \ast \} \arrow{d}{e^{\times}} \\
\GL(V) \arrow{r}{\varphi} & \Sym^{\times}(V,g) 
\end{tikzcd}
\end{equation}
This makes $(\gO(V,g),j)$ into a closed embedded submanifold of $\GL(V)$. See Proposition 7.48 in \cite{Vysoky:2022gm}. Note that its underlying submanifold is the level set of $\ul{\varphi}: \GL(V_{\bullet}) \rightarrow \Sym^{\times}_{0}(V,g)$ corresponding to $\1_{V} \in \Sym^{\times}(V,g)$, let us denote it as $\gO(V_{\bullet},g)$. It follows from (\ref{eq_varphiunderlying}) that 
\begin{equation} \label{eq_gOVbulletg}
\gO(V_{\bullet},g) = \{ A \in \GL(V_{\bullet}) \mid A^{T} g A = g \}.
\end{equation}

The main statement of this section is the following one. See Subsection \ref{subsec_technical4} for the proof. 
\begin{theorem} \label{thm_O}
$(\gO(V,g),j)$ is a graded Lie subgroup of $\GL(V)$. 

Moreover, its associated graded Lie algebra can be naturally identified with the subalgebra $\ao(V,g) \subseteq \gl(V)$, see Proposition \ref{tvrz_Symortdecom} and Proposition \ref{tvrz_subalgebra}. 
\end{theorem}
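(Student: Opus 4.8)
The plan is to treat the two assertions in turn, and to carry out the subgroup part entirely through the functor of points (as was done for $\GL(V)$), reserving a differential computation for the Lie algebra. By Proposition \ref{tvrz_subgroupisgroup} it suffices, for the first assertion, to produce the dashed maps $\mu'$ and $\iota'$, i.e. to show that $\mu \circ (j \times j)$ and $\iota \circ j$ factor through $j$. Fix $\cS \in \gMan^{\infty}$ and write the group $\frP(\cS)$ multiplicatively, with unit $e_{\cS}$ and inverse $\phi \mapsto \phi^{-1} = \iota \circ \phi$. From the pullback (\ref{eq_O}) and the universal property of the regular level set, $\gMan^{\infty}(\cS, \gO(V,g))$ is identified with the subset of those $\phi \in \frP(\cS)$ whose composite with $\varphi$ is the constant map at $e^{\times}$; since $\dia{i}'$ is a monomorphism with $\dia{i}' \circ \varphi = \varphi^{\times}$ and $\dia{i}' \circ e^{\times} = e$, this is the same as
\begin{equation*}
H(\cS) := \{ \phi \in \frP(\cS) \mid \varphi^{\times} \circ \phi = e_{\cS} \}.
\end{equation*}

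Next I would observe that $T_{\cS} \colon \phi \mapsto \tau^{\times} \circ \phi$ is a map $\frP(\cS) \to \frP(\cS)$ which, by the anti-homomorphism property of $\tau^{\times}$ (Proposition \ref{tvrz_tauantihom}), satisfies $T_{\cS}(\phi\psi) = T_{\cS}(\psi)\,T_{\cS}(\phi)$ and fixes $e_{\cS}$; hence $T_{\cS}(\phi^{-1}) = T_{\cS}(\phi)^{-1}$. Because $\varphi^{\times} = \mu \circ (\tau^{\times}, \1_{\GL(V)})$, the defining condition becomes $T_{\cS}(\phi)\,\phi = e_{\cS}$, so
\begin{equation*}
H(\cS) = \{ \phi \in \frP(\cS) \mid T_{\cS}(\phi) = \phi^{-1} \}.
\end{equation*}
A one-line check shows this is a subgroup: it contains $e_{\cS}$; if $T_{\cS}(\phi)=\phi^{-1}$ and $T_{\cS}(\psi)=\psi^{-1}$ then $T_{\cS}(\phi\psi) = \psi^{-1}\phi^{-1} = (\phi\psi)^{-1}$; and $T_{\cS}(\phi^{-1}) = T_{\cS}(\phi)^{-1} = (\phi^{-1})^{-1}$. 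Taking $\cS = \gO(V,g) \times \gO(V,g)$ and $\phi = j \circ \pr_{1}$, $\psi = j \circ \pr_{2}$ (both in $H$, since $\varphi^{\times} \circ j = e_{\gO(V,g)}$) shows $\mu \circ (j \times j) \in H$, hence it factors through $j$ and yields $\mu'$; taking $\cS = \gO(V,g)$ and $\phi = j$ shows $\iota \circ j \in H$ and yields $\iota'$. Proposition \ref{tvrz_subgroupisgroup} then completes the first assertion.

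For the Lie algebra, I would exploit that (\ref{eq_O}) presents $\gO(V,g)$ as a regular level set, so by the tangent description of such pullbacks (Proposition 7.48 in \cite{Vysoky:2022gm}) the monomorphism $\sfj = T_{e'}j$ of Proposition \ref{tvrz_subalgebra} identifies $\h$ with $\ker(T_{e}\varphi) \subseteq \gl(V) \cong T_{e}\GL(V)$. As $\dia{i}'$ is an immersion and $\dia{i}' \circ \varphi = \varphi^{\times}$, this kernel equals $\ker(T_{e}\varphi^{\times})$. Using that the differential of multiplication at the unit is the sum, $T_{(e,e)}\mu(X,Y) = X+Y$, together with $T_{e}\tau^{\times} = \tau$ (which follows from $\tau^{\times}$ being the restriction of $\dia{\tau}$ and the identification in Proposition \ref{tvrz_tangentVSidentification}), I obtain
\begin{equation*}
T_{e}\varphi^{\times} = \1_{\gl(V)} + \tau.
\end{equation*}
By Proposition \ref{tvrz_taumap}, $\tau$ is an involution whose $-1$-eigenspace is exactly $\ao(V,g)$, so $\ker(\1_{\gl(V)} + \tau) = \ao(V,g)$. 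Since $\sfj$ is a morphism of graded Lie algebras and $\ao(V,g)$ is a graded Lie subalgebra (Proposition \ref{tvrz_Symortdecom}), this exhibits $\h \cong \ao(V,g)$ as graded Lie algebras.

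The main obstacle I anticipate is the sign bookkeeping in the two differential computations rather than anything conceptual: one must verify $T_{(e,e)}\mu(X,Y) = X+Y$ in the graded setting directly from the coordinate formula (\ref{eq_muastexplicit}), keeping track of the $(-1)^{|t_{\lambda}|}$ appearing in the identification $V \cong T_{v}\dia{V}$ of Proposition \ref{tvrz_tangentVSidentification}, and confirm $T_{e}\tau^{\times} = \tau$ under the same conventions. The subgroup argument is otherwise clean once the level-set condition is rewritten as $T_{\cS}(\phi) = \phi^{-1}$; the only delicate point there is that $T_{\cS}$ genuinely reverses the order of multiplication, which is precisely Proposition \ref{tvrz_tauantihom} transported to the functor of points.
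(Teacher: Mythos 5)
Your proof is correct, but it takes a genuinely different route from the paper's. For the subgroup part, the paper stays entirely at the level of morphisms in $\gMan^{\infty}$: it constructs $\mu'$ and $\iota'$ through explicit multi-step diagram chases (using associativity, unitarity, inversion, Lemma \ref{lem_antihom} and Proposition \ref{tvrz_tauantihom}) that mimic, arrow by arrow, the classical computations $\tau(AB)(AB)=\1_{V}$ and $\tau(A^{-1})A^{-1}=\1_{V}$. You instead transport everything to ordinary group theory via the functor of points: the universal property of the pullback (\ref{eq_O}), together with the fact that $\dia{i}'$ is a monomorphism, identifies $\gMan^{\infty}(\cS,\gO(V,g))$ with your set $H(\cS)=\{\phi\in\frP(\cS)\mid T_{\cS}(\phi)=\phi^{-1}\}$, where $T_{\cS}(\phi)=\tau^{\times}\circ\phi$, and Proposition \ref{tvrz_tauantihom} makes $T_{\cS}$ a group anti-homomorphism, so that the subgroup verification becomes a one-line check. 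This is essentially the identification the paper itself establishes only afterwards (Proposition \ref{tvrz_FOPOG} in Section \ref{sec_OGFOP}), deployed here as the engine of the proof rather than as a bonus observation; there is no circularity, since that identification uses only (\ref{eq_O}), the diagrams defining $\varphi$ and $e^{\times}$, and Proposition \ref{tvrz_tauantihom}. Your route buys brevity and conceptual clarity; the paper's route buys self-containedness within the geometric construction and an explicit, step-by-step parallel with the classical argument. For the Lie algebra part the two arguments converge on the same operator: the paper reads off $\ker(L_{\1_{V}})$ with $L_{\1_{V}}(X)=X+\tau(X)$ from the computation of $T_{A}\chi_{0}$ it already carried out for Proposition \ref{tvrz_regularvalue}, whereas you rederive it as $T_{e}\varphi^{\times}=\1_{\gl(V)}+\tau$ from $T_{(e,e)}\mu(X,Y)=X+Y$ and $T_{e}\tau^{\times}=\tau$. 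The two facts you defer are genuinely routine and hold with no stray signs: the second is immediate from Proposition \ref{tvrz_tangentVSidentification}, since $\tau^{\times}$ is a restriction of $\dia{\tau}$ to open submanifolds, and the first follows from (\ref{eq_muastexplicit}) by the graded Leibniz rule, using that $\ul{\bbz^{\lambda}{}_{\nu}}(e,e)=\delta^{\lambda}_{\nu}$ and $\ul{\bbu^{\nu}{}_{\kappa}}(e,e)=\delta^{\nu}_{\kappa}$ and that the surviving coordinate functions in each term then have degree zero.
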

\end{enumerate}

We have intentionally postponed the proofs to make the construction of $\gO(V,g)$ more streamlined. An uninterested reader can thus safely skip the following section. 
\section{Technical details} \label{sec_technical}
\subsection{Part I: Anti-homomorphism} \label{subsec_technical1}
\begin{proof}[Proof of Proposition \ref{tvrz_tauantihom}]
Recall that $\tau: \gl(V) \rightarrow \gl(V)$ is defined by (\ref{eq_taumap}). Let $\beta$ be the bilinear map (\ref{eq_betaglVmulti}) and let $\beta': \gl(V) \otimes_{\R} \gl(V) \rightarrow \gl(V)$ be the induced graded linear map. Suppose that $\sigma'$ is the canonical flip map
\begin{equation}
\sigma'(A \otimes B) := (-1)^{|A||B|} B \otimes A. 
\end{equation}
Let us start by proving the commutativity of the following diagram in $\gVect$:
\begin{equation} \label{eq_tauantihom1}
\begin{tikzcd}
\gl(V) \otimes_{\R} \gl(V) \arrow{d}{\sigma'} \arrow{r}{\beta'} & \gl(V) \arrow{dd}{\tau}\\
\gl(V) \otimes_{\R} \gl(V) \arrow{d}{\tau \otimes \tau}& \\
\gl(V) \otimes_{\R} \gl(V) \arrow{r}{\beta'} & \gl(V)
\end{tikzcd}.
\end{equation}
This is equivalent to the equation 
\begin{equation} \label{eq_tauoriginalantihom}
\tau(AB) = (-1)^{|A||B|} \tau(B) \tau(A),
\end{equation}
for all $A,B \in \gl(V)$. But this is easily verified using (\ref{eq_comptranspose}) and (\ref{eq_taumap}). One can now apply the $\diamond$ functor from Proposition \ref{tvrz_diamant} onto (\ref{eq_tauantihom1}) and consider the diagram
\begin{equation}
\begin{tikzcd}
\dia{\gl(V)} \times \dia{\gl(V)} \arrow{d}{\sigma_{0}} \arrow[bend left=25]{rr}{\dia{\beta}} \arrow{r}{\dia{\alpha}} & \dia{(\gl(V) \otimes_{\R} \gl(V))} \arrow{d}{\dia{\sigma}'} \arrow{r}{\dia{\beta}'} & \dia{\gl(V)} \arrow{dd}{\dia{\tau}}\\
\dia{\gl(V)} \times \dia{\gl(V)} \arrow{d}{\dia{\tau} \times \dia{\tau}} \arrow{r}{\dia{\alpha}} & \dia{(\gl(V) \otimes_{\R} \gl(V))} \arrow{d}{\dia{(\tau \otimes \tau)}}& \\
\dia{\gl(V)} \times \dia{\gl(V)} \arrow[bend right=25] {rr}{\dia{\beta}} \arrow{r}{\dia{\alpha}} & \dia{(\gl(V) \otimes_{\R} \gl(V))} \arrow{r}{\dia{\beta}'} & \dia{\gl(V)}
\end{tikzcd}.
\end{equation}
Here $\dia{\alpha}$ is the map from Proposition \ref{tvrz_diaalpha}, $\dia{\beta}$ is the graded smooth map induced by $\beta$ as in Corollary \ref{cor_bilineargrad}, and $\sigma_{0}$ is just the graded smooth map flipping the two components of the product graded manifold. Besides the main rectangle, all side squares and triangles also commute. One can see this using the definitions and the coordinate expressions (\ref{eq_diaApullback}) and (\ref{eq_alphapullback}). Consequently, the diagram composed from the outer edges also commutes. Finally, let us consider the diagram 
\begin{equation}
\begin{tikzcd}
& & \GL(V) \arrow{d}{k} \arrow[bend left=35]{dddd}{\tau^{\times}} \\
\GL(V) \times \GL(V) \arrow[bend left=10]{rru}{\mu} \arrow{d}{\sigma} \arrow{r}{k \times k} & \dia{\gl(V)} \times \dia{\gl(V)} \arrow{d}{\sigma_{0}} \arrow{r}{\dia{\beta}} & \dia{\gl(V)} \arrow{dd}{\dia{\tau}} \\
\GL(V) \times \GL(V) \arrow{d}{\tau^{\times} \times \tau^{\times}} \arrow{r}{k \times k} & \dia{\gl(V)} \times \dia{\gl(V)} \arrow{d}{\dia{\tau} \times \dia{\tau}}& \\
\GL(V) \times \GL(V) \arrow[bend right=10]{rrd}{\mu} \arrow{r}{k \times k} & \dia{\gl(V)} \times \dia{\gl(V)} \arrow{r}{\dia{\beta}}  & \dia{\gl(V)} \\
& & \GL(V) \arrow{u}{k}
\end{tikzcd}.
\end{equation}
Recall that $k: \GL(V) \rightarrow \dia{\gl(V)}$ is an open embedding. The main rectangle is commutative by the previous paragraph and all the side squares commute by definitions of the involved maps. But this means that both paths through the diagram (\ref{eq_tauantihom}) composed with the monomorphic arrow $k$ are equal. Hence they themselves must be equal and the proof is finished. 
\end{proof}

For future reference, let us recall the following:
\begin{lemma} \label{lem_antihom}
Let $(\G,\mu,\iota,e)$ and $(\H,\mu',\iota',e')$ be graded Lie groups. Suppose that a graded smooth map $\varphi: \G \rightarrow \H$ is their anti-homomorphism, that is 
\begin{equation}
\begin{tikzcd}
\G \times \G \arrow{d}{\sigma} \arrow{r}{\mu} & \G \arrow{dd}{\varphi}\\
\G \times \G \arrow{d}{\varphi \times \varphi} & \\
\H \times \H \arrow{r}{\mu'} & \H 
\end{tikzcd}
\end{equation}
commutes. Then the following diagrams commute automatically:
\begin{equation} \label{eq_morphismsother}
\begin{tikzcd}
\G \arrow{r}{\varphi} \arrow{d}{\iota} & \H \arrow{d}{\iota'} \\
\G \arrow{r}{\varphi} & \H
\end{tikzcd}, \; \; 
\begin{tikzcd}
\{ \ast \} \arrow{rd}{e'} \arrow{r}{e} & \G \arrow{d}{\varphi} \\
& \H 
\end{tikzcd}.
\end{equation}
\end{lemma}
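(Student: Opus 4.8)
The plan is to sidestep all diagram chasing in $\gMan^{\infty}$ and push everything through the functor of points, exactly as prepared in Proposition~\ref{tvrz_gradedLGintermsofsets}. Write $\frP_{\G} := \gMan^{\infty}(-,\G)$ and $\frP_{\H} := \gMan^{\infty}(-,\H)$ for the two functors of points, and for each $\cS \in \gMan^{\infty}$ equip $\frP_{\G}(\cS)$ and $\frP_{\H}(\cS)$ with the group structures $(\sfm_{\cS},\sfi_{\cS},\sfe_{\cS})$ and $(\sfm'_{\cS},\sfi'_{\cS},\sfe'_{\cS})$ induced by $(\mu,\iota,e)$ and $(\mu',\iota',e')$; by Proposition~\ref{tvrz_gradedLGintermsofsets} these are honest groups. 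Postcomposition with $\varphi$ defines, for each $\cS$, a set map $\varphi_{\cS}: \frP_{\G}(\cS) \rightarrow \frP_{\H}(\cS)$, $\phi \mapsto \varphi \circ \phi$, which is just the component at $\cS$ of the natural transformation corresponding to $\varphi$ under Yoneda.

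First I would translate the anti-homomorphism hypothesis into the statement that each $\varphi_{\cS}$ is an anti-homomorphism of ordinary groups. Unwinding the definitions, $\sfm_{\cS}(\phi,\phi') = \mu \circ (\phi,\phi')$, while $\sfm'_{\cS}(\varphi_{\cS}(\phi'),\varphi_{\cS}(\phi)) = \mu' \circ (\varphi \times \varphi) \circ \sigma \circ (\phi,\phi')$, since $(\varphi \times \varphi) \circ \sigma \circ (\phi,\phi') = (\varphi \circ \phi', \varphi \circ \phi)$. The hypothesis $\varphi \circ \mu = \mu' \circ (\varphi \times \varphi) \circ \sigma$ therefore says precisely that $\varphi_{\cS}(\sfm_{\cS}(\phi,\phi')) = \sfm'_{\cS}(\varphi_{\cS}(\phi'),\varphi_{\cS}(\phi))$ for all $\phi,\phi'$, i.e. that $\varphi_{\cS}$ reverses products.

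Next I would invoke the elementary fact that an anti-homomorphism $f$ of ordinary groups automatically satisfies $f(e)=e'$ and $f(a^{-1})=f(a)^{-1}$: one has $f(e) = f(e\cdot e) = f(e)f(e)$, forcing $f(e)=e'$, and then $f(a)f(a^{-1}) = f(a^{-1}a) = f(e) = e'$ (and symmetrically), forcing $f(a^{-1}) = f(a)^{-1}$. Applied to every $\varphi_{\cS}$, this yields the two families of identities $\varphi_{\cS} \circ \sfe_{\cS} = \sfe'_{\cS}$ and $\varphi_{\cS} \circ \sfi_{\cS} = \sfi'_{\cS} \circ \varphi_{\cS}$, valid for all $\cS$.

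Finally I would read these back as the claimed diagrams by evaluating at well-chosen objects. For the unit, taking $\cS = \{\ast\}$ and using $\sfe_{\{\ast\}}(\ast) = e$ and $\sfe'_{\{\ast\}}(\ast) = e'$, the identity $\varphi_{\{\ast\}} \circ \sfe_{\{\ast\}} = \sfe'_{\{\ast\}}$ becomes $\varphi \circ e = e'$, which is the second diagram in (\ref{eq_morphismsother}). For the inverse, taking $\cS = \G$ and $\phi = \1_{\G}$, and using $\sfi_{\cS}(\phi) = \iota \circ \phi$ together with its primed analogue $\sfi'_{\cS}(\psi) = \iota' \circ \psi$, the identity $\varphi_{\G}(\sfi_{\G}(\1_{\G})) = \sfi'_{\G}(\varphi_{\G}(\1_{\G}))$ reduces to $\varphi \circ \iota = \iota' \circ \varphi$, the first diagram. (Equivalently, since both families are natural in $\cS$, they are natural transformations and, by the Yoneda lemma, are induced by unique underlying maps, which gives the diagram equalities at once.) I expect no genuine obstacle: all the actual content sits in the elementary group-theoretic lemma of the third step, and the only points demanding care are the bookkeeping of the flip $\sigma$ in the second step and the correct choice of test object ($\{\ast\}$ for the unit, $\G$ with $\phi = \1_{\G}$ for the inverse) when converting the pointwise identities back into $\gMan^{\infty}$.
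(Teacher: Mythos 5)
Your proposal is correct, but it takes a genuinely different route from the paper. The paper's proof is a (very terse) direct argument: it says to take the familiar ordinary-group computation ($f(e)=f(e)f(e)$, etc.) and transcribe it into compositions of morphisms in $\gMan^{\infty}$, i.e.\ a diagram chase with $\mu$, $\iota$, $e$ of the kind carried out at length in Section \ref{subsec_technical4}. You instead push everything through the functor of points: Proposition \ref{tvrz_gradedLGintermsofsets} makes each $\frP_{\G}(\cS)$ and $\frP_{\H}(\cS)$ an honest group, your unwinding of the flip $\sigma$ correctly shows each $\varphi_{\cS}$ is a set-level anti-homomorphism, the elementary lemma then gives $\varphi_{\cS} \circ \sfe_{\cS} = \sfe'_{\cS}$ and $\varphi_{\cS} \circ \sfi_{\cS} = \sfi'_{\cS} \circ \varphi_{\cS}$, and evaluation at $\cS = \{\ast\}$ and at $\cS = \G$, $\phi = \1_{\G}$ (the Yoneda argument) converts these back into the two diagrams. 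The one point you pass over silently is the identification of the abstract group inverse of $\frP_{\H}(\cS)$ with the map $\sfi'_{\cS}(\psi) = \iota' \circ \psi$; this is harmless, since Proposition \ref{tvrz_gradedLGintermsofsets} asserts that the tuple $(\frP_{\H}(\cS),\sfm'_{\cS},\sfi'_{\cS},\sfe'_{\cS})$ is a group and inverses in a group are unique. As for what each approach buys: yours is fully rigorous with essentially no diagram bookkeeping, reusing machinery the paper has already built (and which it uses elsewhere, e.g.\ to construct $\iota$ for $\GL(V)$); the paper's direct transcription is self-contained at the level of morphisms and is the style of argument it needs anyway in the technical Section \ref{sec_technical}, where no functor-of-points shortcut is set up for the intermediate objects involved.
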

\begin{proof}
This is true for any group object in any suitable category. The proof consists of recalling the proof for ordinary groups and rewriting it as compositions of morphisms. 
\end{proof}
\subsection{Part II: Lifting the map} \label{subset_technical2}
We will need the following technical statement:
\begin{lemma} \label{lem_pullbackPsubspace}
Let $V \in \gVect$. Suppose there is a subspace $P \subseteq V$ and a surjective $A \in \Lin(V,W)$ such that $P = \ker(A)$. Let $i: P \rightarrow V$ be the inclusion. Then the diagram
\begin{equation}
\begin{tikzcd}
\dia{P} \arrow{r} \arrow{d}{\dia{i}} & \dia{\{0\}} \arrow{d}{\dia{0}} \\
\dia{V} \arrow{r}{\dia{A}} & \dia{W}
\end{tikzcd}
\end{equation}
is a pullback in $\gMan^{\infty}$. In other words, the submanifold $(\dia{P},\dia{i})$ is equivalent to the regular level set submanifold of $\dia{A}$ corresponding to the regular value $0 \in W_{0}$. 
\end{lemma}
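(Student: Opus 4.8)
The plan is to verify the universal property of the pullback directly, reducing everything to pullbacks of global coordinate functions by means of Proposition \ref{tvrz_onpullback}. Since $\{0\}$ is the zero graded vector space, $\dia{\{0\}}$ is the terminal object and the top arrow $\dia{P} \to \dia{\{0\}}$ is the unique map into it; the square commutes by functoriality of $\diamond$ (Proposition \ref{tvrz_diamant}), because $A \circ i = 0$ factors through $\{0\}$, so both composites equal $\dia{(0_{P \to W})}$. A cone over the diagram is therefore exactly a graded manifold $\cS$ with a graded smooth map $f: \cS \to \dia{V}$ such that $\dia{A} \circ f$ is the constant map at $0 \in W_{0}$, and I must produce a unique $h: \cS \to \dia{P}$ with $\dia{i} \circ h = f$.

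As a preliminary sanity check consistent with the restatement, I note that $0 \in W_{0}$ is a regular value of $\dia{A}$: at each point $v$ of the fibre $\ul{\dia{A}}^{-1}(0) = P_{0}$, the tangent map $T_{v}\dia{A}$ is identified with $A$ itself by Proposition \ref{tvrz_tangentVSidentification}, which is surjective by hypothesis. This guarantees that the regular level set submanifold referred to in the statement exists; it then remains only to identify it with $(\dia{P},\dia{i})$, which is what the universal-property computation accomplishes.

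Next I would set up an adapted total basis. Working degree by degree (each component $A_{j}: V_{j} \to W_{j}$ is surjective with kernel $P_{j}$), choose a total basis $(t_{\lambda})_{\lambda=1}^{n}$ of $V$ whose first $p$ members form a total basis of $P = \ker A$ and whose remaining $m = n-p$ members are sent by $A$ to a total basis $(s_{\sigma})_{\sigma=1}^{m}$ of $W$. Writing $(\bbz^{\lambda})$, $(\bbu^{\sigma})$ for the induced coordinates on $\dia{V}$, $\dia{W}$ and $(\bbz_{P}^{\lambda})_{\lambda=1}^{p}$ for those on $\dia{P}$, formula (\ref{eq_diaApullback}) yields $\dia{A}^{\ast}(\bbu^{\sigma}) = \bbz^{\,p+\sigma}$ and $\dia{i}^{\ast}(\bbz^{\lambda}) = \bbz_{P}^{\lambda}$ for $\lambda \le p$, while $\dia{i}^{\ast}(\bbz^{\lambda}) = 0$ for $\lambda > p$; in the adapted basis the signs of (\ref{eq_diaApullback}) are trivial since $A$ preserves degrees.

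Finally I verify the universal property. Put $f^{\lambda} := f^{\ast}(\bbz^{\lambda})$. The condition $\dia{A} \circ f = 0$ reads $f^{\ast}(\dia{A}^{\ast}(\bbu^{\sigma})) = f^{\,p+\sigma} = 0$, so $f^{\lambda} = 0$ for all $\lambda > p$. By Proposition \ref{tvrz_onpullback} there is a unique graded smooth map $h: \cS \to \dia{P}$ with $h^{\ast}(\bbz_{P}^{\lambda}) := f^{\lambda}$ for $\lambda \le p$, and then $(\dia{i} \circ h)^{\ast}(\bbz^{\lambda}) = h^{\ast}(\dia{i}^{\ast}(\bbz^{\lambda}))$ equals $f^{\lambda}$ for $\lambda \le p$ and $0 = f^{\lambda}$ for $\lambda > p$; hence $\dia{i} \circ h = f$ by Proposition \ref{tvrz_onpullback}. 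Uniqueness is automatic, since $\dia{i} \circ h = f$ already forces $h^{\ast}(\bbz_{P}^{\lambda}) = f^{\lambda}$. This establishes the pullback property. The only mildly delicate points are the degree-by-degree construction of the adapted basis and, in principle, the sign bookkeeping in (\ref{eq_diaApullback}); both are routine, and there is no genuine conceptual obstacle once one recognizes maps into $\dia{(-)}$ as collections of coordinate pullbacks.
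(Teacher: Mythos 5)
Your proof is correct and follows essentially the same route as the paper's: both choose a total basis of $V$ adapted to the splitting $\ker(A)$ plus a complement mapped bijectively onto a basis of $W$, compute the coordinate pullbacks of $\dia{i}$ and $\dia{A}$, and verify the universal property via Proposition \ref{tvrz_onpullback}. The extra observation that $0 \in W_{0}$ is a regular value (via Proposition \ref{tvrz_tangentVSidentification}) is a harmless addition that the paper omits, since the pullback verification alone establishes the claim.
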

\begin{proof}
The diagram commutes as it is just the $\diamond$ functor applied to the obvious diagram in $\gVect$. One only has to check the universal property of pullback. Hence suppose there is $\M \in \gMan^{\infty}$ and a graded smooth map $\varphi: \M \rightarrow \dia{V}$ such that
\begin{equation} \label{eq_commutativitytopullbackkerA}
\begin{tikzcd}
\M \arrow{r} \arrow{d}{\varphi} & \dia{\{0 \}} \arrow{d}{\dia{0}} \\
\dia{V} \arrow{r}{\dia{A}} & \dia{W}
\end{tikzcd}
\end{equation}
commutes. We must show that there is a unique graded smooth map $\hat{\varphi}: \M \rightarrow \dia{P}$ such that $\varphi = \dia{i} \circ \hat{\varphi}$. We can choose a total basis $(t_{1},\dots,t_{m},s_{1},\dots,s_{k})$ for $V$ and $(s'_{1},\dots,s'_{k})$ for $W$, such that $A(t_{\lambda}) = 0$ and $A(s_{\sigma}) = s'_{\sigma}$ for all $\lambda \in \{1,\dots,m\}$ and $\sigma \in \{1, \dots, k\}$. Let $(\bbz^{1},\dots,\bbz^{m},\bbu^{1},\dots,\bbu^{k})$ and $(\bbu'^{1},\dots,\bbu'^{k})$ be the induced coordinates on $\dia{V}$ and $\dia{W}$, respectively.

Then $P = \ker(A)$ is spanned by $(t_{\lambda})_{\lambda=1}^{m}$ there are induced coordinates $(\bbz'^{1},\dots,\bbz'^{m})$ on $\dia{P}$. The explicit expression for the pullback by $\dia{i}$ is then
\begin{equation}
\dia{i}^{\ast}( \bbz^{\lambda}) = \bbz'^{\lambda}, \; \; \dia{i}^{\ast}( \bbu^{\sigma}) = 0, 
\end{equation}
for all $\lambda \in \{1,\dots,m\}$ and $\sigma \in \{1, \dots, k \}$. The explicit expression for the pullback by $\dia{A}$ is 
\begin{equation}
\dia{A}^{\ast}( \bbu'^{\sigma}) = \bbu^{\sigma}, 
\end{equation}
for all $\sigma \in \{1, \dots, k \}$. Define $\hat{\varphi}: \M \rightarrow \dia{P}$ by declaring 
\begin{equation} \label{eq_hatvarphipullbackkerA}
\hat{\varphi}^{\ast}(\bbz'^{\lambda}) := \varphi^{\ast}(\bbz^{\lambda}),
\end{equation}
for all $\lambda \in \{1, \dots, m \}$. Thanks to Proposition \ref{tvrz_onpullback}, this is enough to determine $\hat{\varphi}$ uniquely. Finally, note that the assumed commutativity of (\ref{eq_commutativitytopullbackkerA}) implies $\varphi^{\ast}( \bbu^{\sigma}) = 0$ for all $\sigma \in \{1, \dots, k\}$. It is now trivial to check that indeed $\varphi = \dia{i} \circ \hat{\varphi}$ and the formula (\ref{eq_hatvarphipullbackkerA}) is the only one making this work. 
\end{proof}

\begin{proof}[Proof of Proposition \ref{tvrz_varphi}] Since the graded manifold $\Sym^{\times}(V,g)$ was constructed as a pullback, we will first look for a graded smooth map $\chi: \GL(V) \rightarrow \dia{\Sym(V,g)}$ fitting into the commutative diagram
\begin{equation} \label{eq_varphimap1}
\begin{tikzcd}
\GL(V) \arrow{d}{\varphi^{\times}} \arrow[dashed]{r}{\chi} & \dia{\Sym(V,g)} \arrow{d}{\dia{i}} \\
\GL(V) \arrow{r}{k} & \dia{\gl(V)} 
\end{tikzcd}
\end{equation}
Should we find such a map, the universal property of the pullback (\ref{eq_Symcross}) will give us a unique graded smooth map $\varphi: \GL(V) \rightarrow \Sym^{\times}(V,g)$ fitting into the diagram (\ref{eq_varphidiagram}), thus finishing the proof. 

Now, recall that by Proposition \ref{tvrz_Symortdecom}, we have a decomposition
\begin{equation}
\gl(V) = \Sym(V,g) \oplus \ao(V,g). 
\end{equation}
We can thus construct the surjective projector $\varpi: \gl(V) \rightarrow \ao(V,g)$ such that $\Sym(V,g) = \ker(\varpi)$. Using Lemma \ref{lem_pullbackPsubspace}, we see that the diagram
\begin{equation}
\begin{tikzcd}
\dia{\Sym(V,g)} \arrow{d}{\dia{i}} \arrow{r} & \dia{ \{0\}} \arrow{d}{\dia{0}} \\
\dia{\gl(V)} \arrow{r}{\dia{\varpi}} & \dia{ \ao(V,g)}
\end{tikzcd}
\end{equation}
is a pullback in $\gMan^{\infty}$. We construct $\chi: \GL(V) \rightarrow \dia{\Sym(V,g)}$ by employing the universal property of \emph{this} diagram. To do so, we will prove that $k \circ \varphi^{\times}$ fits into the commutative diagram 
\begin{equation} \label{eq_varphimap2}
\begin{tikzcd}
\GL(V) \arrow[dashed]{d}{k \circ \varphi^{\times}} \arrow{r} & \dia{\{0\}} \arrow{d}{\dia{0}} \\
\dia{\gl(V)} \arrow{r}{\dia{\varpi}} & \dia{\ao(V,g)}
\end{tikzcd}
\end{equation}
If we succeed, we will indeed obtain a unique graded smooth map $\chi: \GL(V) \rightarrow \Sym(V,g)$ fitting into (\ref{eq_varphimap1}). Observe that we can write 
\begin{equation}
k \circ \varphi^{\times} = k \circ \mu \circ (\tau^{\times}, \1_{\GL(V)}) = \dia{\beta} \circ (k \times k) \circ (\tau^{\times}, \1_{\GL(V)}) = \dia{\beta} \circ (\dia{\tau},\1_{\gl(V)}) \circ k.
\end{equation}
Let us write $\chi_{0} := \dia{\beta} \circ (\dia{\tau}, \1_{\gl(V)})$. The uniqueness of arrows into the terminal object $\dia{\{0\}}$ implies that to prove the commutativity of (\ref{eq_varphimap2}), it suffices to prove the commutativity of
\begin{equation} \label{eq_varphidulezity}
\begin{tikzcd}
\dia{\gl(V)} \arrow{r} \arrow{d}{\chi_{0}} & \dia{ \{0\}} \arrow{d}{\dia{0}} \\
\dia{\gl(V)} \arrow{r}{\dia{\varpi}} & \dia{ \ao(V,g)} 
\end{tikzcd}.
\end{equation}
Finally, we can compose both paths along this diagram with the embedding $\dia{l}: \dia{\ao(V,g)} \rightarrow \dia{\gl(V)}$ induced by the inclusion $l: \ao(V,g) \rightarrow \gl(V)$ and prove instead the commutativity of the diagram 
\begin{equation} \label{eq_vaprhifinal}
\begin{tikzcd}
\dia{\gl(V)} \arrow{r} \arrow{d}{\chi_{0}} & \dia{ \{0\}} \arrow{d}{\dia{0}} \\
\dia{\gl(V)} \arrow{r}{\dia{p}} & \dia{\gl(V)}
\end{tikzcd},
\end{equation}
where $p := l \circ \varpi: \gl(V) \rightarrow \gl(V)$. This is the moment where the actual computation starts! It follows from Proposition \ref{tvrz_taumap} that we can write
\begin{equation}
p = \frac{1}{2}(\1_{\gl(V)} - \tau). 
\end{equation}
To proceed, we need the coordinate expression for $\dia{\tau}^{\ast}$. To do so, let us introduce the matrices for $g$ and its inverse $g^{-1}$, namely write 
\begin{equation} \label{eq_gmatrices}
g(t_{\lambda}) = \fg_{\lambda \kappa} t^{\kappa}, \; \; g(t^{\lambda}) = \fg^{\lambda \kappa} t_{\kappa},
\end{equation}
where $|\fg_{\lambda \kappa}| = |t_{\lambda}| + |t_{\kappa}| + \ell$ and $\fg^{\lambda \kappa} =  - |t_{\kappa}| - |t_{\lambda}| - \ell$. Now, let $A \in \gl(V)$ be any graded linear map. We write $A(t_{\lambda}) = \fA_{\lambda}{}^{\kappa} t_{\kappa}$, where $|\fA_{\lambda}{}^{\kappa}| = |t_{\lambda}| - |t_{\kappa}| + |A|$. Then
\begin{equation}
A^{T}(t^{\lambda}) = (-1)^{|t_{\lambda}|(|t_{\kappa}| - |t_{\lambda}|} \fA_{\kappa}{}^{\nu} t^{\kappa}. 
\end{equation}
Consequently, one has 
\begin{equation}
[\tau(A)](t_{\lambda}) = (-1)^{\ell(|A| - |t_{\lambda}| - |t_{\mu}| - 1) + |t_{\nu}|(|A| + |t_{\mu}| - 1) + |A||t_{\lambda}|} \fg_{\lambda \nu} \fA_{\mu}{}^{\nu} \fg^{\mu \kappa} t_{\kappa}. 
\end{equation}
Since the coefficients of $\tau(A)$ in the standard basis can be read out from its matrix, we have
\begin{equation} \label{eq_tauAmatrix}
\tau(A) = (-1)^{\ell(|A| - |t_{\lambda}| - |t_{\mu}| - 1) + |t_{\nu}|(|A| + |t_{\mu}| - 1) + |A||t_{\lambda}|} \fg_{\lambda \nu} \fA_{\mu}{}^{\nu} \fg^{\mu \kappa}  \Delta_{\kappa}{}^{\lambda} 
\end{equation}
By plugging in $A = \Delta_{\rho}{}^{\sigma}$ and noting that then $|A| = |t_{\rho}| - |t_{\sigma}|$ and $\fA_{\mu}{}^{\nu} = \delta^{\nu}_{\rho} \delta_{\mu}^{\sigma}$, one finds
\begin{equation} \label{eq_taucoordexpression}
\tau( \Delta_{\rho}{}^{\sigma}) = (-1)^{\ell(|t_{\rho}| - |t_{\lambda}| + 1) + |t_{\lambda}|(|t_{\rho}| - |t_{\sigma}|)} \fg_{\lambda \rho} \fg^{\sigma \kappa} \Delta_{\kappa}{}^{\lambda}. 
\end{equation}
But from this we can read out the matrix of $\tau: \gl(V) \rightarrow \gl(V)$ with respect to the standard basis and thus obtain the expression for $\dia{\tau}^{\ast}$ from (\ref{eq_diaApullback}). We find 
\begin{equation}
\dia{\tau}^{\ast}( \bby^{\kappa}{}_{\lambda}) = (-1)^{|t_{\lambda}| + |t_{\kappa}|(|t_{\rho}| - |t_{\sigma}| - 1) + \ell( |t_{\rho}| - |t_{\lambda}| + 1)} \fg_{\lambda \rho} \fg^{\sigma \kappa} \bby^{\rho}{}_{\sigma}. 
\end{equation}
The commutativity of (\ref{eq_vaprhifinal}) is equivalent to to proving that 
\begin{equation} \label{eq_chi0diapequation}
(\chi_{0}^{\ast} \circ \dia{p}^{\ast})(\bby^{\kappa}{}_{\lambda}) = 0,
\end{equation}
for all $\kappa,\lambda \in \{1, \dots, n\}$. By applying the first pullback, we get 
\begin{equation}
\dia{p}^{\ast}( \bby^{\kappa}{}_{\lambda}) = \frac{1}{2}\big( \bby^{\kappa}{}_{\lambda} - (-1)^{|t_{\lambda}| + |t_{\kappa}|(|t_{\rho}| - |t_{\sigma}| - 1) + \ell( |t_{\rho}| - |t_{\lambda}| + 1)} \fg_{\lambda \rho} \fg^{\sigma \kappa} \bby^{\rho}{}_{\sigma} \big).
\end{equation}
Since $\chi_{0} = \dia{\beta} \circ (\dia{\tau},\1_{\gl(V)})$, the next step is to apply $\dia{\beta}^{\ast}$. This is given by the same formula as (\ref{eq_muastexplicit}). We again write $(\bbz^{\lambda}{}_{\kappa}, \bbu^{\lambda}{}_{\kappa})$ for coordinates on $\dia{\gl(V)} \times \dia{\gl(V)}$. One has 
\begin{equation} \label{eq_betaastdiapequation}
\begin{split}
(\dia{\beta}^{\ast} \circ \dia{p}^{\ast})(\bby^{\kappa}{}_{\lambda}) = & \ \frac{1}{2}\big( \bbu^{\nu}{}_{\lambda} \bbz^{\kappa}{}_{\nu} - (-1)^{|t_{\lambda}| + |t_{\kappa}|(|t_{\rho}| - |t_{\sigma}| - 1) + \ell( |t_{\rho}| - |t_{\lambda}| + 1)} \fg_{\lambda \rho} \fg^{\sigma \kappa}\bbu^{\nu}{}_{\sigma} \bbz^{\rho}{}_{\nu} \big) \\
= & \ \frac{1}{2}\big( \bbu^{\nu}{}_{\lambda} \bbz^{\kappa}{}_{\nu} -  \fM_{\rho}{}^{\sigma} \bbu^{\nu}{}_{\sigma} \bbz^{\rho}{}_{\nu} \big),
\end{split}
\end{equation}
where we have for a moment defined 
\begin{equation} \label{eq_fMmaticehnusnablba}
\fM_{\rho}{}^{\sigma} := (-1)^{|t_{\lambda}| + |t_{\kappa}|(|t_{\rho}| - |t_{\sigma}| - 1) + \ell( |t_{\rho}| - |t_{\lambda}| + 1)} \fg_{\lambda \rho} \fg^{\sigma \kappa}. 
\end{equation}
Next, note that the pullback by $(\dia{\tau},\1_{\gl(V)}): \dia{\gl(V)} \rightarrow \dia{\gl(V)} \times \dia{\gl(V)}$ is given by 
\begin{align}
(\dia{\tau},\1_{\gl(V)})^{\ast}(\bbz^{\kappa}{}_{\lambda}) = & \ (-1)^{|t_{\lambda}| + |t_{\kappa}|(|t_{\rho}| - |t_{\sigma}| - 1) + \ell( |t_{\rho}| - |t_{\lambda}| + 1)} \fg_{\lambda \rho} \fg^{\sigma \kappa} \bby^{\rho}{}_{\sigma}, \\
(\dia{\tau},\1_{\gl(V)})^{\ast}(\bbu^{\kappa}{}_{\lambda}) = & \  \bby^{\kappa}{}_{\lambda}.
\end{align}
First, one finds the expression
\begin{equation} \label{eq_theonewewanttobeat}
(\dia{\tau}, \1_{\gl(V)})^{\ast}(\bbu^{\nu}{}_{\lambda} \bbz^{\kappa}{}_{\nu}) = (-1)^{|t_{\nu}| + |t_{\kappa}|(|t_{\rho}| - |t_{\sigma}| - 1) + \ell (|t_{\rho}| - |t_{\nu}| + 1)} \bby^{\nu} {}_{\lambda} \fg_{\nu \rho} \fg^{\sigma \kappa} \bby^{\rho}{}_{\sigma}. 
\end{equation}
It follows from (\ref{eq_betaastdiapequation}) that in order to prove (\ref{eq_chi0diapequation}), one has to show that this expression is equal to 
\begin{equation} \label{eq_thisisthestart}
(\dia{\tau},\1_{\gl(V)})^{\ast}( \fM_{\rho}{}^{\sigma} \bbu^{\nu}{}_{\sigma} \bbz^{\rho}{}_{\nu}) = (-1)^{|t_{\nu}| + |t_{\rho}|(|t_{\mu}| - |t_{\alpha}| - 1) + \ell( |t_{\mu}| - |t_{\nu}| + 1)} \fM_{\rho}{}^{\sigma} \bby^{\nu}{}_{\sigma} \fg_{\nu \mu} \fg^{\alpha \rho} \bby^{\mu}{}_{\alpha}. 
\end{equation}
At this moment, we must plug back the expression (\ref{eq_fMmaticehnusnablba}). Up to a sign, which we will now ignore, one obtains the expression of the form
\begin{equation}
\fg_{\lambda \rho} \fg^{\sigma \kappa} \bby^{\nu}{}_{\sigma} \fg_{\nu \mu} \fg^{\alpha \rho} \bby^{\mu}{}_{\alpha}. 
\end{equation}
One has to reshuffle the terms a bit. We aim to obtain the expression 
\begin{equation} \label{eq_almosttheone}
\fg_{\lambda \rho} \fg^{\alpha \rho} \bby^{\mu}{}_{\alpha}  \fg_{\nu \mu} \fg^{\sigma \kappa} \bby^{\nu}{}_{\sigma}.
\end{equation}
By doing so, one acquires an additional sign
\begin{equation} \label{eq_reshufflingsign}
(-1)^{(\ell + |t_{\rho}| + |t_{\mu}|)(|t_{\mu}| - |t_{\kappa}|) + (\ell + |t_{\nu}| + |t_{\mu}|)(\ell + |t_{\kappa}| + |t_{\nu}|)} 
\end{equation}
We are already getting close. At \textit{this moment and this moment only}, we shall utilize the fact that $g$ is symmetric. It follows from (\ref{eq_gradedsymmetrygflat}) that 
\begin{equation} \label{eq_gradedsymmetrymatrices}
\fg_{\nu \mu} = (-1)^{|t_{\nu}||t_{\mu}| + \ell} \fg_{\mu \nu}, \; \; \fg^{\alpha \rho} = (-1)^{|t_{\alpha}| + |t_{\rho}| + |t_{\alpha}||t_{\rho}|} \fg^{\rho \alpha}. 
\end{equation}
Using this to swap some indices in the expression (\ref{eq_almosttheone}), we get the term proportional to
\begin{equation} \label{eq_almostfinalexpression}
\fg_{\lambda \rho} \fg^{\rho \alpha} \bby^{\mu}{}_{\alpha}  \fg_{\mu \nu} \fg^{\sigma \kappa} \bby^{\nu}{}_{\sigma}.
\end{equation}
Note that we used the symmetry of $g$ \textit{exactly twice}. This will be important later. Except for the first two terms, this has the index structure in accord with (\ref{eq_theonewewanttobeat}). At this very point, we must examine the overall sign. We have to combine the one from (\ref{eq_fMmaticehnusnablba}), the one appearing in (\ref{eq_thisisthestart}), the reshuffle sign (\ref{eq_reshufflingsign}) and the two signs from the index swap (\ref{eq_gradedsymmetrymatrices}). The resulting sign does not fully reflect the horrors endured during its calculation. One gets
\begin{equation} \label{eq_almostfinalsign}
(-1)^{|t_{\lambda}| + |t_{\alpha}| - |t_{\mu}| + |t_{\kappa}| (|t_{\nu}| - |t_{\sigma}| - 1) + \ell (|t_{\rho}| + |t_{\lambda}| + |t_{\nu}| - |t_{\mu}|)}
\end{equation}
Finally, the fact that $g^{-1}$ is inverse to $g$ provides the identity
\begin{equation} \label{eq_ginversegmatrix}
(-1)^{\ell (|t_{\lambda}| + |t_{\rho}| + 1)} \fg_{\lambda \rho} \fg^{\rho \alpha} = \delta_{\lambda}^{\alpha}. 
\end{equation}
Note that in order to utilize it, one has to be a bit careful - the index $\rho$ is not free in this formula. Luckily, we see that $|t_{\rho}|$ appears in (\ref{eq_almostfinalsign}) exactly as required. By using this to simplify the combination of (\ref{eq_almostfinalexpression}) and (\ref{eq_almostfinalsign}), one gets
\begin{equation}
(-1)^{|t_{\mu}| + |t_{\kappa}| (|t_{\nu}| - |t_{\sigma}| - 1) + \ell (|t_{\nu}| - |t_{\mu}| + 1)} \bby^{\mu}{}_{\rho} \fg_{\mu \nu} \fg^{\sigma \kappa} \bby^{\nu}{}_{\sigma}. 
\end{equation}
It remains to relabel the dummy indices $\mu \rightarrow \nu, \rho \rightarrow \lambda, \nu \rightarrow \rho$ to get 
\begin{equation}
(-1)^{|t_{\nu}| + |t_{\kappa}|( |t_{\rho}| - |t_{\sigma}| - 1)  + \ell( |t_{\rho}| - |t_{\nu}| + 1)} \bby^{\nu}{}_{\lambda} \fg_{\nu \rho} \fg^{\sigma \kappa} \bby^{\rho}{}_{\sigma}. 
\end{equation}
But this is precisely the expression (\ref{eq_theonewewanttobeat}), which was to be proved. 
\end{proof}
\subsection{Part III: Regular value} \label{subsec_technical3}
\begin{proof}[Proof of Proposition \ref{tvrz_regularvalue}]
We have to ague that for each $A \in \gO(V_{\bullet},g)$, the tangent map 
\begin{equation}
T_{A}\varphi: T_{A} \GL(V) \rightarrow T_{\1_{V}}(\Sym^{\times}(V,g))
\end{equation}
is surjective. Let us simplify this task. First, observe that during the construction of $\varphi$, we have proved the commutativity of the diagram (\ref{eq_varphidulezity}). This implies that there is a unique graded smooth map $\varphi_{0}: \dia{\gl(V)} \rightarrow \dia{\Sym(V,g)}$ fitting into the diagram
\begin{equation} \label{eq_varphi0}
\begin{tikzcd}
& \dia{\Sym(V,g)} \arrow{d}{\dia{i}} \\
\dia{\gl(V)} \arrow[dashed]{ur}{\varphi_{0}} \arrow{r}{\chi_{0}} & \dia{\gl(V)}
\end{tikzcd}
\end{equation}
But then $\dia{i} \circ (\varphi_{0} \circ k) = \chi_{0} = k = \dia{i} \circ \chi$, which implies that $\varphi_{0} \circ k = \chi$ by the uniqueness claim in the construction of $\chi: \GL(V) \rightarrow \dia{\Sym(V,g)}$. Finally, it follows from the construction of $\varphi$ that $\varphi_{0}$ fits into the commutative diagram
\begin{equation} \label{eq_varphivarphi0}
\begin{tikzcd} 
\GL(V) \arrow{r}{\varphi} \arrow{d}{k} & \Sym^{\times}(V,g) \arrow{d}{k'} \\
\gl(V) \arrow{r}{\varphi_{0}} & \dia{\Sym(V,g)},
\end{tikzcd}
\end{equation}
where $k': \Sym^{\times}(V,g) \rightarrow \Sym(V,g)$ is defined by (\ref{eq_Symcross}). Note that this is an open embedding, so $\Sym^{\times}(V,g)$ can be viewed as a restriction of $\dia{\Sym}(V,g)$ to the open subset $\Sym(V,g)_{0} \cap \GL(V_{\bullet})$. In particular, it suffices to prove that the tangent map
\begin{equation}
T_{A}\varphi_{0}: T_{A}(\dia{\gl(V)}) \rightarrow T_{\1_{V}}(\dia{\Sym(V,g)})
\end{equation}
is surjective. Finally, it follows from (\ref{eq_varphi0}) that it suffices to prove that the tangent map
\begin{equation}
T_{A}\chi_{0}: T_{A}(\dia{\gl(V)}) \rightarrow T_{\1_{V}}(\dia{\gl(V)}) 
\end{equation}
is surjective when the codomain is restricted to the subspace of the tangent space corresponding to the submanifold $(\dia{\Sym(V,g)}, \dia{i})$. We utilize Proposition \ref{tvrz_tangentVSidentification} and the canonical identifications
\begin{equation}
\gl(V) \cong T_{A}( \dia{\gl(V)}), \; \; \gl(V) \cong T_{\1_{V}}( \dia{\gl(V)}).
\end{equation}
With respect to those identifications, $T_{A}\chi_{0}$ can be identified with a degree zero graded linear map $L_{A}: \gl(V) \rightarrow \gl(V)$, given for each $X \in \gl(V)$ by the formula
\begin{equation} \label{eq_LAmap}
L_{A}(X) := \tau(A) X + \tau(X) A. 
\end{equation}
Before actually showing this, observe that the subspace of the tangent space corresponding to $(\dia{\Sym(V,g)}, \dia{i})$ is identified with $\Sym(V,g) \subseteq \gl(V)$. To prove the claim about $T_{A}\chi_{0}$, one thus only has to show that $L_{A}: \gl(V) \rightarrow \Sym(V,g)$ is surjective. But for any $S \in \Sym(V,g)$, one has 
\begin{equation}
\begin{split}
L_{A}( \frac{1}{2}A S) = & \ \frac{1}{2} \tau(A) (AS) + \frac{1}{2} \tau(AS)A \\
= & \ \frac{1}{2} (\tau(A) A) S + \frac{1}{2} \tau(S) (\tau(A) A) \\
= & \ \frac{1}{2} S + \frac{1}{2} \tau(S) = S,
\end{split}
\end{equation}
where $\tau(A)A = \1_{V}$ follows from the assumption $A \in \gO(V_{\bullet},g)$, see (\ref{eq_gOVbulletg}). We have also utilized the property (\ref{eq_tauoriginalantihom}) and Proposition \ref{tvrz_taumap} implying that $\tau(S) = S$. 

To finish the proof, we have to prove that (\ref{eq_LAmap}) indeed corresponds to the tangent map $T_{A}\chi_{0}$. This is where the actual calculation starts. We will compare the matrix of $L_{A}$ with respect to the standard basis $\Delta_{\lambda}{}^{\kappa}$, and the matrix of $T_{A}\chi_{0}$ with respect to the corresponding basis
\begin{equation} \label{eq_bdeltabasis}
\bDelta_{\lambda}{}^{\kappa} = (-1)^{|t_{\lambda}| - |t_{\kappa}|} \left.\frac{\partial}{\partial \bby^{\lambda}{}_{\kappa}}\right\rvert_{A} 
\end{equation} 
for $T_{A}( \dia{\gl(V)})$ and the similarly defined (and denoted by the same symbols) basis for $T_{\1_{V}}( \dia{\gl(V)})$. 

At this moment, it is convenient to introduce a single symbol $\mind{\lambda}{\kappa}$ to denote the index corresponding to the basis vector $\Delta_{\kappa}{}^{\lambda} \in \gl(V)$. Consequently, let us define the matrix $\fT$ of $\tau$ by 
\begin{equation}
\tau( \Delta_{\rho}{}^{\sigma}) = \fT_{\mind{\sigma}{\rho}}{}^{\mind{\lambda}{\kappa}} \Delta_{\kappa}{}^{\lambda}. 
\end{equation}
Its exact form will not be important in this proof, but note that it follows from (\ref{eq_taucoordexpression}) that 
\begin{equation} \label{eq_fTmatrix}
\fT_{\mind{\sigma}{\rho}}{}^{\mind{\lambda}{\kappa}} = (-1)^{\ell(|t_{\rho}| - |t_{\lambda}| + 1) + |t_{\lambda}|(|t_{\rho}| - |t_{\sigma}|)} \fg_{\lambda \rho} \fg^{\sigma \kappa}. 
\end{equation}
This allows us to write 
\begin{equation} \label{eq_LAmatrix}
\begin{split}
L_{A}( \Delta_{\mu}{}^{\nu}) = & \ \tau(A) \Delta_{\mu}{}^{\nu} + \tau(\Delta_{\mu}{}^{\nu}) A \\
= & \ \big( \fA_{\sigma}{}^{\rho} \fT_{\mind{\sigma}{\rho}}{}^{\mind{\mu}{\kappa}} \delta_{\lambda}^{\nu} + (-1)^{(|t_{\kappa}| - |t_{\sigma}|)(|t_{\lambda}| - |t_{\sigma}|)} \fT_{\mind{\nu}{\mu}}{}^{\mind{\sigma}{\kappa}} \fA_{\lambda}{}^{\sigma} \big) \Delta_{\kappa}{}^{\lambda}, 
\end{split}
\end{equation}
where $A = \fA_{\sigma}{}^{\rho} \Delta_{\rho}{}^{\sigma}$. We compare this to the differential of $\chi_{0} := \dia{\beta} \circ (\dia{\tau}, \1_{\gl(V)})$. One has
\begin{equation}
\chi_{0}^{\ast}( \bby^{\kappa}{}_{\lambda}) = (-1)^{(|t_{\kappa}| - |t_{\alpha}|)(|t_{\rho}| - |t_{\sigma}| - 1)} \bby^{\alpha}{}_{\lambda} \fT_{\mind{\sigma}{\rho}}{}^{\mind{\alpha}{\kappa}} \bby^{\rho}{}_{\sigma}. 
\end{equation} 
This can be shown in the ways similar to those after (\ref{eq_chi0diapequation}), except that we do not insert (\ref{eq_fTmatrix}). It is convenient to reshuffle this a bit - namely interchange the positions of $\bby^{\alpha}{}_{\lambda}$ and $\bby^{\rho}{}_{\sigma}$. After counting the signs, one gets
\begin{equation} \label{eq_chi0explicitfinal}
\chi_{0}^{\ast}( \bby^{\kappa}{}_{\lambda}) = (-1)^{|t_{\rho}| - |t_{\sigma}| + |t_{\kappa}| + |t_{\alpha}|(|t_{\kappa}| - |t_{\lambda}|) + |t_{\kappa}||t_{\lambda}|} \bby^{\rho}{}_{\sigma} \fT_{\mind{\sigma}{\rho}}{}^{\mind{\alpha}{\kappa}} \bby^{\alpha}{}_{\lambda}. 
\end{equation}
At this moment, let us recall the well-known (up to signs) formula
\begin{equation}
[T_{A}\chi_{0}]( \left.\frac{\partial}{\partial \bby^{\mu}{}_{\nu}}\right\rvert_{A}) = \frac{\partial( \chi_{0}^{\ast}(\bby^{\kappa}{}_{\lambda}))}{\partial \bby^{\mu}{}_{\nu}}(A)  \left.\frac{\partial}{\partial \bby^{\kappa}{}_{\lambda}}\right\rvert_{\1_{V}}.
\end{equation}
See e.g. Proposition 4.20 in \cite{Vysoky:2022gm}. In terms of the bases (\ref{eq_bdeltabasis}), one has 
\begin{equation} \label{eq_TAchi0vbdelta}
[T_{A}\chi_{0}](\bDelta_{\mu}{}^{\nu}) = (-1)^{|t_{\mu}| - |t_{\nu}| + |t_{\lambda}| - |t_{\kappa}|} \frac{\partial( \chi_{0}^{\ast}(\bby^{\kappa}{}_{\lambda}))}{\partial  \bby^{\mu}{}_{\nu}}(A) \bDelta_{\kappa}{}^{\lambda} 
\end{equation}
It turns out that the best course of action is to write the real number $\bby^{\nu}{}_{\lambda}(A)$ as if it is obtained by the action of the dual basis vector $\cD^{\nu}{}_{\lambda}$ on $A \in \gl(V)$, see (\ref{eq_dualstandard}). In other words, one writes
\begin{equation}
\bby^{\nu}{}_{\lambda}(A) = (-1)^{|t_{\lambda}| - |t_{\nu}|} \fA_{\lambda}{}^{\nu}. 
\end{equation}
Observe that this is actually a completely correct formula. It remains to plug (\ref{eq_chi0explicitfinal}) into (\ref{eq_TAchi0vbdelta}). After some sign cancellations and index relabeling, one arrives to 
\begin{equation}
[T_{A}\chi_{0}](\bDelta_{\mu}{}^{\nu}) = \big( \fA_{\sigma}{}^{\rho} \fT_{\mind{\sigma}{\rho}}{}^{\mind{\mu}{\kappa}} \delta_{\lambda}^{\nu} + (-1)^{(|t_{\kappa}| - |t_{\sigma}|)(|t_{\lambda}| - |t_{\sigma}|)} \fT_{\mind{\nu}{\mu}}{}^{\mind{\sigma}{\kappa}} \fA_{\lambda}{}^{\sigma} \big) \bDelta_{\kappa}{}^{\lambda}.
\end{equation}
This is completely the same expression as the one for $L_{A}$ in (\ref{eq_LAmatrix}). This finishes the proof. 
\end{proof}
\subsection{Part IV: It is a subgroup!} \label{subsec_technical4}
\begin{proof}[Proof of Theorem \ref{thm_O}] 
First, we must show that there exists a graded smooth map $\mu': \gO(V,g) \times \gO(V,g) \rightarrow \gO(V,g)$ fitting into the commutative diagram
\begin{equation} \label{eq_mu'diagram}
\begin{tikzcd}
\gO(V,g) \times \gO(V,g) \arrow[dashed]{r}{\mu'} \arrow{d}{j \times j} & \gO(V,g) \arrow{d}{j} \\
\GL(V) \times \GL(V) \arrow{r}{\mu} & \GL(V)
\end{tikzcd}
\end{equation}
Since $\gO(V,g)$ is constructed by the pullback (\ref{eq_O}), it suffices to prove that the diagram 
\begin{equation}
\begin{tikzcd}
\gO(V,g) \times \gO(V,g) \arrow{r} \arrow{d}{\mu \circ (j \times j)} & \{ \ast \} \arrow{d}{e^{\times}} \\
\GL(V) \arrow{r}{\varphi} & \Sym^{\times}(V,g)
\end{tikzcd}
\end{equation}
commutes. We can compose both paths along the diagram with the embedding $\dia{i}': \Sym^{\times}(V,g) \rightarrow \gl(V)$ and prove the commutativity of the diagram
\begin{equation} \label{eq_gOVgmult}
\begin{tikzcd}
\gO(V,g) \times \gO(V,g) \arrow{r} \arrow{d}{\mu \circ (j \times j)} & \{ \ast \} \arrow{d}{e} \\
\GL(V) \arrow{r}{\varphi^{\times}} & \GL(V)
\end{tikzcd}
\end{equation}
instead. Recall that $\varphi^{\times} = \mu \circ (\tau^{\times}, \1_{\GL(V)})$. In the following calculations, $p_{1}$ and $p_{2}$ will always denote canonical projections onto the first and the second component of a product graded manifold, respectively. We also write $\G = \GL(V)$ and $\H = \gO(V,g)$ to save some space. In the following, we will closely follow the usual proof of the implication $A,B \in \gO(V,g) \Rightarrow AB \in \gO(V,g)$. In each step, we thus include the corresponding ordinary manipulation in \cH{brown}. Therefore, we are proving the graded analogue of $\cH{\tau(AB)(AB) = \1_{V}}$. 
\begin{enumerate}[(1)]
\item Observe that one can write 
\begin{equation}
\begin{split}
\varphi^{\times} \circ \mu \circ (j \times j) = & \ \mu \circ (\tau^{\times},\1_{\G}) \circ \mu \circ (j \times j) \\
= & \ \mu \circ (\1_{\G} \times \mu) \circ (\tau \circ \mu, (p_{1},p_{2})) \circ (j \times j).
\end{split}
\end{equation}
Now utilize the associativity (\ref{eq_associativity}) and write
\begin{equation}
\begin{split}
\mu \circ (\1_{\G} \times \mu) \circ (&\tau^{\times} \circ \mu, (p_{1},p_{2})) \circ (j \times j) = \\
= & \  \mu \circ (\mu \times \1_{\G}) \circ ((\tau^{\times} \circ \mu, p_{1}), p_{2}) \circ (j \times j) \\
 = & \ \mu \circ \big(\mu \circ (\tau^{\times} \circ \mu \circ (q_{1},q_{2}), q_{1}), q_{2}\big), 
\end{split}
\end{equation}
where we define $q_{1} := j \circ p_{1}$ and $q_{2} = j \circ p_{2}$. 
\[
\cH{\tau(AB)(AB) = (\tau(AB)A)B}. 
\]

\item At this moment, we recall the fact that $\tau^{\times}$ is an anti-homomorphism and utilize the commutativity of (\ref{eq_tauantihom}) to write 
\begin{equation}
\begin{split}
\mu \circ \big(\mu \circ (\tau^{\times} \circ \mu \circ (q_{1},q_{2}), q_{1}), q_{2}\big) = & \ \mu \circ \big(\mu \circ (\mu \circ (\tau^{\times} \times \tau^{\times}) \circ (q_{2},q_{1}),q_{1}), q_{2} \big) \\
= & \ \mu \circ \big( \mu \circ (\mu \times \1_{\G}) \circ ((\tau^{\times} \circ q_{2}, \tau^{\times} \circ q_{1}), q_{1}), q_{2} \big).
\end{split}
\end{equation}
\[
\cH{(\tau(AB)A)B = ( (\tau(B)\tau(A))A)B}.
\]
\item We can now use the associativity (\ref{eq_associativity}) again to get
\begin{equation}
\begin{split}
\mu \circ \big(& \mu \circ (\mu \times \1_{\G}) \circ ((\tau^{\times} \circ q_{2}, \tau^{\times} \circ q_{1}), q_{1}), q_{2} \big) = \\
= & \ \mu \circ \big( \mu \circ (\1_{\G} \times \mu) \circ (\tau^{\times} \circ q_{2}, (\tau^{\times} \circ q_{1}, q_{1})), q_{2} \big) \\
= & \ \mu \circ \big( \mu \circ ( \tau^{\times} \circ q_{2}, \mu \circ (\tau^{\times},\1_{\G}) \circ q_{1}), q_{2} \big) \\
= & \ \mu \circ \big( \mu \circ ( \tau^{\times} \circ q_{2}, \varphi^{\times} \circ q_{1}), q_{2} \big)
\end{split}
\end{equation}
\[
\cH{\tau( (\tau(B)\tau(A))A)B = ( \tau(B)(\tau(A)A))B}.
\]
\item At this moment, we use the commutativity of the diagram (\ref{eq_O}) composed with the embedding $\dia{i}'$, which gives $\varphi^{\times} \circ j = e_{\H} = e_{\G} \circ j$. Consequently, one can continue and write 
\begin{equation}
\begin{split}
\mu \circ \big( \mu \circ ( \tau^{\times} \circ q_{2}, \varphi^{\times} \circ q_{1}), q_{2} \big) = & \  \mu \circ \big( \mu \circ (\tau^{\times} \circ q_{2}, e_{\G} \circ q_{1}), q_{2}) \\
= & \ \mu \circ \big((\mu \circ (\tau^{\times} \circ q_{2}, e_{\G} \circ \tau^{\times} \circ q_{2}), q_{2}\big) \\
= & \ \mu \circ \big(\mu \circ (\1_{\G},e_{\G}) \circ \tau^{\times} \circ q_{2}, q_{2}\big), 
\end{split}
\end{equation}
where we have used the fact that $e_{\G} \circ q_{1} = e_{\G} \circ \tau^{\times} \circ q_{2}$. This is because arrows into terminal objects are unique. 
\[
\cH{( \tau(B)(\tau(A)A))B = (\tau(B)\1_{V})B}. 
\]
\item We now utilize the commutativity of the second unitarity diagram (\ref{eq_unitarity}) to get
\begin{equation}
\begin{split}
\mu \circ \big(\mu \circ (\1_{\G},e_{\G}) \circ \tau^{\times} \circ q_{2}, q_{2}\big) = & \ \mu \circ (\1_{\G} \circ \tau^{\times} \circ q_{2}, q_{2}) \\
= & \ \mu \circ (\tau^{\times},\1_{\G}) \circ q_{2} = \varphi^{\times} \circ q_{2}. 
\end{split}
\end{equation}
\[
\cH{(\tau(B)\1_{V})B = \tau(B)B}. 
\]
\item Finally, the commutativity of (\ref{eq_O}) composed with $\dia{i}'$ gives 
\begin{equation}
\varphi^{\times} \circ q_{2} = \varphi^{\times} \circ j \circ p_{2} = e_{\H} \circ p_{2} = e_{\H \times \H}. 
\end{equation}
\[
\cH{\tau(B)B = \1_{V}}. 
\]
Since $e_{\H \times \H}: \H \times \H \rightarrow \G$ is the composition of the terminal arrow $\H \times \H \rightarrow \{ \ast \}$ and the unit $e: \{\ast \} \rightarrow \G$, we have just proved (\ref{eq_gOVgmult}) commutative. 
\end{enumerate}

This finishes the construction of $\mu'$ fitting into (\ref{eq_mu'diagram}). Next, we must construct the inverse $\iota': \gO(V,g) \rightarrow \gO(V,g)$ fitting into the commutative diagram
\begin{equation} \label{eq_iprime}
\begin{tikzcd}
\gO(V,g) \arrow{d}{j} \arrow{r}{\iota'} & \gO(V,g) \arrow{d}{j} \\
\GL(V) \arrow{r}{\iota}& \GL(V)
\end{tikzcd}
\end{equation}
Similarly to the first part, this boils down to proving the commutativity of the diagram
\begin{equation} \label{eq_gOVginverse}
\begin{tikzcd} 
\gO(V,g) \arrow{d}{\iota \circ j} \arrow{r} & \{ \ast \} \arrow{d}{e} \\
\GL(V) \arrow{r}{\varphi^{\times}} & \GL(V)
\end{tikzcd}
\end{equation}
We are proving the graded analogue of $\cH{\tau( A^{-1})A^{-1} = \1_{V}}$ for all $A \in \gO(V,g)$. We again write $\G := \GL(V)$ and $\H := \gO(V,g)$. 
\begin{enumerate}[(1)]
\item We use the commutativity of the unitarity diagram (\ref{eq_unitarity}) to write
\begin{equation}
\begin{split}
\varphi^{\times} \circ \iota \circ j = & \ \mu \circ (\tau^{\times},\1_{\G}) \circ \iota \circ j \\
= & \ \mu \circ (\1_{\G} \circ \tau^{\times} \circ \iota, \iota) \circ j \\
= & \ \mu \circ \big(\mu \circ (\1_{\G},e_{\G}) \circ \tau^{\times} \circ \iota, \iota \big) \circ j \\
= & \ \mu \circ \big( \mu \circ (\tau^{\times} \circ \iota \circ j, e_{\H}), \iota \circ j). 
\end{split}
\end{equation}
\[
\cH{\tau(A^{-1})A^{-1} = (\tau(A^{-1})\1_{V})A^{-1}}. 
\]
\item We use the fact that $\varphi^{\times} \circ j = e_{\H}$ following from (\ref{eq_gOVginverse}). We can thus write
\begin{equation}
\begin{split}
\mu \circ \big( \mu \circ (\tau^{\times} \circ \iota \circ j, e_{\H}), \iota \circ j) = & \ \mu \circ \big(\mu \circ( \tau^{\times} \circ \iota \circ j, \varphi^{\times} \circ j), \iota \circ j\big) \\
= & \ \mu \circ \big(\mu \circ (\tau^{\times} \circ \iota, \mu \circ (\tau^{\times},\1_{\G}), \iota\big) \circ j \\
= & \ \mu \circ \big(\mu \circ (\1_{\G} \times \mu) \circ (\tau^{\times} \circ \iota, (\tau^{\times},\1_{\G})), \iota \big) \circ j.
\end{split}
\end{equation}
\[
\cH{(\tau(A^{-1})\1_{V})A^{-1} = ( \tau(A^{-1})(\tau(A)A))A^{-1}}.
\]
\item Since $\tau^{\times}$ is an anti-homomorphism by Proposition \ref{tvrz_tauantihom}, we can use Lemma \ref{lem_antihom} to obtain
\begin{equation}
\mu \circ \big(\mu \circ (\1_{\G} \times \mu) \circ (\tau^{\times} \circ \iota, (\tau^{\times},\1_{\G})), \iota \big) \circ j = \mu \circ \big(\mu \circ (\1_{\G} \times \mu) \circ (\iota \circ \tau^{\times}, (\tau^{\times},\1_{\G})), \iota \big) \circ j.
\end{equation}
\[
\cH{(\tau(A^{-1})(\tau(A)A))A^{-1} = ( \tau(A)^{-1}(\tau(A)A))A^{-1}}. 
\]
\item Use the associativity (\ref{eq_associativity}) to get
\begin{equation}
\begin{split}
\mu \circ \big(&\mu \circ (\1_{\G} \times \mu) \circ (\iota \circ \tau^{\times}, (\tau^{\times},\1_{\G})), \iota \big) \circ j = \\
= & \ \mu \circ \big(\mu \circ (\mu \times \1_{\G}) \circ ((\iota \circ \tau^{\times}, \tau^{\times}),\1_{\G}), \iota \big) \circ j \\
= & \ \mu \circ \big(\mu \circ (\mu \circ (\iota,\1_{\G}) \circ \tau^{\times}, \1_{\G}), \iota \big) \circ j.
\end{split}
\end{equation}
\[
\cH{(\tau(A)^{-1}(\tau(A)A))A^{-1} = ((\tau(A)^{-1}\tau(A))A)A^{-1}}. 
\]
\item It remains to use the axioms (\ref{eq_unitarity}) and (\ref{eq_inversion}) to get
\begin{equation}
\begin{split}
\mu \circ \big(\mu \circ (\mu \circ (\iota,\1_{\G}) \circ \tau^{\times}, \1_{\G}), \iota \big) \circ j = & \  \mu \circ (\mu \circ (e_{\G} \circ \tau^{\times}, \1_{\G}), \iota) \circ j \\
= & \ \mu \circ ( \mu \circ (e_{\G}, \1_{\G}), \iota) \circ j \\
= & \ \mu \circ ( \1_{\G}, \iota) \circ j = e_{\G} \circ j = e_{\H}. 
\end{split}
\end{equation}
\[
\cH{((\tau(A)^{-1}\tau(A))A)A^{-1} = (\1_{V}A)A^{-1} = AA^{-1} = \1_{V}}. 
\]
But $e_{\H}$ is precisely the terminal arrow $\H \rightarrow \{\ast \}$ followed by the unit $e: \{ \ast \} \rightarrow \G$ and the commutativity of (\ref{eq_gOVginverse}) is proved. 
\end{enumerate}
This completes the construction of the graded smooth map $\iota'$ fitting into the diagram (\ref{eq_iprime}). 

To finish the proof, it remains to prove that the Lie algebra associated with $\gO(V,g)$ can be canonically identified with the subalgebra $\ao(V,g) \subseteq \gl(V)$. As $\gO(V,g)$ is defined as a regular level set submanifold with respect to $\varphi: \GL(V) \rightarrow \Sym^{\times}(V,g)$, its tangent space at $e' = \1_{V}$ can be identified with the kernel of the tangent map $T_{\1_{V}}\varphi$. Using the same arguments as under (\ref{eq_varphivarphi0}), it suffices to find the kernel of the map $T_{\1_{V}} \chi_{0}$. Under the identification $\gl(V) \cong T_{\1_{V}} (\dia{\gl(V)})$, it corresponds to the kernel of the map $L_{\1_{V}}$ defined by (\ref{eq_LAmap}). One has 
\begin{equation}
\ker(L_{\1_{V}}) = \{ X \in \gl(V) \mid X + \tau(X) = 0 \}. 
\end{equation}
Thanks to Proposition \ref{tvrz_taumap}, this is precisely the subspace $\ao(V,g)$. 
\end{proof}
\section{Functor of points perspective} \label{sec_OGFOP}
In Proposition \ref{tvrz_GLVFOP}, we have observed that the functor of points $\frP = \gMan^{\infty}(-,\GL(V))$ associated with $\GL(V)$ is naturally isomorphic to the functor $\frF$ assigning to each $\cS \in \gMan^{\infty}$ the set of degree zero $\C^{\infty}_{\cS}(S)$-module automorphisms $\Aut(\frM(\cS))$ of the free $\C^{\infty}_{\cS}(S)$-module $\frM(\cS) = \C^{\infty}_{\cS}(S) \otimes_{\R} V$. In this section, we will argue that the functor of points associated with $\gO(V,g)$ can be canonically identified with a certain ``functor of subsets'' of $\frF$.

Let us first show that a degree $\ell$ metric $g$ induces a $\C^{\infty}_{\cS}(S)$-bilinear form $\<\cdot,\cdot\>_{g}$ on $\frM(\cS)$. On generators, it is defined by the formula
\begin{equation}
\< f \otimes v, f' \otimes w \>_{g} := (-1)^{(|v|+\ell)|f'|} ff' g(v,w) \in \C^{\infty}_{\cS}(S),
 \end{equation}
where $f,f' \in \C^{\infty}_{\cS}(S)$ and $v,w \in V$, where we view $g(v,w) \in \R$ as a ``constant function'' of degree $|g(v,w)| = |v| + |w| + \ell$. For general elements $\psi,\psi' \in \frM(\cS)$ and $f \in \C^{\infty}_{\cS}(S)$, one finds the properties
\begin{enumerate}[(i)]
\item $|\<\psi,\psi'\>_{g}| = |\psi| + |\psi'| + \ell$;
\item $\< \psi,\psi'\>_{g} = (-1)^{(|\psi|+\ell)(|\psi'|+\ell)} \<\psi',\psi\>_{g}$; 
\item $\< f \psi, \psi'\>_{g} = f \<\psi,\psi'\>_{g}$, $\<\psi,f\psi'\>_{g} = (-1)^{|f|(|\psi|+\ell)} f \<\psi,\psi'\>_{g}$.
\end{enumerate}
Compare this to Definition 2.1 in \cite{vysoky2022graded}. It is now natural to consider a subset
\begin{equation} \label{eq_gOfrMcSg}
\gO(\frM(\cS),g) := \{ F \in \Aut(\frM(\cS)) \mid \< F(\psi),F(\psi')\>_{g} = \<\psi,\psi'\>_{g} \text{ for all } \psi,\psi' \in \frM(\cS) \}. 
\end{equation}
\begin{tvrz}
For each graded smooth map $\varphi: \cN \rightarrow \cS$, the induced set map $\frF(\varphi): \frF(\cS) \rightarrow \frF(\cN)$ maps the subset $\gO(\frM(\cS),g)$ into $\gO(\frM(\cN),g)$.

Consequently, $\frF'(\cS) := \gO(\frM(\cS),g)$ can be promoted to a functor $\frF': (\gMan^{\infty})^{\op} \rightarrow \Set$ such that the inclusions $\frI_{\cS}: \frF'(\cS) \rightarrow \frF(\cS)$ define a natural transformation $\frI: \frF' \rightarrow \frF$.  
\end{tvrz}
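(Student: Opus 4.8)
The plan is to reduce membership in $\gO(\frM(\cS),g)$ to a family of polynomial equations in the matrix entries of an automorphism whose coefficients are \emph{real constants} independent of $\cS$, and then to observe that $\frF(\varphi)$ acts on those entries through the graded algebra homomorphism $\varphi^{\ast}$, which fixes such constants. First I would fix a total basis $(t_{\lambda})_{\lambda=1}^{n}$ for $V$ and the associated frame $(\Phi_{\lambda})_{\lambda=1}^{n}$, $\Phi_{\lambda} = (-1)^{|t_{\lambda}|} 1 \otimes t_{\lambda}$, for $\frM(\cS)$ used in the proof of Proposition~\ref{tvrz_GLVFOP}. Since every $F \in \Aut(\frM(\cS))$ is a degree zero $\C^{\infty}_{\cS}(S)$-module map and $\<\cdot,\cdot\>_{g}$ obeys the stated linearity rule in both arguments, I would begin by noting that $F$ preserves $\<\cdot,\cdot\>_{g}$ if and only if it does so on the frame, i.e. $\<F(\Phi_{\lambda}),F(\Phi_{\kappa})\>_{g} = \<\Phi_{\lambda},\Phi_{\kappa}\>_{g}$ for all $\lambda,\kappa$. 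This reduction follows by expanding arbitrary $\psi,\psi'$ in the frame, pulling the coefficients out with the appropriate signs, and using that $|F(\Phi_{\lambda})| = |\Phi_{\lambda}|$, so that the signs produced on both sides agree.

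The key observation is that $\gamma_{\lambda\kappa} := \<\Phi_{\lambda},\Phi_{\kappa}\>_{g}$ is a constant function, a real multiple of the unit $1 \in \C^{\infty}_{\cS}(S)$, whose value is the same for every $\cS$: taking $f = f' = 1$ in the defining formula for $\<\cdot,\cdot\>_{g}$ gives $\gamma_{\lambda\kappa} = (-1)^{|t_{\lambda}|+|t_{\kappa}|} g(t_{\lambda},t_{\kappa})$, with $g(t_{\lambda},t_{\kappa}) \in \R$. Writing $F(\Phi_{\lambda}) = \fF^{\mu}{}_{\lambda} \cdot \Phi_{\mu}$ and moving the coefficients out using the linearity rule, the condition $F \in \gO(\frM(\cS),g)$ turns into the family of identities
\begin{equation}
(-1)^{(|t_{\kappa}|-|t_{\nu}|)(|t_{\mu}|+\ell)} \, \fF^{\mu}{}_{\lambda} \, \fF^{\nu}{}_{\kappa} \, \gamma_{\mu\nu} = \gamma_{\lambda\kappa}, \qquad \lambda,\kappa \in \{1,\dots,n\},
\end{equation}
a polynomial relation in the entries $\fF^{\mu}{}_{\lambda} \in \C^{\infty}_{\cS}(S)$ with coefficients in $\R$.

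With this the first assertion is immediate. By~(\ref{eq_rfFfunctormap}) the image $G := \frF(\varphi)(F)$ has entries $\fG^{\mu}{}_{\lambda} = \varphi^{\ast}(\fF^{\mu}{}_{\lambda})$. As $\varphi^{\ast}$ is a homomorphism of graded commutative algebras it is $\R$-linear, multiplicative, and fixes the constants $\gamma_{\mu\nu}$; applying $\varphi^{\ast}$ to the displayed condition for $F$ and passing the $\pm 1$ signs through the homomorphism therefore produces exactly the same family of identities for the entries $\fG^{\mu}{}_{\lambda}$. Hence $G \in \gO(\frM(\cN),g)$, so $\frF(\varphi)$ restricts to a map $\gO(\frM(\cS),g) \to \gO(\frM(\cN),g)$.

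The second assertion is then purely formal. Defining $\frF'(\varphi)$ to be this corestriction of $\frF(\varphi)$, the contravariant functor axioms for $\frF'$, namely $\frF'(\1_{\cS}) = \1_{\frF'(\cS)}$ and $\frF'(\varphi \circ \psi) = \frF'(\psi) \circ \frF'(\varphi)$, follow at once from the corresponding identities for $\frF$ together with injectivity of the inclusions $\frI_{\cS} : \frF'(\cS) \hookrightarrow \frF(\cS)$. Finally, the equation $\frF(\varphi) \circ \frI_{\cS} = \frI_{\cN} \circ \frF'(\varphi)$ holds tautologically, since $\frF'(\varphi)$ is by construction the corestriction of $\frF(\varphi)$; this is precisely the statement that $\frI = \{\frI_{\cS}\}_{\cS}$ is a natural transformation $\frF' \to \frF$. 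The only point requiring genuine care is the sign bookkeeping in reducing invariance to the frame and rendering it in coordinates; once the coefficients are seen to be constants, the homomorphism $\varphi^{\ast}$ transports the relation with no further obstruction.
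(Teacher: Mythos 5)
Your proposal is correct and takes essentially the same route as the paper's own proof: both reduce the invariance condition to the frame $(\Phi_{\lambda})$, rewrite it as a polynomial identity in the entries $\fF^{\mu}{}_{\lambda}$ whose coefficients $\<\Phi_{\mu},\Phi_{\nu}\>_{g}$ are real constants, and then transport it by the unital $\R$-algebra homomorphism $\varphi^{\ast}$, with the functoriality of $\frF'$ and naturality of $\frI$ obtained by corestriction. Your sign $(-1)^{(|t_{\kappa}|-|t_{\nu}|)(|t_{\mu}|+\ell)}$ is correct and agrees with the paper's condition (\ref{eq_FOGcondition0}) modulo terms where the bracket vanishes for degree reasons; you are in fact slightly more explicit than the paper in justifying the reduction to the frame.
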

\begin{proof}
Let us use the notation introduced in Proposition \ref{tvrz_GLVFOP}. Observe that the matrix of the bilinear form $\<\cdot,\cdot\>_{g}$ with respect to the frame $\Phi_{\lambda} = (-1)^{|t_{\lambda}|} 1 \otimes t_{\lambda}$ is given by
\begin{equation} \label{eq_gformmatrix}
\< \Phi_{\lambda}, \Phi_{\kappa} \>_{g} = (-1)^{|t_{\lambda}| + |t_{\kappa}|} g(t_{\lambda},t_{\kappa}) = (-1)^{|t_{\lambda}| + |t_{\kappa}| + (|t_{\lambda}| + \ell)\ell} \fg_{\lambda \kappa},
\end{equation}
where we have used (\ref{eq_gflattog}) and (\ref{eq_gmatrices}) in the last step. Any $F \in \frF(\cS)$ can be decomposed as in (\ref{eq_Fframedecomposition}). Suppose $F \in \gO(\frM(\cS),g)$. The condition imposed on $F$ by (\ref{eq_gOfrMcSg}) can be written as 
\begin{equation} \label{eq_FOGcondition0}
(-1)^{|t_{\sigma}|(|t_{\kappa}| - 1)} \fF^{\rho}{}_{\lambda} \< \Phi_{\rho}, \Phi_{\sigma} \>_{g} \fF^{\sigma}{}_{\kappa} = \< \Phi_{\lambda},\Phi_{\kappa}\>_{g}. 
\end{equation}
for all $\lambda,\kappa \in \{1,\dots,n\}$. Inserting (\ref{eq_gformmatrix}), this turns into the condition
\begin{equation} \label{eq_FOGcondition}
(-1)^{|t_{\kappa}|(|t_{\sigma}|-1) + |t_{\rho}|(1+\ell)} \fF^{\rho}{}_{\lambda} \fg_{\rho \sigma} \fF^{\sigma}{}_{\kappa} = (-1)^{|t_{\lambda}|(1+\ell)} \fg_{\lambda \kappa},
\end{equation}
for all $\lambda,\kappa \in \{1, \dots, n\}$. We have only slightly rearranged the signs. By applying $\varphi^{\ast}$ and using the fact that $\fg_{\lambda \kappa}$ are constants, one finds
\begin{equation}
(-1)^{|t_{\kappa}|(|t_{\sigma}|-1) + |t_{\rho}|(1+\ell)} \varphi^{\ast}(\fF^{\rho}{}_{\lambda}) \fg_{\rho \sigma} \varphi^{\ast}(\fF^{\sigma}{}_{\kappa})= (-1)^{|t_{\lambda}|(1+\ell)} \fg_{\lambda \kappa}. \end{equation}
By looking at (\ref{eq_rfFfunctormap}), we conclude that $[\frF(\varphi)](F) \in \gO(\frM(\cN),g)$. This proves the first claim. We can now \textit{define} the arrow map of $\frF'$ to make $\frI = \{ \frI_{\cS} \}_{\cS}$ into a natural transformation. 
\end{proof}
Next, let $\frP' := \gMan^{\infty}(-,\gO(V,g))$ denote the functor of points associated with $\gO(V,g)$. The closed embedding $j: \gO(V,g) \rightarrow \GL(V)$ induces a natural transformation $\frJ: \frP' \rightarrow \frP$. For each $\cS \in \gMan^{\infty}$, its component $\frJ_{\cS}: \frP'(\cS) \rightarrow \frP(\cS)$ is given by composition with $j$. 

\begin{lemma}
For each $\cS$, the set map $\frJ_{\cS}$ is injective. 
\end{lemma}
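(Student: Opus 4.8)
The plan is to recognize that $\frJ_{\cS}$ is literally the post-composition map $\psi \mapsto j \circ \psi$, so the claim is nothing more than the statement that the closed embedding $j: \gO(V,g) \rightarrow \GL(V)$ is a monomorphism in $\gMan^{\infty}$. First I would unwind the definition: given $\psi, \psi' \in \frP'(\cS) = \gMan^{\infty}(\cS, \gO(V,g))$, we have $\frJ_{\cS}(\psi) = j \circ \psi$ and $\frJ_{\cS}(\psi') = j \circ \psi'$, so injectivity of $\frJ_{\cS}$ is exactly the implication
\begin{equation}
j \circ \psi = j \circ \psi' \quad \Longrightarrow \quad \psi = \psi'.
\end{equation}
This is precisely right-cancellability of $j$, i.e. the assertion that $j$ is a monomorphism.

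The decisive input is therefore that $(\gO(V,g), j)$ is a closed embedded submanifold of $\GL(V)$ by construction, see the pullback diagram (\ref{eq_O}) and Theorem \ref{thm_O}. Embeddings in $\gMan^{\infty}$ are monomorphisms, exactly as was already invoked in the proof of Proposition \ref{tvrz_subgroupisgroup}; see Proposition 7.28 in \cite{Vysoky:2022gm}. Applying this with the two maps $\psi, \psi': \cS \rightarrow \gO(V,g)$ immediately yields $\psi = \psi'$, which is the claim.

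If one prefers a self-contained argument rather than citing the monomorphism property directly, I would instead recall that for a closed embedding the pullback $j^{\ast}$ on structure sheaves is surjective, and that a graded smooth map is uniquely determined by its pullback of functions. Then $j \circ \psi = j \circ \psi'$ gives $\psi^{\ast} \circ j^{\ast} = \psi'^{\ast} \circ j^{\ast}$, and surjectivity of $j^{\ast}$ lets one cancel it on the right to conclude $\psi^{\ast} = \psi'^{\ast}$, hence $\psi = \psi'$. There is essentially no obstacle here: the only point requiring care is that one is cancelling a surjective sheaf morphism on the appropriate opens, which is exactly the defining feature of an embedding, so the result is formal.
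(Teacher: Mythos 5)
Your proposal is correct and is essentially the paper's own proof: the paper likewise observes that $\frJ_{\cS}(\phi) = j \circ \phi$ and that $j$, being a closed embedding, is a monomorphism in $\gMan^{\infty}$ (Proposition 7.28 in \cite{Vysoky:2022gm}), which immediately gives injectivity. Your alternative sheaf-theoretic argument via surjectivity of $j^{\ast}$ is a fine elaboration of why embeddings are monomorphisms, but it is not needed beyond the citation.
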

\begin{proof}
One has $\frJ_{\cS}(\phi) = j \circ \phi$ for all $\phi \in \frP'(\cS)$. Since $j: \gO(V,g) \rightarrow \GL(V)$ is a closed embedding, it is a monomorphism in $\gMan^{\infty}$. This clearly renders $\frJ_{\cS}$ injective. 
\end{proof}

Recall that for each $\cS \in \gMan^{\infty}$, we have constructed a bijection $\Psi_{\cS}: \frP(\cS) \rightarrow \frF(\cS)$. Since $\frP'(\cS)$ is one-to-one with the subset $\im(\frJ_{\cS}) \subseteq \frP(\cS)$, we may ask what is the corresponding subset of $\frF(\cS)$. The expected answer is given by the following statement:
\begin{tvrz} \label{tvrz_FOPOG}
The bijection $\Psi_{\cS}$ maps the subset $\im(\frJ_{\cS})$ onto $\gO(\frM(\cS),g)$. Consequently, there is a canonical natural isomorphism $\Psi': \frP' \rightarrow \frF'$ fitting into the diagram
\begin{equation}
\begin{tikzcd}
\frP' \arrow[dashed]{r}{\Psi'} \arrow{d}{\frJ} & \frF' \arrow{d}{\frI} \\
\frP \arrow{r}{\Psi} & \frF
\end{tikzcd}.
\end{equation}
\end{tvrz}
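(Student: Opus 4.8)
The plan is to characterize the image $\im(\frJ_\cS)$ via the universal property of the pullback defining $\gO(V,g)$, and then to match the resulting condition with the defining relation of $\gO(\frM(\cS),g)$ already worked out in the previous proposition. Recall that $\gO(V,g)$ was built as the pullback (\ref{eq_O}), so by its universal property a graded smooth map $\phi\colon\cS\to\GL(V)$ lies in $\im(\frJ_\cS)$ precisely when it lifts along $j$, i.e. when $\varphi\circ\phi=e^{\times}_{\cS}$. Composing with the monomorphism $\dia{i}'\colon\Sym^{\times}(V,g)\to\GL(V)$ and using $\dia{i}'\circ\varphi=\varphi^{\times}$ together with $\dia{i}'\circ e^{\times}=e$, this is equivalent to the more convenient condition $\varphi^{\times}\circ\phi=e_{\cS}$.

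First I would render this lifting condition in coordinates. Writing $\fF^{\kappa}{}_{\lambda}:=\phi^{\ast}(\bby^{\kappa}{}_{\lambda})$ — which is exactly the matrix of $F:=\Psi_{\cS}(\phi)$ by (\ref{eq_PsicSmap}) — the equality $\varphi^{\times}\circ\phi=e_{\cS}$ becomes $\phi^{\ast}(\varphi^{\times\ast}(\bby^{\kappa}{}_{\lambda}))=\delta^{\kappa}_{\lambda}$ for all $\kappa,\lambda$. Here I can invoke $k\circ\varphi^{\times}=\chi_{0}\circ k$ and the explicit pullback formula for $\chi_{0}^{\ast}$ already derived in the proof of Proposition \ref{tvrz_regularvalue}, so that the left-hand side is an explicit quadratic expression in the $\fF^{\kappa}{}_{\lambda}$ with coefficients assembled from the matrices $\fg_{\lambda\kappa}$ and $\fg^{\lambda\kappa}$ of $g$ and $g^{-1}$.

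The heart of the argument, and the main obstacle, is to show that this quadratic condition is exactly condition (\ref{eq_FOGcondition}), which the previous proposition identified with membership $F\in\gO(\frM(\cS),g)$. Both encode the same relation, but in different normal forms: the lifting condition is the graded analogue of $g^{-1}A^{T}gA=\1_{V}$, whereas (\ref{eq_FOGcondition}) is the analogue of $A^{T}gA=g$. I would pass between them by contracting with the matrix of $g$ (respectively $g^{-1}$) and using the inverse relation (\ref{eq_ginversegmatrix}) to cancel the resulting $\fg\,\fg^{-1}$ pairs; the only delicate point is tracking the Koszul signs, precisely as in the computations of Subsection \ref{subsec_technical3}. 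Once the two conditions are matched, the equivalence simultaneously shows that $\Psi_{\cS}$ carries $\im(\frJ_\cS)$ both into and onto $\gO(\frM(\cS),g)$, which is the first claim.

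For the final claim I would proceed by formal diagram chasing. Since $\frJ_{\cS}$ is injective (it is composition with the monomorphism $j$) with image $\im(\frJ_\cS)$, and $\Psi_{\cS}$ restricts to a bijection $\im(\frJ_\cS)\to\gO(\frM(\cS),g)=\frF'(\cS)$, the composite $\Psi_{\cS}\circ\frJ_{\cS}$ corestricts to a bijection $\Psi'_{\cS}\colon\frP'(\cS)\to\frF'(\cS)$ with $\frI_{\cS}\circ\Psi'_{\cS}=\Psi_{\cS}\circ\frJ_{\cS}$, which is the asserted commuting square. Naturality of $\Psi'$ then follows for free: given $\psi\colon\cN\to\cS$, precomposing the desired naturality square with the injection $\frI_{\cN}$ reduces it to the already-known naturality of $\Psi$, $\frJ$ and $\frI$, and injectivity of $\frI_{\cN}$ allows it to be cancelled. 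Hence $\Psi'$ is a natural isomorphism.
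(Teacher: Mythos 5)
Your proposal is correct and follows essentially the same route as the paper's own proof: the same reduction of membership in $\im(\frJ_{\cS})$ via the universal property of the pullback (\ref{eq_O}) and composition with the monomorphism $\dia{i}'$, the same coordinate translation of $\varphi^{\times}\circ\phi = e_{\cS}$ using the pullback formula for $\chi_{0}^{\ast}$, and the same passage to condition (\ref{eq_FOGcondition}) by contracting with the matrices $\fg_{\alpha\kappa}$, $\fg^{\kappa\alpha}$ and invoking (\ref{eq_ginversegmatrix}). The only cosmetic difference is that the paper quotes the expression (\ref{eq_theonewewanttobeat}) from Subsection \ref{subset_technical2} rather than the equivalent formula from Subsection \ref{subsec_technical3}, and it carries out the Koszul-sign bookkeeping explicitly where you defer it.
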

\begin{proof}
The subset $\im(\frJ_{\cS})$ consists of graded smooth maps $\phi: \cS \rightarrow \GL(V)$ which factor through the closed embedded submanifold $(\gO(V,g),j)$, that is there exists a graded smooth map $\hat{\phi}: \cS \rightarrow \gO(V,g)$ satisfying $j \circ \hat{\phi} = \phi$. Since (\ref{eq_O}) is a pullback diagram, this is equivalent to $\phi$ fitting into the diagram
\begin{equation}
\begin{tikzcd}
\cS \arrow{r} \arrow[dashed]{d}{\phi} & \{ \ast \} \arrow{d}{e^{\times}} \\
\GL(V) \arrow{r}{\varphi} & \Sym^{\times}(V,g)
\end{tikzcd}.
\end{equation}
Since we can compose both paths along the diagram with the embedding $\dia{i}': \Sym^{\times}(V,g) \rightarrow \GL(V)$, this is equivalent to the commutativity of the diagram
\begin{equation} \label{eq_commutativityphi}
\begin{tikzcd}
\cS \arrow{r} \arrow[dashed]{d}{\phi} & \{ \ast \} \arrow{d}{e} \\
\GL(V) \arrow{r}{\varphi^{\times}} & \GL(V)
\end{tikzcd}
\end{equation}
We will now show that $\phi$ fits into this diagram, if and only if the associated automorphism $F := \Psi_{\cS}(\phi)$ of $\frM(\cS)$ satisfies the condition (\ref{eq_FOGcondition}).

We have $F(\Phi_{\lambda}) = \fF^{\kappa}{}_{\lambda} \Phi_{\kappa}$, where $\fF^{\kappa}{}_{\lambda} := \phi^{\ast}( \bby^{\kappa}{}_{\lambda})$, see (\ref{eq_PsicSmap}). Since the pullback by $\varphi^{\times}$ and the pullback by $\chi_{0}$ have the same coordinate expressions, we can use (\ref{eq_theonewewanttobeat}) to find 
\begin{equation}
(\varphi^{\times})^{\ast}(\bby^{\alpha}{}_{\lambda}) = (-1)^{|t_{\rho}| + |t_{\alpha}|(|t_{\sigma}| - |t_{\nu}| -1) + \ell(|t_{\sigma}| - |t_{\rho}| +1)} \bby^{\rho}{}_{\lambda} \fg_{\rho \sigma} \fg^{\nu \alpha} \bby^{\sigma}{}_{\nu}. 
\end{equation}
Pulling this further back by $\phi$ thus gives
\begin{equation}
(\varphi^{\times} \circ \phi)^{\ast}(\bby^{\alpha}{}_{\lambda}) = (-1)^{|t_{\rho}| + |t_{\alpha}|(|t_{\sigma}| - |t_{\nu}| -1) + \ell(|t_{\sigma}| - |t_{\rho}| +1)} \fF^{\rho}{}_{\lambda} \fg_{\rho \sigma} \fg^{\nu \alpha} \fF^{\sigma}{}_{\nu}. 
\end{equation}
On the other hand, let $e_{\cS}: \cS \rightarrow \GL(V)$ be the composition of $\cS \rightarrow \{ \ast \}$ with $e$. Then 
\begin{equation}
e_{\cS}^{\ast}( \bby^{\alpha}{}_{\lambda}) = \delta^{\alpha}_{\lambda}. 
\end{equation}
The commutativity of (\ref{eq_commutativityphi}) is therefore equivalent to the equation
\begin{equation} \label{eq_commutativityphicoords}
(-1)^{|t_{\rho}| + |t_{\alpha}|(|t_{\sigma}| - |t_{\nu}| -1) + \ell(|t_{\sigma}| - |t_{\rho}| +1)} \fF^{\rho}{}_{\lambda} \fg_{\rho \sigma} \fg^{\nu \alpha} \fF^{\sigma}{}_{\nu} = \delta^{\alpha}_{\lambda}
\end{equation}
being valid for all $\lambda,\alpha \in \{1,\dots,n\}$. Let us show that this is equivalent to the condition (\ref{eq_FOGcondition}). First, interchange the last two terms on the left-hand side to get
\begin{equation}
(-1)^{|t_{\rho}| + |t_{\alpha}| + |t_{\nu}|(|t_{\sigma}| - 1) + \ell(|t_{\nu}| - |t_{\rho}| + 1)} \fF^{\rho}{}_{\lambda} \fg_{\rho \sigma} \fF^{\sigma}{}_{\nu} \fg^{\nu \alpha} = \delta^{\alpha}_{\lambda} 
\end{equation}
Multiply both sides of the equation by $(-1)^{|t_{\alpha}|(1 + \ell)} \fg_{\alpha \kappa}$ (first without summation) to get:
\begin{equation}
\big( (-1)^{|t_{\nu}|(|t_{\sigma}|-1) + |t_{\rho}|(1 + \ell) }  \fF^{\rho}{}_{\lambda} \fg_{\rho \sigma} \fF^{\sigma}{}_{\nu} \big) (-1)^{\ell(|t_{\nu}| + |t_{\alpha}| - 1)} \fg^{\nu \alpha} \fg_{\alpha \kappa} = (-1)^{|t_{\alpha}|(1 + \ell)} \fg_{\alpha \kappa} \delta^{\alpha}_{\lambda}. 
\end{equation}
Summing over $\alpha \in \{1, \dots, n\}$ and using the analogue of (\ref{eq_ginversegmatrix}), we get 
\begin{equation}
(-1)^{|t_{\kappa}|(|t_{\sigma}|-1) + |t_{\rho}|(1 + \ell)}  \fF^{\rho}{}_{\lambda} \fg_{\rho \sigma} \fF^{\sigma}{}_{\nu} = (-1)^{|t_{\lambda}|(1 + \ell)} \fg_{\lambda \kappa}. 
\end{equation}
But this is indeed (\ref{eq_FOGcondition}). Since we can go back to (\ref{eq_commutativityphicoords}) by multiplying both sides by $(-1)^{\ell |t_{\kappa}|} \fg^{\kappa \alpha}$ and summing over $\kappa \in \{1,\dots,n\}$, this finishes the proof. 
\end{proof}
\section{Graded symplectic group} \label{sec_gradedsymplectic}
It turns out that it is completely trivial to modify everything to include the graded skew-symmetric case. Everything is proved in \textit{completely} the same way. We will thus only recall definitions and make some statements. 

\begin{definice}
Let $V \in \gVect$. By a \textbf{skew-symmetric bilinear form of degree $\ell$}, we mean a graded bilinear map $\omega: V \times V \rightarrow \R$ satisfying
\begin{enumerate}[(i)]
\item $|\omega(v,w)| = |v| + |w| + \ell;$
\item $\omega(v,w) = - (-1)^{(|v|+\ell)(|w|+\ell)} \omega(w,v)$.
\end{enumerate}
We say that $\omega$ is a \textbf{degree $\ell$ symplectic form} on $V$, $\omega_{\flat} \in \ul{\Lin}(V,V^{\ast})$ defined by the analogue of (\ref{eq_gflattog}) is an isomorphism of graded vector spaces. 
\end{definice}
Being symmetric and skew-symmetric with respect to $\omega$ is completely the same as in Definition \ref{def_symantisym}. We just denote the two subspaces of $\gl(V)$ as $\Sym(V,\omega)$ and $\sp(V,\omega)$, instead. There is a direct sum decomposition $\gl(V) = \Sym(V,\omega) \oplus \sp(V,\omega)$ and $\sp(V,\omega)$ forms a graded Lie subalgebra of $(\gl(V),[\cdot,\cdot])$. They are $\pm 1$ eigenspaces of the map 
\begin{equation}
\tau(A) = (-1)^{\ell |A|} \omega^{-1} A^{T} \omega,
\end{equation}
which we denote by the same symbol. We can now repeat the steps of Section \ref{sec_GOVg}. 
\begin{enumerate}[(1)]
\item Construct a closed embedded submanifold $(\Sym^{\times}(V,\omega), \dia{i}')$ of $\GL(V)$.
\item Lift $\dia{\tau}$ to a unique graded smooth map $\tau^{\times}: \GL(V) \rightarrow \GL(V)$ and prove that it forms an anti-homomorphism of graded Lie groups.
\item Define $\varphi^{\times} = \mu \circ (\tau^{\times},\1_{\GL(V)})$ and show that it lifts to a graded smooth map $\varphi: \GL(V) \rightarrow \Sym^{\times}(V,\omega)$. At this second, the strange comment under (\ref{eq_almostfinalexpression}) shows that the proof of this statement goes through also in the skew-symmetric case. 
\item Lift the unit $e$ to a graded smooth map $e^{\times}: \{ \ast \} \rightarrow \Sym^{\times}(V,\omega)$ and show that $\1_{V} \in \Sym^{\times}_{0}(V,\omega)$ is a regular value of $\varphi$.
\item Define the \textbf{graded symplectic group} $\gSp(V,\omega)$ by the pullback diagram
\begin{equation}
\begin{tikzcd}
\gSp(V,\omega) \arrow[dashed]{r} \arrow[dashed]{d}{j} & \{ \ast \} \arrow{d}{e^{\times}}\\
\GL(V) \arrow{r}{\varphi} & \Sym^{\times}(V,\omega)
\end{tikzcd}
\end{equation}
Its underlying manifold is 
\begin{equation}
\gSp(V_{\bullet},\omega) = \{ A \in \GL(V_{\bullet}) \mid A^{T} \omega A = \omega \}.
\end{equation}
Finally, we obtain the following statement:
\begin{theorem}
$(\gSp(V,\omega), j)$ is a graded Lie subgroup of $\GL(V)$. Moreover, its associated graded Lie algebra can be naturally identified with the subalgebra $\sp(V,\omega) \subseteq \gl(V)$. 
\end{theorem}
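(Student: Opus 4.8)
The plan is to follow essentially verbatim the proof of Theorem \ref{thm_O}, everywhere substituting the symmetric form $g$ by the skew-symmetric form $\omega$, the subspace $\ao(V,g)$ by $\sp(V,\omega)$, and the decomposition of Proposition \ref{tvrz_Symortdecom} by its symplectic counterpart. All the structural ingredients transfer without modification: the map $\tau(A) = (-1)^{\ell|A|}\omega^{-1}A^{T}\omega$ again squares to the identity and has $\Sym(V,\omega)$ and $\sp(V,\omega)$ as its $\pm 1$ eigenspaces (the symplectic analogue of Proposition \ref{tvrz_taumap}); $\Sym^{\times}(V,\omega)$ is constructed as a closed embedded submanifold by the same pullback; and $\tau^{\times}:\GL(V)\to\GL(V)$ is again an anti-homomorphism, since the proof of Proposition \ref{tvrz_tauantihom} rests only on (\ref{eq_comptranspose}) and the algebraic identity $\tau(AB) = (-1)^{|A||B|}\tau(B)\tau(A)$, neither of which is sensitive to the symmetry type of the form.

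The one place where symmetry genuinely entered the orthogonal argument is the lifting of $\varphi^{\times}$ to $\varphi:\GL(V)\to\Sym^{\times}(V,\omega)$, i.e. the symplectic analogue of Proposition \ref{tvrz_varphi}. There the coordinate computation invoked the symmetry relations (\ref{eq_gradedsymmetrymatrices}) \emph{exactly twice}. Passing from $g$ to $\omega$ alters each such relation by one overall sign, so two applications contribute $(-1)^{2}=+1$ and the final sign is unchanged. Hence the commutativity of the crucial diagram (\ref{eq_vaprhifinal}) survives, and the universal property of the pullback produces $\varphi$ exactly as before. The transversality claim (the analogue of Proposition \ref{tvrz_regularvalue}) is likewise unaffected: its proof identifies $T_{A}\chi_{0}$ with $L_{A}(X) = \tau(A)X + \tau(X)A$ and exhibits the section $S \mapsto \tfrac{1}{2}AS$, using only $\tau^{2}=\1_{\gl(V)}$ and $\tau(A)A=\1_{V}$, both of which persist here.

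With these four preliminary statements in hand, the subgroup and Lie-algebra claims follow by copying Subsection \ref{subsec_technical4}. The multiplication $\mu'$ and the inverse $\iota'$ are produced by the same two diagram chases, whose only inputs are the group axioms of $\GL(V)$, the anti-homomorphism property (\ref{eq_tauantihom}), Lemma \ref{lem_antihom}, and the defining relation $\varphi^{\times}\circ j = e_{\gSp(V,\omega)}$; Proposition \ref{tvrz_subgroupisgroup} then upgrades $(\gSp(V,\omega),j)$ to a graded Lie group with $j$ a morphism. For the Lie algebra, the tangent map $T_{\1_{V}}\varphi$ has kernel identified with $\ker(L_{\1_{V}}) = \{X\in\gl(V)\mid X + \tau(X) = 0\}$, which by the symplectic analogue of Proposition \ref{tvrz_taumap} is precisely the $-1$ eigenspace $\sp(V,\omega)$; Proposition \ref{tvrz_subalgebra} then realizes it as a graded Lie subalgebra of $\gl(V)$.

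I expect no genuine obstacle, since the construction was deliberately set up to be agnostic to the symmetry type of the form. The only subtle point worth double-checking is precisely the sign bookkeeping in the symplectic version of the lift: one must confirm that the symmetry of $g$ really was invoked an \emph{even} number of times, because an odd number of uses would flip the outcome and break the lifting of $\varphi^{\times}$. Verifying this parity — rather than recomputing all the signs from scratch — is the essential content that makes the whole section go through unchanged.
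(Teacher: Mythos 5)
Your proposal is correct and takes essentially the same route as the paper: Section \ref{sec_gradedsymplectic} likewise declares that the entire construction of Section \ref{sec_GOVg} repeats verbatim with $g$ replaced by $\omega$, and its only substantive justification is precisely your parity argument — the remark under (\ref{eq_almostfinalexpression}) that the symmetry of $g$ was used \emph{exactly twice} in the lifting computation, so the two sign flips introduced by skew-symmetry cancel and the diagram (\ref{eq_vaprhifinal}) still commutes. Your identification of that parity check as the single point where the symmetry type could matter is exactly the paper's own reasoning.
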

\end{enumerate}
\section{Isomorphisms} \label{sec_isomorphisms}
Let $V,W \in \gVect$ and let $M: V \rightarrow W$ be their isomorphism of degree $|M|$. Then 
\begin{equation} \label{eq_eta}
\eta(A) := (-1)^{|A||M|} M A M^{-1}
\end{equation}
defines a degree zero graded linear map $\eta: \gl(V) \rightarrow \gl(W)$. 
\begin{tvrz} \label{tvrz_etainduced}
The induced graded smooth map $\dia{\eta}: \dia{\gl(V)} \rightarrow \dia{\gl(W)}$ induces a graded smooth map $\eta^{\times}: \GL(V) \rightarrow \GL(W)$ fitting into the diagram
\begin{equation}
\begin{tikzcd}
\GL(V) \arrow{d}{k} \arrow[dashed]{r}{\eta^{\times}} & \GL(W) \arrow{d}{k'} \\
\dia{\gl(V)} \arrow{r}{\dia{\eta}} & \dia{\gl(W)} 
\end{tikzcd}.
\end{equation}
This map is a graded diffeomorphism and it is an isomorphism of graded Lie groups. 
\end{tvrz}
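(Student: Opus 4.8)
The plan is to break the claim into three parts: that the lift $\eta^{\times}$ exists, that it is a graded diffeomorphism, and that it is a morphism of graded Lie groups. Before any of this I would record the two elementary algebraic facts that drive the whole argument. Applying the construction (\ref{eq_eta}) to the inverse isomorphism $M^{-1}: W \rightarrow V$, which has degree $-|M|$ and hence the same parity as $|M|$, produces a degree zero linear map $\eta': \gl(W) \rightarrow \gl(V)$, $\eta'(B) = (-1)^{|B||M|} M^{-1} B M$. A one-line computation gives $\eta' \circ \eta = \1_{\gl(V)}$ and $\eta \circ \eta' = \1_{\gl(W)}$ (the sign squares to $+1$), so $\eta$ is an isomorphism in $\gVect$ with inverse $\eta'$. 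Second, for homogeneous $A, B \in \gl(V)$ the cross factor $M^{-1}M$ cancels, yielding
\[
\eta(AB) = (-1)^{(|A|+|B|)|M|}\, M A B M^{-1} = \eta(A)\,\eta(B), \qquad \eta(\1_V) = \1_W.
\]

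For existence and the diffeomorphism claim I would invoke functoriality of $\diamond$ (Proposition \ref{tvrz_diamant}): $\dia{\eta}$ and $\dia{\eta'}$ are mutually inverse graded diffeomorphisms between $\dia{\gl(V)}$ and $\dia{\gl(W)}$. Since $\GL(V)$ and $\GL(W)$ are the open submanifolds cut out by $\GL(V_\bullet)$ and $\GL(W_\bullet)$, it remains only to observe that the underlying map $\eta_0$ sends $\GL(V_\bullet)$ into $\GL(W_\bullet)$: for invertible degree zero $A$ one has $\eta_0(A) = MAM^{-1}$, invertible with inverse $MA^{-1}M^{-1} = \eta_0(A^{-1})$, and symmetrically for $\eta'_0$. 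Hence $\dia{\eta}$ and $\dia{\eta'}$ restrict to mutually inverse graded smooth maps $\eta^{\times}: \GL(V) \rightarrow \GL(W)$ and $(\eta')^{\times}: \GL(W) \rightarrow \GL(V)$ filling the stated square, settling the first two assertions at once.

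For the morphism property I would follow the template of Proposition \ref{tvrz_tauantihom} essentially verbatim, replacing the flip $\sigma'$ by the identity. The multiplicativity $\eta(AB) = \eta(A)\eta(B)$ is exactly the commutativity in $\gVect$ of the square whose edges are $\beta'_V$, $\beta'_W$ on the horizontal side and $\eta \otimes \eta$, $\eta$ on the vertical side, where $\beta'$ is the linearization of the composition $\beta(A,B) = AB$. Applying $\diamond$ and inserting the map $\dia{\alpha}$ (Proposition \ref{tvrz_diaalpha}, Corollary \ref{cor_bilineargrad}) converts this into the equality $\dia{\eta} \circ \dia{\beta} = \dia{\beta} \circ (\dia{\eta} \times \dia{\eta})$ of graded smooth maps $\dia{\gl(V)} \times \dia{\gl(V)} \rightarrow \dia{\gl(W)}$. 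Pre- and post-composing with the open embeddings $k, k'$, which are monomorphisms, descends this to $\eta^{\times} \circ \mu = \mu \circ (\eta^{\times} \times \eta^{\times})$, so $\eta^{\times}$ is a homomorphism. Unit compatibility is $\eta(\1_V) = \1_W$, and compatibility with the inverses then follows automatically from uniqueness of inverses in a group object, exactly as in the homomorphism analogue of Lemma \ref{lem_antihom}. Being simultaneously a graded diffeomorphism, $\eta^{\times}$ is therefore an isomorphism of graded Lie groups.

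I do not expect a genuine obstacle here: unlike the $\tau$ case, $\eta$ is multiplicative rather than anti-multiplicative, so the proof is strictly easier, and every ingredient is already in place. The only points needing care are clerical — tracking that $|M^{-1}| \equiv |M| \pmod 2$ so the signs in $\eta$ and $\eta'$ agree, and confirming that restriction to the invertible loci is legitimate, which reduces to the stability of invertibility under conjugation noted above. One could also sidestep the diagram chase via the functor-of-points picture of Proposition \ref{tvrz_GLVFOP}: conjugation by the degree $|M|$ module isomorphism $\1 \otimes M: \frM(\cS) \rightarrow \frM(\cS)$ (with target module built from $W$) induces a group isomorphism $\Aut(\frM(\cS)) \rightarrow \Aut(\frM(\cS))$ natural in $\cS$, which by the Yoneda lemma corresponds precisely to $\eta^{\times}$.
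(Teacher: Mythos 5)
Your proposal is correct and follows essentially the same route as the paper: restrict $\dia{\eta}$ to the open submanifolds using that conjugation preserves invertibility, obtain the inverse from $M^{-1}$, and establish the homomorphism diagram by running the argument of Proposition \ref{tvrz_tauantihom} with the flip replaced by the identity, since $\eta(AB)=\eta(A)\eta(B)$. Your additional remarks (the explicit sign check, the unit/inverse compatibility via the analogue of Lemma \ref{lem_antihom}, and the functor-of-points aside) only flesh out details the paper leaves implicit.
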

\begin{proof}
The first claim follows from the fact that $\ul{\dia{\eta}}(A) = \eta(A)$ which clearly maps $\GL(V_{\bullet})$ into $\GL(W_{\bullet})$. $\eta^{\times}$ is a graded diffeomorphism since its inverse can be easily constructed in the same way using $M^{-1}: W \rightarrow V$. To prove that $\eta^{\times}$ is a graded Lie group morphism, we must show that the diagram
\begin{equation} \label{eq_eta'ismhomo}
\begin{tikzcd}
\GL(V) \times \GL(V) \arrow{d}{\eta^{\times} \times \eta^{\times}} \arrow{r}{\mu} & \GL(V) \arrow{d}{\eta^{\times}} \\
\GL(W) \times \GL(W) \arrow{r}{\mu'} & \GL(W)
\end{tikzcd}
\end{equation}
commutes. We denote the multiplication in $\GL(W)$ by $\mu'$. The proof of this claim is completely analogous to the one of Proposition \ref{tvrz_tauantihom}. 
\end{proof}
\begin{rem}
This observation can be viewed in a more abstract way. 

Let us write $\ul{\gVect}$ for the enriched category of graded vector spaces (over $\gVect$), that is its morphisms from $V$ to $W$ form a graded vector space $\ul{\Lin}(V,W)$ of graded linear maps of arbitrary degrees. Let $\ul{\gVect}^{\times}$ be its core groupoid (one only considers isomorphisms). Proposition \ref{tvrz_etainduced} can be then used to show that the assignment $V \mapsto \GL(V)$ defines a functor from $\ul{\gVect}^{\times}$ to the category of graded Lie groups. 
\end{rem}

\begin{example} \label{ex_degreeshift}
Let $V \in \gVect$ and let $V[m]$ be its degree shift by $m \in \Z$. In other words, one has $(V[m])_{j} := V_{j + m}$ for each $j \in \Z$. There is always a canonical graded linear map $\delta[m]: V[m] \rightarrow V$ of degree $m$, defined by $(\delta[m])_{j} := \1_{V_{j+m}} : (V[m])_{j} \rightarrow V_{j+m}$ for each $j \in \Z$. It is clearly an isomorphism of graded vector spaces of degree $m$. By the above proposition, there is an induced graded Lie group isomorphism
\begin{equation}
\eta^{\times}: \GL(V[m]) \rightarrow \GL(V). 
\end{equation}
This example shows that the general linear group is ``stable under degree shifts''. 
\end{example}

Suppose that there is a degree $\ell$ bilinear form $\beta$ on $V$ and a degree $\ell'$ bilinear form $\beta'$ on $W$. How to make a graded linear map $M: V \rightarrow W$ compatible with these? Consider the induced graded linear maps $\beta_{\flat}: V \rightarrow V^{\ast}$ and $\beta'_{\flat}: W \rightarrow W^{\ast}$ given by the analogue of (\ref{eq_gflattog}). Impose
\begin{equation} \label{eq_betabeta'OG}
M^{T} \beta'_{\flat} M = (-1)^{|M|\ell'} \beta_{\flat}.
\end{equation}
The sign is guessed according to the Koszul convention. Observe that the two sides of (\ref{eq_betabeta'OG}) do not a priori have the same degree. In order for the equation to make sense, we have to assume that 
\begin{equation} \label{eq_ellell'Mrelation}
\ell' + 2|M| = \ell. 
\end{equation}
In particular, note that $\ell$ and $\ell'$ must have the same \textit{parity}. In terms of $\beta$ and $\beta'$, the condition (\ref{eq_betabeta'OG}) takes the following form for all $v,w \in V$:
\begin{equation} \label{eq_betabeta'OG2}
\beta'(M(v),M(w)) = (-1)^{|M|(|v|+\ell+1)} \beta(v,w).
\end{equation}
We now examine the consistency of this relation with a (skew-)symmetry of the forms $\beta$ and $\beta'$. 
\begin{lemma}
Let $\beta$ and $\beta'$ be bilinear forms as in the above paragraph. Suppose (\ref{eq_ellell'Mrelation}) and (\ref{eq_betabeta'OG2}) are true. Then we get the following statements:
\begin{enumerate}[(i)]
\item Suppose that $\beta$ is symmetric. Then for even $|M|$ the form $\beta'$ is symmetric, and for odd $|M|$ the form $\beta'$ is skew-symmetric. 

In particular, $\beta$ is a metric, then for even $|M|$ the form $\beta'$ is a metric, and for odd $|M|$ the form $\beta'$ is a symplectic form.
\item Suppose that $\beta$ is skew-symmetric. Then for even $|M|$ the form $\beta'$ is skew-symmetric, and for odd $|M|$ the form $\beta'$ is symmetric. 

In particular, if $\beta$ is a symplectic form, then for even $|M|$ the form $\beta'$ is a symplectic form, and for odd $|M|$ the form $\beta'$ is a metric. 
\end{enumerate}
\end{lemma}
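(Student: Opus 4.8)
The plan is to exploit that $M$ is an isomorphism, so every pair of vectors in $W$ is of the form $(M(v),M(w))$ for unique $v,w\in V$, and the (skew-)symmetry of $\beta'$ can therefore be tested purely on such pairs. First I would rewrite the compatibility relation (\ref{eq_betabeta'OG2}) in the two orders, obtaining
\begin{equation}
\beta'(M(v),M(w)) = (-1)^{|M|(|v|+\ell+1)}\beta(v,w), \quad \beta'(M(w),M(v)) = (-1)^{|M|(|w|+\ell+1)}\beta(w,v),
\end{equation}
and then feed in the (skew-)symmetry hypothesis for $\beta$, written uniformly as $\beta(v,w)=\epsilon\,(-1)^{(|v|+\ell)(|w|+\ell)}\beta(w,v)$ with $\epsilon=+1$ in the symmetric case (i) and $\epsilon=-1$ in the skew-symmetric case (ii). Treating both cases by this single sign $\epsilon$ lets me run the two arguments simultaneously.

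Combining these three identities expresses $\beta'(M(v),M(w))$ as $\epsilon\,(-1)^{E}\beta'(M(w),M(v))$ for an explicit exponent $E$, and the crux of the argument is the ensuing sign bookkeeping. Setting $a:=|v|+\ell$, $b:=|w|+\ell$ and $m:=|M|$, one gets $E = m(a+1)+ab+m(b+1)=ab+ma+mb+2m$. On the other hand, the degrees in $W$ are $|M(v)|=|v|+m$, $|M(w)|=|w|+m$, and by the constraint (\ref{eq_ellell'Mrelation}) one has $\ell'=\ell-2m$, so that $|M(v)|+\ell'=a-m$ and $|M(w)|+\ell'=b-m$. Hence the exponent appearing in the (skew-)symmetry condition for $\beta'$ is $(a-m)(b-m)=ab-ma-mb+m^{2}$, and I compute $E-(a-m)(b-m)=2ma+2mb+2m-m^{2}\equiv m^{2}\equiv m\pmod 2$, using $m^{2}\equiv m\pmod 2$. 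Therefore
\begin{equation}
\beta'(M(v),M(w)) = \epsilon\,(-1)^{|M|}\,(-1)^{(|M(v)|+\ell')(|M(w)|+\ell')}\,\beta'(M(w),M(v)),
\end{equation}
so $\beta'$ is governed by the overall sign $\epsilon\,(-1)^{|M|}$: for even $|M|$ it retains the symmetry type of $\beta$, and for odd $|M|$ it flips it. This proves (i) and (ii) at once.

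For the ``in particular'' statements I would only need to promote non-degeneracy from $\beta$ to $\beta'$. Solving (\ref{eq_betabeta'OG}) for $\beta'_{\flat}$ gives $\beta'_{\flat}=(-1)^{|M|\ell'}(M^{T})^{-1}\beta_{\flat}M^{-1}$, a composition of isomorphisms of graded vector spaces whenever $\beta_{\flat}$ is one; hence $\beta'_{\flat}$ is again an isomorphism. Combined with the symmetry type determined above, this shows that a metric $\beta$ produces a metric for even $|M|$ and a symplectic form for odd $|M|$, and symmetrically that a symplectic $\beta$ produces a symplectic form for even $|M|$ and a metric for odd $|M|$. The only genuine obstacle is the parity computation of the sign, where the decisive input is the identity $m^{2}\equiv m\pmod 2$ that converts the leftover $-m^{2}$ into the factor $(-1)^{|M|}$ responsible for the symmetry flip; everything else is a direct substitution relying on the invertibility of $M$.
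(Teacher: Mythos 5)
Your proof is correct and follows essentially the same route as the paper's: rewrite $\beta'(M(v),M(w))$ using the compatibility relation (\ref{eq_betabeta'OG2}) in both orders together with the (skew-)symmetry of $\beta$, then reduce the resulting exponent to $|M| + (|M(v)|+\ell')(|M(w)|+\ell')$ modulo $2$ via the identity $|M|^{2} \equiv |M| \pmod{2}$, concluding by surjectivity of $M$. The only differences are cosmetic refinements: you run cases (i) and (ii) simultaneously with the sign $\epsilon$ (the paper proves (i) and declares (ii) analogous), and you explicitly check the non-degeneracy transfer $\beta'_{\flat} = (-1)^{|M|\ell'}(M^{T})^{-1}\beta_{\flat}M^{-1}$, which the paper leaves implicit.
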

\begin{proof}
Let us only show $(i)$. Suppose $\beta$ is symmetric. Then 
\begin{equation}
\begin{split}
\beta'(M(v),M(w)) = & \ (-1)^{|M|(|v|+\ell+1)} \beta(v,w) = (-1)^{|M|(|v|+\ell+1) + (|v|+\ell)(|w|+\ell)} \beta(w,v) \\
= & \ (-1)^{|M|(|v| + |w|) + (|v|+\ell)(|w|+\ell)} \beta'(M(w),M(v)) \\
= & \ (-1)^{|M| + (|M(v)| + \ell')(|M(w)|+\ell')} \beta'(M(w),M(v)),
\end{split}
\end{equation}
for all $v,w \in V$. Since $M$ is an isomorphism, the claim $(i)$ follows. 
\end{proof}
\begin{rem}
The feature of symmetry and skew-symmetry being interchanged under odd-degree isomorphisms is common in any graded geometry. Note that the sign $(-1)^{|M|}$ popping out in the above proof cannot be killed by any choice of signs in (\ref{eq_betabeta'OG2}). The sign hydra always wins. 
\end{rem}

\begin{tvrz} \label{tvrz_isomorphisms}
Let $g$ be a degree $\ell$ metric on $V$, $g'$ a degree $\ell'$ metric on $W$. Let $\omega$ be a degree $\ell$ symplectic form on $V$, $\omega'$ a degree $\ell'$ symplectic form on $W$. Suppose that an isomorphism $M: V \rightarrow W$ satisfies the degree constraint (\ref{eq_ellell'Mrelation}). 

\begin{enumerate}[(i)]
\item Let $|M|$ be even, and suppose that $\beta = g$ and $\beta' = g'$ satisfy (\ref{eq_betabeta'OG2}). Then $\eta^{\times}: \GL(V) \rightarrow \GL(W)$ constructed in Proposition \ref{tvrz_etainduced} induces a unique graded Lie group isomorphism $\eta': \gO(V,g) \rightarrow \gO(W,g')$ fitting into the commutative diagram
\begin{equation} \label{eq_etaprimediagram}
\begin{tikzcd}
\gO(V,g) \arrow[dashed]{r}{\eta'} \arrow{d}{j} & \gO(W,g') \arrow{d}{j'}\\
\GL(V) \arrow{r}{\eta^{\times}} & \GL(W)
\end{tikzcd}
\end{equation}
\item Let $|M|$ be even and suppose that $\beta = \omega$ and $\beta' = \omega'$ satisfy (\ref{eq_betabeta'OG2}). Then there is a unique isomorphism $\eta': \gSp(V,\omega) \rightarrow \gSp(W,\omega')$ fitting into the analogue of (\ref{eq_etaprimediagram}). 
\item Let $|M|$ be odd and suppose that $\beta = g$ and $\beta' = \omega'$ satisfy (\ref{eq_betabeta'OG2}). Then there is a unique isomorphism $\eta': \gO(V,g) \rightarrow \gSp(W,\omega')$ fitting into the analogue of (\ref{eq_etaprimediagram}).
\item Let $|M|$ be odd and suppose that $\beta = \omega$ and $\beta' = g'$ satisfy (\ref{eq_betabeta'OG2}). Then there is a unique isomorphism $\eta': \gSp(V,\omega) \rightarrow \gO(W,g')$ fitting into the analogue of (\ref{eq_etaprimediagram}). 
\end{enumerate}
\end{tvrz}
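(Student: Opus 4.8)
The plan is to reduce all four cases to a single intertwining identity between the involutions attached to the two forms, and then to produce $\eta'$ purely formally from the universal property of the pullback (\ref{eq_O}) defining the two groups. Throughout, write $\tau$ and $\tau'$ for the degree-zero involutions (\ref{eq_taumap}) associated to $(V,\beta_{\flat})$ and $(W,\beta'_{\flat})$, and let $\tau^{\times},\tau'^{\times}$ and $\varphi^{\times},\varphi'^{\times}$ be the corresponding graded smooth self-maps of $\GL(V)$ and $\GL(W)$. The four cases differ only in whether $\beta,\beta'$ are labelled $g,\omega$ or $g',\omega'$; since (\ref{eq_taumap}) and the whole construction of Section \ref{sec_GOVg} depend on a form only through $\beta_{\flat}$, the argument is verbatim in each case, the parity of $|M|$ entering solely through the preceding Lemma to decide whether the target is $\gO(W,g')$ or $\gSp(W,\omega')$.

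The key step is the identity $\eta\circ\tau=\tau'\circ\eta$ of graded linear maps $\gl(V)\to\gl(W)$. To prove it I would evaluate both sides on an arbitrary $A\in\gl(V)$, expanding $(MAM^{-1})^{T}$ by repeated use of (\ref{eq_comptranspose}) and then substituting the compatibility (\ref{eq_betabeta'OG}) in its rearranged forms $M^{T}\beta'_{\flat}=(-1)^{|M|\ell'}\beta_{\flat}M^{-1}$ and $\beta'^{-1}_{\flat}(M^{-1})^{T}=(-1)^{|M|(1+\ell')}M\beta^{-1}_{\flat}$. Both sides then collapse to $(-1)^{|A||M|}M\beta^{-1}_{\flat}A^{T}\beta_{\flat}M^{-1}$, the only discrepancy being a factor $(-1)^{\ell|A|}$ on one side versus $(-1)^{\ell'|A|}$ on the other; these agree because $\ell-\ell'=2|M|$ is even by (\ref{eq_ellell'Mrelation}). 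This is the one genuinely computational point, and I expect the sign bookkeeping here to be the main obstacle. Note that the (skew-)symmetry of $\beta$ is never invoked, which is precisely why the identity persists in the odd cases (iii)--(iv) where a metric is matched to a symplectic form.

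Granting this identity, functoriality of $\diamond$ (Proposition \ref{tvrz_diamant}) upgrades $\eta\circ\tau=\tau'\circ\eta$ to $\eta^{\times}\circ\tau^{\times}=\tau'^{\times}\circ\eta^{\times}$ after restriction to the open submanifolds. Combined with the fact that $\eta^{\times}$ is a group homomorphism, $\eta^{\times}\circ\mu=\mu'\circ(\eta^{\times}\times\eta^{\times})$ (Proposition \ref{tvrz_etainduced}), a one-line chase on $\varphi^{\times}=\mu\circ(\tau^{\times},\1_{\GL(V)})$ gives the intertwining $\varphi'^{\times}\circ\eta^{\times}=\eta^{\times}\circ\varphi^{\times}$.

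Finally I would build $\eta'$ from the universal property of the pullback (\ref{eq_O}) for $W$. Composing the required cone condition with the embedding of $\Sym^{\times}(W,g')$ reduces it, via $\varphi'^{\times}=\dia{i}'\circ\varphi'$, to showing $\varphi'^{\times}\circ\eta^{\times}\circ j=e_{W}\circ t$, where $t$ is the terminal arrow and $e_{W}$ the unit of $\GL(W)$. By the intertwining this equals $\eta^{\times}\circ\varphi^{\times}\circ j$; but $\varphi^{\times}\circ j=e_{V}\circ t$ is exactly the defining property of $\gO(V,g)$ (the pullback (\ref{eq_O}) composed with $\dia{i}'$, using (\ref{eq_varphidiagram})), and $\eta^{\times}\circ e_{V}=e_{W}$ since $\eta^{\times}$ preserves units, so the composite is $e_{W}\circ t$ as required. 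The pullback then yields a unique $\eta'\colon\gO(V,g)\to\gO(W,g')$ with $j'\circ\eta'=\eta^{\times}\circ j$, uniqueness being immediate from $j'$ being a monomorphism. That $\eta'$ is a morphism of graded Lie groups follows by cancelling $j'$ in the identity obtained from $\eta^{\times}\circ\mu\circ(j\times j)=\mu'\circ(j'\times j')\circ(\eta'\times\eta')$, and invertibility follows by checking that $M^{-1}\colon W\to V$ satisfies the analogous compatibility (this uses $(-1)^{|M|\ell'}=(-1)^{|M|\ell}$, again from (\ref{eq_ellell'Mrelation}), and that $|M^{-1}|$ has the same parity as $|M|$), so the same construction applied to $M^{-1}$ furnishes a two-sided inverse. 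Cases (ii)--(iv) are obtained word for word, only relabelling the forms.
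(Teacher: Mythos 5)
Your proposal follows the same route as the paper's proof: the core is the intertwining identity $\eta\circ\tau=\tau'\circ\eta$ (the paper's diagram (\ref{eq_etaintertwinestaus})), upgraded to $\eta^{\times}\circ\tau^{\times}=\tau'^{\times}\circ\eta^{\times}$ by functoriality of $\diamond$, followed by the chase through $\varphi^{\times}=\mu\circ(\tau^{\times},\1_{\GL(V)})$ using that $\eta^{\times}$ is a homomorphism preserving units, and finally the universal property of the pullback (\ref{eq_O}), with uniqueness and the group-morphism property obtained by cancelling the monomorphism $j'$. One point where you genuinely improve on the paper: you run the computation for a general $\beta_{\flat}$, keeping the parity of $|M|$ explicit, so all four cases are covered by a single calculation. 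The signs do work out as you claim --- the factor $(-1)^{|M|}$ from $(MAM^{-1})^{T}=(-1)^{|M|}(M^{-1})^{T}A^{T}M^{T}$ cancels against the one from $M^{T}(M^{-1})^{T}=(-1)^{|M|}\1$, leaving only $(-1)^{\ell|A|}$ versus $(-1)^{\ell'|A|}$, which agree since $\ell-\ell'=2|M|$ --- and the (skew-)symmetry of the forms is indeed never used. The paper, by contrast, writes out only case (i), explicitly dropping $|M|$ from all formulas because it is even, and declares the remaining cases analogous.

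There is, however, one incorrect claim in your invertibility step, and it bites exactly in the odd cases (iii)--(iv). You assert that $M^{-1}\colon W\to V$ satisfies the analogous compatibility (\ref{eq_betabeta'OG2}). It does not when $|M|$ is odd: starting from $M^{T}\beta'_{\flat}M=(-1)^{|M|\ell'}\beta_{\flat}$ and using $(M^{T})^{-1}=(-1)^{|M|}(M^{-1})^{T}$ --- precisely the sign the paper warns about at the end of its proof --- one finds
\begin{equation}
(M^{-1})^{T}\beta_{\flat}M^{-1}=(-1)^{|M|(\ell'+1)}\beta'_{\flat}=(-1)^{|M|}\,(-1)^{|M^{-1}|\ell}\,\beta'_{\flat},
\end{equation}
so for odd $|M|$ the inverse isomorphism satisfies only the ``anti'' condition (\ref{eq_betabetaantiOG}), with an extra minus sign that your parity bookkeeping ($(-1)^{|M|\ell'}=(-1)^{|M|\ell}$ and $|M^{-1}|\equiv|M|$) does not detect. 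The gap is easily repaired in either of two ways: observe, as the paper does in the example following the proposition, that the anti-condition yields the same intertwining identity (the extra sign enters once through $\beta'_{\flat}$ and once through $\beta'^{-1}_{\flat}$ and cancels); or avoid $M^{-1}$ altogether by noting that $\eta\circ\tau=\tau'\circ\eta$ is equivalent to $\eta^{-1}\circ\tau'=\tau\circ\eta^{-1}$, so the pullback construction applies verbatim to $\eta^{-1}$ and the resulting map is a two-sided inverse of $\eta'$ by the monomorphism property of $j$ and $j'$.
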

\begin{proof}
It suffices to prove $(i)$. Since we are constructing a map into the embedded submanifold $\gO(W,g')$ defined by the pullback diagram (\ref{eq_O}), it suffices to prove the commutativity of the diagram
\begin{equation}
\begin{tikzcd}
\gO(V,g) \arrow{d}{\eta^{\times} \circ j} \arrow{r} & \{ \ast \} \arrow{d}{e'^{\times}} \\
\GL(W) \arrow{r}{\varphi'} & \Sym^{\times}(W,g') 
\end{tikzcd},
\end{equation}
where all primed maps are the counterparts we have used to construct $\gO(V,g)$, except now in the setting of $\gO(W,g')$. Composing the diagram with the embedding of $\Sym^{\times}(W,g)$ into $\GL(W)$, this is equivalent to the commutativity of 
\begin{equation}
\begin{tikzcd} \label{eq_diagramtoconstructetaprime}
\gO(V,g) \arrow{d}{\eta^{\times} \circ j} \arrow{r} & \{ \ast \} \arrow{d}{e'} \\
\GL(W) \arrow{r}{\varphi'^{\times}} & \GL(W)
\end{tikzcd},
\end{equation}
where $e': \{\ast\} \rightarrow \GL(W)$ is the unit and $\varphi'^{\times} = \mu' \circ (\tau'^{\times},\1_{\GL(W)})$. To see this, first note that
\begin{equation} \label{eq_etaintertwinestaus}
\begin{tikzcd}
\GL(V) \arrow{d}{\tau^{\times}} \arrow{r}{\eta^{\times}} & \GL(W) \arrow{d}{\tau'^{\times}} \\
\GL(V) \arrow{r}{\eta^{\times}} & \GL(W)
\end{tikzcd}
\end{equation}
commutes. Indeed, observe that for any $A \in \gl(V)$, one has 
\begin{equation}
\begin{split}
(\eta \circ \tau)(A) = & \  \eta( (-1)^{\ell |A|} g^{-1} A^{T} g) = (-1)^{\ell|A|} M (g^{-1} A^{T} g) M^{-1} \\
= & \ (-1)^{\ell|A|} (Mg^{-1}M^{T}) ((M^{-1}){}^{T} A^{T} M^{T}) ((M^{T}){}^{-1} g M^{-1}) \\
= & \ (-1)^{\ell'|A|} g'^{-1} \eta(A)^{T} g' = (\tau' \circ \eta)(A).
\end{split}
\end{equation}
Note that $|M|$ is even and can be omitted from all formulas. Hence $\eta \circ \tau = \tau' \circ \eta$ and the commutativity (\ref{eq_etaintertwinestaus}) follows by similar means to those in the proof of Proposition \ref{tvrz_tauantihom}. We can now prove the commutativity of (\ref{eq_diagramtoconstructetaprime}). Indeed, one has 
\begin{equation}
\begin{split}
\varphi'^{\times} \circ (\eta^{\times} \circ j) = & \ \mu' \circ (\tau'^{\times}, \1_{\GL(W)}) \circ \eta^{\times} \circ j = \mu' \circ (\tau'^{\times} \circ \eta^{\times}, \eta^{\times}) \circ j  \\
= & \ \mu' \circ (\eta^{\times} \circ \tau^{\times}, \eta^{\times} ) \circ j = \mu' \circ (\eta^{\times} \times \eta^{\times}) \circ (\tau^{\times}, \1_{\GL(V)}) \circ j \\
= & \ \eta^{\times} \circ \mu \circ (\tau^{\times}, \1_{\GL(V)}) \circ j = \eta^{\times} \circ (\varphi^{\times} \circ j) \\
= & \ \eta^{\times} \circ e_{\gO(V,g)} = e'_{\gO(W,g')}. 
\end{split}
\end{equation}
We have used the commutativity of (\ref{eq_etaintertwinestaus}), the fact that (\ref{eq_eta'ismhomo}) commutes, and the definition of $\gO(V,g)$. In the last step, we have used the fact that $\eta^{\times}$ preserves units. Since $e'_{\gO(W,g')}$ is a composition of the terminal arrow $\gO(W,g') \rightarrow \{\ast \}$ with the unit $e'$, the commutativity of (\ref{eq_diagramtoconstructetaprime}) follows. By the universal property of pullbacks, there now exists a unique graded smooth map $\eta': \gO(V,g) \rightarrow \gO(W,g')$ fitting into the diagram (\ref{eq_eta'ismhomo}). 

The fact that $\eta'$ is a graded Lie group isomorphism follows from its definition and the fact that $\eta^{\times}$ is a graded Lie group isomorphism. The proof is very similar to the one of Proposition \ref{tvrz_subgroupisgroup} and we leave it as an easy exercise. The proof of $(ii)-(iv)$ is analogous. Let us only warn the reader that for odd $|M|$, odd things may happen. For example, one has $(M^{T})^{-1} = -(M^{-1})^{T}$, see (\ref{eq_comptranspose}). 
\end{proof}
\begin{rem}
One can consider a category $\mathbf{qgVect}$ of \textbf{quadratic graded vector spaces}. Its objects are pairs $(V,g)$ consisting of $V \in \gVect$ and a degree $\ell$ metric $g$ (for some $\ell \in \Z$). Its arrows are degree \textit{zero} isomorphisms satisfying (\ref{eq_ellell'Mrelation}) and (\ref{eq_betabeta'OG2}). Proposition \ref{tvrz_isomorphisms}-$(i)$ then proves that the assignment $(V,g) \mapsto \gO(V,g)$ defines a functor from $\mathbf{qgVect}$ into the category of graded Lie groups. The same remark applies also to the category of symplectic graded vector spaces and the assignment $(V,\omega) \mapsto \gSp(V,\omega)$. 
\end{rem}
\begin{example}
Let $\beta: V \times V \rightarrow \R$ be a degree $\ell$ bilinear form on $V \in \gVect$. Let $m \in \Z$ be arbitrary. Recall that there is a degree shifting operator $\delta[m]: V[m] \rightarrow V$, see Example \ref{ex_degreeshift}. Define a bilinear form $\beta[m]: V[m] \times V[m] \rightarrow \R$ be the formula
\begin{equation}
\beta[m]_{\flat} := (-1)^{m\ell} \delta[m]^{T} \beta_{\flat} \delta[m]. 
\end{equation}
Note that its degree is $\ell + 2m$. We call $\beta[m]$ the \textbf{degree shift of $\beta$ by $m \in \Z$}. 

\begin{enumerate}[(i)]
\item If $m$ is even, and $\beta = g$ is a degree $\ell$ metric, then $g[m]$ is a degree $\ell + 2m$ metric and $\delta[m]$ induces a graded Lie group isomorphism $\eta': \gO(V[m],g[m]) \rightarrow \gO(V,g)$. 
\item If $m$ is even, and $\beta = \omega$ is a degree $\ell$ symplectic form, then $\omega[m]$ is a degree $\ell + 2m$ symplectic form and $\delta[m]$ induces an isomorphism $\eta': \gSp(V[m],\omega[m]) \rightarrow \gSp(V,\omega)$.
\item If $m$ is odd, and $\beta = g$ is a degree $\ell$ metric, then $g[m]$ is a degree $\ell + 2m$ symplectic form and $\delta[m]$ induces an isomorphism $\eta': \gSp(V[m],g[m]) \rightarrow \gO(V,g)$.
\item If $m$ is odd, and $\beta = \omega$ is a degree $\ell$ symplectic form, then $\omega[m]$ is a degree $\ell + 2m$ metric and $\delta[m]$ induces an isomorphism $\eta': \gO(V[m],\omega[m]) \rightarrow \gSp(V,\omega)$. 
\end{enumerate}
\end{example}
\begin{example}
Let $\beta: V \times V \rightarrow \R$ be a degree $\ell$ bilinear form on $V$, such that $\beta_{\flat}: V \rightarrow V^{\ast}$ is an isomorphism. Recall that there is a canonical isomorphism $\chi: V \rightarrow (V^{\ast})^{\ast}$, given by
\begin{equation}
[\chi(v)](\xi) = (-1)^{|v||\xi|} \xi(v),
\end{equation}
for all $v \in V$ and $\xi \in V^{\ast}$. Let us define a degree $-\ell$ bilinear form $\beta'$ on $V^{\ast}$ by declaring 
\begin{equation}
\beta'_{\flat} := \chi \circ \beta_{\flat}^{-1}. 
\end{equation}
More explicitly, for all $\xi,\eta \in V^{\ast}$, one finds
\begin{equation}
\begin{split}
\beta'(\xi,\eta) = & \ (-1)^{-\ell(|\xi|-\ell)} [\beta'_{\flat}(\xi)](\eta) = (-1)^{-\ell(|\xi|-\ell)} [\chi( \beta_{\flat}^{-1}(\xi))](\eta) \\
= & \ (-1)^{(|\xi|-\ell)(|\eta|-\ell)} \eta( \beta_{\flat}^{-1}(\xi)). 
\end{split}
\end{equation}
If there is no risk of confusion, we write $\beta' = \beta^{-1}$. 

\begin{enumerate}[(i)]
\item Let us first assume that $\beta$ is symmetric, that is a degree $\ell$ metric on $V$. It follows from (\ref{eq_gradedsymmetrygflat}) easily that for all $\xi,\eta \in V^{\ast}$, one has 
\begin{equation}
\eta(\beta_{\flat}^{-1}(\xi)) = (-1)^{(|\xi|-\ell)(|\eta|-\ell) + \ell} \xi(\beta^{-1}_{\flat}(\eta)). 
\end{equation}
Consequently, one finds  that for all $\xi, \eta \in V^{\ast}$, one has 
\begin{equation}
\beta^{-1}(\xi,\eta) = (-1)^{(|\xi|+\ell)(|\eta| + \ell) + \ell} \beta^{-1}(\eta,\xi). 
\end{equation}
We have just shown that for even $\ell$, $\beta^{-1}$ is a metric, and for odd $\ell$, $\beta^{-1}$ is a \textit{symplectic form}. Moreover, in both cases, one finds
\begin{equation} \label{eq_betabetabeta}
\beta^{-1}(\beta_{\flat}(v), \beta_{\flat}(w)) = (-1)^{\ell} [\beta_{\flat}(v)](w) = (-1)^{\ell(|v| + \ell + 1)} \beta(v,w). 
\end{equation}
But this means that the two bilinear forms $\beta$ and $\beta^{-1}$, together with a degree $\ell$ isomorphism $\beta_{\flat}: V \rightarrow V^{\ast}$, fit into (\ref{eq_ellell'Mrelation}) and (\ref{eq_betabeta'OG2}). Proposition \ref{tvrz_isomorphisms} now implies that for even $\ell$, there is a graded Lie group isomorphism 
\begin{equation}
\eta': \gO(V,\beta) \rightarrow \gO(V^{\ast},\beta^{-1}),
\end{equation}
and for odd $\ell$, there is a graded Lie group isomorphism 
\begin{equation}
\eta': \gO(V,\beta) \rightarrow \gSp(V^{\ast},\beta^{-1}).
\end{equation}
\item If $\beta$ is skew-symmetric, that is a degree $\ell$ symplectic form on $V$, one uses the same calculation to prove that $\beta^{-1}$ is a symplectic form, and for odd $\ell$, $\beta^{-1}$ is a \textit{metric}. However due to the extra sign, the analogue of (\ref{eq_betabetabeta}) now takes the form
\begin{equation} \label{eq_betabetabeta2}
\beta^{-1}(\beta_{\flat}(v), \beta_{\flat}(w)) = - (-1)^{\ell(|v|+\ell+1)} \beta(v,w). 
\end{equation}
Proposition \ref{tvrz_isomorphisms} thus implies that for even $\ell$, there is a graded Lie group isomorphism
\begin{equation}
\eta': \gSp(V,\beta) \rightarrow \gSp(V^{\ast}, -\beta^{-1}),
\end{equation}
and for odd $\ell$, there is a graded Lie group isomorphism
\begin{equation}
\eta': \gSp(V,\beta) \rightarrow \gO(V^{\ast}, -\beta^{-1}). 
\end{equation}
\end{enumerate}
\end{example}
\begin{example}
The extra signs in the symplectic case of the previous example are in fact completely irrelevant. First, notice that the statements of Proposition \ref{tvrz_isomorphisms} remain true even if we replace the condition (\ref{eq_betabeta'OG2}) with the ``anti'' condition
\begin{equation} \label{eq_betabetaantiOG}
\beta'(M(v),M(w)) = - (-1)^{|M|(|v|+\ell+1)} \beta(v,w). 
\end{equation}
Indeed, this condition was actually used only to prove (\ref{eq_etaintertwinestaus}), and it is easy to see that the extra sign does not change anything. In particular, we can always view the identity $\1_{V}: V \rightarrow V$ as a degree zero isomorphism fitting into (\ref{eq_betabetaantiOG}) for the bilinear forms $\beta$ and $\beta' = -\beta$. It now follows from our observation that $\gO(V,\beta) \cong \gO(V,-\beta)$ and $\gSp(V,\beta) \cong \gSp(V,-\beta)$, respectively. 

Note that this is precisely the canonical isomorphism $\gO(p,q) \cong \gO(q,p)$ in ordinary geometry. 
\end{example}
\section{Examples and applications} \label{sec_examples}
\subsection{Standard form of a metric} \label{subsec_standard}
Let us start this section by examining the standard form of a degree $\ell$ metric on a \textit{non-zero} graded vector space $V$. As already noted in Remark \ref{rem_metric}-(iii), the graded dimension $\gdim(V) = (r_{j})_{j \in \Z}$ has to satisfy $r_{j} = r_{-(j+\ell)}$ for all $j \in \Z$. To better understand this condition, let 
\begin{equation} \label{eq_kepsilon} k := - \lfloor \frac{\ell}{2} \rfloor, \; \; \epsilon := \ell \modu{2}. \end{equation}
We can thus write $\ell = -2k + \epsilon$ and rewrite the above restriction of the graded dimension as
\begin{equation} \label{eq_grkconditionfinal}
r_{k + i} = r_{k - (i+\epsilon)},
\end{equation}
for all $i \in \Z$. In fact, it suffices to consider $i \in \N_{0}$. Moreover, there is $i_{\bullet} \in \N_{0}$, such that $r_{k+i_{\bullet}} \neq 0$ and $r_{k+i} = 0$ for all $i > i_{\bullet}$. If one visualizes $\gdim(V)$ as a function from integers to non-negative integers, this means that its graph is symmetric around the axis passing through $-\ell / 2$ and non-zero only in the interval $[k - (i_{\bullet} + \epsilon), k + i_{\bullet}]$. Finally, note that the total dimension $n$ of $V$ is 
\begin{equation}
n = r_{k} + 2 \sum_{i = 1}^{i_{\bullet}} r_{k+i}. 
\end{equation} 
We will now show that one can always choose a convenient total basis for $V$. 
\begin{tvrz} \label{tvrz_standardformmetric}
Let $V \in \gVect$ be non-trivial, equipped with a degree $\ell$ metric $g$. Let $k \in \Z$ and $\epsilon \in \{0,1\}$ be defined as above. 
\begin{enumerate}[(i)]
\item If $\epsilon = 1$ and $k$ is arbitrary, or $\epsilon = 0$ and $k$ is odd, there exists a total basis $(t_{\alpha})_{\alpha=1}^{n/2} \cup (\ol{t}_{\beta})_{\beta=1}^{n/2}$ for $V$, such that 
\begin{equation}
|t_{\alpha}| \geq k, \; \; |\ol{t}_{\beta}| = -|t_{\beta}| + 2k - \epsilon,
\end{equation}
and the metric takes the form
\begin{equation} \label{eq_ginONbasisexpl}
g(t_{\alpha}, \ol{t}_{\beta}) = \delta_{\alpha \beta},
\end{equation}
with other combinations either trivial or obtained by the symmetry.
\item If $\epsilon = 0$ and $k$ is even, there exists a total basis $(t_{\alpha})_{\alpha=1}^{m/2} \cup (\ol{t}_{\beta})_{\beta=1}^{m/2} \cup (e_{a})_{a=1}^{r_{k}}$ for $V$, where
\begin{equation}
|t_{\alpha}| > k, \; |\ol{t}_{\beta}| = -|t_{\beta}| + 2k, \; \; |e_{a}| = k,
\end{equation} 
and the metric takes the form 
\begin{equation}
g(t_{\alpha},\ol{t}_{\beta}) = \delta_{\alpha \beta}, \; \; g(e_{a},e_{b}) = \eta_{ab},
\end{equation}
where $\eta_{ab}$ is a Minkowski metric of some signature $(p,q)$, $m = n - r_{k}$, and other combinations are either trivial or obtained by the symmetry.
\end{enumerate}
\end{tvrz}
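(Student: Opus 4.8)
The plan is to exploit the fact that, by Definition \ref{def_metric}, $g(v,w)=0$ unless $|v|+|w|+\ell=0$, so that $g$ pairs $V_{k+i}$ only with $V_{k-i-\epsilon}$. I would treat these off-diagonal pairs and the possible central block $V_k$ separately, and in each case reduce to a classical normal-form theorem. The basic bookkeeping is as follows: since $g_\flat$ is an isomorphism, its degree-$(k+i)$ component $g_{k+i}\colon V_{k+i}\to (V_{k-i-\epsilon})^{\ast}$ is an isomorphism, which is to say the pairing $g\colon V_{k+i}\times V_{k-i-\epsilon}\to\R$ is perfect. The two paired degrees $k+i$ and $k-i-\epsilon$ coincide precisely when $2i+\epsilon=0$, i.e. only in the central case $i=0$, $\epsilon=0$.

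For the off-diagonal blocks, i.e. whenever $2i+\epsilon>0$, I would choose an arbitrary basis $(t_\alpha)$ of $V_{k+i}$ and take $(\bar t_\beta)$ to be its dual basis in $V_{k-i-\epsilon}$ with respect to the perfect pairing above, so that $g(t_\alpha,\bar t_\beta)=\delta_{\alpha\beta}$. Designating the larger degree as the $t$-side forces $|t_\alpha|=k+i\geq k$ and $|\bar t_\beta|=k-i-\epsilon=-|t_\beta|+2k-\epsilon$, as required. For $\epsilon=1$ the smallest admissible index is $i=0$ (giving $|t_\alpha|=k$) and these blocks already exhaust $V$; for $\epsilon=0$ they cover all $i\geq 1$, which forces $|t_\alpha|>k$ and leaves only the central block to handle.

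The central block occurs exactly when $\epsilon=0$, where $V_k$ carries the restriction of $g$. By (\ref{eq_gradedsymmetrygflat}), for $v,w\in V_k$ one has $g(v,w)=(-1)^{(|v|+\ell)(|w|+\ell)}g(w,v)=(-1)^{k}g(w,v)$, using $|v|=|w|=k$ and $\ell=-2k$ so that $(-1)^{(-k)^{2}}=(-1)^{k}$. This single sign computation is the only genuinely delicate point, since it decides the symmetry type of $g|_{V_k}$. If $k$ is odd, $g|_{V_k}$ is a nondegenerate antisymmetric form, whence $r_k$ is even and a standard symplectic basis $(t_\alpha)\cup(\bar t_\beta)$ of $V_k$ with $g(t_\alpha,\bar t_\beta)=\delta_{\alpha\beta}$ and all other pairings zero exists; appending it completes case (i). If $k$ is even, $g|_{V_k}$ is a nondegenerate symmetric real form, and Sylvester's law of inertia supplies an orthogonal basis $(e_a)$ with $g(e_a,e_b)=\eta_{ab}$, where $\eta=\diag(\pm 1)$ has some signature $(p,q)$, completing case (ii).

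Finally I would assemble the blocks: their union is a total basis because it restricts to a basis on every nonzero $V_j$, each degree being covered exactly once. All cross terms between vectors from distinct blocks vanish automatically, since two of our basis vectors can pair nontrivially only when their degrees sum to $2k-\epsilon$, which within this indexing happens exactly for a vector and its designated partner; the additional vanishing relations inside the central symplectic block were imposed by construction. This yields precisely the stated normal forms, and the identity $n=r_k+2\sum_{i\geq 1}r_{k+i}$ confirms the claimed cardinalities of the bases. The main obstacle is thus entirely the sign bookkeeping on the central block; everything else reduces to applying the two classical normal-form theorems blockwise.
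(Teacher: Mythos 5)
Your proof is correct, and it takes a genuinely different route from the paper's. The paper argues by induction on dimension: it splits off one conjugated pair $L = \R\{t,\ol{t}\}$ at a time (or a single line $L = \R\{t\}$ with $g(t,t)=\pm 1$ in the central case of (ii)), verifies that $V = L \oplus L^{\perp}$ with $g|_{L^{\perp}}$ again a degree $\ell$ metric, and iterates until the vectors run out; the classical normal forms are thereby re-derived inside the iteration rather than quoted. You instead decompose $V$ globally by degree, observe that $g$ restricts to a perfect pairing $V_{k+i} \times V_{k-i-\epsilon} \rightarrow \R$ (which indeed follows from each component $g_{j}$ of $g_{\flat}$ being an isomorphism), dispose of every off-diagonal block at once by taking an arbitrary basis on the higher-degree side and its dual basis on the lower-degree side, and delegate the central block $V_{k}$ (present only for $\epsilon = 0$) to the classical linear symplectic normal form for $k$ odd and to Sylvester's law of inertia for $k$ even. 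Both arguments pivot on the same sign computation, $g(v,w) = (-1)^{k} g(w,v)$ on $V_{k}$ via (\ref{eq_gradedsymmetrygflat}), which you carry out correctly, and your degree bookkeeping for which blocks can pair nontrivially is also right. What your version buys is transparency: it isolates exactly where the geometric content sits (the central block), shows that the off-diagonal part is combinatorially trivial, avoids checking that orthogonal complements inherit the metric, and makes the subsequent remark on isomorphism classes (signature only matters when $\ell \equiv 0 \bmod 4$) immediate. What the paper's iteration buys is self-containedness and reusability: essentially the same vector-by-vector splitting is repeated for quadratic graded vector bundles in Proposition \ref{tvrz_standardforQGVB}, where one extends independent fiber vectors to local frames step by step; your block argument would also adapt there (with dual local frames obtained by inverting a matrix of functions), but the adaptation is less verbatim.
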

\begin{proof}
Let us prove $(i)$ first. Suppose that there exists a non-zero vector $t \in V$ satisfying $|t| > k$. Since $g_{\flat}(t) \neq 0$, there exist a vector $\ol{t} \in V$ satisfying $[g_{\flat}(t)](\ol{t}) = (-1)^{|t| + 1}$. But then
\begin{equation}
g(t,\ol{t}) = (-1)^{|t|+1} [g_{\flat}(t)](\ol{t}) = 1. 
\end{equation}
Note that $|\ol{t}| = -(|t| + \ell) = -|t| + 2k - \epsilon$. Since $|\ol{t}| < k - \epsilon < |t|$, the collection $\{t,\ol{t}\}$ is linearly independent, and one also finds that
\begin{equation}
g(t,t) = 0, \; \; g(\ol{t}, \ol{t}) = 0,
\end{equation}
for degree reasons. However, it may happen that there is no non-zero vector $t \in V$ satisfying $|t| > k$. Thanks to (\ref{eq_grkconditionfinal}) and since $V \neq 0$, there is a non-zero $t$ with $|t| = k$. For $\epsilon = 1$, we are safe as $|\ol{t}| = k - 1 < |t|$ and we can proceed as above. If $\epsilon = 0$ and $k$ is odd, we have $g_{\flat}(V_{k}) = (V_{k})^{\ast}$ and $g$ restricts to a non-degenerate bilinear form $g_{k}: V_{k} \times V_{k} \rightarrow \R$. For any $v,w \in V_{k}$, one has 
\begin{equation}
g_{k}(v,w) = -g_{k}(w,v),
\end{equation}
that is $g_{k}$ is a symplectic form on $V_{k}$. We can thus choose a vector $\ol{t} \in V_{k}$ so that 
\begin{equation}
g(t,\ol{t}) = g_{k}(t,\ol{t}) = 1. 
\end{equation}
The skew-symmetry ensures that $g(t,t) = 0$ and $g(\ol{t},\ol{t}) = 0$. Note that this immediately implies that the collection $\{t, \ol{t} \}$ is linearly independent. 

In either case, we can now consider $L = \R \{t, \ol{t} \}$ and it follows from the construction that $V = L \oplus L^{\perp}$. In particular, the restriction $g|_{L^{\perp}}$ is a degree $\ell$ metric on $L^{\perp}$. By replacing $(V,g)$ with $(L^{\perp}, g|_{L^{\perp}})$, we can now iterate the procedure. After finitely many steps, we run out of vectors and obtain a sequence $\{ t_{\alpha}, \ol{t}_{\alpha} \}_{\alpha =1}^{n/2}$ of ``conjugated pairs''. By construction, they form a total basis $(t_{\alpha})_{\alpha =1}^{n/2} \cup (\ol{t}_{\beta})_{\beta=1}^{n/2}$ having the required properties. This proves $(i)$. 

Let $\epsilon = 0$ and $k$ be even. Whenever we can find $|t| > k$, we proceed in the same way as above. Indeed, in this case we can choose $\ol{t} \in V$ as before, define $L = \R \{t,\ol{t}\}$ and proceed. If there are no such vectors, we observe that that $g_{\flat}(V_{k}) = (V_{k})^{\ast}$ and $g$ restricts to a \textit{metric} $g_{k}: V_{k} \times V_{k} \rightarrow \R$. We can thus choose $t \in V_{k}$ to satisfy $g(t,t) = g_{k}(t,t) = \pm 1$. In this case, choose $L = \R \{ t \}$. We get $V = L \oplus L^{\perp}$, replace $(V,g)$ with $(L^{\perp}, g|_{L^{\perp}})$ and repeat. In this way, we obtain a sequence of ``conjugated pairs'' $\{ t_{\alpha}, \ol{t}_{\alpha} \}_{\alpha=1}^{m/2}$, where $m = n - r_{k}$, and a sequence $\{ e_{a} \}_{a =1}^{r_{k}}$ satisfying $|e_{a}| = k$ and $|g(e_{a},e_{b})| = \delta_{ab}$. By possibly reordering these, we obtain a total basis $(t_{\alpha})_{\alpha=1}^{m/2} \cup (\ol{t}_{\beta})_{\beta=1}^{m/2} \cup (e_{a})_{a=1}^{r_{k}}$ having the required properties. 
\end{proof}

\begin{rem}
The above proposition shows that for $\ell \modu{4} \neq 0$, there is a unique isomorphism class of degree $\ell$ metrics on a given $V \in \gVect$. If $\ell \modu{4} = 0$, the isomorphism classes of degree $\ell$ metrics are uniquely classified by the signature $(p,q)$ of the metric $g_{k}: V_{k} \times V_{k} \rightarrow \R$. 
\end{rem}

\subsection{Quadratic graded vector bundles} \label{subsec_OVG}
Now, let us quickly recall the notion of a \textbf{graded vector bundle $\E$} over $\M$ of a graded rank $(r_{j})_{j \in \Z}$. It is defined by its sheaf of sections $\Gamma_{\E}$, which is assumed to be a locally freely and finitely generated sheaf of $\C^{\infty}_{\M}$-modules of a constant graded rank $(r_{j})_{j \in \Z}$. In other words, for each $m \in M$, there is $U \in \Op_{m}(M)$ and a \textbf{local frame} $(\Phi_{\lambda})_{\lambda=1}^{n}$ for $\E$ over $U$, where 
\begin{enumerate}[(i)]
\item $\Phi_{\lambda} \in \Gamma_{\E}(U)$, $r_{j} = \# \{ \lambda \in \{1,\dots,n\} \mid |\Phi_{\lambda}| = j \}$. In particular, $n = \sum_{j \in \Z} r_{j}$. 
\item For every $V \in \Op(U)$, every $\psi \in \Gamma_{\E}(V)$ can be written as $\psi = \psi^{\lambda} \cdot \Phi_{\lambda}|_{V}$ for unique functions $\psi^{\lambda} \in \C^{\infty}_{\M}(V)$ satisfying $|f^{\lambda}| + |\Phi_{\lambda}| = |\psi|$. 
\end{enumerate}
For detailed exposition and examples, see \cite{vsmolka2025threefold}. We say that an $\R$-bilinear map $\<\cdot,\cdot\>: \Gamma_{\E}(M) \times \Gamma_{\E}(M) \rightarrow \C^{\infty}_{\M}(M)$ is a \textbf{degree $\ell$ fiber-wise metric on $\E$}, if 
\begin{enumerate}[(i)]
\item $|\<\psi,\psi'\>| = |\psi| + |\psi'| + \ell$. 
\item $\<\psi,\psi'\> = (-1)^{(|\psi|+\ell)(|\psi'|+\ell)} \<\psi',\psi\>$.
\item $\<\psi, f\psi'\> = (-1)^{|f|(|\psi|+\ell)} f \<\psi,\psi'\>$. 
\item $g_{\E}: \Gamma_{\E}(M) \rightarrow \Gamma_{\E^{\ast}}(M)$, defined by $[g_{\E}(\psi)](\psi') = (-1)^{\ell(|\psi|+\ell)} \<\psi,\psi'\>$, is an isomorphism.
\end{enumerate}
All conditions are assumed to hold for all $\psi,\psi' \in \Gamma_{\E}(M)$ and $f \in \C^{\infty}_{\M}(M)$. We say that $(\E, \<\cdot,\cdot\>)$ is a \textbf{quadratic graded vector bundle of degree $\ell$}. This notion was introduced in \cite{vysoky2022graded}. It can be shown that $\< \cdot,\cdot\>$ restricts to a degree $\ell$ metric $\< \cdot,\cdot\>_{(m)}$ on each fiber $\E_{(m)}$. In particular, the graded rank $(r_{j})_{j \in \Z}$ of $\E$ has to satisfy the same constraint as discussed in Subsection \ref{subsec_standard}. We can thus define $k \in \Z$ and $\epsilon \in \{0,1\}$ as in (\ref{eq_kepsilon}) to find the condition (\ref{eq_grkconditionfinal}) for all $i \in \Z$. There is a following analogue of Proposition \ref{tvrz_standardformmetric}:

\begin{tvrz} \label{tvrz_standardforQGVB}
Let $(\E, \<\cdot,\cdot\>)$ be a non-trivial quadratic graded vector bundle of degree $\ell$. Let $k \in \Z$ and $\epsilon \in \{0,1\}$ be defined as above. Let $m \in M$ be arbitrary. 
\begin{enumerate}[(i)]
\item If $\epsilon = 1$ and $k$ is arbitrary, or $\epsilon = 0$ and $k$ is odd, there exists a local frame $(\Phi_{\alpha})_{\alpha=1}^{n/1} \cup \{ \ol{\Phi}_{\beta})_{\beta=1}^{n/2}$ for $\E$ over some $U \in \Op_{m}(M)$, such that 
\begin{equation}
|\Phi_{\alpha}| \geq k, \; \; |\ol{\Phi}_{\beta}| = -|\Phi_{\beta}| + 2k - \epsilon,
\end{equation}
and the fiber-wise metric takes the form
\begin{equation} \label{eq_ONframe1}
\<\Phi_{\alpha}, \ol{\Phi}_{\beta}\> = \delta_{\alpha \beta},
\end{equation}
with other combinations either trivial or obtained by the symmetry. 
\item If $\epsilon = 0$ and $k$ is even, there exists a local frame $(\Phi_{\alpha})_{\alpha=1}^{m/2} \cup (\ol{\Phi}_{\beta})_{\beta=1}^{m/2} \cup (\Xi_{a})_{a=1}^{r_{k}}$ for $\E$ over some $U \in \Op_{m}(M)$, where
\begin{equation}
|\Phi_{\alpha}| > k, \; |\ol{\Phi}_{\beta}| = -|\Phi_{\beta}| + 2k, \; \; |\Xi_{a}| = k,
\end{equation} 
and the fiber-wise metric takes the form 
\begin{equation}
\<\Phi_{\alpha},\ol{\Phi}_{\beta}\> = \delta_{\alpha \beta}, \; \; \< \Xi_{a}, \Xi_{b}\> = \eta_{ab},
\end{equation}
were $\eta_{ab}$ is a Minkowski metric of some signature $(p,q)$, $m = n - r_{k}$, and other combinations are either trivial or obtained by the symmetry. 
\end{enumerate}
\end{tvrz}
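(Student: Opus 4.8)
The plan is to mimic the proof of Proposition \ref{tvrz_standardformmetric} almost verbatim, but to carry out every step at the level of \emph{sections} of $\E$ over a shrinking neighbourhood of $m$ rather than at the level of vectors. The dictionary is: "non-zero scalar" becomes "function with non-vanishing body", "linearly independent" becomes "part of a local frame", and "rescale" becomes "multiply by the inverse of a function". The two enabling facts are that a function $f \in \C^{\infty}_{\M}(U)$ with $\ul{f}(m) \neq 0$ is invertible on some smaller $U' \in \Op_{m}(M)$, and that, by the criterion recalled in Appendix A of \cite{vsmolka2025threefold}, a square matrix of functions is invertible iff the body of its determinant is everywhere non-zero.

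First I would fix any local frame for $\E$ over some $U_{0} \in \Op_{m}(M)$ and pass to the fibre $\E_{(m)}$, on which $\<\cdot,\cdot\>_{(m)}$ is a genuine degree $\ell$ metric. Proposition \ref{tvrz_standardformmetric} applied to $(\E_{(m)},\<\cdot,\cdot\>_{(m)})$ records both the admissible degrees and the non-degeneracy needed below (and, in case (ii), the signature $(p,q)$ of the middle metric $\<\cdot,\cdot\>_{(m)}$ on degree $k$). I then run the inductive splitting: at each step I choose a frame section $\Phi$ with $|\Phi| \geq k$ and $\Phi(m) \neq 0$; non-degeneracy of $\<\cdot,\cdot\>_{(m)}$ produces a frame section of the conjugate degree $-|\Phi|+2k-\epsilon$ whose pairing with $\Phi$ has non-vanishing body at $m$, and after shrinking $U$ and dividing by that pairing I obtain $\ol{\Phi}$ with $\<\Phi,\ol{\Phi}\> = 1$ on $U$.

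The step with no analogue in the vector-space argument is the vanishing of the self-pairings: in the bundle setting $\<\Phi,\Phi\>$ and $\<\ol{\Phi},\ol{\Phi}\>$ have non-zero degrees $2|\Phi|+\ell$ and $-2|\Phi|+2k-\epsilon$, so their \emph{bodies} vanish but the functions themselves need not. I would remove them by a graded Gram--Schmidt correction, replacing $\Phi$ by $\Phi - \tfrac{1}{2}\<\Phi,\Phi\>\ol{\Phi}$ and $\ol{\Phi}$ by $\ol{\Phi} - \tfrac{1}{2}\<\ol{\Phi},\ol{\Phi}\>\Phi$ (up to signs fixed by the Koszul rule), the degree bookkeeping being arranged so that these substitutions preserve degrees and leave the body of $\<\Phi,\ol{\Phi}\>$ equal to $1$. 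Using $\<\Phi,\ol{\Phi}\>=1$ one checks that the leading contributions to the new self-pairings cancel, so each correction raises the order of $\<\Phi,\Phi\>$ and $\<\ol{\Phi},\ol{\Phi}\>$ in the filtration by powers of the augmentation ideal of zero-body functions. Since $\C^{\infty}_{\M}$ is complete with respect to this filtration, the iteration converges to sections with $\<\Phi,\Phi\> = \<\ol{\Phi},\ol{\Phi}\> = 0$ exactly (in case (ii) the middle degree $k$ is handled identically, yielding the $\{\Xi_{a}\}$ with $\<\Xi_{a},\Xi_{b}\> = \eta_{ab}$). Establishing this convergence of the graded Gram--Schmidt is the main obstacle.

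Once a normalised conjugate pair $\{\Phi,\ol{\Phi}\}$ (or, in case (ii), a single middle section $\Xi$) is in hand, the rank-two (resp. rank-one) subbundle $L$ it spans carries a fibre-wise non-degenerate metric, so the orthogonal projector onto $L$ exists, being given by the inverse of the invertible metric matrix on $L$. Hence $L^{\perp}$ is again a graded subbundle and $\E|_{U} = L \oplus L^{\perp}$, with $\<\cdot,\cdot\>|_{L^{\perp}}$ a degree $\ell$ fibre-wise metric of smaller rank. Iterating on $L^{\perp}$ and stopping after finitely many steps, when the rank is exhausted, assembles the desired local frame; a final reordering produces exactly the frames displayed in (i) and (ii).
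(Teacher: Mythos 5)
Your overall route coincides with the paper's: the paper's own proof is only a sketch that says to rerun the argument of Proposition \ref{tvrz_standardformmetric} at the level of sections, using the extension of linearly independent fiber vectors to a local frame (Proposition 3.12 of \cite{vsmolka2025threefold}) and shrinking $U$ so that functions with non-vanishing body at $m$ become invertible. Where you go beyond the paper is in explicitly confronting the one step that has no linear-algebra analogue: when $|\Phi|+\ell$ is even, the self-pairings $a := \<\Phi,\Phi\>$ and $b := \<\ol{\Phi},\ol{\Phi}\>$ are functions of non-zero degree and need not vanish, so a correction is genuinely required; identifying this is the right instinct. Two comments on your fix. First, note that the graded symmetry forces $\<\psi,\psi\> = 0$ whenever $|\psi|+\ell$ is odd; for $\epsilon = 1$ this kills at least one of $a,b$, and the surviving one has odd degree, so it squares to zero and the single correction $\Phi \mapsto \Phi - \tfrac{1}{2}a\ol{\Phi}$ already works exactly. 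Second, even when both survive ($\epsilon = 0$, $|\Phi|$ even), the infinite iteration is unnecessary: since $ab$ has degree zero and vanishing body, $1-ab$ has a smooth square root, and (up to Koszul signs) $\Phi' := \Phi - \frac{a}{1+\sqrt{1-ab}}\,\ol{\Phi}$ satisfies $\<\Phi',\Phi'\> = 0$ on the nose; after re-normalizing the cross-pairing, the remaining self-pairing is removed exactly by $\ol{\Phi} \mapsto \ol{\Phi} - \tfrac{1}{2}\<\ol{\Phi},\ol{\Phi}\>\Phi'$, because now the quadratic term multiplies $\<\Phi',\Phi'\> = 0$. If you do keep the iteration, its convergence rests on completeness of $\C^{\infty}_{\M}$ with respect to the filtration by the ideal of functions with vanishing body; this holds in the formalism of \cite{Vysoky:2022gm}, but it is a nontrivial structural fact that must be invoked, not asserted.

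There is, however, one concrete misstep: the middle block in case (ii) is \emph{not} ``handled identically''. For $|\Xi| = k$ with $\epsilon = 0$ and $k$ even, the pairing $\<\Xi,\Xi\>$ has degree zero and non-vanishing body at $m$, so on a small enough $U$ it is an \emph{invertible} function; no Gram--Schmidt correction can make it zero, and there is no conjugate partner to subtract in the first place. It can only be normalized, by dividing $\Xi$ by the square root of $\pm\<\Xi,\Xi\>$ after shrinking $U$ so that the sign of the body is constant; the off-diagonal pairings $\<\Xi_{a},\Xi_{b}\>$ are then removed by ordinary Gram--Schmidt using inverses of the diagonal ones, and the signature $(p,q)$ is read off from $\<\cdot,\cdot\>_{(m)}$. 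Square roots of functions with positive body are exactly the tool missing from your dictionary (you allow only inverses), and this is the single technical point the paper's proof explicitly flags. With that repair, your argument goes through.
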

\begin{proof}
The proof is similar to the one of Proposition \ref{tvrz_standardformmetric}. One uses the fact that independent vectors of the fiber $\E_{(m)}$ can be extended to a local frame on some $U \in \Op_{m}(M)$. This is Proposition 3.12 in \cite{vsmolka2025threefold}. Note that in the case $\epsilon = 0$ and $k$ even, one has to use a square root of a function $f \in \C^{\infty}_{\M}(M)$ to normalize sections in $\Gamma_{\E}(U)$. This is always possible if $f(m) > 0$ for all $m \in U$. We leave this as an interesting graded geometry exercise. 
\end{proof}

Now, for example, suppose that $\epsilon  = 1$ and $k$ is arbitrary, or $\epsilon = 0$ and $k$ is odd. In other words, the option $(i)$ of the above proposition happens. We can thus find a local frame $(\Phi_{\alpha})_{\alpha=1}^{n/2} \cup (\ol{\Phi}_{\beta})_{\beta=1}^{n/2}$ for $\E$ over $U$, having the described properties. 

Let $K \in \gVect$ be the typical fiber of $\E$. We know that it has a total basis $(t_{\alpha})_{\alpha=1}^{n/2} \cup (\ol{t}_{\beta})_{\beta=1}^{n/2}$ satisfying $|t_{\alpha}| = |\Phi_{\alpha}|$ and $|\ol{t}_{\beta}| = |\ol{\Phi}_{\beta}|$ for all $\alpha,\beta \in \{1,\dots,n/2\}$, and 
\begin{equation} \label{eq_localtrivialization}
\Gamma_{\E}(U) \cong \C^{\infty}_{\M}(U) \otimes_{\R} K,
\end{equation} 
where $\Phi_{\alpha}$ are mapped to $1 \otimes t_{\alpha}$ and $\ol{\Phi}_{\beta}$ are mapped to $1 \otimes t_{\beta}$. The formula 
\begin{equation}
g(t_{\alpha},\ol{t}_{\beta}) := \delta_{\alpha \beta}
\end{equation}
then defines a degree $\ell$ metric $g$ on $K$, where other combinations are trivial or obtained by the graded symmetry. This follows from Proposition \ref{tvrz_standardformmetric}. In particular, the equation (\ref{eq_ONframe1}) shows that under identifications induced by the local trivialization, the fiber-wise metric $\<\cdot,\cdot\>$ corresponds to the $\C^{\infty}_{\M}(U)$-bilinear form $\<\cdot,\cdot\>_{g}$ defined in Section \ref{sec_OGFOP}. Consequently, we find that the group of vector bundle automorphisms of $\E_{U}$ preserving $\<\cdot,\cdot\>$ can be identified with the group
\begin{equation}
\frP'(\M|_{U}) = \gMan^{\infty}( \M|_{U}, \gO(K,g)). 
\end{equation}
This follows immediately from the above observations, together with Proposition \ref{tvrz_FOPOG}. Completely the same statement is obtained for $\epsilon = 0$ and $k$ even. In other words, we see that $\gO(K,g)$ can be viewed as a ``\textit{structure group}'' for quadratic graded vector bundles.  Similar relation can be found between the graded symplectic group and symplectic graded vector bundles. 
\subsection{Underlying Lie groups} \label{subsec_ULG}
In the construction of the graded Lie group $\gO(V,g)$, we have argued that its underlying Lie group is $\gO(V_{\bullet},g)$ defined as 
\begin{equation}
\gO(V_{\bullet},g) = \{ A \in \GL(V_{\bullet}) \mid A^{T} g A = g \},
\end{equation}
see (\ref{eq_gOVbulletg}). Using Proposition \ref{tvrz_standardformmetric}, we can now see more clearly what is going on. Recall that $A = (A_{j})_{j \in \Z}$ for $A_{j} \in \Lin(V_{j},V_{j})$, and $g = (g_{j})_{j \in \Z}$, for $g_{j} \in \Lin(V_{j}, (V_{-(j+\ell)})^{\ast})$. We thus require
\begin{equation} \label{eq_AOVgdegreewise}
(A_{-(j+\ell)})^{T} g_{j} A_{j} = g_{j},
\end{equation}
for all $j \in \Z$. Let $k \in \Z$ and $\epsilon \in \{0,1\}$, and $i_{\bullet} \in \N_{0}$ be defined as in Subsection \ref{subsec_standard}. (\ref{eq_AOVgdegreewise}) reads
\begin{equation} \label{eq_AOVgdegreewise2}
(A_{k - (i+\epsilon)})^{T} g_{k+i} A_{k+i} = g_{k+i},
\end{equation}
for all $i \in \Z$. We will now discuss the situation based on the parity of $\epsilon$ and $k$. 
\begin{enumerate}[(i)]
\item Let $\epsilon = 1$ and let $k$ be arbitrary. Let $(t_{\alpha})_{\alpha=1}^{n/2} \cup (\ol{t}_{\beta})_{\beta=1}^{n/2}$ be a basis obtained as in Proposition \ref{tvrz_standardformmetric}-$(i)$. Let us further order and relabel this basis so that 
\begin{equation} \label{eq_basissubdivision}
(t_{\alpha})_{\alpha=1}^{n/2} = \cup_{i=0}^{i_{\bullet}} (t^{(i)}_{\alpha_{i}})_{\alpha_{i}=1}^{r_{k+i}}, \; \; (\ol{t}_{\beta})_{\beta=1}^{n/2} = \cup_{i=0}^{i_{\bullet}} (\ol{t}{}^{(i)}_{\beta_{i}})_{\beta_{i}=1}^{r_{k+i}},
\end{equation} 
where $(t_{\alpha_{i}}^{(i)})_{\alpha_{i}=1}^{r_{k+i}}$ is a basis for $V_{k+i}$ and $(\ol{t}^{(i)}_{\beta_{i}})_{\beta_{i}=1}^{r_{k+i}}$ is a basis for $V_{k - (i+1)}$, for each $i \in \{0,\dots,i_{\bullet}\}$. Finally, for each $i \in \{0,\dots,i_{\bullet}\}$, write 
\begin{equation} \label{eq_Amatrices}
A_{k+i}(t^{(i)}_{\alpha_{i}}) = [\bbA_{(i)}]{}^{\beta_{i}}{}_{\alpha_{i}} t^{(i)}_{\beta_{i}}, \; \; A_{k-(i+1)}(\ol{t}^{(i)}_{\alpha_{i}}) = [\ol{\bbA}_{(i)}]^{\beta_{i}}{}_{\alpha_{i}} \ol{t}^{(i)}_{\beta_{i}},
\end{equation} 
thus defining matrices $\bbA_{(i)}, \ol{\bbA}_{(i)} \in \GL(r_{k+i}, \R)$. It follows from (\ref{eq_gflattog}) and (\ref{eq_ginONbasisexpl}) that 
\begin{equation}
g_{k+i}( t^{(i)}_{\alpha_{i}}) = (-1)^{k+i+1} \ol{t}^{\alpha_{i}}_{(i)}, \; \; g_{k-(i+1)}(\ol{t}^{(i)}_{\alpha_{i}}) = (-1)^{k+i} t^{\alpha_{i}}_{(i)},
\end{equation}
for each $i \in \{0,\dots,i_{\bullet}\}$ and all $\alpha_{i} \in \{1,\dots,r_{k+i}\}$. Now, by plugging everything into (\ref{eq_AOVgdegreewise2}), one arrives to two matrix equations, namely 
\begin{equation}
\ol{\bbA}{}^{T}_{(i)} \bbA_{(i)} = \1_{r_{k+i}}, \; \; \bbA^{T}_{(i)} \ol{\bbA}_{(i)} = \1_{r_{k+i}},
\end{equation}
for all $i \in \{0,\dots,i_{\bullet}\}$. Both can be solved at once by setting $\ol{\bbA}_{(i)} = \bbA_{(i)}^{-T}$. In particular, we have just proved the following statement:
\begin{tvrz}
The map $A \mapsto (\bbA_{(i)})_{i=0}^{i_{\bullet}}$ defines a Lie group isomorphism 
\begin{equation}
\gO(V_{\bullet},g) \cong \prod_{i=0}^{i_{\bullet}} \GL(r_{k+i}, \R).
\end{equation}
\end{tvrz}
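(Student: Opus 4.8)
The plan is to exhibit $\Theta\colon A \mapsto (\bbA_{(i)})_{i=0}^{i_\bullet}$ as a bijective group homomorphism that is smooth with smooth inverse; since $\gO(V_\bullet,g)$ is a Lie group and $\prod_{i=0}^{i_\bullet}\GL(r_{k+i},\R)$ is a Lie group under componentwise multiplication, this is precisely what a Lie group isomorphism requires. Most of the analytic work is already behind us: the preceding computation has shown that the orthogonality condition $A^{T} g A = g$ decouples, degree by degree, into the pair of block identities $\ol{\bbA}_{(i)}^{T}\bbA_{(i)} = \1_{r_{k+i}}$ and $\bbA_{(i)}^{T}\ol{\bbA}_{(i)} = \1_{r_{k+i}}$, whose unique solution is $\ol{\bbA}_{(i)} = \bbA_{(i)}^{-T}$ with $\bbA_{(i)}$ unconstrained. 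So what remains is to package this into the four properties of $\Theta$.

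First I would verify that $\Theta$ is a homomorphism. Because every $A \in \gO(V_\bullet,g)$ is a degree-zero endomorphism it preserves each homogeneous component $V_{k+i}$, and composition then multiplies the corresponding blocks; with the index convention of (\ref{eq_Amatrices}) this reads $[\bbA\bbB]_{(i)} = \bbA_{(i)}\bbB_{(i)}$, so $\Theta(AB) = \Theta(A)\Theta(B)$. Invertibility of $A$ forces each $\bbA_{(i)}$ to be invertible, so $\Theta$ indeed takes values in the stated product.

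Next I would establish bijectivity. For injectivity, note that an element of $\gO(V_\bullet,g)$ is determined by its restriction to every nonzero $V_j$; when $\epsilon=1$ these components are precisely the $V_{k+i}$ and $V_{k-(i+1)}$ for $0 \le i \le i_\bullet$, which are disjoint and fill out the whole support of $\gdim(V)$. The tuple $(\bbA_{(i)})$ fixes $A$ on the $V_{k+i}$, and the forced relation $\ol{\bbA}_{(i)} = \bbA_{(i)}^{-T}$ fixes it on the $V_{k-(i+1)}$, so $A$ is recovered uniquely. For surjectivity, I would run this in reverse: from an arbitrary tuple build $A$ using those blocks together with their conjugates $\bbA_{(i)}^{-T}$, observe that $A$ is an invertible degree-zero map, and check that it satisfies $A^{T} g A = g$ by the very block identities above, so that $A \in \gO(V_\bullet,g)$ with $\Theta(A)$ the given tuple.

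Finally, smoothness of $\Theta$ is immediate since it merely extracts matrix entries of $A$, and smoothness of $\Theta^{-1}$ follows because the conjugate blocks are reconstructed through $\bbA_{(i)} \mapsto \bbA_{(i)}^{-T}$, which is smooth on $\GL(r_{k+i},\R)$. I do not expect a serious obstacle here: the only subtlety worth stating explicitly is that the conjugate-component blocks carry no independent degrees of freedom and that for $\epsilon=1$ there is no self-paired ``middle'' component, both of which are guaranteed by the structure of the basis in Proposition \ref{tvrz_standardformmetric}-$(i)$.
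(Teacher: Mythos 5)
Your proposal is correct and follows essentially the same route as the paper: the block computation you cite (the degree-wise decoupling of $A^{T}gA=g$ into $\ol{\bbA}_{(i)}^{T}\bbA_{(i)}=\1_{r_{k+i}}$ and $\bbA_{(i)}^{T}\ol{\bbA}_{(i)}=\1_{r_{k+i}}$, solved uniquely by $\ol{\bbA}_{(i)}=\bbA_{(i)}^{-T}$ with $\bbA_{(i)}$ free) is precisely the paper's proof, after which the paper simply asserts the isomorphism. Your explicit packaging of this into the homomorphism property, bijectivity, and smoothness of $\Theta$ and $\Theta^{-1}$ just spells out what the paper leaves implicit, and all of those verifications are sound.
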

\item Let $\epsilon = 0$ and let $k$ be odd. Choose a basis $(t_{\alpha})_{\alpha=1}^{n/2} \cup (\ol{t}_{\beta})_{\beta=1}^{n/2}$ as in Proposition \ref{tvrz_standardformmetric}-$(i)$. This time, we can order and relabel it so that 
\begin{align}
(t_{\alpha})_{\alpha=1}^{n/2} = & \ (t^{(0)}_{\alpha_{0}})_{\alpha_{0}=1}^{r_{k}/2} \cup \big( \cup_{i=1}^{i_{\bullet}} (t^{(i)}_{\alpha_{i}})_{\alpha_{i}=1}^{r_{k+i}} \big), \\
(\ol{t}_{\beta})_{\beta=1}^{n/2} = & \  (\ol{t}^{(0)}_{\beta_{0}})_{\beta_{0}=1}^{r_{k}/2} \cup \big( \cup_{i=1}^{i_{\bullet}} (\ol{t}^{(i)}_{\beta_{i}})_{\beta_{i}=1}^{r_{k+i}} \big),
\end{align}
where $(t_{\alpha_{i}}^{(i)})_{\alpha_{i}}^{r_{k+i}}$ is a basis for $V_{k+i}$ and $(\ol{t}_{\beta_{i}}^{(i)})_{\beta_{i}=1}^{r_{k+i}}$ is a basis for $V_{-(k+i)}$ for every $i \in \{1,\dots,i_{\bullet}\}$. Moreover, $(t_{\alpha_{0}}^{(0)})_{\alpha_{0}=1}^{r_{k}/2} \cup (\ol{t}_{\beta_{0}}^{(0)})_{\beta_{0}=1}^{r_{k}/2}$ forms a basis for $V_{k}$. 

Let us introduce the matrices $\bbA_{(i)}, \ol{\bbA}_{(i)} \in \GL(r_{k+i},\R)$ using the same formula (\ref{eq_Amatrices}) for $i \in \{1,\dots,i_{\bullet}\}$. Finally, let $\bbA_{(0)} \in \GL(r_{k},\R)$ be the matrix of $A_{k}$ in the basis $(t_{\alpha_{0}}^{(0)})_{\alpha_{0}=1}^{r_{k}/2} \cup (\ol{t}_{\beta_{0}})_{\beta_{0}=1}^{r_{k}/2}$. It is now straightforward to arrive to the matrix equations 
\begin{equation}
\ol{\bbA}{}^{T}_{(i)} \bbA_{(i)} = \1_{r_{k+i}}, \; \; \bbA^{T}_{(i)} \ol{\bbA}_{(i)} = \1_{r_{k+i}},
\end{equation}
for each $i \in \{1,\dots,i_{\bullet}\}$. Finally, observe that the matrix of $g_{k}$ in the basis $(t_{\alpha_{0}}^{(0)})_{\alpha_{0}=1}^{r_{k}/2} \cup (\ol{t}^{(0)}_{\beta_{0}})_{\beta_{0}=1}^{r_{k}/2}$ is just the standard symplectic form $\Omega$ on $\R^{r_{k}}$ and the matrix $\bbA_{(0)}$ has to satisfy
\begin{equation}
\bbA_{(0)}^{T} \Omega \bbA_{(0)} = \Omega,
\end{equation}
that is $\bbA_{(0)} \in \gSp(r_{k})$. We have just proved the following statement:  
\begin{tvrz}
The map $A \mapsto (\bbA_{(i)})_{i=0}^{i_{\bullet}}$ defines a Lie group isomorphism 
\begin{equation}
\gO(V_{\bullet},g) \cong \gSp(r_{k}) \times \prod_{i=1}^{i_{\bullet}} \GL(r_{k+i}, \R).
\end{equation}
\end{tvrz}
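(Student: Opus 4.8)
The plan is to show that the stated map, call it $\Theta: A \mapsto (\bbA_{(i)})_{i=0}^{i_{\bullet}}$, is a bijective homomorphism of Lie groups whose inverse is again smooth. The well-definedness of $\Theta$ is essentially already contained in the discussion preceding the statement: inserting the standard form of $g$ furnished by Proposition \ref{tvrz_standardformmetric}-$(i)$ into the defining equations (\ref{eq_AOVgdegreewise2}) produces, for each $i \in \{1,\dots,i_{\bullet}\}$, the pair $\ol{\bbA}{}^{T}_{(i)} \bbA_{(i)} = \1_{r_{k+i}}$ and $\bbA^{T}_{(i)} \ol{\bbA}_{(i)} = \1_{r_{k+i}}$, which (given invertibility) are both equivalent to the single relation $\ol{\bbA}_{(i)} = \bbA_{(i)}^{-T}$ with $\bbA_{(i)} \in \GL(r_{k+i},\R)$ unconstrained, whereas the self-dual middle block at $i=0$ yields $\bbA_{(0)}^{T} \Omega \bbA_{(0)} = \Omega$, i.e. $\bbA_{(0)} \in \gSp(r_{k})$. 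Hence $\Theta$ indeed lands in the asserted product group.

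Next I would verify that $\Theta$ is a group homomorphism. Since every $A \in \gO(V_{\bullet},g)$ is a degree-zero endomorphism, composition is purely block-wise, $(AB)_{j} = A_{j}B_{j}$, and no Koszul signs intervene; a one-line computation from (\ref{eq_Amatrices}) then gives that the matrix of $(AB)_{k+i}$ is the ordinary product $\bbA_{(i)}\bbB_{(i)}$, and likewise for the middle block. Therefore $\Theta(AB) = \Theta(A)\Theta(B)$ componentwise in $\gSp(r_{k}) \times \prod_{i=1}^{i_{\bullet}} \GL(r_{k+i},\R)$, the symplectic factor being respected because the product of symplectic matrices is symplectic.

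For bijectivity I would exhibit the inverse directly. Given a tuple $(\bbA_{(0)}, \bbA_{(1)}, \dots, \bbA_{(i_{\bullet})})$ in the target, reconstruct $A$ by letting $A_{k+i}$ and $A_{k}$ carry the prescribed matrices and by forcing the conjugate blocks $A_{k-i}$ to have matrices $\bbA_{(i)}^{-T}$; running the coordinate computation above in reverse shows that this $A$ satisfies every equation (\ref{eq_AOVgdegreewise2}), hence lies in $\gO(V_{\bullet},g)$. This assignment is a two-sided inverse of $\Theta$, since $A$ is completely determined by its blocks and each block is accounted for (the $A_{k-i}$ being fixed once $\bbA_{(i)}$ is known). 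Finally, both $\Theta$ and this inverse are rational in the matrix entries — the inverse uses only matrix inversion and transposition — and are thus smooth, so $\Theta$ is an isomorphism of Lie groups.

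I expect the only genuinely delicate point to be the middle block. One must confirm that, in the relabelled basis of $V_{k}$ of Proposition \ref{tvrz_standardformmetric}-$(i)$, the Gram matrix of the skew-symmetric $g_{k}$ is exactly the standard $\Omega$ (using $g(t_\alpha,\ol{t}_\beta)=\delta_{\alpha\beta}$ together with skew-symmetry, so that $g(\ol{t}_\beta,t_\alpha)=-\delta_{\alpha\beta}$), and that the sign factors produced when $(A_{k})^{T} g_{k} A_{k} = g_{k}$ is written in coordinates cancel to leave the clean condition $\bbA_{(0)}^{T}\Omega\bbA_{(0)}=\Omega$. This is the same bookkeeping as in the $\epsilon=1$ proposition, enhanced only by the observation (already established in the proof of Proposition \ref{tvrz_standardformmetric}) that for $\epsilon=0$ and $k$ odd the restriction $g_{k}$ is genuinely a symplectic form, which is precisely what forces the $\gSp(r_{k})$ factor.
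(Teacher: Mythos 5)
Your proposal is correct and follows essentially the same route as the paper: insert the standard basis from Proposition \ref{tvrz_standardformmetric}-$(i)$, reduce the defining equations (\ref{eq_AOVgdegreewise2}) to the block conditions $\ol{\bbA}_{(i)} = \bbA_{(i)}^{-T}$ for $i \geq 1$ and $\bbA_{(0)}^{T}\Omega\bbA_{(0)} = \Omega$ for the middle block, where $g_{k}$ is symplectic because $k$ is odd. The paper leaves the homomorphism property, the explicit inverse, and smoothness implicit (``we have just proved''), so your spelling out of these routine verifications is a completion of, not a departure from, its argument.
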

\item Let $\epsilon = 0$ and let $k$ be even. The discussion is very similar to part (ii), except that one uses Proposition \ref{tvrz_standardformmetric}-$(ii)$. In particular, there is a basis $(e_{a})_{a=1}^{r_{k}}$ for $V_{k}$, such that 
\begin{equation}
g(e_{a},e_{b}) = \eta_{ab},
\end{equation}
where $\eta$ is the Minkowski metric of some signature $(p,q)$. Everything proceeds as above, except that the matrix of $g_{k}$ in the basis $(e_{a})_{a=1}^{r_{k}}$ is $\eta$ and the matrix $\bbA_{(0)}$ with respect to this basis has to satisfy
\begin{equation}
\bbA_{(0)}^{T}  \eta \bbA_{(0)} = \eta,
\end{equation}
that is $\bbA_{(0)} \in \gO(p,q)$. Consequently, this proves the following statement:
\begin{tvrz}
The map $A \mapsto (\bbA_{(i)})_{i=0}^{i_{\bullet}}$ defines a Lie group isomorphism 
\begin{equation}
\gO(V_{\bullet},g) \cong \gO(p,q) \times \prod_{i=1}^{i_{\bullet}} \GL(r_{k+i}, \R),
\end{equation}
where $(p,q)$ is the signature of the restriction $g_{k}: V_{k} \times V_{k} \rightarrow \R$. 
\end{tvrz}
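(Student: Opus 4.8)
The plan is to imitate the arguments of parts (i) and (ii) almost verbatim, the only genuinely new ingredient being the behaviour of the middle block in degree $k$. First I would invoke Proposition \ref{tvrz_standardformmetric}-(ii) to fix a total basis $(t_{\alpha})_{\alpha=1}^{m/2} \cup (\ol{t}_{\beta})_{\beta=1}^{m/2} \cup (e_{a})_{a=1}^{r_{k}}$, with $m = n - r_{k}$, in which $g$ takes its standard form. As in part (ii), I would order and relabel the off-diagonal vectors by degree, writing $(t_{\alpha}) = \cup_{i=1}^{i_{\bullet}} (t^{(i)}_{\alpha_{i}})$ and $(\ol{t}_{\beta}) = \cup_{i=1}^{i_{\bullet}} (\ol{t}^{(i)}_{\beta_{i}})$, where $(t^{(i)}_{\alpha_{i}})_{\alpha_{i}=1}^{r_{k+i}}$ is a basis of $V_{k+i}$ and $(\ol{t}^{(i)}_{\beta_{i}})_{\beta_{i}=1}^{r_{k+i}}$ a basis of the paired degree $V_{k-i}$, while the $(e_{a})$ form a basis of $V_{k}$. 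The matrices $\bbA_{(i)}, \ol{\bbA}_{(i)} \in \GL(r_{k+i}, \R)$ for $i \geq 1$ are then defined by the same formula (\ref{eq_Amatrices}) as before, and $\bbA_{(0)} \in \GL(r_{k}, \R)$ is defined as the matrix of $A_{k}$ in the basis $(e_{a})$.

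Next I would substitute the standard form into the degree-wise orthogonality condition (\ref{eq_AOVgdegreewise2}), which for $\epsilon = 0$ reads $(A_{k-i})^{T} g_{k+i} A_{k+i} = g_{k+i}$. For each $i \geq 1$ the off-diagonal pairing reproduces, after tracking the signs coming from (\ref{eq_gflattog}), the pair of matrix equations $\ol{\bbA}{}^{T}_{(i)} \bbA_{(i)} = \1_{r_{k+i}}$ and $\bbA^{T}_{(i)} \ol{\bbA}_{(i)} = \1_{r_{k+i}}$, both solved at once by $\ol{\bbA}_{(i)} = \bbA_{(i)}^{-T}$; hence this pair of degrees contributes an unconstrained factor $\GL(r_{k+i}, \R)$. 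For $i = 0$ the condition becomes $A_{k}^{T} g_{k} A_{k} = g_{k}$, and here lies the conceptual crux: for \emph{even} $k$ the graded symmetry (\ref{eq_gradedsymmetrymatrices}) carries the sign $(-1)^{k^{2} + \ell} = (-1)^{k^{2}} = +1$, so $g_{k}$ is genuinely \emph{symmetric}, and in the basis $(e_{a})$ its matrix is the Minkowski form $\eta$ of signature $(p,q)$. The condition therefore reduces to $\bbA_{(0)}^{T} \eta \bbA_{(0)} = \eta$, i.e. $\bbA_{(0)} \in \gO(p,q)$. This is exactly the step separating case (iii) from the symplectic outcome of case (ii), where odd $k$ forced $g_{k}$ to be skew-symmetric.

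Finally I would verify that $A \mapsto (\bbA_{(i)})_{i=0}^{i_{\bullet}}$ is an isomorphism of Lie groups. It is a homomorphism because multiplication in $\gO(V_{\bullet}, g)$ is the degree-wise composition of endomorphisms, which on each invariant block is ordinary matrix multiplication and preserves the relation $\ol{\bbA}_{(i)} = \bbA_{(i)}^{-T}$ (using $(\bbA_{(i)}\bbB_{(i)})^{-T} = \bbA_{(i)}^{-T}\bbB_{(i)}^{-T}$). Surjectivity follows by reading the above backwards: any tuple with $\bbA_{(0)} \in \gO(p,q)$ and $\bbA_{(i)} \in \GL(r_{k+i}, \R)$ assembles into a degree-zero endomorphism satisfying $A^{T} g A = g$ upon setting $\ol{\bbA}_{(i)} := \bbA_{(i)}^{-T}$, and injectivity is clear since the blocks of $A$ are recovered from the data. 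As both the map and its inverse are rational in the matrix entries (the inverse involves $\bbA_{(i)}^{-T}$), they are smooth, so the bijection is a diffeomorphism and hence a Lie group isomorphism.

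I expect no essential obstacle beyond the bookkeeping: the one point requiring care is confirming that the graded-symmetry sign makes $g_{k}$ symmetric precisely when $k$ is even, and keeping the signs in (\ref{eq_AOVgdegreewise2}) consistent when translating the abstract pairing into the concrete matrix equations.
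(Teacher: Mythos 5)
Your proposal is correct and follows essentially the same route as the paper: reduce via Proposition \ref{tvrz_standardformmetric}-$(ii)$ to the standard basis, obtain the unconstrained $\GL(r_{k+i},\R)$ blocks exactly as in cases $(i)$--$(ii)$, and observe that for even $k$ the restriction $g_{k}$ is a genuine metric of signature $(p,q)$, so the middle block satisfies $\bbA_{(0)}^{T}\eta\,\bbA_{(0)} = \eta$, i.e. $\bbA_{(0)} \in \gO(p,q)$. The extra verifications you spell out (the homomorphism property, bijectivity, and smoothness of the inverse via $\ol{\bbA}_{(i)} = \bbA_{(i)}^{-T}$) are left implicit in the paper, which simply invokes the pattern established in the previous two cases.
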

\end{enumerate}
\subsection{Representations} \label{subsec_repre}
The existence of a graded Lie group $\GL(V)$ allows one to define representations of graded Lie groups in a straightforward manner. First, let us recall the concept of Lie group actions. Let $(\G,\mu,\iota,e)$ be a graded Lie group. A graded smooth map $\theta: \G \times \M \rightarrow \M$ is called a \textbf{left action} of $\G$ on $\M \in \gMan^{\infty}$, if the following diagrams commute:
\begin{equation} \label{eq_actiondiagrams}
\begin{tikzcd}
\G \times (\G \times \M) \arrow{d} \arrow{r}{\1_{\G} \times \theta} & \G \times \M \arrow{dd}{\theta}\\
(\G \times \G) \times \M \arrow{d}{\mu \times \1_{\M}} & \\
\G \times \M \arrow{r}{\theta} & \M 
\end{tikzcd}, \; \; 
\begin{tikzcd}
\M \arrow{r}{(e_{\M},\1_{\M})} \arrow{rd}{\1_{\M}} &[2em] \G \times \M \arrow{d}{\theta} \\
& \M 
\end{tikzcd}.
\end{equation}
It is reasonable to expect that a general linear group $\GL(V)$ acts on the graded manifold $\dia{V}$ associated with the graded vector space $V$ we have started with. This is indeed the case. See Proposition 2.17 in \cite{Smolka2023} for the original definition. 
\begin{tvrz} \label{tvrz_standardaction}
There is a canonical left action $\theta_{V}$ of $\GL(V)$ on $\dia{V}$.  
\end{tvrz}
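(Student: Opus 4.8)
The plan is to build $\theta_{V}$ from the evaluation pairing and then verify the two action diagrams through the functor-of-points dictionary developed in Section \ref{sec_generallinear}. Consider the evaluation map $\ev: \gl(V) \times V \rightarrow V$ given by $\ev(A,v) := A(v)$. Since $|A(v)| = |A| + |v|$, this is a degree zero graded bilinear map, so Corollary \ref{cor_bilineargrad} produces a canonical graded smooth map $\dia{\ev}: \dia{\gl(V)} \times \dia{V} \rightarrow \dia{V}$ whose underlying map is $(A,v) \mapsto A(v)$. Because $\GL(V)$ is exactly the open submanifold of $\dia{\gl(V)}$ sitting over $\GL(V_{\bullet})$, precomposing with the open embedding $k: \GL(V) \rightarrow \dia{\gl(V)}$ gives the desired $\theta_{V} := \dia{\ev} \circ (k \times \1_{\dia{V}}): \GL(V) \times \dia{V} \rightarrow \dia{V}$, with underlying map the honest action $\ul{\theta}_{V}(A,v) = A(v)$ of $\GL(V_{\bullet})$ on $V_{0}$.

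For the axioms (\ref{eq_actiondiagrams}) the cleanest route is the functor of points, fully in the spirit of the inverse construction in Section \ref{sec_generallinear}. Recall from Proposition \ref{tvrz_GLVFOP} that $\Psi_{\cS}: \gMan^{\infty}(\cS,\GL(V)) \xrightarrow{\sim} \Aut(\frM(\cS))$, and from the proposition preceding it that $\Xi_{\cS}$ identifies $\gMan^{\infty}(\cS,\dia{V})$ with $\frM(\cS)_{0}$, both naturally in $\cS$. Every $F \in \Aut(\frM(\cS))$ acts on the module $\frM(\cS)$ and thus restricts to a self-map of $\frM(\cS)_{0}$, giving a set-level action $\Aut(\frM(\cS)) \times \frM(\cS)_{0} \rightarrow \frM(\cS)_{0}$ that is natural in $\cS$. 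I would then check that, under $\Psi_{\cS}$ and $\Xi_{\cS}$, the map induced by $\theta_{V}$ on hom-sets coincides with this module action; this is a matching of coordinate formulas via (\ref{eq_diaApullback}), (\ref{eq_PsicSmap}) and (\ref{eq_XiS}), entirely parallel to the verification that $\Psi$ intertwines $\sfm_{\cS}$ with composition of module endomorphisms. Since a genuine action of the group $\Aut(\frM(\cS))$ on the module $\frM(\cS)$ automatically satisfies associativity and the unit law, the two diagrams in (\ref{eq_actiondiagrams}), read through the natural isomorphisms, commute for every $\cS$; fullness and faithfulness of the Yoneda embedding then force the diagrams themselves to commute, so $\theta_{V}$ is a left action.

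Alternatively, one can argue geometrically in the style of Proposition \ref{tvrz_tauantihom}: the identities $A(B(v)) = (AB)(v)$ and $\1_{V}(v) = v$ hold in $\gVect$, applying $\diamond$ together with Corollary \ref{cor_bilineargrad} promotes them to commuting diagrams relating $\dia{\ev}$, $\dia{\beta}$ and $\dia{\alpha}$ on $\dia{\gl(V)} \times \dia{V}$, and restricting along $k$, which is a monomorphism in $\gMan^{\infty}$ (an open embedding), transports the commutativity to $\GL(V) \times \dia{V}$. The main obstacle in either route is purely the sign bookkeeping in matching the set-level action with the module action, respectively in pushing the linear identities through the tensor-hom coordinate conventions of Section \ref{sec_diamond}; conceptually nothing is hard, as the action axioms reduce to the associativity and unitality of composition of endomorphisms, which are already encoded in the group structure of $\GL(V)$.
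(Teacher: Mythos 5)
Your construction of $\theta_{V}$ is exactly the paper's: the paper takes the same evaluation bilinear map (called $\vartheta$ there), promotes it with Corollary \ref{cor_bilineargrad}, and restricts along the open embedding $k$. Where you genuinely diverge is in verifying the diagrams (\ref{eq_actiondiagrams}). The paper does this by a direct coordinate check: it computes $\theta_{V}^{\ast}(\bbx^{\lambda}) = \bbx^{\kappa}\bby^{\lambda}{}_{\kappa}$ and compares both composites in each diagram as pullbacks. Your primary route instead identifies, under $\Psi_{\cS}$ and $\Xi_{\cS}$, the hom-set map induced by $\theta_{V}$ with the tautological action of $\Aut(\frM(\cS))$ on $\frM(\cS)_{0}$ (well-defined since degree-zero automorphisms preserve the degree-zero part, and natural in $\cS$), and then lets Yoneda finish, since associativity and unitality of a module action are automatic; the Yoneda step is the action analogue of Proposition \ref{tvrz_gradedLGintermsofsets}. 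This is sound, and the deferred identification is short: for $\phi \in \frP(\cS)$ and $\chi \in \frQ(\cS)$ one finds
\begin{equation}
(\theta_{V}\circ(\phi,\chi))^{\ast}(\bbx^{\lambda}) = \chi^{\ast}(\bbx^{\kappa})\,\phi^{\ast}(\bby^{\lambda}{}_{\kappa}),
\end{equation}
which is exactly $\Xi_{\cS}$ applied to $[\Psi_{\cS}(\phi)](\Xi_{\cS}(\chi))$. Be aware, though, that this still rests on the same coordinate formula $\theta_{V}^{\ast}(\bbx^{\lambda}) = \bbx^{\kappa}\bby^{\lambda}{}_{\kappa}$, which comes from (\ref{eq_diabetaastexplicit}) rather than (\ref{eq_diaApullback}) (the evaluation map is bilinear, not linear), so the sign bookkeeping is not avoided, merely relocated. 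What your route buys is that the action axioms become trivial once the identification is made, the argument transfers verbatim to restricted actions such as that of $\gO(V,g)$, and the resulting description of the functor of points of the action is independently useful — it is precisely what the paper exploits later in Subsections \ref{subsec_OVG} and \ref{subsec_repre}. Your alternative geometric route is also viable but sketchier: pushing $A(B(v)) = (AB)(v)$ through $\diamond$ needs a compatibility of $\dia{\alpha}$ with the associativity of tensor products that the paper never states, so it too would bottom out in a coordinate verification comparable to the paper's; note also that no monomorphism property of $k$ is actually needed there, since $\theta_{V}$ lands in $\dia{V}$ and one only ever precomposes with $k$.
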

\begin{proof}
Let $\vartheta: \gl(V) \times V \rightarrow V$ be a canonical degree zero bilinear map $\vartheta(A,v) = A(v)$. Using (\ref{cor_bilineargrad}), it can be promoted to a graded smooth map $\dia{\vartheta}: \dia{\gl(V)} \times \dia{V} \rightarrow \dia{V}$. Let
\begin{equation}
\theta_{V} := \dia{\vartheta} \circ (k \times \1_{\dia{V}}),
\end{equation} 
where $k: \GL(V) \rightarrow \dia{\gl(V)}$ is the open embedding. To prove that $\theta_{V}$ fits into (\ref{eq_actiondiagrams}), the easiest way is to use the coordinates $(\bbx^{\lambda})_{\lambda=1}^{n}$ on $\dia{V}$ and $(\bby^{\lambda}{}_{\kappa})$ on $\GL(V)$ induced by a a choice of a total basis $(t_{\lambda})_{\lambda=1}^{n}$ for $V$. Using (\ref{eq_diabetaastexplicit}), it is not difficult to see that
\begin{equation} \label{eq_thetaVexplicit}
\theta_{V}^{\ast}(\bbx^{\lambda}) = \bbx^{\kappa} \bby^{\lambda}{}_{\kappa},
\end{equation}
where on the right-hand side, we view $\bby^{\lambda}{}_{\kappa}$ and $\bbx^{\kappa}$ as coordinates on $\GL(V) \times \dia{V}$. Using (\ref{eq_muastexplicit}), we thus find the coordinate expression 
\begin{equation}
[\theta_{V} \circ(\mu \times \1_{\M})]^{\ast}(\bbx^{\lambda}) = (\mu \times \1_{\M})^{\ast}( \bbx^{\kappa} \bby^{\lambda}{}_{\kappa}) = \bbx^{\kappa} \bbu^{\nu}{}_{\kappa} \bbz^{\lambda}{}_{\nu},
\end{equation}
where $\bbz$'s and $\bbu$'s are the coordinates on the first and the second copy of $\GL(V)$ in the product $(\GL(V) \times \GL(V)) \times \dia{V}$, respectively. On the other hand, one has 
\begin{equation}
[\theta_{V} \circ (\1_{\G} \times \theta_{V})]^{\ast}(\bbx^{\lambda}) = (\1_{\GL(V)} \times \theta_{V})^{\ast}( \bbx^{\kappa} \bby^{\lambda}{}_{\kappa}) = (\bbx^{\nu} \bbu^{\kappa}{}_{\nu}) \bbz^{\lambda}{}_{\kappa} = \bbx^{\kappa} \bbu^{\nu}{}_{\kappa} \bbz^{\lambda}{}_{\nu}. 
\end{equation}
where $\bbz$'s and $\bbu$'s are the coordinates on the first and the second copy of $\GL(V)$ in the product $\GL(V) \times (\GL(V) \times \dia{V})$. Both expressions are mapped to each other by the pullback of the canonical graded diffeomorphism, so the leftmost diagram in (\ref{eq_actiondiagrams}) commutes. Moreover, one has 
\begin{equation}
[\theta \circ (e_{\dia{V}},\1_{\dia{V}})]^{\ast}(\bbx^{\lambda}) = (e_{\dia{V}}, \1_{\dia{V}})^{\ast}( \bbx^{\kappa} \bby^{\lambda}{}_{\kappa}) = \bbx^{\kappa} \delta^{\lambda}_{\kappa} = \bbx^{\lambda} = (\1_{\dia{V}})^{\ast}(\bbx^{\lambda}). 
\end{equation}
This proves that the rightmost diagram in (\ref{eq_actiondiagrams}) commutes and the proof is complete. 
\end{proof}

\begin{definice}
A \textbf{representation of a graded Lie group $\G$ on a graded vector space $V$} is a graded Lie group homomorphism $\rho: \G \rightarrow \GL(V)$. 
\end{definice}

Similarly to the ordinary geometry, every representation induces a left action. 

\begin{tvrz}
Let $\rho: \G \rightarrow \GL(V)$ be a representation of $\G$ on $V$. Then $\theta := \theta_{V} \circ (\rho \times \1_{\dia{V}})$ is a left action of $\G$ on $\dia{V}$. Here $\theta_{V}$ is the canonical left action from Proposition \ref{tvrz_standardaction}. 
\end{tvrz}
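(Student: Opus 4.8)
The plan is to verify directly that $\theta := \theta_{V} \circ (\rho \times \1_{\dia{V}})$ satisfies the two action diagrams (\ref{eq_actiondiagrams}), reducing each to the corresponding diagram for the canonical action $\theta_{V}$ from Proposition \ref{tvrz_standardaction}. Write $\mu_{\G}$ and $e_{\G}$ for the multiplication and unit of $\G$, keeping $\mu$, $e$ for those of $\GL(V)$. The assumption that $\rho$ is a graded Lie group homomorphism means precisely that $\rho \circ \mu_{\G} = \mu \circ (\rho \times \rho)$ and $\rho \circ e_{\G} = e$. The whole argument is then a formal diagram chase mirroring the classical proof that a representation induces an action; no genuine computation is needed, since the coordinate work is already contained in Proposition \ref{tvrz_standardaction}.

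For the compatibility (leftmost) diagram I would start from $\theta \circ (\1_{\G} \times \theta)$ and push the two copies of $\rho$ outward. Using bifunctoriality of the product one rewrites
\begin{equation}
(\rho \times \1_{\dia{V}}) \circ (\1_{\G} \times \theta) = (\1_{\GL(V)} \times \theta_{V}) \circ R, \qquad R := \rho \times (\rho \times \1_{\dia{V}}),
\end{equation}
so that $\theta \circ (\1_{\G} \times \theta) = \theta_{V} \circ (\1_{\GL(V)} \times \theta_{V}) \circ R$. Next I would invoke the compatibility diagram for $\theta_{V}$, which gives $\theta_{V} \circ (\1_{\GL(V)} \times \theta_{V}) = \theta_{V} \circ (\mu \times \1_{\dia{V}}) \circ a$, where $a$ is the canonical associativity diffeomorphism $\GL(V) \times (\GL(V) \times \dia{V}) \cong (\GL(V) \times \GL(V)) \times \dia{V}$. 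Then comes naturality of these canonical diffeomorphisms, $a \circ R = ((\rho \times \rho) \times \1_{\dia{V}}) \circ a_{\G}$, and finally the homomorphism identity rewritten as $(\mu \times \1_{\dia{V}}) \circ ((\rho \times \rho) \times \1_{\dia{V}}) = (\rho \times \1_{\dia{V}}) \circ (\mu_{\G} \times \1_{\dia{V}})$. Chaining these equalities collapses the expression to $\theta \circ (\mu_{\G} \times \1_{\dia{V}}) \circ a_{\G}$, which is exactly the other path through the diagram.

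The unit (rightmost) diagram is shorter. Writing $t : \dia{V} \to \{\ast\}$ for the terminal arrow, the relevant unit maps are $e_{\G} \circ t$ for $\G$ and $e \circ t$ for $\GL(V)$; composing gives $\theta \circ (e_{\G} \circ t, \1_{\dia{V}}) = \theta_{V} \circ (\rho \circ e_{\G} \circ t, \1_{\dia{V}}) = \theta_{V} \circ (e \circ t, \1_{\dia{V}})$ by $\rho \circ e_{\G} = e$, and the latter equals $\1_{\dia{V}}$ by the unitarity of $\theta_{V}$. The only point requiring care — and the closest thing to an obstacle — is the bookkeeping of the interchange laws for $\times$ and, in particular, the naturality of the associativity diffeomorphisms $a$ and $a_{\G}$ with respect to $\rho$; once these are granted (they are part of the monoidal structure of $\gMan^{\infty}$), the result is immediate. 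Since everything is phrased through morphisms, the argument is literally the categorical translation of ``$\rho$ preserves products and units, hence $\theta_{V}(\rho(g),-)$ is an action,'' and it is this structural transparency, rather than any estimate, that makes the statement routine.
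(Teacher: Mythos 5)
Your proof is correct and follows essentially the same route as the paper: both arguments are purely formal diagram chases that reduce the two action axioms for $\theta$ to the action axioms for $\theta_{V}$, the homomorphism property of $\rho$, bifunctoriality of $\times$, and naturality of the canonical associativity diffeomorphisms (the paper pastes these facts into one large commutative diagram, while you chain them as equalities, which is the same content). The only cosmetic remark is that in the paper's formalism a graded Lie group homomorphism is defined by compatibility with $\mu$ alone, and unit preservation $\rho \circ e_{\G} = e$ is an automatic consequence (as in Lemma \ref{lem_antihom}); you may cite that fact rather than fold it into the definition.
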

\begin{proof}
Let us denote the group operations on $\G$ as $(\mu,\iota,e)$ and those on $\GL(V)$ as $(\mu',\iota',e')$. Let us consider a commutative diagram 
\begin{equation}
\begin{tikzcd}
\G \times (\G \times \dia{V}) \arrow{dd} \arrow{rd}{\rho \times (\rho \times \1_{\dia{V}})} \arrow{rr}{\1_{\G} \times \theta} &[2em] &[2em] \G \times \dia{V} \arrow{d}{\rho \times \1_{\dia{V}}} \arrow[bend left=460]{ddd}{\theta} \\
& \GL(V) \times (\GL(V) \times \dia{V}) \arrow{d} \arrow{r}{\1_{\GL(V)} \times \theta_{V}} & \GL(V) \times \dia{V} \arrow{dd}{\theta_{V}} \\
(\G \times \G) \times \dia{V} \arrow{r}{(\rho \times \rho) \times \1_{\dia{V}}} \arrow{dd}{\mu \times \1_{\dia{V}}} & (\GL(V) \times \GL(V)) \times \1_{\dia{V}} \arrow{d}{\mu' \times \1_{\dia{V}}} & \\
& \GL(V) \times \1_{\dia{V}} \arrow{r}{\theta_{V}} & \dia{V} \\
\G \times \dia{V} \arrow{ur}{\rho \times \1_{\dia{V}}} \arrow[bend right=10]{urr}{\theta} & &
\end{tikzcd}.
\end{equation}
Now, the inner diagram commutes since $\theta_{V}$ is a left action of $\GL(V)$ on $\dia{V}$. All triangles commute by definition of $\theta$. The upper-left square contains canonical diffeomorphisms and commutes trivially. The bottom-left square commutes since $\rho$ is assumed to be a graded Lie group homomorphism. Finally, the commutativity of the upper square can be checked easily by composing it with the projections onto $\GL(V)$ and $\dia{V}$, respectively, and using the definition of $\theta$. This proves the commutativity of the first diagram in (\ref{eq_actiondiagrams}). The commutativity of the second diagram is checked in a similar fashion.
\end{proof}
We can now ask which actions of $\G$ on a graded manifold $\dia{V}$ are induced by representations as in the above proposition. First, observe that for each $\lambda \in \R$, there is a degree zero graded linear map $v \mapsto \lambda v$, thus inducing a graded smooth map $H_{\lambda}: \dia{V} \rightarrow \dia{V}$, called the \textbf{homothety map} on $\dia{V}$ corresponding to $\lambda \in \R$. 

\begin{definice}
Let $\theta: \G \times \dia{V} \rightarrow \dia{V}$ be a left action of $\G$ on $\dia{V}$. We say that it is \textbf{linear}, if the diagram
\begin{equation} \label{eq_linearaction}
\begin{tikzcd}
\G \times \dia{V} \arrow{d}{\1_{\G} \times H_{\lambda}} \arrow{r}{\theta} & \dia{V} \arrow{d}{H_{\lambda}} \\
\G \times \dia{V} \arrow{r}{\theta} & \dia{V}
\end{tikzcd}
\end{equation}
commutes for each $\lambda \in \R$. 
\end{definice}
It turns out that this is precisely the sufficient and necessary condition to establish the link between actions and representations of graded Lie groups. 
\begin{tvrz}
Let $\theta: \G \times \dia{V} \rightarrow \dia{V}$ be a left action of $\G$ on $\dia{V}$. 

Then $\theta$ is linear, if and only if there exists a representation $\rho: \G \rightarrow \GL(V)$, such that $\theta = \theta_{V} \circ (\rho \times \1_{\dia{V}})$. Moreover, if such $\rho$ exists, it is unique. 
\end{tvrz}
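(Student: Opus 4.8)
The plan is to prove the two implications separately, reducing everything to the coordinate identity $\theta_{V}^{\ast}(\bbx^{\lambda}) = \bbx^{\kappa} \bby^{\lambda}{}_{\kappa}$ of (\ref{eq_thetaVexplicit}) together with $H_{\lambda}^{\ast}(\bbx^{\mu}) = \lambda \bbx^{\mu}$. First I would record that the canonical action $\theta_{V}$ is itself linear: pulling back $\bbx^{\lambda}$ along the two paths of (\ref{eq_linearaction}) for $\theta_{V}$ gives $\theta_{V}^{\ast}(\lambda \bbx^{\lambda}) = \lambda \bbx^{\kappa} \bby^{\lambda}{}_{\kappa}$ and $(\1_{\GL(V)} \times H_{\lambda})^{\ast}(\bbx^{\kappa} \bby^{\lambda}{}_{\kappa}) = \lambda \bbx^{\kappa} \bby^{\lambda}{}_{\kappa}$, which agree. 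This yields the converse direction at once: if $\theta = \theta_{V} \circ (\rho \times \1_{\dia{V}})$ then
\begin{equation}
\begin{split}
H_{\lambda} \circ \theta &= H_{\lambda} \circ \theta_{V} \circ (\rho \times \1_{\dia{V}}) = \theta_{V} \circ (\1_{\GL(V)} \times H_{\lambda}) \circ (\rho \times \1_{\dia{V}}) \\
&= \theta_{V} \circ (\rho \times \1_{\dia{V}}) \circ (\1_{\G} \times H_{\lambda}) = \theta \circ (\1_{\G} \times H_{\lambda}),
\end{split}
\end{equation}
where the middle steps use linearity of $\theta_{V}$ and the obvious commutation $(\rho \times \1_{\dia{V}}) \circ (\1_{\G} \times H_{\lambda}) = (\1_{\GL(V)} \times H_{\lambda}) \circ (\rho \times \1_{\dia{V}})$.

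For the forward direction I would use linearity to pin down the coordinate form of $\theta$. Writing $F^{\lambda} := \theta^{\ast}(\bbx^{\lambda}) \in \C^{\infty}_{\G \times \dia{V}}(G \times V_{0})$, the hypothesis (\ref{eq_linearaction}) reads $(\1_{\G} \times H_{\lambda})^{\ast}(F^{\lambda}) = \lambda F^{\lambda}$ for every $\lambda \in \R$, i.e.\ $F^{\lambda}$ is homogeneous of degree one in the fibre coordinates $\bbx^{\mu}$. By the structure of $\C^{\infty}_{\dia{V}}$ (polynomial in the coordinates of nonzero degree, and, since a smooth $g$ with $g(\lambda x) = \lambda g(x)$ for all $\lambda$ is linear, also linear in the degree-zero coordinates) this forces $F^{\lambda} = \bbx^{\kappa} \cdot \pr_{\G}^{\ast}(f^{\lambda}{}_{\kappa})$ for unique $f^{\lambda}{}_{\kappa} \in \C^{\infty}_{\G}(G)$ with $|f^{\lambda}{}_{\kappa}| = |t_{\kappa}| - |t_{\lambda}| = |\bby^{\lambda}{}_{\kappa}|$. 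I would then define $\rho \colon \G \to \dia{\gl(V)}$ by $\rho^{\ast}(\bby^{\lambda}{}_{\kappa}) := f^{\lambda}{}_{\kappa}$ (legitimate by Proposition \ref{tvrz_onpullback}). To see that $\rho$ corestricts to the open submanifold $\GL(V)$ it suffices, exactly as for $\mu$ and $\tau^{\times}$ earlier, to check $\ul{\rho}(G) \subseteq \GL(V_{\bullet})$: restricting $\theta$ along a point $\hat{g} \colon \{\ast\} \to \G$ gives $\theta \circ (\hat{g} \times \1_{\dia{V}}) = \dia{A_{g}}$ with $A_{g} \in \gl(V)_{0}$ read off from $\ul{f^{\lambda}{}_{\kappa}}(g)$, and evaluating the action axioms (\ref{eq_actiondiagrams}) at points yields $\dia{A_{g}} \circ \dia{A_{\ul{\iota}(g)}} = \dia{\1_{V}}$; since $\diamond$ is faithful on $\Lin$ (by (\ref{eq_diaApullback})), this gives $A_{g} A_{\ul{\iota}(g)} = \1_{V}$, so $A_{g} = \ul{\rho}(g)$ is invertible. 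Finally $\theta = \theta_{V} \circ (\rho \times \1_{\dia{V}})$ follows by comparing pullbacks, since $(\theta_{V} \circ (\rho \times \1_{\dia{V}}))^{\ast}(\bbx^{\lambda}) = \bbx^{\kappa} \rho^{\ast}(\bby^{\lambda}{}_{\kappa}) = \bbx^{\kappa} f^{\lambda}{}_{\kappa} = F^{\lambda}$.

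It remains to show $\rho$ is a homomorphism and is unique, both of which rest on one cancellation fact: for $\psi, \psi' \colon \cS \to \GL(V)$, the equality $\theta_{V} \circ (\psi \times \1_{\dia{V}}) = \theta_{V} \circ (\psi' \times \1_{\dia{V}})$ forces $\psi = \psi'$, because pulling back $\bbx^{\lambda}$ gives $\bbx^{\kappa} \psi^{\ast}(\bby^{\lambda}{}_{\kappa}) = \bbx^{\kappa} \psi'^{\ast}(\bby^{\lambda}{}_{\kappa})$, and the freely generating $\bbx^{\kappa}$ let one read off $\psi^{\ast}(\bby^{\lambda}{}_{\kappa}) = \psi'^{\ast}(\bby^{\lambda}{}_{\kappa})$ and invoke Proposition \ref{tvrz_onpullback}. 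Uniqueness of $\rho$ is then immediate (take $\cS = \G$). For the homomorphism property I would substitute $\theta = \theta_{V} \circ (\rho \times \1_{\dia{V}})$ into the associativity square of (\ref{eq_actiondiagrams}) for $\theta$ and use associativity of $\theta_{V}$ (the same square for $\GL(V)$), naturality of the associator, and the identity $(\rho \times \1_{\dia{V}}) \circ (\1_{\G} \times \theta_{V}) = (\1_{\GL(V)} \times \theta_{V}) \circ (\rho \times \1_{\GL(V) \times \dia{V}})$; this collapses to $\theta_{V} \circ ((\rho \circ \mu) \times \1_{\dia{V}}) = \theta_{V} \circ ((\mu' \circ (\rho \times \rho)) \times \1_{\dia{V}})$, whence the cancellation fact gives $\rho \circ \mu = \mu' \circ (\rho \times \rho)$. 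Preservation of unit and inverse is then automatic for morphisms of group objects.

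The main obstacle is the homogeneity step: extracting from (\ref{eq_linearaction}) that $\theta^{\ast}(\bbx^{\lambda})$ is genuinely linear in the fibre coordinates requires care with the mixed polynomial/smooth structure of $\C^{\infty}_{\dia{V}}$ and with the degree bookkeeping needed to identify the coefficients $f^{\lambda}{}_{\kappa}$ with legitimate pullbacks $\rho^{\ast}(\bby^{\lambda}{}_{\kappa})$. Once this normal form and the cancellation fact are in hand, the remaining diagram chases are routine.
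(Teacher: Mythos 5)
Your proof is correct, and its key existence step takes a genuinely different route from the paper's. The paper packages linearity into the single map $F_{\theta} := (p_{\G},\theta)$, reads the diagram (\ref{eq_linearaction}) as saying that $F_{\theta}$ is a morphism of the trivial graded vector bundle $\G \times \dia{V}$ over $\1_{\G}$ (indeed an isomorphism, its inverse $(p_{\G}, \theta \circ (\iota \times \1_{\dia{V}}))$ being supplied by the group inverse), and then invokes Theorem 5.1 of \cite{vsmolka2025threefold} together with the functor-of-points bijection $\Psi_{\G}$ of Proposition \ref{tvrz_GLVFOP} to convert the resulting automorphism of $\frM(\G)$ into an element $\rho \in \gMan^{\infty}(\G,\GL(V))$. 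You instead extract the normal form $\theta^{\ast}(\bbx^{\lambda}) = \bbx^{\kappa}\pr_{\G}^{\ast}(f^{\lambda}{}_{\kappa})$ by hand from degree-one homogeneity, define $\rho$ directly via Proposition \ref{tvrz_onpullback}, and obtain invertibility of $\ul{\rho}(g)$ pointwise from the action axioms evaluated at points; this last touch is a nice replacement for the paper's bundle-level inverse, and you are right that one must work with the full graded map $\dia{A_{g}}$ and faithfulness of $\diamond$, since the underlying action alone would only control the degree-zero block of $A_{g}$. What the paper's route buys is that the analytic crux — that invariance under \emph{all} homotheties forces fibrewise linearity, i.e.\ kills the higher monomials and the cross terms between degree-zero and nonzero-degree fibre coordinates in the formal-power-series structure sheaf — is outsourced to the cited bundle-morphism theorem, and membership in $\Aut(\frM(\G))$ (rather than mere endomorphisms) comes for free; what your route buys is self-containedness within the present paper's toolkit, at the price of the normal-form lemma, which you correctly identify as the main obstacle and sketch but do not fully execute (the $\lambda \to 0$ coefficient-matching argument). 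Your uniqueness and homomorphism arguments — coefficient extraction via the zero section and $\partial/\partial\bbx^{\kappa}$, and reduction of multiplicativity to a cancellation property of $\theta_{V}$ — are essentially identical to the paper's closing observation, and your remark that unit and inverse preservation is automatic for multiplicative maps of group objects is sound.
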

\begin{proof}
Let us first argue that the left action $\theta_{V}$ is linear. Indeed, let us consider the diagram
\begin{equation}
\begin{tikzcd}
\GL(V) \times \dia{V} \arrow{d}{\1_{\GL(V)} \times H_{\lambda}} \arrow{r}{k \times \1_{\dia{V}}} &[2em] \dia{\gl(V)} \times \dia{V} \arrow{r}{\dia{\vartheta}} \arrow{d}{\1_{\gl(V)} \times H_{\lambda}} & \dia{V} \arrow{d}{H_{\lambda}} \\
\GL(V) \times \dia{V} \arrow{r}{k \times \1_{\dia{V}}}  & \dia{\gl(V)} \times \dia{V} \arrow{r}{\dia{\vartheta}} & \dia{V}
\end{tikzcd},
\end{equation}
for any given $\lambda \in \R$. See the proof of Proposition \ref{tvrz_standardaction} for the notation. The leftmost square commutes trivially, the rightmost square commutes since $\vartheta(A,\lambda v) = \lambda \vartheta(A,v)$ for all $A \in \gl(V)$ and $v \in V$. The horizontal arrows compose to $\theta_{V}$, proving it linear. Now, if $\theta = \theta_{V} \circ (\rho \times \1_{\dia{V}})$ for some representation $\rho: \G \rightarrow \GL(V)$, for each $\lambda \in \R$ we can consider a diagram
\begin{equation}
\begin{tikzcd}
\G \times \dia{V} \arrow{d}{\1_{\G} \times H_{\lambda}} \arrow{r}{\rho \times \1_{\dia{V}}} &[2em] \GL(V) \times \dia{V} \arrow{d}{\1_{\GL(V)} \times H_{\lambda}} \arrow{r}{\theta_{V}} & \dia{V} \arrow{d}{H_{\lambda}} \arrow{d}{H_{\lambda}}\\
\G \times \dia{V} \arrow{r}{\rho \times \1_{\dia{V}}} & \GL(V) \times \dia{V} \arrow{r}{\theta_{V}} & \dia{V}
\end{tikzcd}.
\end{equation}
The leftmost square commutes trivially, the second square commutes thanks to the linearity of $\theta_{V}$. The horizontal arrows compose to $\theta$, proving it linear. 

Conversely, suppose that $\theta: \G \times \dia{V} \rightarrow \dia{V}$ is linear. Let us consider a map $F_{\theta}: \G \times \dia{V} \rightarrow \G \times \dia{V}$ defined by $F_{\theta} := (p_{\G}, \theta)$, where $p_{\G}: \G \times \dia{V} \rightarrow \G$ is the canonical projection. Since $\theta$ is linear, the map $F_{\theta}$ fits into the commutative diagram
\begin{equation}
\begin{tikzcd}
\G \times \dia{V} \arrow{d}{\1_{\G} \times H_{\lambda}} \arrow{r}{F_{\theta}} & \G \times \dia{V} \arrow{d}{\1_{\G} \times H_{\lambda}} \\
\G \times \dia{V} \arrow{r}{F_{\theta}} & \G \times \dia{V}
\end{tikzcd}
\end{equation}
We can view $\G \times \dia{V}$ as a trivial graded vector bundle over $\G$. The commutativity of the diagram then shows that $F_{\theta}$ is a graded vector bundle morphism over $\1_{\G}$. See \S 5 of \cite{vsmolka2025threefold} for details. In fact, it is an isomorphism with the inverse given by 
\begin{equation}
F_{\theta}^{-1} = (p_{\G}, \theta \circ (\iota \times \1_{\dia{V}})),
\end{equation}
where $\iota: \G \rightarrow \G$ is the group inverse. Consequently, by Theorem 5.1 in \cite{vsmolka2025threefold}, $F_{\theta}$ corresponds to a unique degree zero $\C^{\infty}_{\G}(G)$-linear automorphism $\hat{F}_{\theta}$ of the graded module 
\begin{equation}
\Gamma_{\G \times \dia{V}}(G) = \C^{\infty}_{\G}(G) \otimes_{\R} V \equiv \frM(\G)
\end{equation}
see just above Proposition \ref{tvrz_GLVFOP} for the notation. Using the statements in the same proposition, it thus corresponds to a unique element in the set
\begin{equation}
\frP(\G) = \gMan^{\infty}(\G, \GL(V)). 
\end{equation}
This element is our representation $\rho: \G \rightarrow \GL(V)$. To prove that $\theta = \theta_{V} \circ (\rho \times \1_{\dia{V}})$, one has to dig a bit deeper into the coordinate expressions. Fix a total basis $(t_{\lambda})_{\lambda=1}^{n}$ for $V$. Let $\Phi_{\lambda} = (-1)^{|t_{\lambda}|} 1 \otimes t_{\lambda}$ be the corresponding frame for $\frM(\G)$ we have used in the proof of Proposition \ref{tvrz_GLVFOP}. We have
\begin{equation}
\hat{F}_{\theta}(\Phi_{\lambda}) = \fF^{\kappa}{}_{\lambda} \Phi_{\kappa},
\end{equation}
for unique functions $\fF^{\kappa}{}_{\lambda} \in \C^{\infty}_{\G}(G)$. They are related to $\rho: \G \rightarrow \GL(V)$ by the formula $\rho^{\ast}( \bby^{\lambda}{}_{\kappa}) = \fF^{\lambda}{}_{\kappa}$. On the other hand, it follows that $F_{\theta}: \G \times \dia{V} \rightarrow \G \times \dia{V}$ satisfies
\begin{equation}
F_{\theta}^{\ast}(\bbx^{\lambda}) = (-1)^{|t_{\kappa}|(|t_{\lambda}|-1)} p^{\ast}_{\G}( \fF^{\lambda}{}_{\kappa}) \bbx^{\kappa},
\end{equation}
where $\bbx^{\lambda}$ are viewed as functions on $\G \times \dia{V}$. But from this, we can read out the expression for $\theta$, namely 
\begin{equation} \label{eq_thetaincoordinateslinear}
\theta^{\ast}(\bbx^{\lambda}) = (-1)^{|t_{\kappa}|(|t_{\lambda}|-1)} p^{\ast}_{\G}( \fF^{\lambda}{}_{\kappa}) \bbx^{\kappa},
\end{equation}
where on the left-hand side, $\bbx^{\lambda}$ are now viewed as coordinate functions on $\dia{V}$. But observe that
\begin{equation}
\begin{split}
(\theta_{V} \circ (\rho \times \1_{\dia{V}}))^{\ast}(\bbx^{\lambda}) = & \ (\rho \times \1_{\dia{V}})^{\ast}( \bbx^{\kappa} \bby^{\lambda}{}_{\kappa}) = \bbx^{\kappa} p_{\G}^{\ast}(\fF^{\lambda}{}_{\kappa}) \\
= & \ (-1)^{|t_{\kappa}|(|t_{\lambda}| - 1)} p^{\ast}_{\G}(\fF^{\lambda}{}_{\kappa}) \bbx^{\kappa}.
\end{split}
\end{equation}
Since $\lambda \in \{1,\dots,n\}$ was arbitrary, it follows from Proposition \ref{tvrz_onpullback} that $\theta = \theta_{V} \circ (\rho \times \1_{\dia{V}})$. 

To show that $\rho$ is unique such map, let us utilize the following observation: Let $\phi: \M \rightarrow \GL(V)$ be an arbitrary smooth map. Then it is uniquely determined by the composition 
\begin{equation}
\chi := \theta_{V} \circ (\phi \times \1_{\dia{V}}): \M \times \dia{V} \rightarrow \dia{V}.
\end{equation}
Indeed, choose a basis $(t_{\lambda})_{\lambda=1}^{n}$ for $V$ to obtain the induced coordinates $(\bbx^{\lambda})_{\lambda=1}^{n}$ and $(\bby^{\lambda}{}_{\kappa})$ for $\dia{V}$ and $\GL(V)$, respectively. For each $\lambda \in \{1,\dots,n\}$, one finds 
\begin{equation}
\chi^{\ast}(\bbx^{\lambda}) = (\phi \times \1_{\dia{V}})^{\ast}( \bbx^{\kappa} \bby^{\lambda}{}_{\kappa}) = \bbx^{\kappa} p_{\M}^{\ast}( \phi^{\ast}(\bby^{\lambda}{}_{\kappa})),
\end{equation}
where we have used (\ref{eq_thetaVexplicit}) and $p_{\M}: \M \times \dia{V} \rightarrow \M$ is the projection. Let $0_{\M}: \M \rightarrow \M \times \dia{V}$ be the zero section. It satisfies $p_{\M} \circ 0_{\M} = \1_{\M}$. Observe that one can now write
\begin{equation}
\phi^{\ast}(\bby^{\lambda}{}_{\kappa}) = 0_{\M}^{\ast}( \frac{\partial}{\partial \bbx^{\kappa}} \chi^{\ast}(\bbx^{\lambda})). 
\end{equation}
This shows that $\phi$ is uniquely determined by $\chi$. Since $\theta = \theta_{V} \circ (\rho \times \1_{\dia{V}})$, this observation shows that $\rho$ is uniquely determined by $\theta$. It remains to prove that $\rho$ is a representation, that is a commutativity of the diagram
\begin{equation}
\begin{tikzcd}
\G \times \G \arrow{r}{\rho \times \rho} \arrow{d}{\mu} & \GL(V) \times \GL(V) \arrow{d}{\mu'}\\
\G \arrow{r}{\rho} & \GL(V)
\end{tikzcd}
\end{equation}
Instead of during this directly, we can use the above observation and prove 
\begin{equation}
\theta_{V} \circ ( (\rho \circ \mu) \times \1_{\dia{V}}) = \theta_{V} \circ ( (\mu' \circ (\rho \times \rho)) \times \1_{\dia{V}})
\end{equation}
instead. But this equation can be verified easily using fact that $\theta$ and $\theta_{V}$ are left actions of $\G$ and $\GL(V)$ of $\dia{V}$, respectively. We leave this as an exercise. 
\end{proof}
\begin{example}
The representation of $\GL(V)$ corresponding to the linear action $\theta_{V}$ is obviously the identity map $\1_{\GL(V)}: \GL(V) \rightarrow \GL(V)$. 

Now, the embedding $j: \gO(V,g) \rightarrow \GL(V)$ constructed by (\ref{eq_O}) can be viewed as a representation of $\gO(V,g)$ on $\dia{V}$. The corresponding \textbf{standard action of $\gO(V,g)$ on $\dia{V}$} is thus $\theta = \theta_{V} \circ (j \times \1_{\dia{V}})$. 
\end{example}
\begin{example}
Let $\G$ be a graded Lie group, and let $\g$ be its graded Lie algebra. The map which assigns to each $x \in \g$ the corresponding left-invariant vector field $x^{L} \in \X_{\G}(G)$ can be viewed as an isomorphism of graded vector bundles $F_{L}: \G \times \dia{\g} \rightarrow T\G$, defined on ``constant sections'' as $F_{L}(1 \otimes x) := x^{L}$. There is a similarly defined isomorphism $F_{R}: \G \times \dia{\g} \rightarrow T\G$ using right-invariant vector fields. Let
\begin{equation}
\theta := p_{\dia{\g}} \circ ( F_{R}^{-1} \circ F_{L}): \G \times \dia{\g} \rightarrow \dia{\g}, 
\end{equation}
where $p_{\dia{g}}$ is the projection. We claim that $\theta$ is a left action of $\G$ on $\dia{\g}$. By construction, it is linear. We write $\Ad: \G \rightarrow \GL(\g)$ for the corresponding \textbf{adjoint representation} of $\G$. Note that the definition of $\theta$ is just a generalization of the well-known formula 
\begin{equation}
\Ad_{g}(x) = [T_{g} R_{g^{-1}}]( (T_{e} L_{g})(x)).
\end{equation}
The proof of the fact that $\theta$ is a left action of $\G$ on $\dia{\g}$ is rather non-trivial and we do not have any intention to include it here. 
\end{example}
\bibliography{bib}
 \end{document}